\newtheoremstyle{example}
{}                % Space above
{}                % Space below
{\sffamily}        % Theorem body font % (default is "\upshape")
{}                % Indent amount
{\bfseries}       % Theorem head font % (default is \mdseries)
{.}               % Punctuation after theorem head % default: no punctuation
{ }               % Space after theorem head
{}                % Theorem head spec
\theoremstyle{theorem}
\newtheorem{theorem}{Theorem}
\newtheorem{lemma}[theorem]{Lemma}
\newtheorem{corollary}[theorem]{Corollary}
\theoremstyle{example}
\theoremstyle{definition}
\newtheorem{definition}[theorem]{Definition}
\newtheorem{remark}[theorem]{Remark}
\numberwithin{theorem}{section}
\numberwithin{theorem}{section}
\DeclareMathOperator{\im}{im}
\DeclareMathOperator{\Tr}{Tr}
\DeclareMathOperator{\Gal}{Gal}
\DeclareMathOperator{\Aut}{Aut}
\DeclareMathOperator{\Nm}{Nm}
\DeclareMathOperator{\disc}{disc}
\DeclareMathOperator{\Twist}{Twist}
\newcommand{\co}{\mathcal{O}}
\newcommand{\Z}{\mathbb{Z}}
\newcommand{\Q}{\mathbb{Q}}
\newcommand{\F}{\mathbb{F}}
\newcommand{\p}{\mathfrak{p}}
\newcommand{\m}{\widetilde{m}}
\newcommand{\ext}{\mathrm{ext}}
\newcommand{\quadr}{\mathrm{quad}}
\newcommand{\extendable}{\uparrow C_4}
\newcommand{\Tow}{\mathrm{Tow}}
\newcommand{\oneaut}{\mathrm{1-Aut}}
\title[Counting wildly ramified quartic extensions]{Counting wildly ramified quartic extensions with prescribed Galois closure group}
\author{Sebastian Monnet}
\date{}
\begin{document}
\maketitle

\begin{abstract}
    Given a $2$-adic field $K$, we give a formula for the number of totally ramified quartic field extensions $L/K$ with a given discriminant valuation and Galois closure group. We use these formulae to prove refinements of Serre's mass formula, which will have applications to the arithmetic statistics of number fields.   
\end{abstract}
\tableofcontents
\section{Introduction}
\label{sec-intro}

Throughout this paper, we use the term \emph{$2$-adic field} for a finite field extension of the $2$-adic numbers $\Q_2$, and all the fields we consider will be $2$-adic. Once and for all fix a $2$-adic field $K$. 

% notation: Galois closure group
Let $L/K$ be a finite field extension. The \emph{Galois closure group} of $L/K$ is the Galois group $\Gal(\widetilde{L}/K)$, where $\widetilde{L}$ is the normal closure of $L$ over $K$. Write 
% notation: Et with minimal decorators
$
\Sigma_m^G
$
for the set of isomorphism classes of totally ramified quartic field extensions $L/K$ with $v_K(d_{L/K}) = m$, such that the Galois closure group of $L/K$ is isomorphic to $G$. We allow ourselves to drop either or both of the decorators $G$ and $m$, with the obvious meanings. 

Using his eponymous lemma, Krasner \cite[Th\'eor\`eme 1]{krasner} found a formula, in terms of $m$, for the size of the set $\Sigma_m$. More recently, Sinclair \cite{sinclair} and Pauli--Sinclair \cite{pauli-sinclair} gave refinements of Krasner's formula, enumerating (among other things) the elements of $\Sigma_m$ that have a prescribed ramification polygon. In a different direction, Wei and Ji \cite{wei-ji} counted the elements of $\Sigma^{S_4}$ and $\Sigma^{A_4}$, without any conditions on discriminant valuation. In this paper, we combine the flavours of \cite{pauli-sinclair} and \cite{wei-ji} to give new refinements of Krasner's result: formulae for the sizes of the sets $\Sigma_m^G$, for all $m$ and $G$. These results in hand, we prove novel refinements of Serre's mass formula, which will have applications in the arithmetic statistics of number fields.

\subsection{Outline and key results}

In Section~\ref{sec-G=1}, we use a result of Serre to relate 
$$
\#\Big(\Sigma_{m}^{S_4} \cup \Sigma_{m}^{A_4}\Big)
$$
to the density of the corresponding Eisenstein polynomials. We then find explicit congruence conditions for this set of Eisenstein polynomials and use them to compute the required density. Finally, we establish conditions for distinguishing between $\Sigma_{m}^{A_4}$ and $\Sigma_{m}^{S_4}$, which we use to obtain the following two results:

\begin{theorem}
    \label{thm-S4-A4-f-even}
    Suppose that $f(K/\Q_2)$ is even. Then $\Sigma_{m}^{S_4}$ is empty for all $m$. Moreover, $\Sigma_{m}^{A_4}$ is nonempty if and only if $m$ is an even integer with $4 \leq m \leq 6e_K + 2$. In that case, we have 
    $$
    \#\Sigma_{m}^{A_4} = \begin{cases}
        \frac{1}{3}q^{\lfloor \frac{m}{3}\rfloor - 2}(q^2-1)\quad\text{if $3 \mid m$},
        \\
        q^{\lfloor \frac{m}{3}\rfloor - 1}(q-1)\quad\text{if $3\nmid m$}.
    \end{cases}
    $$
\end{theorem}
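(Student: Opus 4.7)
The plan is to follow the two-part strategy from the outline: first count the union $\Sigma_m^{S_4} \cup \Sigma_m^{A_4}$ via a density-of-Eisenstein-polynomials argument combined with Serre's mass formula, and then distinguish the two pieces by a group-theoretic analysis of inertia that forces $\Sigma_m^{S_4} = \emptyset$ when $f(K/\Q_2)$ is even.

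For the union count, the key observation is that a totally ramified quartic $L/K$ has Galois closure in $\{S_4, A_4\}$ precisely when the resolvent cubic of an Eisenstein minimal polynomial $f(x) = x^4 + a_3 x^3 + a_2 x^2 + a_1 x + a_0$ of $L/K$ is irreducible over $K$. I would impose this condition by explicit congruences on $(a_0, \dots, a_3)$ modulo suitable powers of a uniformiser $\pi_K$ (depending on $m$), compute the resulting density in the space of Eisenstein quartics, and convert to a count of extensions using Serre's formula for the number of Eisenstein polynomials representing a given $L/K$. This delivers $\#(\Sigma_m^{S_4} \cup \Sigma_m^{A_4})$ as an explicit function of $m$ and $q$.

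To rule out $S_4$ when $f$ is even, I analyse the inertia $I \trianglelefteq G = \Gal(\widetilde{L}/K)$. Since $p = 2$, $I$ has a normal Sylow-$2$ subgroup $P$ (the wild inertia), $I/P$ is cyclic of odd order, $G/I$ is cyclic (generated by Frobenius), and Frobenius acts on $I/P$ as multiplication by $q$. Running these constraints through the normal subgroups of $S_4$ alongside the total-ramification requirement $I \cdot \Stab(1) = G$ forces, in the $G = S_4$ case, the unique possibility $I = A_4$ with $P$ the Klein four-group, $I/P = C_3$, and $G/I = C_2$. A direct computation shows that conjugation by a transposition in $S_4$ inverts $A_4/P \cong C_3$, so Frobenius acts on $C_3$ by inversion; this forces $q \equiv -1 \pmod 3$, equivalently $f$ odd. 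Hence $f$ even gives $\Sigma_m^{S_4} = \emptyset$, and $\Sigma_m^{A_4}$ equals the union counted in the previous step.

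The bounds $4 \leq m \leq 6 e_K + 2$ and the parity constraint on $m$ come from the admissible discriminant valuations for totally ramified quartics together with the fact that the discriminant of any $A_4$ quartic is a square in $K$, forcing $v_K(d_{L/K})$ to be even. The split by $m \pmod 3$ and the extra $\tfrac13$ factor when $3 \mid m$ emerge from the density calculation of the first step: since $f$ is even we have $\mu_3 \subset \co_K^*$, and when $3 \mid m$ the valid Eisenstein polynomials acquire a free scaling action by $\mu_3$ giving a three-to-one redundancy. The main obstacle will be the density step, specifically the translation of \emph{irreducible resolvent cubic} into a manageable set of congruences on $(a_0,\dots,a_3)$ and the subsequent density calculation split by $m \pmod 3$; the inertia analysis is short once the framework of wild-plus-tame quotient is in place, and the endpoint/parity bookkeeping is routine.
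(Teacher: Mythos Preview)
Your two-step architecture matches the paper's. For ruling out $S_4$, the paper simply invokes the equivalence ``$f_K$ even $\Leftrightarrow \mu_3\subseteq K$'' and then cites Wei--Ji for the fact that $\mu_3\subseteq K$ precludes $S_4$-extensions; your inertia argument is a correct, self-contained proof of that same fact and is a pleasant alternative. For counting the union $\Sigma_m^{\oneaut}$, the paper does not go via the resolvent cubic: it uses the equivalent characterisation ``$f$ has exactly one root in $L_f$'' and analyses the Newton polygon of $X^{-1}f(X+\pi)$ over $L_f$ to extract the congruence conditions (the sets $T_m$) and the correction when $6\mid m$. Your resolvent-cubic route is a valid characterisation in principle, but you have not indicated how irreducibility of a cubic over $K$ becomes a tractable set of congruences on $(a_0,\ldots,a_3)$.

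Your account of the $\tfrac13$ factor is where the proposal would actually break down. There is no three-to-one redundancy in the Eisenstein parametrisation coming from a $\mu_3$-scaling: Serre's formula already absorbs all overcounting, and the paper's value of $\#\Sigma_m^{\oneaut}$ (Corollary~\ref{cor-size-of-Sigma-1-m}) is the same whether or not $\mu_3\subseteq K$. What actually happens when $6\mid m$ is that a positive-density subset of $T_m$ \emph{fails} the $\oneaut$ condition because $f$ acquires a second root in $L_f$; computing that failing density reduces to counting the image of the residue-field map $c\mapsto \lambda c+\mu c^3$ (Lemma~\ref{lem-size-of-im-alpha}), and the factor $(q+1)/(3q)$ in the final count comes from that image size, not from any symmetry of the polynomial set. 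Likewise, the upper bound $m\leq 6e_K+2$ is specific to the $\oneaut$ case (arbitrary totally ramified quartics reach $8e_K+3$) and falls out of the same Newton-polygon analysis rather than from generic Ore-type discriminant bounds.
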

\begin{theorem}
    \label{thm-S4-A4-f-odd}
    Suppose that $f(K/\Q_2)$ is odd. 
    \begin{itemize} 
        \item The set $\Sigma_{m}^{S_4}$ is nonempty if and only if $m \in 2\Z \setminus 6\Z$ and $4 \leq m \leq 6e_K + 2$. In that case, we have
    $$
    \#\Sigma_{m}^{S_4} = q^{\lfloor\frac{m}{3}\rfloor - 1}(q-1). 
    $$
    \item The set $\Sigma_{m}^{A_4}$ is nonempty if and only if $m$ is a multiple of $6$ and $6 \leq m \leq 6e_K$. In that case, we have
    $$
    \#\Sigma_{m}^{A_4} = \frac{1}{3}\cdot q^{\lfloor \frac{m}{3}\rfloor - 2}(q^2-1).
    $$
    \end{itemize}
\end{theorem}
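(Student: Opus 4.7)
The plan is to adapt the strategy of Theorem~\ref{thm-S4-A4-f-even}. The combined count $\#(\Sigma_m^{S_4} \cup \Sigma_m^{A_4})$ arising from the Serre--Eisenstein density computation in Section~\ref{sec-G=1} does not depend on the parity of $f(K/\Q_2)$, so I reuse it directly. What remains is to split this total between $S_4$ and $A_4$ and to pin down the admissible values of $m$ in each case.

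For the split, I would use the classical criterion that $\Gal(\widetilde{L}/K) \subseteq A_4$ iff $d_{L/K} \in (K^\times)^2$, translating it into a condition on an Eisenstein polynomial generating $L$. The structural input specific to the odd-$f$ case is that $K$ contains no primitive cube root of unity (since $2^f \equiv 2 \pmod{3}$), so the $C_3$-subquotient $A_4/V_4$ of any $A_4$-closure $\widetilde{L}/K$ must correspond to the unramified cubic extension $K^{(3)}/K$. Setting $M = K^{(3)}$, the $V_4$-extension $\widetilde{L}/M$ then carries a faithful $C_3 = \Gal(M/K)$ action that cyclically permutes its three quadratic subextensions. All three therefore have the same discriminant valuation in $M$, so $v_M(d_{\widetilde{L}/M})$---and hence $m$, via a tower comparison with $v_K(d_{L/K})$---is forced to be a multiple of $3$. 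Combined with the generic parity constraint $2 \mid m$, this yields $6 \mid m$ as necessary for $A_4$.

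In the range $3 \nmid m$ with $m$ even and $4 \leq m \leq 6e_K + 2$, every element of $\Sigma_m^{S_4} \cup \Sigma_m^{A_4}$ is therefore an $S_4$-closure, and the count reads off as $q^{\lfloor m/3 \rfloor - 1}(q-1)$. In the range $6 \mid m$ with $6 \leq m \leq 6e_K$, a parallel analysis should show every such extension lies in $\Sigma_m^{A_4}$, giving $\#\Sigma_m^{A_4} = \tfrac{1}{3}q^{\lfloor m/3 \rfloor - 2}(q^2-1)$. The sharp upper bound $6e_K$ (rather than $6e_K + 2$) for $A_4$ reflects the fact that a $C_3$-invariant $V_4$-extension of $M$ cannot have the maximal individual quadratic discriminant valuation $2e_K + 1$, which is odd and so incompatible with the cyclic symmetry.

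The main obstacle will be the converse direction: proving that in the $6 \mid m$ range, no $S_4$-closures occur. While ``$A_4$ forces $6 \mid m$'' follows cleanly from the $C_3$-symmetry argument above, the reverse inclusion requires an explicit analysis of the square class of $d_{L/K}$ for every Eisenstein polynomial in this range. I would expect to handle this by fixing generators of $(K^\times)/(K^\times)^2$ adapted to the odd-$f$ setting and tracing through the discriminant computation to verify that $d_{L/K}$ is forced into $(K^\times)^2$ whenever $6 \mid m$.
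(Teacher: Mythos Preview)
Your outline for the direction ``$A_4 \Rightarrow 6\mid m$'' is exactly the paper's Lemma~\ref{lem-A4-quartic-implies-disc-mult-3}: the unique $C_3$-quotient forces the cubic subfield of $\widetilde{L}$ to be unramified, and the resulting $C_3$-symmetry on the three quadratic subfields of $\widetilde{L}/M$ gives $3\mid m$. Combined with the combined count from Corollary~\ref{cor-size-of-Sigma-1-m}, this handles the $S_4$ statement completely. (Incidentally, your remark about the bound $6e_K$ is overcomplicated: it is just that the largest multiple of $6$ not exceeding $6e_K+2$ is $6e_K$.)

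The genuine gap is the converse, which you flag yourself. Your plan is to show directly that $\disc(f)\in K^{\times 2}$ for every $f\in P_m^{\oneaut}$ when $6\mid m$, by tracking the square class through the Eisenstein discriminant formula. This is not how the paper proceeds, and it is not clear your method would succeed: the discriminant of a quartic Eisenstein polynomial has many terms, and controlling its square class uniformly over all $f$ in the congruence set $T_m\cap P_m^{\oneaut}$ looks delicate, especially since the description of $P_m^{\oneaut}$ inside $T_m$ (Corollary~\ref{cor-cong-cond-m-multiple-of-6}) is itself a cubic non-solvability condition rather than a clean congruence.

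The paper sidesteps this entirely with a counting argument (Lemma~\ref{lem-sigma-A4-in-terms-of-Sigma-1}). From \cite[Theorem~1.2]{wei-ji} together with Lemma~\ref{lem-A4-exts-tot-ram}, one knows the total $\#\Sigma^{A_4}=(q^{2e_K}-1)/3$. Summing the formula of Corollary~\ref{cor-size-of-Sigma-1-m} over multiples of $6$ gives the same number. Since you already have the containment $\Sigma^{A_4}\subseteq\bigcup_{6\mid m}\Sigma_m^{\oneaut}$, equality of cardinalities forces equality of sets, and hence $\Sigma_m^{S_4}=\varnothing$ whenever $6\mid m$. This is the missing idea: rather than a local discriminant analysis, invoke the known global $A_4$-count and compare.
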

The case $V_4$ was addressed by Tunnell in \cite{tunnell}. We repackage his result in Section~\ref{sec-G=V4} as the following theorem:
\begin{theorem}
    \label{thm-size-of-Sigma-V4-m}
    If $\Sigma_{m}^{V_4}$ is nonempty, then $m$ is an even integer with $6 \leq m \leq 6e_K + 2$. For all such $m$, we have 
    $$
\#\Sigma_{m}^{V_4} = 2(q-1)q^{\frac{m-4}{2}}\Big(
    q^{-\lfloor \frac{m}{6}\rfloor}(1 + \mathbbm{1}_{3 \mid m}\cdot \frac{q-2}{3}) - \mathbbm{1}_{m\leq 4e_K + 2}\cdot q^{-\lfloor\frac{m-2}{4}\rfloor}
\Big).
$$
\end{theorem}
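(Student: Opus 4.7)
The plan is to reduce the theorem to enumerating $2$-dimensional $\F_2$-subspaces of $\Hom(K^\times, \Z/2)$ by the conductor profile of their non-trivial elements, then to apply Tunnell's \cite{tunnell} description of the conductor filtration on this group. Via local class field theory, totally ramified quadratic extensions of $K$ biject with ramified order-$2$ characters of $K^\times$, and $v_K(d_{M/K})$ equals the conductor exponent $c(\chi_M)$. A totally ramified $V_4$-extension $L/K$ has three distinct, totally ramified quadratic subfields (an unramified subfield would force $L$ to contain the unramified quadratic), corresponding to the three non-trivial characters of a $2$-dimensional $\F_2$-subspace $W \subseteq \Hom(K^\times, \Z/2)$ that does not contain the unramified quadratic character $\chi_0$. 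By the conductor-discriminant formula,
$$
v_K(d_{L/K}) \;=\; \sum_{\chi \in W \setminus \{1\}} c(\chi),
$$
so the count reduces to enumerating such $W$ with non-trivial-element conductor-sum equal to $m$.

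Next, I would use the ultrametric inequality $c(\chi\psi) \le \max(c(\chi), c(\psi))$, with equality when $c(\chi) \ne c(\psi)$, to sort these $W$ into two types: Type (a), where all three non-trivial conductors equal some $c$ (forcing $m = 3c$); and Type (b), where two conductors equal some $c_2$ and the third equals $c_1 < c_2$ (giving $m = c_1 + 2c_2$). Setting $F_j := \{\chi : c(\chi) \le j\}$, Tunnell's formulas determine each $\dim_{\F_2} F_j$ and the quotient sizes $|F_j/F_{j-1}| \in \{1,q\}$. Type (a) subspaces are counted as $2$-dim subspaces of $F_{m/3}$ meeting $F_{m/3-1}$ trivially, while Type (b) subspaces are counted by choosing a low-conductor character in $F_{c_1} \setminus F_{c_1-1}$ and then a high-conductor character in $F_{c_2} \setminus F_{c_2-1}$, halving to account for the swap symmetry between the two high-conductor characters. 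Finally, subspaces containing $\chi_0$ (Type (b) with $c_1 = 0$ and $c_2 = m/2$) are subtracted, producing the $\mathbbm{1}_{m \le 4e_K + 2}$ correction.

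The main obstacle will be matching the various pieces of the closed form. The leading factor $2(q-1)q^{(m-4)/2}q^{-\lfloor m/6 \rfloor}$ captures the generic Type (b) contribution, the $(1 + \mathbbm{1}_{3 \mid m}(q-2)/3)$ factor blends in the Type (a) count when $3 \mid m$ (with $(q-2)/3$ coming from a Gaussian binomial counting $2$-dim subspaces of the $\F_2$-vector space $F_{m/3}/F_{m/3-1}$ of dimension $f_K$), and the subtracted term removes the $\chi_0$-containing subspaces. The chief technical difficulty is controlling the top of the conductor filtration at the maximum allowed conductor $2e_K + 1$: the dimension jumps there are irregular and depend on finer arithmetic of $K$, and careful bookkeeping is required both to verify the vanishing of $\#\Sigma_m^{V_4}$ on odd $m$ (no Type (a) with odd conductor forms a $2$-dim subspace) and to pin down the boundary conditions $6 \le m \le 6e_K + 2$ and $m \le 4e_K + 2$ in the formula.
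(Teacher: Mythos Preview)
Your proposal is sound in outline, but it is worth noting that the paper does not actually carry out this computation: its proof of the theorem consists entirely of invoking \cite[Lemma~4.7]{tunnell} for the formula, together with a short lemma (using the conductor-discriminant identity and Hecke's theorem) to pin down the admissible range $6 \le m \le 6e_K+2$. What you have sketched is essentially a reconstruction of Tunnell's own argument for his Lemma~4.7, so your approach and the paper's ultimately rest on the same mathematics; the paper simply black-boxes the step you are proposing to redo by hand.

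A couple of small points where your sketch is imprecise. First, the filtration quotient $F_{2e_K+1}/F_{2e_K}$ always has order $2$, independent of any ``finer arithmetic of $K$''; the reason Type~(a) with $c=2e_K+1$ never occurs is simply that a one-dimensional quotient contains no $2$-dimensional subspace, and this is exactly what forces $m\le 6e_K+2$. Second, the factor $(q-2)/3$ is not literally the Gaussian binomial $\binom{f_K}{2}_2=\tfrac{(q-1)(q-2)}{6}$: the Type~(a) count at level $c=m/3$ is $\tfrac{2}{3}q^{m/3-2}(q-1)(q-2)$, obtained by counting $2$-planes in $F_c$ meeting $F_{c-2}$ trivially, and the $(q-2)/3$ in the stated formula emerges only after combining this with the Type~(b) sum. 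Neither of these affects the validity of your plan.
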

The bulk of our work goes into the $C_4$ case. In \cite{cohen-et-al}, Cohen, Diaz y Diaz, and Olivier obtain asymptotic formulae for the number of $C_4$-extensions of a number field. We adapt their methods to compute the size of $\Sigma_{m}^{C_4}$. Our formula depends on the discriminant valuation
$$
d_{(-1)} = v_K(d_{K(\sqrt{-1})/K}),
$$
which is an even integer by Lemma~\ref{lem-bound-on-d-(-1)}. 
\begin{theorem}
    \label{thm-size-Sigma-C4-m}
    If $\Sigma_{m}^{C_4}$ is nonempty, then either $m=8e_K + 3$ or $m$ is an even integer with $8 \leq m \leq 8e_K$. For even $m$ with $8 \leq m \leq 8e_K$, the number $\# \Sigma_{m}^{C_4}$ is the sum of the following four quantities:
    \begin{enumerate}
        \item $\mathbbm{1}_{8\leq m \leq 5e_K - 2}\cdot \mathbbm{1}_{m\equiv 3\pmod{5}}\cdot 2q^{\frac{3m-14}{10}}(q - 1)$.
        \item $\mathbbm{1}_{4e_K + 4 \leq m \leq 5e_K + 2}\cdot 2q^{\frac{m}{2}-e_K - 2}(q - 1)$.
        \item $\mathbbm{1}_{5e_K + 3 \leq m \leq 8e_K}\cdot \mathbbm{1}_{m\equiv 2e_K\pmod{3}}\cdot 2q^{\frac{m+4e_K}{6} - 1}(1 + \mathbbm{1}_{m \leq 8e_K - 3d_{(-1)}})(q - 1 - \mathbbm{1}_{m = 8e_K - 3d_{(-1)} + 6})$. 
        \item $\mathbbm{1}_{10 \leq m \leq 5e_K}\cdot 2(q-1)(
            q^{ \lfloor\frac{3m}{10} \rfloor - 1} - q^{\max\{
                \lceil \frac{m+2}{4}\rceil,
                 \frac{m}{2} - e_K
            \} - 2}
        )$.
    \end{enumerate}
    We also have 
    $$
    \#\Sigma_{8e_K + 3}^{C_4} = \begin{cases}
        4q^{2e_K}\quad\text{if $-1 \in K^{\times 2}$},
        \\
        2q^{2e_K} \quad\text{if $K(\sqrt{-1})/K$ is quadratic and totally ramified},
        \\
        0\quad\text{if $K(\sqrt{-1})/K$ is quadratic and unramified}.
    \end{cases}
    $$
\end{theorem}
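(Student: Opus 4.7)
The plan is to parametrise totally ramified $C_4$-extensions of~$K$ via local class field theory. Any such $L/K$ corresponds to a surjective character $\psi\colon K^\times\to\mu_4$ of exact order~$4$, two characters giving the same extension iff they are inverses of one another. Writing $\chi:=\psi^2$, the unique quadratic subfield $F:=L^{C_2}$ is the extension cut out by~$\chi$. The conductor--discriminant formula gives
$$
m \;=\; v_K(d_{L/K}) \;=\; 2\cond(\psi)+\cond(\chi),
$$
and $L/K$ is totally ramified exactly when $\psi|_{\mathcal{O}_K^\times}$ has order~$4$, which forces $\chi|_{\mathcal{O}_K^\times}$ to be nontrivial, i.e.\ $F/K$ to be ramified. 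So I reduce to counting pairs $(\chi,\psi)$ with $\chi$ a ramified quadratic character of $K^\times$, $\psi^2=\chi$ of order~$4$, and $\cond(\psi)=(m-\cond(\chi))/2$, then dividing by~$2$.

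Fixing a ramified quadratic character~$\chi$ of conductor $d_F\in\{2,\ldots,2e_K+1\}$, I count the order-$4$ characters $\psi$ with $\psi^2=\chi$ and $\cond(\psi)=a:=(m-d_F)/2$. When nonempty, the set of square roots of~$\chi$ is a torsor under the group of quadratic characters of $K^\times$; the existence of any such root is a Kummer-theoretic condition analysed through the structure of $K^\times/(K^\times)^4$ and its higher-unit filtration $U^{(i)}=1+\mathfrak{m}_K^i$. The conductor condition filters this torsor by tracking jumps of $\psi$ along the filtration. Summing over~$\chi$, grouped by~$d_F$ (itself controlled by the classification of ramified quadratic extensions from the earlier sections), yields the four summands of the theorem.

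The main technical obstacle is the precise behaviour of conductors under the square-root operation. Squaring on $\mathcal{O}_K^\times$ has kernel $\{\pm1\}$ and its image is not a single step of the $U^{(i)}$-filtration, so $\cond(\psi)$ is not simply $\cond(\chi)/2$ but depends on the position of $-1$ in the filtration; this is precisely what the invariant $d_{(-1)}=v_K(d_{K(\sqrt{-1})/K})$ records, and it is the source both of the appearance of $d_{(-1)}$ in piece~(3) and of the congruence conditions $m\equiv 3\pmod 5$ and $m\equiv 2e_K\pmod 3$ separating the four pieces. I expect pieces~(1) and~(4) to come from the regime of small even~$d_F$ with differing ranges of~$a$; piece~(2) from the transitional band $4e_K+4\leq m\leq 5e_K+2$ where the Kummer obstruction becomes active; and piece~(3) from the large-$d_F$ regime in which $-1$ enters the square-root count nontrivially.

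For the edge case $m=8e_K+3$, the equation $2a+d_F=m$ together with the available ranges of $d_F$ forces $d_F=2e_K+1$ and $a=3e_K+1$, i.e.\ $F/K$ is of uniformiser type and $\psi$ has maximal conductor. An explicit calculation at the top of the conductor filtration, using the Kummer description of the order-$4$ characters and dividing into the three cases for $K(\sqrt{-1})/K$ (contained in $K$, quadratic and totally ramified, or quadratic and unramified), yields the claimed values $4q^{2e_K}$, $2q^{2e_K}$, and~$0$.
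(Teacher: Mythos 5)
Your character-theoretic setup is sound and is the exact dual, via local class field theory, of the route the paper takes: the paper fibres $\Sigma_m^{C_4}$ over the intermediate quadratic field $E$ (your $F$), counting $C_4$-extendable $E$ with $v_K(d_{E/K})=m_1$ (Lemma~\ref{lem-N-ext}) and then the quadratic extensions $L/E$ with prescribed $v_E(d_{L/E})$ and Galois closure group $C_4$ over $K$ (Lemma~\ref{lem-N-E-m2}); your identity $m = 2\cond(\psi)+\cond(\chi)$ is the tower law for the discriminant in disguise, and the reduction to pairs $(\chi,\psi)$ followed by division by $2$ is correct.

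However, as written this is a strategy rather than a proof: neither of the two counting inputs that actually produce the four summands is established. First, you need the number of ramified quadratic characters $\chi$ of each conductor that admit a square root at all (equivalently, with $-1 \in \Nm K(\sqrt{d})$); this is the content of Lemma~\ref{lem-N-ext} and already requires the $d_{(-1)}$-dependent analysis of the groups $S_{K/K,t}^{\mathcal{A}}$. Second, and more seriously, you need, for each such $\chi$ of conductor $m_1$, the number of order-$4$ square roots $\psi$ of each given conductor. The torsor structure under quadratic characters is correct, but it only determines the answer once you know the \emph{minimal} conductor achieved on the torsor, and that minimum is the crux: it equals $(m_1+n_0)/2$ with $n_0=\min\{m_1+2e_K,\,3m_1-2\}$ (Lemma~\ref{lem-conds-for-m2-extendable}, which the paper imports from \cite[Proposition~3.15]{cohen-et-al}). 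This threshold is what generates the boundary cases $m=5m_1-2$ and $m=3m_1+2e_K$, hence the congruences $m\equiv 3\pmod 5$ and $m\equiv 2e_K\pmod 3$ and the separation into regimes (1)--(4). Your sketch misattributes these congruences to the position of $-1$ in the unit filtration: $d_{(-1)}$ enters only through the count of admissible $\chi$ (visible in summand (3)), whereas the congruences come from the conductor threshold just described, which you have not computed. Until you prove the analogue of Lemma~\ref{lem-conds-for-m2-extendable} and the count of square roots at each conductor level (the analogue of Corollary~\ref{cor-size-of-Sigma_E_leq_m2} and Lemma~\ref{lem-size-of-Z-E-t-and-S-E/K-t}), none of the four explicit quantities can be derived. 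The $m=8e_K+3$ case is fine in outline but likewise rests on the unproven count of order-$4$ characters squaring to a given ramified quadratic character of conductor $2e_K+1$.
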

Finally, in Section~\ref{sec-G=D4}, we compute the number of towers of two quadratic extensions $L/E/K$ with $v_K(d_{L/K}) = m$ and express this number in terms of $\#\Sigma_{m}^{C_4}$, $\#\Sigma_{m}^{V_4}$, and $\#\Sigma_{m}^{D_4}$. Rearranging, we obtain:
\begin{theorem}
    \label{thm-size-of-Sigma_m_D4}
    If $\Sigma_{m}^{D_4}$ is nonempty, then one of the following holds:
    \begin{enumerate}
        \item $m$ is an even integer with $6  \leq m \leq 8e_K + 2$.
        \item $m\equiv 1 \pmod{4}$ and $4e_K + 5 \leq m \leq 8e_K + 1$. 
        \item $m = 8e_K + 3$. 
    \end{enumerate} 
    For even $m$ with $6 \leq m \leq 8e_K + 2$, we have 
    $$
    \#\Sigma_{m}^{D_4} = 2(q-1)q^{\frac{m}{2} - 2}(\mathbbm{1}_{m \geq 4e_K + 4}\cdot q^{-e_K} +\mathbbm{1}_{m \leq 8e_K}\cdot ( q^{\min\{0, e_K + 1 - \lceil \frac{m}{4}\rceil\} }- q^{-\min\{\lfloor \frac{m-2}{4}\rfloor, e_K\}})) - \frac{1}{2}\#\Sigma_m^{C_4} - \frac{3}{2}\#\Sigma_m^{V_4}. 
    $$
    For $m\equiv 1\pmod{4}$ with $4e_K + 5 \leq m \leq 8e_K + 1$, we have 
    $$
    \#\Sigma_{m}^{D_4} = 2(q-1)q^{e_K + \frac{m-1}{4} - 1} - \frac{1}{2}\#\Sigma_m^{C_4} - \frac{3}{2}\#\Sigma_m^{V_4}. 
    $$
    If $m = 8e_K + 3$, then 
    $$
    \#\Sigma_{m}^{D_4} = 2q^{3e_K} - \frac{1}{2}\#\Sigma_{8e_K + 3}^{C_4}.
    $$
    Theorems~\ref{thm-size-of-Sigma-V4-m} and \ref{thm-size-Sigma-C4-m} make these expressions completely explicit.
\end{theorem}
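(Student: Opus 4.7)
The plan is to count the set $\mathcal{T}_m$ of towers $K \subset E \subset L$ inside a fixed algebraic closure $\bar K$, with $[E:K] = [L:E] = 2$, $L/K$ totally ramified (equivalently, both steps totally ramified), and $v_K(d_{L/K}) = m$, in two different ways and equate the answers.

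\textbf{Expressing $\#\mathcal{T}_m$ via $\Sigma_m^G$.} A totally ramified quartic $L/K$ contains an intermediate quadratic subfield precisely when its Galois closure group lies in $\{C_4, V_4, D_4\}$---these are the transitive subgroups of $S_4$ whose point-stabiliser sits inside an index-$2$ subgroup. For each iso class $L \in \Sigma_m^G$, the number of distinct subfields of $\bar K$ that are $K$-isomorphic to $L$ is $[L:K]/|\Aut_K(L)|$: this equals $1$ when $G \in \{C_4, V_4\}$, and $2$ when $G = D_4$ (in the $D_4$ case, $\Aut_K(L) \cong N_{D_4}(H)/H \cong C_2$, where $H$ is the point-stabiliser). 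The number of intermediate quadratic subfields inside each such copy of $L$ is $1$ when $G \in \{C_4, D_4\}$ and $3$ when $G = V_4$. Multiplying and summing over $G$ gives
$$
\#\mathcal{T}_m = \#\Sigma_m^{C_4} + 3\,\#\Sigma_m^{V_4} + 2\,\#\Sigma_m^{D_4}.
$$

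\textbf{Computing $\#\mathcal{T}_m$ directly.} For $E/K$ totally ramified of degree $2$, the tower formula reads $v_K(d_{L/K}) = 2\,v_K(d_{E/K}) + v_E(d_{L/E})$, so I partition $\mathcal{T}_m$ by $a := v_K(d_{E/K})$: the number of towers with this value of $a$ factors as the number of totally ramified quadratic $E/K$ with $v_K(d_{E/K}) = a$, times the number of totally ramified quadratic $L/E$ with $v_E(d_{L/E}) = m - 2a$. Both factors are given by Tunnell's formula for $C_2$-extensions of a $2$-adic field (as already used in Theorem~\ref{thm-size-of-Sigma-V4-m}), applied to $K$ in the first factor and to $E$ in the second; the latter depends on $E$ only through its absolute ramification index $e_E = 2e_K$ and residue-field size $q$, so it is constant over $E$ with fixed $a$. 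Summing the resulting product over admissible $a$ produces the main-term expressions in the statement.

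\textbf{Finishing and main obstacle.} Substituting the formulae for $\#\Sigma_m^{C_4}$ and $\#\Sigma_m^{V_4}$ from Theorems~\ref{thm-size-Sigma-C4-m} and~\ref{thm-size-of-Sigma-V4-m} into the identity above and solving for $\#\Sigma_m^{D_4}$ gives the three cases in the statement. The main obstacle is the case analysis in the sum over $a$: Tunnell's formula is piecewise in the conductor exponent, with distinct expressions for small, intermediate and maximal conductor, and a parity constraint on the odd-conductor regime. The summand inherits this piecewise behaviour, and tracking which sub-ranges of $a$ contribute for each $m$ reproduces the three regimes listed. In particular, $m$ odd forces $v_E(d_{L/E}) = m - 2a$ itself to be odd, which pins $a$ into a narrow range at the top of its allowed values and is responsible for the separate $m \equiv 1 \pmod{4}$ and $m = 8e_K+3$ clauses.
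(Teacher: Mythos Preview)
Your proposal is correct and follows essentially the same route as the paper. The paper defines $\Tow_m$ as pairs $(E,L)$ of isomorphism classes rather than as literal subfield chains in $\bar K$, and obtains the weights $1,3,2$ for $C_4,V_4,D_4$ via a twist count (Lemma~\ref{lem-size-of-Xi} and Lemma~\ref{lem-size-of-fibre-of-Phi_m}) instead of your ``copies in $\bar K$ times intermediate subfields'' argument; but these are transparently equivalent and yield the same identity $\#\Tow_m = \#\Sigma_m^{C_4} + 3\,\#\Sigma_m^{V_4} + 2\,\#\Sigma_m^{D_4}$. The direct enumeration of $\Tow_m$ by the tower law and the quadratic-extension count (Tunnell's \cite[Lemma~4.3]{tunnell}, not the $V_4$ lemma you cite) is carried out in full in Lemma~\ref{lem-size-of-Xm}, which is precisely the piecewise sum over $a$ you describe as the main obstacle.
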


\subsection{Application: Refinements of Serre's mass formula}

Our main application is to prove refinements of Serre's mass formula. Define the \emph{mass} of a set $S$ of field extensions $L/K$ to be 
% notation: (pre-)mass
$$
\m(S) = \sum_{L \in S} \frac{(\#\Aut(L/K))^{-1}}{q^{v_K(d_{L/K})}}.
$$
This quantity was first studied by Serre, who proved his famous ``mass formula'' \cite[Theorem~2]{serre}. In \cite{bhargava-serre-mass-formula}, Bhargava generalised Serre's formula to sets of \'etale algebras over $K$ and developed the so-called ``Malle--Bhargava heuristics'' which predict the asymptotic number of degree $n$ number fields with Galois closure group $S_n$, when ordered by discriminant. Essentially, Bhargava predicts that the probability of a ``randomly selected'' such number field having a prescribed local completion is proportional to the mass of that local completion. 

Bhargava, Shankar, and Wang proved these heuristics for $n=2,3,4,5$ in \cite[Theorem~2]{bhargava-shankar-wang}, replacing degree $n$ number fields by degree $n$ extensions of an arbitrary base number field. Recently, in \cite{alberts2023random}, Alberts extended the Malle--Bhargava heuristics, replacing $S_n$ with more general classes of Galois closure groups. Aside from the mass's general importance in arithmetic statistics, the original motivation for our refinements comes from our earlier preprint \cite{monnet-S4-quartics}; our formula for the local masses at primes $\p$ lying over $2$ (called $m_{\mathcal{A},\p}$ in \cite{monnet-S4-quartics}) is woefully inexplicit, and we intend to use the formulae in this paper to remedy that shortcoming. Similarly, upcoming work of Newton--Varma uses a modified version of Corollary~\ref{cor-premass-of-Sigma-C4}, and more generally we expect our refined mass formulae to be useful for obtaining explicit masses when counting $S_4$-quartic extensions with local conditions.  

We find explicit formulae for $\m(\Sigma^{G})$ for each $G$, which we now state. The proofs are deferred to later sections of the paper.

\begin{corollary}
    \label{cor-premass-of-S4-A4-f-even}
    If $f(K/\Q_2)$ is even, then 
    $$
    \m(\Sigma^{S_4}) = 0,
    $$
    and
    $$
    \m(\Sigma^{A_4}) = 
    \frac{1}{3}(q-1)\cdot\frac{q^{4e_K}-1}{q^4-1}\cdot q^{-4e_K-3}\Big(
        3q^3 + q^2 + q + 3
    \Big).
    $$
\end{corollary}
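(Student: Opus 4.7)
The plan is to deduce the corollary directly from Theorem~\ref{thm-S4-A4-f-even} by reducing the mass to a weighted count. First I observe that for any totally ramified quartic $L/K$ whose Galois closure group $G$ is $A_4$ or $S_4$, the subfield $L$ corresponds to a point stabilizer $H$ in $G$ (a Sylow $3$-subgroup in $A_4$, a copy of $S_3$ in $S_4$), and in both cases $H$ is self-normalising, so $\Aut(L/K) = N_G(H)/H$ is trivial. Hence
\[
\m(\Sigma^G) = \sum_m \frac{\#\Sigma_m^G}{q^m}
\]
for $G \in \{S_4, A_4\}$. The $S_4$ statement is then immediate: Theorem~\ref{thm-S4-A4-f-even} gives $\Sigma_m^{S_4} = \emptyset$ for every $m$ when $f(K/\Q_2)$ is even.

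For $A_4$, Theorem~\ref{thm-S4-A4-f-even} restricts the nonempty $\Sigma_m^{A_4}$ to even $m$ with $4 \leq m \leq 6e_K + 2$. The next step is to split these $m$ into the three arithmetic progressions
\[
m = 6k \ (k=1,\dots,e_K), \qquad m = 6k+2 \ (k=1,\dots,e_K), \qquad m = 6k+4 \ (k=0,\dots,e_K-1),
\]
and apply the two-case formula in each. In all three ranges the summand $\#\Sigma_m^{A_4}/q^m$ is proportional to $q^{-4k}$, so each contribution becomes a finite geometric series whose value involves the common factor $(1 - q^{-4e_K})/(q^4-1)$.

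Evaluating the three geometric sums and factoring out this common term leaves
\[
\m(\Sigma^{A_4}) = \frac{1 - q^{-4e_K}}{q^4-1}\left( \frac{q^2-1}{3q^2} + \frac{q-1}{q^3} + (q-1) \right).
\]
Putting the bracket over the common denominator $3q^3$ and extracting the factor $(q-1)$ yields $(q-1)(3q^3 + q^2 + q + 3)/(3q^3)$; multiplying the leading fraction by $q^{4e_K}/q^{4e_K}$ rewrites $(1 - q^{-4e_K})$ as $q^{-4e_K}(q^{4e_K}-1)$, which produces exactly the stated expression. The only obstacle is the bookkeeping of the three residue classes and keeping the endpoints straight; there is no conceptual difficulty.
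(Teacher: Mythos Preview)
Your proposal is correct and follows essentially the same route as the paper's own proof: both reduce the mass to $\sum_m \#\Sigma_m^{A_4}/q^m$ using Theorem~\ref{thm-S4-A4-f-even}, split the even $m$ into the three residue classes modulo $6$, and recognise each contribution as a constant times the geometric series $\sum_{l=1}^{e_K} q^{-4l}$. The only cosmetic differences are that you justify $\#\Aut(L/K)=1$ explicitly via the self-normalising point stabiliser (the paper instead invokes its earlier footnote defining $\Sigma^{\oneaut}$), and the paper packages the two cases of Theorem~\ref{thm-S4-A4-f-even} into the single expression $q^{-\lceil 2m/3\rceil -1}(q-1)\big(1+\mathbbm{1}_{6\mid m}\frac{1-2q}{3q}\big)$ before substituting $m=2k$ and grouping $k$ in blocks of three.
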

\begin{corollary}
    \label{cor-premass-of-S4-A4-f-odd}
    Suppose that $f(K/\Q_2)$ is odd. Then 
    $$
    \m(\Sigma^{S_4}) = \frac{q^3 + 1}{q^3 + q^2 + q + 1}\cdot (q^{-3} - q^{-4e_K - 3}),
    $$
    and 
    $$
    \m(\Sigma^{A_4}) = \frac{1}{3}\cdot \frac{1}{q^2 + 1}\cdot (q^{-2} - q^{-4e_K - 2}).
    $$
\end{corollary}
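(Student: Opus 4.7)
The plan is straightforward bookkeeping using Theorem~\ref{thm-S4-A4-f-odd}. The first step is to check that every $L \in \Sigma^{S_4}\cup \Sigma^{A_4}$ has trivial $K$-automorphism group. Writing $L = \widetilde{L}^H$ for $H$ the stabilizer of a root under $G = \Gal(\widetilde{L}/K)$, we have $\Aut(L/K) \cong N_G(H)/H$. For $G = S_4$ the stabilizer $H\cong S_3$ is self-normalizing, and for $G = A_4$ the stabilizer $H\cong C_3$ is a Sylow $3$-subgroup whose four conjugates force $|N_{A_4}(H)| = 3$. Hence $\#\Aut(L/K) = 1$ in both cases, so the mass collapses to
$$
\m(\Sigma^G) \;=\; \sum_m \frac{\#\Sigma_m^G}{q^m}.
$$

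For $\Sigma^{S_4}$, I would split the admissible $m$ into the two residue classes $m = 6k+2$ with $1\leq k \leq e_K$ and $m = 6k+4$ with $0\leq k \leq e_K-1$. Substituting $\#\Sigma_m^{S_4} = q^{\lfloor m/3\rfloor - 1}(q-1)$ produces
$$
(q-1)\left[q^{-3}\sum_{k=1}^{e_K} q^{-4k} \;+\; q^{-4}\sum_{k=0}^{e_K-1} q^{-4k}\right],
$$
a pair of geometric sums in $q^{-4}$ sharing the factor $1 - q^{-4e_K}$. After the algebraic simplification $(q-1)(1+q^3)/(q^4-1) = (q^3+1)/(q^3+q^2+q+1)$, the claimed expression drops out.

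The $A_4$ case is even easier: the admissible valuations are $m = 6k$ for $1\leq k \leq e_K$, and $\#\Sigma_m^{A_4} = \tfrac{1}{3}q^{2k-2}(q^2-1)$, so the mass reduces to $\tfrac{q^2-1}{3q^2}\sum_{k=1}^{e_K}q^{-4k}$; evaluating the geometric sum and factoring $q^4 - 1 = (q^2-1)(q^2+1)$ in the denominator yields the formula directly. No step is a serious obstacle; the only non-combinatorial ingredient is the automorphism count, and the rest is elementary summation.
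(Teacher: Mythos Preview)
Your proposal is correct and follows essentially the same route as the paper: both proofs feed the counts from Theorem~\ref{thm-S4-A4-f-odd} into the mass and evaluate the resulting geometric sums. The only cosmetic differences are that the paper substitutes $m=2k$ and groups $k$ by residue modulo $3$ (arriving at $\sum_{l=1}^{e_K}(q^{-2-4l}+q^{1-4l})$) whereas you split $m$ by residue modulo $6$ directly, and that the paper leaves the $A_4$ sum as an exercise while you write it out. Your explicit verification that $\#\Aut(L/K)=1$ via $N_G(H)/H$ is a welcome addition; the paper takes this for granted through its $\Sigma^{\oneaut}$ notation.
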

\begin{corollary}
    \label{cor-premass-C2xC2}
    We have 
    $$
    \m(\Sigma^{V_4}) = \frac{q-1}{6}\cdot \Big(q^{-4e_K-3}\cdot \frac{q^{4e_K} - 1}{q^4 - 1} \cdot (3q^3 + q^2 + q + 3) - 3 q^{-3e_K - 3}\cdot \frac{q^{3e_K} - 1}{q^3 - 1} \cdot (q^2 + 1)\Big).
    $$
\end{corollary}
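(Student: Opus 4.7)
The plan is to unpack the mass as a weighted count. Every $L \in \Sigma^{V_4}$ is Galois over $K$ with $\Aut(L/K) \cong V_4$ of order $4$, so the definition gives
$$\m(\Sigma^{V_4}) = \tfrac{1}{4}\sum_{m} \#\Sigma_m^{V_4}\, q^{-m},$$
where the sum ranges over even $m$ in $[6, 6e_K + 2]$. Substituting the formula of Theorem~\ref{thm-size-of-Sigma-V4-m} naturally splits the computation into a \emph{main piece} coming from the term $q^{-\lfloor m/6 \rfloor}(1 + \mathbbm{1}_{3\mid m}(q-2)/3)$ and a \emph{correction piece} coming from $-\mathbbm{1}_{m\leq 4e_K + 2}\cdot q^{-\lfloor (m-2)/4 \rfloor}$.

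For the main piece I would partition even $m \in [6, 6e_K + 2]$ by residue mod $6$: $m = 6j,\ 6j+2,\ 6j+4$. In each class $\lfloor m/6\rfloor = j$, so after combining with $q^{-(m+4)/2}$ the exponent becomes linear in $j$ and each subsum is a finite geometric series in $q^{-4}$. The $m \equiv 0 \pmod 6$ class picks up the extra factor $(q+1)/3$; the $m \equiv 4\pmod 6$ subsum only runs up to $j = e_K - 1$ (since $6e_K + 4 > 6e_K + 2$), and after the reindexing $j \mapsto j+1$ it rewrites as $\sum_{j=1}^{e_K}q^{-4j} - q^{-4}$, isolating a boundary term $-q^{-4}$. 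The three sums then share the common factor
$$\sum_{j=1}^{e_K}q^{-4j} = q^{-4e_K}\cdot \tfrac{q^{4e_K}-1}{q^4-1},$$
and the coefficients combine through the identity $\tfrac{q+1}{3q^2} + q^{-3} + 1 = \tfrac{3q^3 + q^2 + q + 3}{3q^3}$, producing precisely the first term in the corollary.

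The correction piece I handle in the same spirit, partitioning by residue mod $4$: $m = 4k$ (with $k = 2,\ldots,e_K$, the lower bound coming from $m \geq 6$) or $m = 4k + 2$ (with $k = 1,\ldots,e_K$). Each subsum is a finite geometric series in $q^{-3}$, and the off-by-one in the $m = 4k$ range, once the leading sign $-\mathbbm{1}_{m \leq 4e_K + 2}$ is applied, contributes a boundary term of opposite sign to the one from the main piece, so that the two $q^{-4}$ boundary contributions cancel exactly. The remaining sums share the common factor $\sum_{k=1}^{e_K}q^{-3k} = q^{-3e_K}\cdot \tfrac{q^{3e_K}-1}{q^3-1}$ multiplied by $q^{-1} + q^{-3} = (q^2+1)/q^3$, reproducing the second term in the corollary with the factor of $3$ out front (from $3/6$). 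Pulling out the overall $\tfrac{q-1}{6}$ then yields the stated formula.

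The calculations themselves are routine geometric-series manipulations; the only mild obstacle is careful bookkeeping of the summation ranges and the floor functions, ensuring in particular that the two off-by-one boundary terms appear with opposite signs so that the final expression collapses into the advertised closed form.
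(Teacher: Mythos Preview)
Your proposal is correct and follows essentially the same approach as the paper: both split the mass into the ``main'' and ``correction'' pieces coming from the two terms in Theorem~\ref{thm-size-of-Sigma-V4-m}, partition by residue modulo $6$ (respectively $4$), and reduce to the same geometric series in $q^{-4}$ and $q^{-3}$. The only cosmetic difference is that the paper includes the harmless $m=4$ term (where $\#\Sigma_4^{V_4}=0$), which makes all three mod-$6$ classes and both mod-$4$ classes run over the same index set $1,\ldots,e_K$ and so avoids the pair of cancelling $q^{-4}$ boundary terms you track explicitly.
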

\begin{corollary}
    \label{cor-premass-of-Sigma-C4}
    The mass 
    $
    \widetilde{m}(\Sigma^{C_4})
    $
    is the sum of the following nine quantities:
    \begin{enumerate}
        \item $$
        \frac{1}{2}\cdot \frac{(q-1)(1 - q^{-7\lfloor \frac{e_K}{2}\rfloor})}{q^7 - 1}.
        $$
        \item $$
        \frac{1}{2}\cdot q^{-3e_K - 3}(1 - q^{
                - \lfloor \frac{e_K}{2}\rfloor
            }).
        $$
        \item $$
        \mathbbm{1}_{d_{(-1)} < e_K}\cdot \frac{(q-1)(q^{-5\lfloor\frac{e_K}{2}\rfloor - e_K - 1} - q^{\frac{5}{2} d_{(-1)} - 6e_K - 1})}{q^5-1}.
        $$
        \item $$
        \frac{1}{2}\cdot \mathbbm{1}_{d_{(-1)} \geq 2}\cdot q^{-6e_K + \frac{5}{2}d_{(-1)} - 6}(q-2).     
    $$
    \item $$
    \frac{1}{2} \cdot \mathbbm{1}_{d_{(-1)}\geq 4}\cdot \frac{(q-1)(q^{\frac{5}{2}d_{(-1)} - 6e_K - 6} - q^{-6e_K-1})}{q^5-1}.
    $$
    \item $$\mathbbm{1}_{e_K\geq 2}\cdot \frac{1}{2}(q-1)q^{- 7\lfloor \frac{e_K}{2}\rfloor - 1}\Big(
        \frac{q(q^{7\lfloor \frac{e_K}{2}\rfloor - 7} - 1)(q^6 + q^4 + q^3 + q + 1)}{q^7-1} + 1 + \mathbbm{1}_{2\nmid e_K}(q^{-2} + q^{-3})    
    \Big).$$
    \item $$
    -\mathbbm{1}_{e_K \geq 2}\cdot \frac{1}{2}\cdot\frac{(q-1)(q+1)(q^{-7} - q^{-3e_K - 1})}{q^3-1}.
    $$
    \item $$
    - \frac{1}{2}q^{-3e_K - 2}(1 - q^{-\lfloor\frac{e_K}{2}\rfloor}).
    $$
    \item $$
        \begin{cases}
            q^{-6e_K - 3} \quad\text{if $-1 \in K^{\times 2}$},
            \\
            \frac{1}{2}q^{-6e_K-3}\quad\text{if $K(\sqrt{-1})/K$ is quadratic and totally ramified},
            \\
            0 \quad\text{otherwise}. 
        \end{cases}
    $$
    \end{enumerate}
\end{corollary}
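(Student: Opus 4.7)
The plan is to unpack the definition of mass and evaluate, term by term, each geometric sub-sum produced by Theorem~\ref{thm-size-Sigma-C4-m}. Since every $L \in \Sigma^{C_4}$ is Galois over $K$ with $\Aut(L/K) \cong C_4$, the mass simplifies to
$$
\m(\Sigma^{C_4}) = \frac{1}{4}\Bigg(\sum_{\substack{8 \leq m \leq 8e_K \\ m \text{ even}}} \frac{\#\Sigma_m^{C_4}}{q^m} + \frac{\#\Sigma_{8e_K+3}^{C_4}}{q^{8e_K + 3}}\Bigg),
$$
and I would substitute the four-part formula of Theorem~\ref{thm-size-Sigma-C4-m} into the first sum and evaluate each part separately as a finite geometric series. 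The closed forms then match term by term against the nine listed quantities.

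The easy pieces come first. Part~(1) of the theorem contributes only when $m \equiv 8 \pmod{10}$, so reparameterising $m = 10k + 8$ produces a geometric series of ratio $q^{-7}$ matching quantity~(1). Part~(2) is a single geometric series over even $m \in [4e_K + 4, 5e_K + 2]$ giving quantity~(2), and the isolated contribution from $m = 8e_K + 3$ is computed directly from the three cases of the theorem and matches quantity~(9).

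Part~(3) of the theorem requires a case split. Writing $N = 8e_K - 3d_{(-1)}$, the factor $(1 + \mathbbm{1}_{m \leq N})(q - 1 - \mathbbm{1}_{m = N+6})$ partitions the summation range (even $m$ with $m \equiv 2e_K \pmod 3$ in $[5e_K + 3, 8e_K]$, i.e.\ $m$ in an arithmetic progression of step $6$) into three subranges: $m \leq N$ with factor $2(q-1)$, the single value $m = N+6$ with factor $q-2$, and $m \in \{N+12, N+18, \ldots, 8e_K\}$ with factor $q-1$. These reproduce quantities~(3), (4), and (5); the indicators $\mathbbm{1}_{d_{(-1)} < e_K}$, $\mathbbm{1}_{d_{(-1)} \geq 2}$, $\mathbbm{1}_{d_{(-1)} \geq 4}$ mark precisely when each subrange is non-empty.

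The main obstacle is part~(4), whose summand $q^{\lfloor 3m/10\rfloor - 1} - q^{\max\{\lceil(m+2)/4\rceil, m/2 - e_K\} - 2}$ behaves non-multiplicatively in $m$. For the positive term, I would partition even $m$ by residue modulo $10$: a short calculation shows that each complete block $\{10k, 10k+2, 10k+4, 10k+6, 10k+8\}$ contributes $q^{-7k - 7}(q^6 + q^4 + q^3 + q + 1)$, matching the factor appearing in quantity~(6), while the partial block at the upper endpoint $m = 5e_K$ (whose shape depends on the parity of $e_K$) produces the $1 + \mathbbm{1}_{2 \nmid e_K}(q^{-2} + q^{-3})$ correction. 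For the subtracted term, the max equals $\lceil(m+2)/4\rceil$ for $m \leq 4e_K$ and equals $m/2 - e_K$ for $m \geq 4e_K + 2$, so the sum splits: the lower range $[10, 4e_K]$ groups into pairs $\{4j - 2, 4j\}$ each contributing $(q+1) q^{-3j - 1}$ and summing to quantity~(7), while the upper range $[4e_K + 2, 5e_K]$ is a simple geometric series of ratio $q^{-1}$ giving quantity~(8). Once this bookkeeping is organised, each sub-sum is elementary, and matching the closed forms completes the proof.
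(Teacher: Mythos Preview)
Your proposal is correct and is essentially the paper's own argument: both substitute Theorem~\ref{thm-size-Sigma-C4-m} into the mass, treat the four parts and the $m=8e_K+3$ term separately, and evaluate each as a finite geometric series with the same reparametrisations ($m=10k+8$ for part~(1), the three-range split of part~(3) by the indicators, and the mod-$10$ block decomposition together with the $m\le 4e_K$ versus $m\ge 4e_K+2$ split for the two halves of part~(4)). The only cosmetic difference is that the paper carries out the part-(4) computations after the substitution $k=m/2$, grouping $k$ modulo $5$ rather than $m$ modulo $10$, which is of course equivalent.
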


\begin{corollary}
    \label{cor-premass-of-D4}
    We have the following formula for $\m(\Sigma^{D_4})$, which is made completely explicit by Corollaries~\ref{cor-premass-C2xC2} and \ref{cor-premass-of-Sigma-C4}. 
    $$
    \m(\Sigma^{D_4}) = \frac{1}{q^2+q+1} \cdot (q^{-3e_K - 3} + q^{-3e_K - 1} + q^{-2}) - \m(\Sigma^{C_4}) - 3\m(\Sigma^{V_4}).
    $$
\end{corollary}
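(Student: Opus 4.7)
The plan is to deduce the corollary from Theorem~\ref{thm-size-of-Sigma_m_D4} by weighting each term by the appropriate automorphism count and then evaluating the resulting geometric series. Every $L \in \Sigma^{D_4}$ satisfies $\#\Aut(L/K) = 2$: if $\widetilde{L}/K$ has Galois group $D_4$ and $L$ corresponds to a non-normal index-$4$ subgroup $H$ of order $2$, then a direct calculation gives $N_{D_4}(H)/H \cong C_2$. Meanwhile every $L \in \Sigma^{C_4} \cup \Sigma^{V_4}$ satisfies $\#\Aut(L/K) = 4$, since these extensions are Galois. Hence
$$
\widetilde{m}(\Sigma^{D_4}) = \tfrac{1}{2}\sum_m \#\Sigma_m^{D_4}\, q^{-m}, \qquad \widetilde{m}(\Sigma^{G}) = \tfrac{1}{4}\sum_m \#\Sigma_m^{G}\, q^{-m} \text{ for } G \in \{C_4,V_4\}.
$$

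Substituting Theorem~\ref{thm-size-of-Sigma_m_D4} into the first identity, the $-\tfrac{1}{2}\#\Sigma_m^{C_4}$ and $-\tfrac{3}{2}\#\Sigma_m^{V_4}$ terms contribute exactly $-\widetilde{m}(\Sigma^{C_4})$ and $-3\widetilde{m}(\Sigma^{V_4})$ respectively. So it remains to show
$$
\tfrac{1}{2}\sum_m A_m\, q^{-m} = \frac{q^{-3e_K-3} + q^{-3e_K-1} + q^{-2}}{q^2+q+1},
$$
where $A_m$ denotes the piece of $\#\Sigma_m^{D_4}$ in Theorem~\ref{thm-size-of-Sigma_m_D4} that does not involve $\Sigma^{C_4}$ or $\Sigma^{V_4}$.

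I would split this sum into the three cases of Theorem~\ref{thm-size-of-Sigma_m_D4}: even $m$ with $6 \leq m \leq 8e_K + 2$, the residue class $m \equiv 1 \pmod{4}$ with $4e_K + 5 \leq m \leq 8e_K + 1$, and the single term $m = 8e_K + 3$. The last two cases are immediate finite geometric sums. For the first case, after substituting $m = 2n$, I would further subdivide according to the indicator functions $\mathbbm{1}_{m \geq 4e_K + 4}$ and $\mathbbm{1}_{m \leq 8e_K}$, and according to the branches of $\min\{0, e_K + 1 - \lceil m/4\rceil\}$ and $\min\{\lfloor (m-2)/4\rfloor, e_K\}$, which switch behaviour near $m \approx 4e_K$. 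Each subregion produces a finite geometric series in $q^{-1}$ that can be evaluated in closed form.

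The main obstacle is the bookkeeping in the first case: several overlapping ranges each contribute a geometric series, and combining them into the compact right-hand side requires careful manipulation. I expect the cleanest approach is to express each subsum over the common denominator $(q-1)(q^2+q+1)$, using $q^3-1 = (q-1)(q^2+q+1)$, and then to verify that the numerators collapse to $q^{-3e_K-3} + q^{-3e_K-1} + q^{-2}$. A sanity check at small $e_K$ (say $e_K = 1$) would make the simplification tractable and help pinpoint any sign errors.
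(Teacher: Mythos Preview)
Your proposal is correct and takes essentially the same approach as the paper. The quantity you call $A_m$ is precisely $\tfrac{1}{2}\#\Tow_m$ (compare Theorem~\ref{thm-size-of-Sigma_m_D4} with Lemma~\ref{lem-size-of-Xm}), so the geometric-series computation you outline is exactly the content of Lemma~\ref{lem-"mass"-of-towers}; the paper simply packages that computation as a separate lemma and then invokes Corollary~\ref{cor-Xm-in-terms-of-field-sets} directly rather than going through Theorem~\ref{thm-size-of-Sigma_m_D4}.
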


\subsection{Correctness of results}
Using MAGMA \cite{magma} and the LMFDB \cite{lmfdb}, we have verified Theorems~\ref{thm-S4-A4-f-even}-\ref{thm-size-of-Sigma_m_D4} and Corollaries~\ref{cor-premass-of-S4-A4-f-even}-\ref{cor-premass-of-D4} for all extensions $K/\Q_2$ of degree at most $3$. Whenever $e_K\leq 10$ and $f_K \leq 10$, we have also checked numerically the deduction of Corollaries~\ref{cor-premass-of-S4-A4-f-even}-\ref{cor-premass-of-D4} from Theorems~\ref{thm-S4-A4-f-even}-\ref{thm-size-of-Sigma_m_D4}. The proofs involve a lot of messy computation, so we are reassured to have checked our work numerically. Our code is available at \url{https://github.com/Sebastian-Monnet/Mass-Formula-Checks}.

\subsection{Index of notation}
We fix the following notation. 
\begin{enumerate}
    \item For a $2$-adic field $F$, write:
    \begin{enumerate}
        \item $\co_F$ for its ring of integers.
        \item $\pi_F$ for a uniformiser of $\co_F$. 
        \item $\p_F$ for the maximal ideal of $\co_F$. 
        \item $\F_F$ for the residue field $\co_F/\p_F$.
        \item $q_F$ for the cardinality of $\F_F$.
        \item $e_F$ for the absolute ramification index $e(F/\Q_2)$.
        \item $f_F$ for the inertia degree $f(F/\Q_2)$.
        \item $v_F$ for the unique $2$-adic valuation on $F$, normalised such that $v_F(\pi_F) = 1$.
        \item $U_F^{(i)}$ for the group $1 + \p_F^i$ in the unit filtration, where $i > 0$. 
        \item $U_F^{(0)}$ for the unit group $\co_F^\times$. 
    \end{enumerate}
    \item Given an extension $E/F$ of $2$-adic fields, write: 
    \begin{enumerate}
        \item $d_{E/F}$ for its discriminant ideal.
        \item $e(E/F)$ for its ramification index. 
        \item $f(E/F)$ for its inertia degree. 
    \end{enumerate}
    \item $K$: A fixed $2$-adic field.
    \item $q$: Shorthand for $q_K$. 
    \item $\Sigma$: The set of totally ramified quartic extensions $L/K$.
    \item $\widetilde{L}$: For an extension $L/K$ of $K$, write $\widetilde{L}$ for the normal closure of $L$ over $K$. 
    \item For a group $G \in \{S_4, A_4, D_4, V_4, C_4\}$ and positive integer $m$, write:
    \begin{enumerate}
        \item $\Sigma^G := \{L \in \Sigma : \Gal(\widetilde{L}/K) \cong G\}$.
        \item $\Sigma_m := \{L \in \Sigma : v_K(d_{L/K}) = m\}$.
        \item $\Sigma_m^G := \Sigma_m \cap \Sigma^G$. 
    \end{enumerate}
    \item $d_{(-1)}$: The discriminant valuation $v_K(d_{K(\sqrt{-1})/K})$.
    \item $\m(S)$: For $S \subseteq \Sigma$, the \emph{mass} of $S$ is $$\m(S) = \sum_{L \in S} \frac{(\#\Aut(L/K))^{-1}}{q^{v_K(d_{L/K})}}.$$ 
    \item $\Sigma^{\oneaut}$ and $\Sigma^{\oneaut}_m$: The sets $\Sigma^{A_4}\cup \Sigma^{S_4}$ and $\Sigma^{\oneaut}\cap \Sigma_m$ respectively. 
    \item $P$: The set of monic, quartic Eisenstein polynomials in $K[X]$. 
    \item $L_f$: For $f \in P$, write $L_f$ for the field extension $K[X]/(f)$ of $K$.
    \item $P_m, P^G$ and $P^G_m$: For $G \in \{S_4, A_4, D_4, V_4, C_4\}$, define $P_m, P^G$, and $P^G_m$ to be the sets of $f \in L$ such that $L_f$ is in $\Sigma_m, \Sigma^G$, and $\Sigma^G_m$ respectively.
    \item $P^{\oneaut}$ and $P_m^{\oneaut}$: The sets of $f \in P$ such that $L_f \in \Sigma^{\oneaut}$ and $L_f \in \Sigma_m^{\oneaut}$, respectively.
    \item $\mu$: Haar measure on $\co_K^4$, normalised so that $\mu(\co_K^4) = 1$. 
    \item $v_\pi$: Given an extension $L/K$, such that $\pi$ is a uniformiser of $L$, we write $v_\pi$ for the $2$-adic valuation on $L$, normalised so that $v_\pi(\pi) = 1$.
    \item $T_m$: For even integers $4 \leq m \leq 6e_K + 2$, this is the set of $a_0 + a_1 X + a_2X^2 + a_3X^3 + X^4$ in $P$ such that 
    $$
    \begin{cases}
        v_K(a_1)= \frac{m}{4}, \quad v_K(a_2)\geq \frac{m}{6},\quad v_K(a_3) \geq \frac{m}{4}, \quad \text{if $m\equiv 0 \pmod{4}$},
        \\
        v_K(a_1) \geq \frac{m+2}{4},\quad v_K(a_2)\geq \frac{m}{6},\quad v_K(a_3) = \frac{m-2}{4},\quad \text{if $m \equiv 2\pmod{4}$}. 
    \end{cases}
    $$
    \item $\mathcal{R}$: System of representatives for $(\co_K/\p_K)^\times$.
    \item $g_f^{(u)}$: When $m$ is a multiple of $6$ with $6 \leq m \leq 6e_K$, $u \in \mathcal{R}$, and $f \in P_m$, we define $g_f^{(u)}(X) = (X + \pi + u\pi^{\frac{m}{3}})$. 
    \item $b_i^{(u)}$: The $X^i$ coefficient of $g_f^{(u)}$. 
    \item $\mu_3 \subseteq K$: Shorthand for ``$K$ contains three distinct cube roots of unity''. 
    \item Let $F$ be a $2$-adic field. Write:
    \begin{enumerate}
    \item $\Sigma_{\quadr/F}$ for the set of isomorphism classes of quadratic extensions of $F$. 
    \item $\Sigma_{\quadr/F,m}$ (respectively $\Sigma_{\quadr/F,\leq m}$) for the set of $E \in \Sigma_{\quadr/F}$ such that we have $v_F(d_{E/F}) = m$ (respectively $v_F(d_{E/F}) \leq m$).
    \end{enumerate}
    \item Let $E/K$ be a quadratic extension. Write: 
    \begin{enumerate}
    \item $\Sigma_{\quadr/E}^{G/K}$ for the set of $L \in \Sigma_{\quadr/E}$ such that the Galois closure group of $L/K$ is isomorphic to $G$. 
    \item $\Sigma_{\quadr/E,m_2}^{G/K}$ (respectively $\Sigma_{\quadr/E, \leq m_2}^{G/K}$) for the set of $L \in \Sigma_{\quadr/E}^{G/K}$ such that $v_E(d_{L/E}) = m_2$ (respectively $v_E(d_{L/E}) \leq m_2$). 
    \end{enumerate}
    \item $\Sigma_{\quadr/K,m_1}^{\uparrow C_4}$ (respectively $\Sigma_{\quadr/K,\leq m_1}^{\uparrow C_4}$): Set of $C_4$-extendable quadratic extensions $E/K$ such that $v_K(d_{E/K}) = m_1$ (respectively $v_K(d_{E/K}) \leq m_1$).   
    \item $N_\ext(m_1)$: Function explicitly defined in Definition~\ref{defi-N-ext}.
    \item $N^{C_4}(m_1,m_2)$: Function explicitly defined in Definition~\ref{defi-N-C4}.
    \item $S_{F/K, t}$: For an extension $F/K$ and an integer $1 \leq t \leq v_F(2)$, we define 
    $$
    S_{F/K,t} = \{u \in K^\times / K^{\times 2} : u/x^2 \equiv 1 \pmod{\p_F^{2t}} \text{ for some $x \in F^\times$}\}.
    $$
    For $t=0$, define $$S_{F/K,0} = \{u \in K^\times / K^{\times 2} : v_F(u) \text{ is even}\}.$$
    \item Let $\mathcal{A}$ be a subgroup of $K^\times / K^{\times 2}$. Then write: 
    \begin{enumerate}
    \item $K(\sqrt{\mathcal{A}}) = K(\{\sqrt{\alpha} : [\alpha] \in \mathcal{A}\})$. 
    \item $\co_K^\mathcal{A} = \co_K^\times \cap \Nm K(\sqrt{\mathcal{A}})$. 
    \item $S_{K/K,t}^\mathcal{A} = S_{K/K,t} \cap \big(\Nm K(\sqrt{\mathcal{A}})/K^{\times 2}\big)$, where $0 \leq t \leq e_K$.
    \item $\Sigma_{\quadr/K, \leq m_1}^\mathcal{A}$ for the set of $E \in \Sigma_{\quadr/K, \leq m_1}$ with $\mathcal{A}\subseteq \Nm E$. 
    \item $(\co_K/\p_K^{2t})^\mathcal{A}$ for the image of the map $\co_K^\mathcal{A} \to (\co_K/\p_K^{2t})^{\times}$.
    \item $\mathcal{A}_t = \mathcal{A} \cap \big(U_K^{(2t)}K^{\times 2} / K^{\times 2}\big)$, where $0 \leq t \leq e_K$. 
    \end{enumerate}
    \item $\Sigma_{\quadr/K}^{\hookrightarrow L}$: For $G\in \{V_4,C_4,D_4\}$ and $L \in \Sigma^G$, this is the set of $E \in \Sigma_{\quadr/K}$ such that there exists a $K$-morphism $E \hookrightarrow L$. 
    \item $\Twist_K(L/E)$: For $G \in \{V_4, C_4, D_4\}$, $L \in \Sigma^G$, and $E \in \Sigma_{\quadr/K}^{\hookrightarrow L}$, this is the set of $L' \in \Sigma_{\quadr/E}$ such that there is a $K$-isomorphism $L\to L'$, where $L$ is viewed as an extension of $E$ via the unique embedding $E\hookrightarrow L$. 
    \item $\Tow_m$: The set of pairs $(E,L)$, where $L/E/K$ is a tower of totally ramified quadratic extensions and $v_K(d_{L/K}) = m$.
    \item $\Phi_m$: The forgetful map $\Tow_m \to \Sigma_m$, taking $(E,L)$ to the extension $L$ of $K$. 
\end{enumerate}

\subsection{Acknowledgements}
I am grateful to my supervisor, Rachel Newton, for her support and enthusiasm throughout the project. Thanks also to Melanie Matchett Wood and Takehiko Yasuda for helpful suggestions, especially in the Galois cases, and to John Voight for suggesting I use the LMFDB to check my results. I am particularly indebted to Lee Berry, Ross Paterson, and Tim Santens for some very helpful conversations. 

This work was supported by the Engineering and Physical Sciences Research Council [EP/S021590/1], via the EPSRC Centre for Doctoral Training in Geometry and Number Theory (The London School of Geometry and Number Theory), University College London.

\section{The cases $G = S_4$ and $G = A_4$}
\label{sec-G=1}

% notation: P set of quartic Eisenstein polynomials. L_f ext associated to f. P^G set of f with Galois closure group G
Throughout this paper, all Eisenstein polynomials are taken to be monic. Write $P$ for the set of quartic Eisenstein polynomials in $K[X]$. For $f \in P$, let $L_f$ be the field $K[X]/(f)$, which is a totally ramified quartic extension of $K$. Given a finite group $G$, let $P^G$ be the set of $f \in P$ such that $L_f/K$ has Galois closure group isomorphic to $G$. For any integer $m$, let $P_m$ be the set of $f \in P$ such that $v_K(d_{L_f/K}) = m$, or equivalently such that $v_K(\disc(f)) = m$. For each $G$, write $P^G_m$ for the intersection $P^G \cap P_m$. Write $P^{\oneaut}$ and $P_m^{\oneaut}$ as shorthand\footnote{The $\oneaut$ refers to the fact that $\#\Aut(L/K) = 1$ if and only if $L \in \Sigma^{S_4}\cup \Sigma^{A_4}$.} for $P^{S_4}\cup P^{A_4}$ and $P_m^{S_4} \cup P_m^{A_4}$ respectively. Similarly, write $\Sigma^{\oneaut}$ and $\Sigma_m^{\oneaut}$ for $\Sigma^{S_4}\cup \Sigma^{A_4}$ and $\Sigma_m^{S_4} \cup \Sigma_m^{A_4}$ respectively.

The quartic Eisenstein polynomials in $K[X]$ embed naturally into $\co_K^4$ via 
$$
X^4 + a_3X^3 + a_2X^2 + a_1X + a_0 \mapsto (a_3,a_2,a_1,a_0).
$$
Write $\mu$ for the Haar measure on $\co_K^4$, normalised such that $\mu(\co_K^4) = 1$. We will apply this Haar measure to sets of Eisenstein polynomials, viewed as subsets of $\co_K^4$ via the embedding described above. 

\begin{lemma}
    \label{lem-num-exts-density-polys}
    Let $G \in \{S_4, A_4\}$ let $m$ be a positive integer. We have 
    $$
    \#\Sigma_m^G = \frac{q^{m+2}}{q-1}\cdot \mu(P^G_m). 
    $$
\end{lemma}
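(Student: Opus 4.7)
The plan is to decompose $P_m^G$ according to the isomorphism class of $L_f$ and compute the Haar measure of each piece via a Serre-style change of variables. By Krasner's lemma, if $f \in P_m$ and $g \in \co_K^4$ is close enough to $f$, then $L_g \cong L_f$ and $v_K(\disc(g)) = m$; hence for each $L \in \Sigma_m^G$ the set $P(L) := \{f \in P : L_f \cong L\}$ is open in $\co_K^4$, and $P_m^G = \bigsqcup_{L \in \Sigma_m^G} P(L)$ as a disjoint union. It therefore suffices to show $\mu(P(L)) = (q-1)/q^{m+2}$ for each $L \in \Sigma_m^G$.

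To compute $\mu(P(L))$, I would use the characteristic-polynomial map $\Phi \colon \co_L \to \co_K^4$, with $\co_L$ carrying Haar measure $\mu_L$ normalised so $\mu_L(\co_L) = 1$ (which agrees with $\mu$ under the identification $\co_L \cong \co_K^4$ via the basis $1,\pi_L,\pi_L^2,\pi_L^3$). Restricted to the open set $\pi_L \cdot \co_L^\times$ of uniformisers, $\Phi$ surjects onto $P(L)$ with fibres of size $\#\Aut(L/K)$: each $f \in P(L)$ equals the minimal polynomial of any of its roots in $L$, and $\Aut(L/K) = N_G(H)/H$ (with $H$ a point stabiliser in the Galois closure group $G$) acts transitively on these roots. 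A Vandermonde computation over $\widetilde{L}$ shows that the Jacobian of $\Phi$ at a uniformiser $\alpha$ has $K$-valuation equal to $v_K(\disc(1,\alpha,\alpha^2,\alpha^3)) = v_K(d_{L/K}) = m$, the last equality holding because $1,\alpha,\alpha^2,\alpha^3$ is an $\co_K$-basis of $\co_L$ in the totally ramified case. Combined with $\mu_L(\pi_L \cdot \co_L^\times) = (q-1)/q^2$ (using $\#\F_L = q$), the change-of-variables formula yields
$$
\mu(P(L)) = \frac{q-1}{q^{m+2}\cdot \#\Aut(L/K)}.
$$

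Finally, for $L \in \Sigma^{S_4} \cup \Sigma^{A_4}$ we have $\#\Aut(L/K) = 1$, since both $N_{S_4}(S_3)/S_3$ and $N_{A_4}(A_3)/A_3$ are trivial (this is the content of the footnote defining $\oneaut$). Summing the previous display over $L \in \Sigma_m^G$ gives $\mu(P_m^G) = \#\Sigma_m^G \cdot (q-1)/q^{m+2}$, and rearranging yields the lemma. The main obstacle is the Jacobian computation; it is routine once the Haar measures on $\co_L$ and $\co_K^4$ are matched via the basis $1, \pi_L, \pi_L^2, \pi_L^3$, but some care is needed with the bookkeeping of the two Vandermonde factors (one from the map to conjugates, one from the elementary symmetric polynomials) and the normalisations of $|\cdot|_K$ versus $|\cdot|_L$.
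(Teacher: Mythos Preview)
Your argument is correct and is essentially the same as the paper's: the paper simply cites \cite[Equation~13]{serre}, and what you have written is precisely a reconstruction of that equation (Serre's computation of the measure of Eisenstein polynomials cutting out a fixed totally ramified extension, via the Jacobian of the characteristic-polynomial map). The only difference is that you spell out Serre's change-of-variables computation rather than citing it as a black box; your Jacobian bookkeeping (two Vandermonde factors each contributing valuation $m/2$, and $\mu_L(\pi_L\co_L^\times) = (q-1)/q^2$ since $q_L = q$ for totally ramified $L/K$) is correct, as is the observation that $\#\Aut(L/K) = 1$ for $G \in \{S_4, A_4\}$.
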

\begin{proof}
    This follows easily from \cite[Equation~13]{serre}.
\end{proof}

\subsection{Congruence conditions for $P^{\oneaut}_m$}

In \cite[Theorem~2.9]{lbekkouri}, Lbekkouri gives congruence conditions for a quartic Eisenstein polynomial $f(X) \in \Q_2[X]$ to define a Galois extension. We extend his methods to Eisenstein polynomials over arbitrary $2$-adic base fields, to obtain congruence conditions for the set $P_m^{\oneaut}$, which we will state in Lemma~\ref{lem-Pm1-eq-Tm-if-m-not-multiple-of-3} and Corollary~\ref{cor-cong-cond-m-multiple-of-6}.
\begin{remark}
    It should be noted that Lbekkouri's statement of \cite[Theorem~2.9]{lbekkouri} is incorrect. In items (2i) and (2ii), both instances of ``$a_0 + a_2$'' should read ``$a_0 + 2$''. This typo is first introduced in the statement of Proposition~2.8 and is carried over into Theorem~2.9. 
\end{remark}
\begin{remark}
    We tried to extend the method to find congruence conditions for $L_f/K$ to be Galois. This was a partial success, and we came up with an algorithm that can compute such congruence conditions given a choice of base field $K$. Our results are not organised enough to be publishable, but the interested reader should get in touch! 
\end{remark}

For $f \in P$, we will always denote the coefficients of $f$ by $f(X) = X^4 + a_3X^3 + a_2X^2 + a_1X + a_0$. Whenever we refer to the coefficients $a_i$, the choice of $f$ will be clear. Let $\pi_f = X + (f)$ be the natural uniformiser of $L_f$. We will always drop the subscript and denote $\pi_f$ by $\pi$, since our choice of $f$ will be clear. Write $v_\pi$ for the $2$-adic valuation on $L_f$, normalised such that $v_\pi(\pi) = 1$. Fix an algebraic closure $\overline{K}$ of $L_f$, and let 
$$
\sigma_i : L_f \to \overline{K},\quad i=1,2,3,4
$$
be the four embeddings of $L_f$, where $\sigma_1$ is the identity embedding. 
\begin{lemma}
    \label{lem-1-aut-implies-eq-vals}
    For all $f \in P^{\oneaut}$, the three valuations 
    $$
    v_K(\sigma_i(\pi) - \pi),\quad i=2,3,4
    $$
    are equal. 
\end{lemma}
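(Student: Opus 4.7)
The plan is purely group-theoretic: extend $v_K$ to the algebraic closure $\overline{K}$ and exploit the transitivity of a point stabiliser inside the Galois closure group, which is large precisely in the two cases $G = S_4$ and $G = A_4$.

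First I would recall that $v_K$ has a unique extension to $\overline{K}$ (still denoted $v_K$), and that this extension is invariant under every element of $\Gal(\overline{K}/K)$, in particular under every element of $G := \Gal(\widetilde{L_f}/K)$. The four embeddings $\sigma_1,\sigma_2,\sigma_3,\sigma_4$ are the restrictions to $L_f$ of elements of $G$, and $G$ acts transitively and faithfully on the four roots $\sigma_1(\pi),\ldots,\sigma_4(\pi)$ of $f$.

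Next I would analyse the point stabiliser $H := \Stab_G(\pi) = \Gal(\widetilde{L_f}/L_f)$, whose order is $|G|/4$. If $G \cong S_4$ then $|H| = 6$ and $H$ is the copy of $S_3$ permuting $\{\sigma_2(\pi),\sigma_3(\pi),\sigma_4(\pi)\}$; if $G \cong A_4$ then $|H| = 3$ and $H$ is the copy of $A_3$ cyclically permuting these same three roots. In either case, $H$ acts transitively on $\{\sigma_2(\pi),\sigma_3(\pi),\sigma_4(\pi)\}$. This transitivity is the whole reason for restricting to the $\oneaut$ case: for $G \in \{V_4, C_4, D_4\}$ the point stabiliser is too small to move all three non-identity roots to one another.

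Finally I would finish by the one-line computation: given any $i,j \in \{2,3,4\}$, pick $\tau \in H$ with $\tau(\sigma_i(\pi)) = \sigma_j(\pi)$. Then $\tau(\pi) = \pi$, so
$$
v_K(\sigma_j(\pi) - \pi) = v_K\bigl(\tau(\sigma_i(\pi)) - \tau(\pi)\bigr) = v_K\bigl(\tau(\sigma_i(\pi) - \pi)\bigr) = v_K(\sigma_i(\pi) - \pi),
$$
where the last equality is $G$-invariance of $v_K$. Hence all three valuations coincide. There is no real obstacle here; the only points to be careful about are (i) that $v_K$ is understood as its unique extension to $\overline{K}$, which is fractional but still $G$-invariant, and (ii) that the identification of $H$ with $S_3$ or $A_3$ acting on the three other roots is read off correctly from the permutation representation of $G$ on the roots of $f$.
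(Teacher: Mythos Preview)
Your proof is correct and takes a genuinely different route from the paper's. The paper argues by contrapositive: if the three valuations $v_K(\sigma_i(\pi)-\pi)$ are not all equal, then after reordering one of them, say $v_K(\sigma_2(\pi)-\pi)$, differs from the other two; since the minimal polynomial of $\sigma_2(\pi)-\pi$ over $L_f$ divides the cubic $X^{-1}f(X+\pi)$ and has all roots of equal valuation, it must be linear, forcing $\sigma_2(\pi)\in L_f$ and hence $\#\Aut(L_f/K)\geq 2$. Your argument is instead a direct one via the Galois-invariance of the extended valuation and the $2$-transitivity of $S_4$ and $A_4$ on four letters (equivalently, transitivity of the point stabiliser on the remaining three roots). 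Your approach is cleaner and more conceptual, and it makes transparent exactly why the lemma singles out $S_4$ and $A_4$. The paper's Newton-polygon argument, on the other hand, dovetails with the polynomial $g(X)=X^{-1}f(X+\pi)$ that is analysed in detail immediately afterwards, so it is better integrated with the surrounding computations; it also avoids invoking the Galois closure explicitly. Either approach is perfectly adequate here.
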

\begin{proof}
    Suppose that $f \in P$ and the quantities $v_K(\sigma_i(\pi) - \pi)$ are not all equal for $i=2,3,4$. Reordering the $\sigma_i$ if necessary, we have 
    $$
    v_K(\sigma_2(\pi) - \pi) \neq v_K(\sigma_i(\pi) - \pi)
    $$
    for $i = 3,4$. The cubic polynomial $X^{-1}f(X+\pi) \in L_f[X]$ has roots
    $$
    \sigma_i(\pi) - \pi,\quad i = 2,3,4.
    $$
    The minimal polynomial of $\sigma_2(\pi) - \pi$ over $L_f$ therefore divides $X^{-1}f(X+\pi)$, and all its roots have the same valuation, so 
    $$
    \sigma_2(\pi) - \pi \in L_f,
    $$
    and therefore $f$ has at least two roots in $L_f$, so $f \not \in P^{\oneaut}$.
\end{proof}

For each even integer $4 \leq m \leq 6e_K + 2$, define $T_m$ to be the set of $f \in P$ such that 
$$
\begin{cases}
    v_K(a_1)= \frac{m}{4}, \quad v_K(a_2)\geq \frac{m}{6},\quad v_K(a_3) \geq \frac{m}{4}, \quad \text{if $m\equiv 0 \pmod{4}$},
    \\
    v_K(a_1) \geq \frac{m+2}{4},\quad v_K(a_2)\geq \frac{m}{6},\quad v_K(a_3) = \frac{m-2}{4},\quad \text{if $m \equiv 2\pmod{4}$}. 
\end{cases}
$$
\begin{lemma}
    \label{lem-T-m-iff-val-m-by-12}
     The following two statements are true:
    \begin{enumerate} 
    \item  Let $m$ be an even integer with $4 \leq m \leq 6e_K + 2$ and let $f \in P_m$. Then $f \in T_m$ if and only if 
    $$
    v_K(\sigma_i(\pi) - \pi) = \frac{m}{12}
    $$
    for $i=2,3,4$.
    \item Let $m$ be a positive integer. If $P_m^{\oneaut}$ is nonempty then $m$ is even, $4 \leq m \leq 6e_K + 2$, and $P_m^{\oneaut}\subseteq T_m$. 
    \end{enumerate}
\end{lemma}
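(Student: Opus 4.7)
My plan is to use Newton polygons of the resolvent cubic
$$g(Y) = Y^{-1}f(Y + \pi) = Y^3 + (4\pi + a_3)Y^2 + (6\pi^2 + 3a_3\pi + a_2)Y + f'(\pi) \in L_f[Y],$$
whose roots in $\overline{K}$ are exactly $\sigma_i(\pi) - \pi$ for $i = 2, 3, 4$. The product of these roots equals $-f'(\pi)$, and since $\mathfrak{d}_{L_f/K} = (f'(\pi))$ we have $v_K(f'(\pi)) = m/4$. Hence the three roots share a common $v_K$-valuation if and only if each equals $m/12$, which is in turn equivalent to the $v_\pi$-Newton polygon of $g$ being the single segment from $(0, m)$ to $(3, 0)$.

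For part (1), I would compute the $v_\pi$-valuations of all summands in each coefficient of $g$. The key observation is that the $v_\pi$-valuations of the summands in each coefficient lie in pairwise distinct residue classes modulo $4$ (since $v_\pi$ of a power of $2$, a power of $\pi$, and an element of $K^\times$ fall into separate classes), so each coefficient's valuation equals the unique minimum of its summands. The single-segment requirement then breaks into three conditions: $v_\pi(f'(\pi)) = m$, $v_\pi(\text{linear coeff}) \geq 2m/3$, and $v_\pi(\text{quadratic coeff}) \geq m/3$. A case split on $m \bmod 4$ turns the first into either $v_K(a_1) = m/4$ with $v_K(a_3) \geq m/4$ (when $m \equiv 0$), or $v_K(a_3) = (m-2)/4$ with $v_K(a_1) \geq (m+2)/4$ (when $m \equiv 2$), recovering exactly the $a_1, a_3$ conditions of $T_m$. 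The linear-coefficient inequality contributes $v_K(a_2) \geq m/6$, and the quadratic inequality is implied in the range $m \leq 6e_K + 2$. This gives part (1).

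For part (2), given $f \in P_m^{\oneaut}$, Lemma \ref{lem-1-aut-implies-eq-vals} together with the product formula yields the single-segment condition, so applying part (1) gives $f \in T_m$ once we know $m$ is even with $4 \leq m \leq 6e_K + 2$. The upper bound $m \leq 6e_K + 2$ follows because the summand $6\pi^2$ in the linear coefficient has $v_\pi = 4e_K + 2$, forcing $4e_K + 2 \geq 2m/3$ and hence $m \leq 6e_K + 3$; evenness refines this. To rule out $m \equiv 1 \pmod 4$: the constant-coefficient equality forces $v_\pi(a_2) = m - 4e_K - 1$, which is strictly less than $2m/3$ for $m \leq 8e_K + 3$, contradicting the linear bound. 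To rule out $m \equiv 3 \pmod 4$: the only possibility for an Eisenstein quartic is $m = 8e_K + 3$, for which $4e_K + 2 < 2m/3$ already fails the linear bound when $e_K \geq 1$. Finally, $m \geq 4$ is automatic for any totally ramified quartic extension of a $2$-adic field.

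The main obstacle is the modular bookkeeping: the Newton-polygon inequalities and the defining conditions of $T_m$ are slightly asymmetric between $m \equiv 0$ and $m \equiv 2 \pmod 4$, and verifying that the two systems coincide in each case requires careful unique-minimum analysis of every coefficient of $g$, particularly when some $a_i$ vanish.
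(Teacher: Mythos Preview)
Your proposal is correct and follows essentially the same approach as the paper: both introduce the cubic $g(Y)=Y^{-1}f(Y+\pi)$ with roots $\sigma_i(\pi)-\pi$, use that the summands in each coefficient have $v_\pi$-valuations in distinct residue classes modulo $4$ so that the valuation is the unique minimum, and translate the single-segment Newton polygon condition into the defining inequalities of $T_m$.

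The only notable difference is in how evenness of $m$ is obtained in part~(2). You argue by eliminating $m\equiv 1$ and $m\equiv 3\pmod 4$ separately. The paper instead observes directly that the inequality $v_\pi(a_2)\geq 2m/3$ forces $v_\pi(a_2)+4e_K+1>m$, and that $m\leq 6e_K+3<8e_K+3$, so those two summands can never realise the minimum in $v_\pi(b_0)=m$; hence $m=\min\{v_\pi(a_1),\,v_\pi(a_3)+2\}$, which is automatically even. This is slightly cleaner than your casework but logically equivalent.
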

\begin{proof}
    Let $f \in P_m$ for any positive integer $m$, not necessarily even. Define the polynomial 
    $$
    g(X) := X^{-1}f(X + \pi),
    $$
    and write $g(X) = \sum_{i=0}^3 b_iX^i$ for $b_i \in L_f$. 
    It is easy to see that 
    \begin{align*}
        b_0 &= a_1 + 2\pi a_2 + 3\pi^2 a_3 + 4\pi^3,
        \\
        b_1 &= a_2 + 3\pi a_3 + 6\pi^2,
        \\
        b_2 &= a_3 + 4\pi. 
    \end{align*}
    Since the $v_\pi(a_i)$ are all multiples of $4$, we have 
    \begin{align*}
        v_\pi(b_0) &= \min\{v_\pi(a_1), v_\pi(2\pi a_2), v_\pi(3\pi^2 a_3), v_\pi(4\pi^3)\},
        \\
        v_\pi(b_1) &= \min\{v_\pi(a_2), v_\pi(3\pi a_3), v_\pi(6\pi^2)\},
        \\
        v_\pi(b_2) &= \min\{v_\pi(a_3), v_\pi(4\pi)\}.
    \end{align*}
    The polynomial $g(X) \in L_f[X]$ has roots $\sigma_i(\pi) - \pi$ for $i=2,3,4$. Suppose that
    $$
    v_K(\sigma_i(\pi) - \pi) = \frac{m}{12}
    $$ 
    for each $i$. Then the Newton polygon of $g(X)$ consists of one line segment $(0,m)\leftrightarrow(3,0)$, so
    \begin{equation}
        \label{eq-m-vals-of-ai}\tag{$*$}
    \begin{cases}
        m = \min\{v_\pi(a_1), v_\pi(2\pi a_2), v_\pi(3\pi^2 a_3), v_\pi(4\pi^3)\},
        \\
        \frac{2m}{3} \leq \min\{v_\pi(a_2), v_\pi(3\pi a_3), v_\pi(6\pi^2)\},
        \\
        \frac{m}{3} \leq \min\{v_\pi(a_3), v_\pi(4\pi)\},
\end{cases}
\end{equation}
    and for even $m$ this implies membership of $T_m$. Reversing the argument, it is easy to see that for even $m$ with $4 \leq m \leq 6e_K + 2$, every $f \in T_m$ has 
    $$
    v_K(\sigma_i(\pi) - \pi) = \frac{m}{12},\quad i=2,3,4.
    $$
    Thus we have proven (1). Now, let $f \in P_m^{\oneaut}$ for some positive integer $m$. Then (\ref{eq-m-vals-of-ai}) tells us that 
    \begin{align*}
        m &= \min\{v_\pi(a_1), v_\pi(a_2) + 4e_K + 1, v_\pi(a_3) + 2, 8e_K + 3\},
        \\
        \frac{2m}{3} &\leq \min\{v_\pi(a_2), 4e_K + 2\}.
    \end{align*}
    Since $f$ is Eisenstein, $v_\pi(a_i)\geq 4$ for each $i$, and therefore we obtain $4 \leq m \leq 6e_K + 3$. Moreover, $v_\pi(a_2) \geq \frac{2m}{3}$ implies that $v_\pi(a_2) + 4e_K + 1 > m$. Since $m < 8e_K + 3$, we obtain
    $$
    m = \min\{v_\pi(a_1), v_\pi(a_3) + 2\},
    $$
    so $m$ is even. Finally, Lemma~\ref{lem-1-aut-implies-eq-vals} tells us that the valuations $v_K(\sigma_i(\pi) - \pi)$ are all equal, so 
    $$
    v_K(\sigma_i(\pi) - \pi) = \frac{m}{12}
    $$
    by definition of discriminant, and therefore $f \in T_m$ by (1). 
\end{proof}

\begin{lemma}
    \label{lem-Pm1-eq-Tm-if-m-not-multiple-of-3}
    Let $m$ be an even integer with $4 \leq m \leq 6e_K + 2$. If $m$ is not a multiple of $3$, then $P_m^{\oneaut} = T_m$. 
\end{lemma}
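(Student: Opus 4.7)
The plan is to exploit the part of Lemma~\ref{lem-T-m-iff-val-m-by-12} we already have: the inclusion $P_m^{\oneaut} \subseteq T_m$ is given by part (2), so only the reverse inclusion $T_m \subseteq P_m^{\oneaut}$ needs to be established. Concretely, for $f \in T_m$ with $m$ even and $3 \nmid m$, I aim to show $\#\Aut(L_f/K) = 1$, which is equivalent to $L_f \in \Sigma^{\oneaut} = \Sigma^{S_4} \cup \Sigma^{A_4}$.

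First, I would take $f \in T_m$ and apply part (1) of Lemma~\ref{lem-T-m-iff-val-m-by-12} to conclude that $v_K(\sigma_i(\pi) - \pi) = m/12$ for each $i = 2,3,4$. Next, the key divisibility observation: since $L_f/K$ is totally ramified of degree $4$, the value group of $v_K$ on $L_f^\times$ is exactly $\tfrac14 \Z$. Under the hypothesis $3 \nmid m$ and $m$ even, the value $m/12$ fails to lie in $\tfrac14 \Z$ (its denominator in lowest terms is $6$ or $12$, never $1, 2,$ or $4$), so $\sigma_i(\pi) - \pi \notin L_f$ for any $i \in \{2,3,4\}$. This forces $\sigma_i(\pi) \notin L_f$, so none of $\sigma_2, \sigma_3, \sigma_4$ factors through $L_f$ and $\Aut(L_f/K) = \{1\}$. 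Consequently $L_f$ has Galois closure group $S_4$ or $A_4$, i.e.\ $f \in P_m^{\oneaut}$.

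There is no real obstacle here; once Lemma~\ref{lem-T-m-iff-val-m-by-12}(1) is in hand, the only substantive input is the elementary arithmetic observation that $m/12 \in \tfrac14 \Z$ forces $3 \mid m$. The proof should fit in a few lines.
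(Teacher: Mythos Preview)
Your proposal is correct and essentially identical to the paper's proof. The paper also uses Lemma~\ref{lem-T-m-iff-val-m-by-12}(2) for $P_m^{\oneaut}\subseteq T_m$, then for $f\in T_m$ invokes part (1) to get $v_\pi(\sigma_i(\pi)-\pi)=m/3$ (equivalently $v_K=m/12$), and concludes $\sigma_i(\pi)\notin L_f$ because $m/3$ is not an integer when $3\nmid m$; this is exactly your valuation-group argument in the $v_\pi$ normalisation.
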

\begin{proof}
    Lemma~\ref{lem-T-m-iff-val-m-by-12} tells us that $P_m^{\oneaut} \subseteq T_m$, so we just need to show that $T_m \subseteq P_m^{\oneaut}$. Let $f \in T_m$. Lemma~\ref{lem-T-m-iff-val-m-by-12} tells us that 
    $$
    v_\pi(\sigma_i(\pi) - \pi) = \frac{m}{3},\quad i=2,3,4,
    $$
    so $\sigma_i(\pi) \not \in L_f$ for each $i$, since $\frac{m}{3}$ is not an integer, and therefore $T_m \subseteq P_m^{\oneaut}$. 
\end{proof}

From now on, fix a system of representatives $\mathcal{R}$ for $(\co_K/\p_K)^\times$. When $3 \mid m$, for each $u \in \mathcal{R}$ and $f \in P_m$, define the polynomial 
$$
g_f^{(u)}(X) := f(X + \pi + u\pi^{\frac{m}{3}}),
$$
and write $g_f^{(u)}(X) = \sum_{i=0}^4 b_i^{(u)}X^i$ for $b_i^{(u)} \in L_f$. We will always omit the subscript and write $g^{(u)}(X)$ for $g_f^{(u)}(X)$, leaving $f$ implicit. 

\begin{lemma}
    \label{lem-vals-of-bi}
    Let $m$ be a multiple of $6$ with $4 \leq m \leq 6e_K + 2$. Let $f \in T_m$ and $u\in\mathcal{R}$. We have: 
    \begin{enumerate} 
        \item $v_K(b_3^{(u)}) \geq \frac{m-2}{4}$.
        \item $v_K(b_2^{(u)}) \geq \frac{m}{6}$.
        \item $v_K(b_1^{(u)}) = \frac{m}{4}$. 
        \item $$
    v_K(b_0^{(u)}) \begin{cases}
        \geq \frac{m}{3} + 1 \quad\text{if $4\mid m$ and $a_1 + ua_2a_0^{ \frac{m}{12}} + u^3 a_0^{ \frac{m}{4}}\equiv 0 \pmod{\p_K^{ \frac{m}{4} + 1}}$},
        \\
        \geq \frac{m}{3} + 1 \quad\text{if $4\nmid m$ and $a_3 + ua_2a_0^{\lfloor \frac{m}{12}\rfloor} + u^3a_0^{\lfloor \frac{m}{4}\rfloor}\equiv 0\pmod{\p_K^{\lfloor \frac{m}{4}\rfloor + 1}}$},
        \\
        = \frac{m}{3} \quad\text{otherwise}. 
    \end{cases}
    $$
\end{enumerate}
\end{lemma}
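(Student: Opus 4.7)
The common setup is the fourth-order Taylor expansion of $f$ at $\alpha = \pi + u\pi^{m/3}$, which gives
\begin{align*}
b_3^{(u)} &= 4\alpha + a_3, \\
b_2^{(u)} &= 6\alpha^2 + 3a_3\alpha + a_2, \\
b_1^{(u)} &= 4\alpha^3 + 3a_3\alpha^2 + 2a_2\alpha + a_1, \\
b_0^{(u)} &= f(\alpha).
\end{align*}
Since $v_\pi(\alpha) = 1$ (the correction $u\pi^{m/3}$ has strictly larger valuation), parts (1)--(3) reduce to term-by-term triangle-inequality estimates using the $T_m$-bounds on $v_\pi(a_1)$, $v_\pi(a_2)$, $v_\pi(a_3)$. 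The only care needed is to check that the $2$-adic factors in the binomial coefficients (namely $v_\pi(2) = 4e_K$, $v_\pi(4) = 8e_K$, $v_\pi(6) = 4e_K$) do not violate the claimed bounds; the hypothesis $m \leq 6e_K$ is exactly what ensures this.

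For (4), Taylor-expand $b_0^{(u)}$ around $\pi$ instead. Since $f(\pi) = 0$,
\[
b_0^{(u)} = f'(\pi)\,u\pi^{m/3} + \tfrac{1}{2}f''(\pi)\,u^2\pi^{2m/3} + \tfrac{1}{6}f'''(\pi)\,u^3\pi^m + u^4\pi^{4m/3}.
\]
Lemma~\ref{lem-T-m-iff-val-m-by-12} gives $v_\pi(f'(\pi)) = m$, and the $T_m$-bounds give $v_\pi(f''(\pi)/2) \geq 2m/3$ and $v_\pi(f'''(\pi)/6) \geq m/3$. Each of the four displayed terms therefore has $v_\pi \geq 4m/3$, proving the default bound $v_K(b_0^{(u)}) \geq m/3$.

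To distinguish the two sub-cases of (4), compute $b_0^{(u)}$ modulo $\pi^{4m/3 + 4}$, using the identity $\pi^4 = -a_0 - a_1\pi - a_2\pi^2 - a_3\pi^3$ (from $f(\pi) = 0$) iteratively to reduce powers $\pi^{4k}$ to $(-a_0)^k$ plus controlled error. When $m \equiv 0 \pmod{12}$, the leading part of $f'(\pi)$ is $a_1$ and $\pi^{m/3} = (\pi^4)^{m/12}$, yielding
\[
b_0^{(u)} \equiv u(-a_0)^{m/12}\bigl[a_1 + u a_2 (-a_0)^{m/12} + u^3 (-a_0)^{m/4}\bigr] \pmod{\pi^{4m/3 + 4}},
\]
so $v_K(b_0^{(u)}) \geq m/3 + 1$ iff the bracket vanishes in $\co_K/\p_K^{m/4 + 1}$; up to the signs $(-1)^{m/12}$ and $(-1)^{m/4}$, absorbed by re-indexing $\mathcal{R}$ via $u \leftrightarrow -u$, this is the stated congruence. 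When $m \equiv 6 \pmod{12}$, the leading part of $f'(\pi)$ is $3a_3\pi^2$ and $\pi^{m/3} = \pi^2(\pi^4)^{\lfloor m/12 \rfloor}$; the $\pi^2$ combines with $a_3$ to give the analogous bracket with $a_3$ in place of $a_1$ and floor exponents, the unit $3$ being absorbed. The main obstacle is tracking ``off-$K$'' contributions at $v_\pi \in \{4m/3 + 1, 4m/3 + 2, 4m/3 + 3\}$ arising from cross-products between sub-terms of the four Taylor pieces (e.g.\ the $3a_3\pi^2$ inside $f'(\pi)$ against the $6\pi^2$ inside $f''(\pi)/2$); these must be shown to cancel, again via the $\pi^4$-identity, so that only the $\co_K$-valued leading part survives modulo $\pi^{4m/3+4}$. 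Once this is done, the ``otherwise'' clause of (4) follows because the surviving leading term has $v_K = m/3$ exactly.
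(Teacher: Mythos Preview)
Your overall strategy---Taylor expansion, term-by-term estimates using the $T_m$-bounds, and the reduction $\pi^{4k}\equiv(-a_0)^k$---is exactly the paper's approach. However, two steps in your execution of part~(4) are problematic.

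First, the sign argument ``absorbed by re-indexing $\mathcal{R}$ via $u\leftrightarrow -u$'' is not valid: the statement is for a \emph{fixed} $u\in\mathcal{R}$, so you cannot re-index. The correct reason the signs are harmless is that every term in the bracket has $v_K\geq m/4$, and since $v_K(2)=e_K\geq 1$ we have $2x\in\p_K^{m/4+1}$ for any such term $x$; hence $x\equiv -x\pmod{\p_K^{m/4+1}}$ and the signs $(-1)^{m/12}$, $(-1)^{m/4}$ genuinely disappear.

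Second, and more seriously, your final paragraph identifies a difficulty (``off-$K$'' contributions at $v_\pi\in\{4m/3+1,4m/3+2,4m/3+3\}$) and then simply asserts that ``these must be shown to cancel''---but you do not show it. This is a real gap. The paper sidesteps the issue entirely by working to lower precision: it computes $b_0^{(u)}$ only modulo $\pi^{4m/3+1}$, obtaining (for $4\mid m$)
\[
b_0^{(u)}\equiv u\pi^{m/3}\bigl(a_1+ua_2\pi^{m/3}+u^3\pi^m\bigr)\pmod{\pi^{4m/3+1}},
\]
so that $v_\pi(b_0^{(u)})>4m/3$ if and only if $a_1+ua_2\pi^{m/3}+u^3\pi^m\equiv 0\pmod{\pi^{m+1}}$. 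It then substitutes $\pi^{m/3}\to(-a_0)^{m/12}$ and $\pi^m\to(-a_0)^{m/4}$ (with error of $\pi$-valuation $\geq 5m/3-2>m+1$), landing on an element $c\in\co_K$. The key observation is that for $c\in\co_K$ one has $c\equiv 0\pmod{\pi^{m+1}}$ if and only if $c\in\p_K^{\lceil(m+1)/4\rceil}=\p_K^{m/4+1}$, since $v_\pi(c)\in 4\Z$. No cancellation of ``off-$K$'' terms is needed: the passage to $\co_K$ is what converts the $\pi^{m+1}$-congruence into the $\p_K^{m/4+1}$-congruence, not a computation modulo $\pi^{4m/3+4}$.
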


\begin{proof}
    It is easy to see that for each $i$ and $u$, we have 
    $$
    b_i^{(u)} = \sum_{j=i}^{4}\binom{j}{i}a_j(\pi + u\pi^{\frac{m}{3}})^{j-i},
    $$
    where we adopt the convention that $a_4 = 1$. Using this formula for the $b_i^{(u)}$, along with the congruence conditions defining $T_m$, gives us the following three congruences:
    \begin{align*} 
    b_3^{(u)}&\equiv a_3 \pmod{\pi^{m + 1}}.
    \\
    b_2^{(u)} &\equiv a_2\pmod{\pi^{\frac{2m}{3} + 1}}.
     \\
    b_1^{(u)} &\equiv \begin{cases} a_1\pmod{\pi^{m+1}}\quad\text{if $m\equiv 0\pmod{4}$},
        \\
        3\pi^2a_3\pmod{\pi^{m + 1}}\quad\text{if $m\equiv 2\pmod{4}$}.  
    \end{cases}  
\end{align*}
    We can read off the first three claims from these congruences. Expanding the formula for $b_0^{(u)}$ and ignoring the high-valuation terms, we obtain 
    $$
    b_0^{(u)} \equiv \begin{cases}
        ua_1  \pi^{\frac{m}{3}} + u^2a_2  \pi^{\frac{2m}{3}} + u^4 \pi^{\frac{4m}{3}} \pmod{\pi^{\frac{4m}{3}+1}} \quad\text{if $m\equiv 0\pmod{4}$},
        \\
        u^2 a_2\pi^{\frac{2m}{3}} + ua_3 \pi^{\frac{m}{3} + 2} + u^4\pi^{\frac{4m}{3}} \pmod{\pi^{\frac{4m}{3} + 1}}\quad\text{if $m\equiv 2\pmod{4}$}.
    \end{cases}
    $$
    It follows that $v_K(b_0^{(u)}) \geq \frac{m}{3}$, and $v_K(b_0^{(u)}) \geq \frac{m}{3}+1$ if and only if 
    $$
    \begin{cases}
        a_1 + ua_2\pi^{\frac{m}{3}} + u^3\pi^m \equiv 0\pmod{\pi^{m+1}}\quad\text{if $m\equiv 0\pmod{4}$},
        \\
        a_3 + ua_2\pi^{\frac{m}{3} - 2} + u^3\pi^{m-2} \equiv 0\pmod{\pi^{m-1}}\quad\text{if $m\equiv 2\pmod{4}$}.
    \end{cases}
    $$
    The result then follows from the fact that\footnote{This follows from expanding the binomial on the right-hand side of $$
    (\pi^4)^k = ((-a_0) + (-a_1\pi - a_2\pi^2 - a_3\pi^3))^k.
    $$
    }, for any positive integer $k$, we have 
    $$
    \pi^{4k} \equiv (-a_0)^k\pmod{\pi^{4k+\frac{2m}{3} - 2}}.
    $$
\end{proof}
\begin{lemma}
    \label{lem-T-m-iff-b0u-val}
    Let $4 \leq m \leq 6e_K + 2$ be a multiple of $6$ and let $f \in T_m$. Then $f \not \in P_m^{\oneaut}$ if and only if $v_K(b_0^{(u)}) \geq \frac{m}{3} + 1$ for some $u \in \mathcal{R}$. 
\end{lemma}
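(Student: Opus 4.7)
The plan is to analyze the four roots in $\overline{K}$ of $g^{(u)}(X) = f(X + \pi + u\pi^{m/3})$, which are $r_1 := -u\pi^{m/3}$ together with $r_i := \sigma_i(\pi) - \pi - u\pi^{m/3}$ for $i = 2,3,4$, so that $b_0^{(u)} = r_1 r_2 r_3 r_4$. Since $f \in T_m$ and $6 \mid m$, Lemma~\ref{lem-T-m-iff-val-m-by-12}(1) gives $v_\pi(\sigma_i(\pi) - \pi) = m/3 \in \Z$ for $i \geq 2$, so every $r_k$ satisfies $v_\pi(r_k) \geq m/3$. The sharper input I need comes from the discriminant: since $v_\pi(\disc(f)) = 4m$, one has $\sum_{i<j} v_\pi(\sigma_i(\pi) - \sigma_j(\pi)) = 2m$, of which the three pairs $(1,j)$ contribute $m$, forcing the three pairs among $\{2,3,4\}$ to sum to $m$ as well. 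Each such difference has $v_\pi \geq m/3$, so each is exactly $m/3$, meaning the leading $\pi^{m/3}$-coefficients of $\sigma_2(\pi) - \pi$, $\sigma_3(\pi) - \pi$, $\sigma_4(\pi) - \pi$ have pairwise distinct residues modulo $\p_K$.

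For the forward direction, suppose $f \notin P_m^{\oneaut}$, so $\sigma_i(\pi) \in L_f$ for some $i \geq 2$. Then $\sigma_i(\pi) - \pi \in L_f$ has $v_\pi = m/3$, and since $L_f/K$ is totally ramified, its leading coefficient is represented by some $u \in \mathcal{R}$. For this $u$, one has $v_\pi(r_i) \geq m/3 + 1$ and $v_\pi(r_j) = m/3$ for $j \in \{1\} \cup (\{2,3,4\} \setminus \{i\})$ by pairwise distinctness, giving $v_\pi(b_0^{(u)}) \geq 4m/3 + 1$, i.e., $v_K(b_0^{(u)}) > m/3$. The dichotomy of Lemma~\ref{lem-vals-of-bi}(4) then upgrades this to $v_K(b_0^{(u)}) \geq m/3 + 1$.

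Conversely, suppose $v_K(b_0^{(u)}) \geq m/3 + 1$ for some $u \in \mathcal{R}$, so $v_\pi(r_2 r_3 r_4) \geq m + 4$. Pairwise distinctness again implies at most one $r_i$ for $i \in \{2,3,4\}$ has $v_\pi > m/3$, so exactly one index $i$ satisfies $v_\pi(r_i) \geq m/3 + 4$ while the other two have $v_\pi = m/3$. Setting $\alpha = \pi + u\pi^{m/3} \in L_f$, we have $v_\pi(\sigma_i(\pi) - \alpha) \geq m/3 + 4 > m/3 = v_\pi(\sigma_i(\pi) - \sigma_j(\pi))$ for every $j \neq i$, so Krasner's lemma forces $K(\sigma_i(\pi)) \subseteq K(\alpha) \subseteq L_f$; a degree comparison gives $\sigma_i(\pi) \in L_f$, so $f \notin P_m^{\oneaut}$. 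The main obstacle is the discriminant bookkeeping establishing pairwise distinctness of leading coefficients, without which one cannot rule out multiple ``dangerous'' values of $u$ simultaneously contributing to $b_0^{(u)}$; once that is in hand, the rest is routine valuation tracking plus a single application of Krasner's lemma.
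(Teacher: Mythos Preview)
Your proof is correct, and both directions work. The forward direction is essentially identical to the paper's (you are slightly more careful than the paper in explicitly invoking the dichotomy of Lemma~\ref{lem-vals-of-bi}(4) to upgrade $v_K(b_0^{(u)}) > m/3$ to $\geq m/3 + 1$).

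For the converse, however, you take a genuinely different route. The paper does not use the discriminant or Krasner's lemma. Instead, it feeds the valuation data from Lemma~\ref{lem-vals-of-bi} --- specifically $v_K(b_1^{(u)}) = m/4$ and $v_K(b_2^{(u)}) \geq m/6$ --- into the Newton polygon of the quartic $g^{(u)}$, which then has a length-$1$ segment of steep slope; the Newton polygon factorisation theorem over the complete field $L_f$ gives a linear factor, so the corresponding high-valuation root lies in $L_f$. Your argument replaces this with the observation (via the discriminant) that $v_\pi(\sigma_i(\pi) - \sigma_j(\pi)) = m/3$ for all $i \neq j$ in $\{2,3,4\}$, from which an ultrametric argument shows at most one $r_i$ can have $v_\pi > m/3$; then Krasner pulls $\sigma_i(\pi)$ into $L_f$. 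Your approach avoids the Newton polygon factorisation theorem and does not need parts (1)--(3) of Lemma~\ref{lem-vals-of-bi}; in exchange you need the discriminant bookkeeping and Krasner. One small wording point: the ``leading $\pi^{m/3}$-coefficients'' of $\sigma_i(\pi) - \pi$ for $i \geq 2$ a priori live in the residue field of the normal closure, not $\F_K$; since you only use their pairwise distinctness (equivalent to $v_\pi(\sigma_i(\pi) - \sigma_j(\pi)) = m/3$), this is harmless, but you may want to phrase it that way.
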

\begin{proof}
    Suppose that $f \not \in P_m^{\oneaut}$. Then $f$ has at least two roots in $L_f$. Reordering the $\sigma_i$ if necessary, we may assume that $\sigma_2(\pi) \in L_f$. Since $f \in T_m$, Lemma~\ref{lem-T-m-iff-val-m-by-12} tells us that $v_K(\sigma_2(\pi) - \pi) = \frac{m}{12}$, so 
    $$
    \sigma_2(\pi) = \pi + \tilde{u}\pi^{\frac{m}{3}}
    $$
    for some $\tilde{u} \in \co_{L_f}^\times$. Since $L_f/K$ is totally ramified, there is some $u\in \mathcal{R}$ with $u \equiv \tilde{u}\pmod{\pi}$, which means that 
    $$
    v_K(\sigma_2(\pi) - \pi - u\pi^{\frac{m}{3}}) > \frac{m}{12}.
    $$ 
    The other three roots of $g^{(u)}$ all have valuation at least $\frac{m}{12}$, so 
    $$
    v_K(b_0^{(u)}) \geq \frac{m}{3} + 1.
    $$
    Suppose conversely that $v_K(b_0^{(u)}) \geq \frac{m}{3} + 1$ for some $u \in \mathcal{R}$. Lemma~\ref{lem-vals-of-bi} tells us that $v_K(b_1^{(u)}) = \frac{m}{4}$ and $v_K(b_2^{(u)}) \geq \frac{m}{6}$, so considering the Newton polygon of $g^{(u)}$ tells us that it has exactly one root $\sigma_i(\pi) - \pi - u\pi^{\frac{m}{3}}$ with 
    $$
    v_\pi(\sigma_i(\pi) - \pi - u\pi^{\frac{m}{3}}) \geq \frac{m}{3} + 1.
    $$
    Therefore we have 
    $$
    \sigma_i(\pi) - \pi - u\pi^{\frac{m}{3}} \in L_f,
    $$
    so $\sigma_i(\pi) \in L_f$, which means that $f \not \in P^{\oneaut}$.
\end{proof}

\begin{corollary}
    \label{cor-cong-cond-m-multiple-of-6}
    Let $m$ be a multiple of $6$ with $4 \leq m \leq 6e_K + 2$, and let $f \in T_m$. The following are equivalent:
    \begin{enumerate}
        \item We have $f \not \in P_m^{\oneaut}$.
        \item There is some $u \in \mathcal{R}$ such that 
        $$
        \begin{cases}
            a_1 + ua_2a_0^{\lfloor\frac{m}{12}\rfloor} + u^3a_0^{\lfloor \frac{m}{4}\rfloor} \equiv 0 \pmod{\p_K^{\lfloor \frac{m}{4}\rfloor + 1}}\quad\text{if $m\equiv 0 \pmod{4}$},
            \\
            a_3 + ua_2a_0^{\lfloor\frac{m}{12}\rfloor} + u^3a_0^{\lfloor \frac{m}{4}\rfloor} \equiv 0 \pmod{\p_K^{\lfloor \frac{m}{4}\rfloor + 1}}\quad \text{if $m\equiv 2\pmod{4}$}.
        \end{cases}
        $$
    \end{enumerate}
\end{corollary}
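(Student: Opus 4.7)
The plan is to derive this corollary as a direct combination of the two preceding results. By Lemma~\ref{lem-T-m-iff-b0u-val}, for $f \in T_m$ with $m$ a multiple of $6$ in the stated range, membership in the complement of $P_m^{\oneaut}$ is equivalent to the existence of some $u \in \mathcal{R}$ with $v_K(b_0^{(u)}) \geq \frac{m}{3} + 1$. So my first step is simply to invoke this lemma and reduce the problem to characterising when this valuation bound on $b_0^{(u)}$ holds.

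Next, I would apply Lemma~\ref{lem-vals-of-bi}(4), which provides exactly such a characterisation: it shows that $v_K(b_0^{(u)}) = \frac{m}{3}$ unless the explicit congruence in the coefficients $a_0, a_1, a_2, a_3$ is satisfied, in which case $v_K(b_0^{(u)}) \geq \frac{m}{3}+1$. The congruence bifurcates depending on whether $m \equiv 0$ or $m \equiv 2 \pmod 4$, and this matches precisely the two cases of the corollary.

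The only point that requires a moment's attention is reconciling the notational difference: Lemma~\ref{lem-vals-of-bi}(4) writes the exponents in the $4 \mid m$ case as $\frac{m}{12}$ and $\frac{m}{4}$, whereas the corollary uses $\lfloor \frac{m}{12}\rfloor$ and $\lfloor \frac{m}{4}\rfloor$. Since $m$ is assumed to be a multiple of $6$, the case $m \equiv 0 \pmod 4$ forces $12 \mid m$, making both fractions integers and the floors redundant; and the case $m \equiv 2 \pmod 4$ is covered by Lemma~\ref{lem-vals-of-bi}(4)'s second bullet, which already uses the floor notation. So the two statements align exactly, and no additional argument or calculation is needed. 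There is no real obstacle here: all the genuine work has been done in Lemmas~\ref{lem-vals-of-bi} and \ref{lem-T-m-iff-b0u-val}, and this corollary is just the packaging of their combination into a clean congruence criterion.
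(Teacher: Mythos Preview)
Your proposal is correct and matches the paper's own proof, which simply states that the corollary is immediate from Lemmas~\ref{lem-vals-of-bi} and \ref{lem-T-m-iff-b0u-val}. Your additional remark reconciling the floor notation with the exact exponents in the $4\mid m$ case is a helpful clarification, but otherwise there is nothing to add.
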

\begin{proof}
    This is immediate from Lemmas~\ref{lem-vals-of-bi} and \ref{lem-T-m-iff-b0u-val}.
\end{proof}
\subsection{Computing the densities}
\begin{lemma}
    \label{lem-mu-Tm}
    Let $m$ be an even integer with $4 \leq m \leq 6e_K + 2$. Then 
    $$
    \mu(T_m) = q^{-\lceil \frac{2m}{3}\rceil - 3}(q-1)^2. 
    $$
\end{lemma}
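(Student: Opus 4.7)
The plan is to observe that $T_m$ is cut out by independent valuation conditions on the four coefficients $a_0, a_1, a_2, a_3$, together with the usual Eisenstein condition $v_K(a_0) = 1$. Viewing $T_m$ as a product
\[
T_m = S_0 \times S_1 \times S_2 \times S_3 \subseteq \co_K^4
\]
where $S_i$ is the set of admissible values of $a_i$, the Haar measure factors as $\mu(T_m) = \prod_i \mu_1(S_i)$, where $\mu_1$ is Haar measure on $\co_K$ normalised so $\mu_1(\co_K) = 1$. Since $\mu_1(\{a : v_K(a) = n\}) = q^{-n-1}(q-1)$ and $\mu_1(\{a : v_K(a) \geq n\}) = q^{-n}$, the computation reduces to a bookkeeping exercise.

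First I would handle the case $m \equiv 0 \pmod{4}$. Here the Eisenstein condition forces $v_K(a_0) = 1$, contributing $q^{-2}(q-1)$; the condition $v_K(a_1) = m/4$ contributes $q^{-m/4 - 1}(q-1)$; $v_K(a_2) \geq m/6$ (equivalently $v_K(a_2) \geq \lceil m/6 \rceil$, since $v_K$ is integer-valued) contributes $q^{-\lceil m/6 \rceil}$; and $v_K(a_3) \geq m/4$ contributes $q^{-m/4}$. Multiplying gives $q^{-m/2 - \lceil m/6 \rceil - 3}(q-1)^2$. For $m \equiv 2 \pmod{4}$ the computation is symmetric, with the roles of $a_1$ and $a_3$ swapped: I get the same product $q^{-m/2 - \lceil m/6 \rceil - 3}(q-1)^2$.

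The final step is the elementary identity $m/2 + \lceil m/6 \rceil = \lceil 2m/3 \rceil$ for even $m$, which I would verify by writing $m = 2k$ and checking the three residue classes $k \pmod 3$ separately. This yields the claimed formula $\mu(T_m) = q^{-\lceil 2m/3 \rceil - 3}(q-1)^2$.

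There is no real obstacle: the only thing to be a little careful about is interpreting the non-integral lower bound $v_K(a_2) \geq m/6$ correctly as $v_K(a_2) \geq \lceil m/6 \rceil$, and checking that the valuation conditions for the allowed range $4 \leq m \leq 6e_K + 2$ are compatible with (in fact, imply) the Eisenstein requirements $v_K(a_i) \geq 1$ for $i = 1, 2, 3$, so that no extra constraints intrude.
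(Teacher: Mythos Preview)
Your proposal is correct and is exactly the computation the paper has in mind; the paper's own proof is simply the one-line assertion ``This is easy to see from the definition of $T_m$,'' so you have merely written out the details of that same argument.
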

\begin{proof}
    This is easy to see from the definition of $T_m$. 
\end{proof}
Since $\F_K \cong \F_{2^{f_K}}$, the trace map $\Tr_{\F_K/\F_2}: \F_K \to \F_2$ is given by 
$$
\Tr_{\F_K/\F_2}(x) = x + x^2 + \ldots + x^{2^{f_K-1}}. 
$$
\begin{lemma}
    \label{lem-num-roots-quadratic}
    Let $\alpha,\beta,\gamma\in \F_K$ with $\alpha \neq 0$, and let $g$ be the polynomial $\alpha X^2 + \beta X + \gamma$ in $\F_K[X]$. The number of roots of $g$ in $\F_K$ is 
    $$
    \begin{cases}
        1 \quad \text{if $\beta = 0$},
        \\
        2 \quad \text{if $\beta \neq 0$ and $\Tr_{\F_K/\F_2}(\alpha\gamma/\beta^2) = 0$},
        \\
        0 \quad \text{if $\beta\neq 0$ and $\Tr_{\F_K/\F_2}(\alpha\gamma/\beta^2) = 1$}.
    \end{cases}
    $$
\end{lemma}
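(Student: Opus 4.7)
The plan is to exploit the characteristic-$2$ structure of $\F_K$ and reduce to the Artin--Schreier equation. First I would handle the degenerate case $\beta = 0$: since $\F_K$ has characteristic $2$, the Frobenius map $x \mapsto x^2$ is an $\F_2$-linear bijection on the finite field $\F_K$, so $\alpha X^2 + \gamma$ has exactly one root, namely the unique $x \in \F_K$ with $x^2 = \gamma/\alpha$.

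For $\beta \neq 0$, I would reduce to Artin--Schreier form by the substitution $X = (\beta/\alpha)Y$, which transforms $g(X)$ into
$$
(\beta^2/\alpha)(Y^2 + Y) + \gamma.
$$
Hence the roots of $g$ in $\F_K$ are in bijection with the solutions $Y \in \F_K$ of $Y^2 + Y = \alpha\gamma/\beta^2$.

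Then I would appeal to the standard characterisation of Artin--Schreier equations over $\F_{2^n}$. The map $\phi : \F_K \to \F_K$, $\phi(Y) = Y^2 + Y$, is $\F_2$-linear with kernel $\{0,1\} = \F_2$, so its image is an $\F_2$-subspace of codimension $1$. Since $\Tr_{\F_K/\F_2}$ is also $\F_2$-linear and surjective with kernel of codimension $1$, and since $\Tr_{\F_K/\F_2}(y^2 + y) = \Tr_{\F_K/\F_2}(y^2) + \Tr_{\F_K/\F_2}(y) = 2\Tr_{\F_K/\F_2}(y) = 0$ for all $y$ (using that trace commutes with Frobenius), we conclude $\im \phi \subseteq \ker \Tr_{\F_K/\F_2}$, and the two subspaces coincide by a dimension count. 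Consequently $Y^2 + Y = c$ has exactly $2$ solutions when $\Tr_{\F_K/\F_2}(c) = 0$ and none when $\Tr_{\F_K/\F_2}(c) = 1$, which combined with the substitution gives the claim.

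There is no substantive obstacle here; the lemma is essentially the classical description of the splitting behaviour of quadratics in characteristic $2$, and every step is a short linear-algebra computation over $\F_2$.
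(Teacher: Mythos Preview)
Your proof is correct and essentially matches the paper's argument: both reduce via the substitution $\theta = \alpha X/\beta$ to the Artin--Schreier equation $\theta^2 + \theta = \alpha\gamma/\beta^2$ and then invoke the trace criterion for its solvability over $\F_K$. The only cosmetic difference is that the paper picks a root $u$ in a splitting field, sets $\theta = \alpha u/\beta$, and checks $\theta \in \F_K$ via $\theta + \theta^q = \Tr_{\F_K/\F_2}(\theta + \theta^2)$ directly, whereas you phrase the same fact as the linear-algebra identity $\im(\theta \mapsto \theta^2 + \theta) = \ker \Tr_{\F_K/\F_2}$.
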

\begin{proof}
    The case with $\beta = 0$ is clear, so assume $\beta \neq 0$. Let $u$ be a root of $g$ in a splitting field over $\F_K$, and let $\theta = \frac{\alpha u}{\beta}$. Clearly $u \in \F_K$ if and only if $\theta \in \F_K$, which is equivalent to $\theta + \theta^{q} = 0$. Since
    $$
    \Gal(\F_K/\F_2) = \{x\mapsto x^{2^i} : i = 0,1,\ldots, f_K - 1\},
    $$
    it is easy to see that 
    $$
    \Tr_{\F_K/\F_2}(\theta + \theta^2) = \theta + \theta^q,
    $$
    and also that 
    $$
    \theta + \theta^2 = \frac{\alpha\gamma}{\beta^2}.
    $$
    Therefore, $u \in \F_K$ if and only if $\Tr_{\F_K/\F_2}(\frac{\alpha\gamma}{\beta^2}) = 0$, and the result follows. 
\end{proof}
\begin{lemma}
    \label{lem-size-of-im-alpha}
    Let $n \geq 0$ be an integer and let $\lambda, \mu \in \p_K^n$, with $\mu \not \in \p_K^{n+1}$. Define the map 
    $$
    \alpha: \co_K/\p_K \to \co_K/\p_K^{n+1},\quad c\mapsto \lambda c + \mu c^3.
    $$
    The following two statements are true:
    \begin{enumerate}
    \item For $c \in (\co_K/\p_K)^\times$, we have 
    $$
    \#\{c'\in (\co_K/\p_K)^\times : \alpha(c') = \alpha(c)\} = \begin{cases}
        1 \quad \text{if $c^2 \equiv \lambda / \mu \pmod{\p_K}$},
        \\
        1 \quad \text{if $c^2 \neq \lambda/\mu$ and $\Tr_{\F_K/\F_2}\Big(\frac{\lambda}{c^2\mu}\Big) \not \equiv f_K \pmod{2}$},
        \\
        3 \quad \text{if $c^2 \neq \lambda/\mu$ and $\Tr_{\F_K/\F_2}\Big(\frac{\lambda}{c^2\mu}\Big) \equiv f_K \pmod{2}$}.
    \end{cases}
    $$
    \item We have 
    $$
    \#\im \alpha = \begin{cases}
        \frac{2q + (-1)^{f_K}}{3}\quad\text{if $\lambda \not \in \p_K^{n+1}$},
        \\
        \frac{q+1 + (-1)^{f_K}}{2 + (-1)^{f_K}}\quad\text{if $\lambda \in \p_K^{n+1}$}. 
    \end{cases}
    $$
\end{enumerate}
\end{lemma}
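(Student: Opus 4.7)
My strategy is to reduce part~(1) to Lemma~\ref{lem-num-roots-quadratic} via a factorization, and then to obtain part~(2) by summing over fibers of $\alpha$.

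For part~(1), I would start from the identity
$$
\alpha(c') - \alpha(c) = (c' - c)\bigl(\lambda + \mu(c'^2 + c'c + c^2)\bigr)
$$
in $\co_K/\p_K^{n+1}$. Since $v_K(\mu) = n$, the condition $\alpha(c') \equiv \alpha(c)$ with $c' \not\equiv c \pmod{\p_K}$ is equivalent, after dividing by $\mu$ and reducing modulo $\p_K$, to $c'$ being a root of $q_c(X) := X^2 + cX + (c^2 + \lambda/\mu) \in \F_K[X]$. Substituting $X = c$ gives $q_c(c) = c^2 + \lambda/\mu$ (using $3 \equiv 1$ in characteristic~$2$), so $c$ itself is a root of $q_c$ precisely when $c^2 \equiv \lambda/\mu$. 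In that case $q_c(X) = X(X+c)$ has only $c$ as a nonzero root, giving a single preimage. Otherwise $c$ and $0$ are both non-roots of $q_c$, so the preimages of $\alpha(c)$ in $\F_K^\times$ are $\{c\}$ together with the $0$ or $2$ roots of $q_c$ in $\F_K$. Applying Lemma~\ref{lem-num-roots-quadratic} to $q_c$ (whose leading and middle coefficients are $1$ and $c \neq 0$) then determines the root count via the trace condition $\Tr_{\F_K/\F_2}((c^2 + \lambda/\mu)/c^2) = \Tr_{\F_K/\F_2}(1) + \Tr_{\F_K/\F_2}(\lambda/(c^2 \mu))$; combining with $\Tr_{\F_K/\F_2}(1) \equiv f_K \pmod{2}$ yields the statement.

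For part~(2), I would write $\#\im\alpha = \sum_{c \in \co_K/\p_K} |\alpha^{-1}(\alpha(c))|^{-1}$. A direct factoring shows $\alpha(c) \equiv 0$ iff $c = 0$ or $c^2 \equiv \lambda/\mu$ in $\F_K$, so the fiber of $0$ has size~$2$ when $\lambda \notin \p_K^{n+1}$ (using that squaring is bijective on $\F_K$ in characteristic~$2$, giving a unique $c_0 \in \F_K^\times$ with $c_0^2 = \lambda/\mu$) and size~$1$ otherwise. Part~(1) then classifies the remaining fibers; the substitution $x = \lambda/(c^2\mu)$, which equals $(c_0/c)^2$ in the first case, reduces the count to $\#\{x \in \F_K^\times \setminus \{1\} : \Tr_{\F_K/\F_2}(x) \equiv f_K \pmod{2}\}$, or in the case $\lambda \in \p_K^{n+1}$ to a direct application using $\Tr_{\F_K/\F_2}(0) = 0$. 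Since $\Tr_{\F_K/\F_2}$ is a surjective $\F_2$-linear map with kernel of size $q/2$ and $\Tr_{\F_K/\F_2}(1) = f_K$, the resulting count is elementary.

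The main obstacle is the bookkeeping across the simultaneous cases ``$\lambda \in \p_K^{n+1}$ or not'' and ``$f_K$ even or odd''. Each combination yields a slightly different count, but after dividing size-$3$ contributions by $3$ and adding the contribution from the fiber of $0$, the expressions simplify to $(2q + (-1)^{f_K})/3$ in the first case and $(q + 1 + (-1)^{f_K})/(2 + (-1)^{f_K})$ in the second. A sanity check using $3 \mid q - 1 \iff f_K$ even confirms that these closed forms are integer-valued in every case.
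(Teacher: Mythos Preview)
Your proposal is correct and follows essentially the same approach as the paper: the same factorization of $\alpha(c')-\alpha(c)$ reduces part~(1) to Lemma~\ref{lem-num-roots-quadratic}, and part~(2) is obtained by the same fiber-summing formula together with the bijection $c\mapsto \lambda/(c^2\mu)$. The only cosmetic difference is that you sum over all of $\co_K/\p_K$ and handle the fiber of $0$ separately, whereas the paper restricts the sum to $(\co_K/\p_K)^\times$ (which already sees $0$ in its image when $\lambda\notin\p_K^{n+1}$); both bookkeepings yield $1+(q-2-a)+a/3$.
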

\begin{proof}
    It is easy to see that for $c,c' \in (\co_K/\p_K)^\times$, we have $\alpha(c) = \alpha(c')$ if and only if
    $$
    (c - c')\Big(
        (c')^2 + cc' + \frac{\lambda}{\mu} + c^2\Big) \equiv 0\pmod{\p_K}.
    $$
    The first statement then follows from Lemma~\ref{lem-num-roots-quadratic}. For the second statement, suppose first that $\lambda \not \in \p_K^{n+1}$. Then there is some $c \in (\co_K/\p_K)^\times$ with $\alpha(c) = 0$, so 
    \begin{align*}
    \#\im \alpha &= \sum_{c \in (\co_K/\p_K)^\times} \frac{1}{\# \{c' \in (\co_K/\p_K)^\times : \alpha(c') = \alpha(c)\}}
    \\
    &= 1 + (q-2-a) + \frac{a}{3},
    \end{align*}
    where 
    $$
    a = \#\{c \in (\co_K/\p_K)^\times : c^2 \neq \frac{\lambda}{\mu} \text{ and } \Tr_{\F_K/\F_2}\Big(\frac{\lambda}{c^2\mu}\Big) \equiv f_K\pmod{2}\}.
    $$
    Since $\lambda \not \in \p_K^{n+1}$, the map 
    $$
    (\p_K/\co_K)^\times \to (\p_K/\co_K)^\times,\quad c \mapsto \frac{\lambda}{c^2\mu}
    $$
    is a bijection, so 
    \begin{align*}
    a &= \#\{u \in (\co_K/\p_K)^\times\setminus\{1\} : \Tr_{\F_K/\F_2}(u) \equiv f_K\pmod{2}\}
    \\
    &= \frac{1}{2}(q - 3 - (-1)^{f_K}),
    \end{align*}
    and the result follows. Now suppose that $\lambda \in \p_K^{n+1}$. Then $\alpha(c) = 0$ if and only if $c = 0$, so 
    $$
        \#\im\alpha = 1 + \sum_{c \in (\co_K/\p_K)^\times} \frac{1}{\# \{c' \in (\co_K/\p_K)^\times : \alpha(c') = \alpha(c)\}}.
    $$
    We have $\frac{\lambda}{c^2\mu} \equiv 0\pmod{\p_K}$ for all $c \in (\co_K/\p_K)^\times$, so 
    $$
    \# \{c' \in (\co_K/\p_K)^\times : \alpha(c') = \alpha(c)\} = 2 + (-1)^{f_K},
    $$
    and the result follows. 
\end{proof}
\begin{lemma}
    \label{lem-density-of-cubic-cong-cond}
    Let $a, b > 0$ be integers, and let $S$ be the set of triples $(x_0,x_1,x_2) \in \co_K^3$ such that the following two conditions hold: 
    \begin{enumerate}
    \item $
    v_K(x_0) = 1, \quad v_K(x_1) = a + b, \quad v_K(x_2) \geq b.
    $
    \item There is some $u \in \mathcal{R}$ such that $x_1 + ux_2x_0^{a} + u^3x_0^{a+b} \equiv 0 \pmod{\p_K^{a + b + 1}}$. 
    \end{enumerate}
    Then $\mu(S) = \frac{1}{3}q^{-a - 2b - 4}(q-1)^2(2q-1)$.
\end{lemma}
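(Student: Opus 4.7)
The plan is to compute $\mu(S)$ by Fubini, integrating out $x_1$ first. Fix $x_0, x_2 \in \co_K$ with $v_K(x_0) = 1$ and $v_K(x_2) \geq b$, and define $\alpha \colon \co_K/\p_K \to \co_K/\p_K^{a+b+1}$ by $c \mapsto c x_2 x_0^a + c^3 x_0^{a+b}$; this has image in $\p_K^{a+b}/\p_K^{a+b+1}$ and Lemma~\ref{lem-size-of-im-alpha} applies with $n = a+b$, $\lambda = x_2 x_0^a$, $\mu = x_0^{a+b}$. Condition (2) says $x_1 \equiv -\alpha(u) \pmod{\p_K^{a+b+1}}$ for some $u \in \mathcal{R}$, and condition (1) forces $v_K(x_1) = a + b$; so the allowed residue classes of $x_1$ modulo $\p_K^{a+b+1}$ are precisely the nonzero elements of $\alpha(\mathcal{R}) \subseteq \p_K^{a+b}/\p_K^{a+b+1}$ (negation is a bijection preserving zero).

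Next, I would split on whether $v_K(x_2) = b$ or $v_K(x_2) > b$. If $v_K(x_2) = b$ then $\lambda \notin \p_K^{n+1}$; Lemma~\ref{lem-size-of-im-alpha}(2) gives $\#\im\alpha = (2q + (-1)^{f_K})/3$, and by that lemma's proof there is $c \in (\co_K/\p_K)^\times$ with $\alpha(c) = 0$, so $\alpha(\mathcal{R}) = \im\alpha$ and discarding $0$ leaves $(2q + (-1)^{f_K} - 3)/3$ admissible residues. If $v_K(x_2) > b$ then $\lambda \in \p_K^{n+1}$, so $\alpha(c) \equiv \mu c^3 \pmod{\p_K^{n+1}}$ vanishes only at $c = 0$; hence $\alpha(\mathcal{R}) = \im\alpha \setminus \{0\}$ of size $(q+1+(-1)^{f_K})/(2+(-1)^{f_K}) - 1$, with no further subtraction needed.

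Multiplying these counts by $q^{-(a+b+1)}$ (the Haar measure of a single residue class) and integrating using $\mu(\{v_K(x_0) = 1\}) = q^{-2}(q-1)$, $\mu(\{v_K(x_2) = b\}) = q^{-b-1}(q-1)$, and $\mu(\{v_K(x_2) > b\}) = q^{-b-1}$, I would obtain
$$
\mu(S) = q^{-a-2b-4}(q-1)\left[\frac{(q-1)(2q + (-1)^{f_K} - 3)}{3} + \frac{q+1+(-1)^{f_K}}{2+(-1)^{f_K}} - 1\right].
$$
A short algebraic check shows the bracket equals $(q-1)(2q-1)/3$ regardless of the parity of $f_K$: for $f_K$ even it reads $(q-1)(2q-2)/3 + (q-1)/3$, and for $f_K$ odd it reads $(q-1)(2q-4)/3 + (q-1)$, both collapsing to $(q-1)(2q-1)/3$. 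Substituting gives the claimed formula. The only real subtlety is the bookkeeping of whether $0 \in \alpha(\mathcal{R})$ in each subcase; this is what makes the parity-dependent expressions from Lemma~\ref{lem-size-of-im-alpha} conspire to give a uniform answer.
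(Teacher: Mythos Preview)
Your proof is correct and follows essentially the same approach as the paper: fix $(x_0,x_2)$, count the admissible residue classes of $x_1$ via the image of the cubic map $\alpha$, and invoke Lemma~\ref{lem-size-of-im-alpha}. The only cosmetic differences are that the paper first passes to a finite quotient modulo $\p_K^{a+b+1}$ rather than phrasing things via Fubini and Haar measure, and the paper observes uniformly that $\alpha(0)=0\in\im\alpha$ so the count of valid $\bar x_1$ is simply $\#\im\alpha-1$ in both cases, whereas you do an explicit case split on whether $0\in\alpha(\mathcal{R})$; both routes yield the same bracket and the same final answer.
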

\begin{proof}
    Suppose that, for $x_i$ and $x_i'$ in $\co_K$, we have $x_i \equiv x_i'\pmod{\p_K^{a + b + 1}}$ for $i=0,1,2$. Then $(x_0,x_1,x_2)\in S$ if and only if $(x_0',x_1',x_2') \in S$, so
    $$
    \mu(S) = \frac{\#\overline{S}}{q^{3a + 3b + 3}},
    $$
    where $\overline{S}$ is the set of triples 
    $$
    (\bar{x}_0,\bar{x}_1,\bar{x}_2) \in 
    \Big((\p_K/\p_K^{a + b + 1})\setminus (\p_K^2/\p_K^{a + b + 1})\Big) 
    \times 
    \Big((\p_K^{a + b}/\p_K^{a+b+1})\setminus \{0\}\Big)
    \times 
    (\p_K^{b}/\p_K^{a + b + 1}) 
    $$
    such that there is some $u \in \mathcal{R}$ with 
    $$
    \bar x_1 + u\bar x_2\bar x_0^{a} + u^3 \bar x_0^{a+b} = 0.
    $$
    For each $\bar x_0 \in (\p_K/\p_K^{a +b+ 1})\setminus (\p_K^2/\p_K^{a +b+ 1})$ and $\bar x_2 \in \p_K^{b}/\p_K^{a +b+ 1}$, define the map 
    $$
    \alpha_{\bar{x}_0,\bar{x}_2}: \co_K/\p_K \to \p_K^{a+b}/\p_K^{a +b+ 1},\quad u \mapsto - u\bar x_2\bar x_0^{a} - u^3 \bar x_0^{a+b}.
    $$
    Then 
    $$
    \overline{S} = \bigsqcup_{\substack{\bar x_0 \in (\p_K/\p_K^{a+b + 1})\setminus (\p_K^2/\p_K^{a+b + 1})\\ \bar x_2 \in \p_K^{b}/\p_K^{a+b + 1}}} \{\bar{x}_0\}\times \Big(\im\alpha_{\bar x_0,\bar x_2} \setminus\{0\}\Big) \times \{\bar x_2\}.
    $$
    Since $\alpha_{\bar{x}_0,\bar{x}_2}(0) = 0$, we always have $0 \in \im  \alpha_{\bar{x}_0,\bar{x}_2}$, so 
    $$
    \#\Big(\im\alpha_{\bar x_0,\bar x_2} \setminus\{0\}\Big) = \#\im\alpha_{\bar x_0,\bar x_2} - 1,
    $$
    and therefore
    $$
    \# \overline{S} = \sum_{\substack{\bar x_0 \in (\p_K/\p_K^{a+b + 1})\setminus (\p_K^2/\p_K^{a +b+ 1})\\ \bar x_2 \in \p_K^{b}/\p_K^{a+b + 1}}} (\#\im\alpha_{\bar x_0,\bar x_2} - 1),
    $$
    Lemma~\ref{lem-size-of-im-alpha} tells us that 
    $$
    \#\im\alpha_{\bar x_0, \bar x_2} = \begin{cases}
        \frac{2q + (-1)^{f_K}}{3} \quad\text{if $\bar{x}_2 \not \in \p_K^{b + 1}/\p_K^{a+b+ 1}$},
        \\
        \frac{q+1+(-1)^{f_K}}{2 + (-1)^{f_K}} \quad \text{if $\bar{x}_2  \in \p_K^{b + 1}/\p_K^{a+b + 1}$}. 
    \end{cases}
    $$
    It follows that 
    $$
    \# \overline{S} = \frac{1}{3}q^{2a + b - 1}(q-1)^2(2q-1),
    $$
    so 
    $$
    \mu(S) = \frac{1}{3}q^{-a - 2b - 4}(q-1)^2(2q-1). 
    $$
\end{proof}
\begin{corollary}
    \label{cor-Tm-minus-Pm}
    Let $4 \leq m \leq 6e_K + 2$ be a multiple of $6$. Then 
    $$
    \mu(T_m \setminus P_m^{\oneaut}) = \frac{1}{3}q^{-\frac{2m}{3} - 4}(q-1)^2(2q-1).
    $$
\end{corollary}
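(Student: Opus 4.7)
The plan is to use Corollary~\ref{cor-cong-cond-m-multiple-of-6} to describe $T_m \setminus P_m^{\oneaut}$ as a congruence condition on three of the four Eisenstein coefficients, and then compute the measure by combining Lemma~\ref{lem-density-of-cubic-cong-cond} with a Fubini-type split over the remaining coefficient.

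By Corollary~\ref{cor-cong-cond-m-multiple-of-6}, a polynomial $f = X^4 + a_3 X^3 + a_2 X^2 + a_1 X + a_0 \in T_m$ lies in $T_m \setminus P_m^{\oneaut}$ precisely when there exists $u \in \mathcal{R}$ satisfying an explicit cubic congruence. Since $m$ is a multiple of $6$, I split into two subcases: for $m \equiv 0 \pmod{12}$ the congruence involves $(a_0, a_1, a_2)$ and $a_3$ varies freely in $\p_K^{m/4}$; for $m \equiv 6 \pmod{12}$ the congruence involves $(a_0, a_3, a_2)$ and $a_1$ varies freely in $\p_K^{(m+2)/4}$.

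In the first subcase I apply Lemma~\ref{lem-density-of-cubic-cong-cond} with $(x_0, x_1, x_2) = (a_0, a_1, a_2)$, $a = m/12$ and $b = m/6$, after checking that the $T_m$ valuation conditions are exactly $v_K(x_0) = 1$, $v_K(x_1) = a + b = m/4$ and $v_K(x_2) \geq b = m/6$. Multiplying the resulting measure $\tfrac{1}{3}q^{-a - 2b - 4}(q-1)^2(2q-1)$ by the factor $q^{-m/4}$ contributed by the free coordinate $a_3$, and using that the Haar measure on $\co_K^4$ factorises coordinatewise, gives
$$
\mu(T_m \setminus P_m^{\oneaut}) = q^{-m/4}\cdot \tfrac{1}{3}q^{-m/12 - m/3 - 4}(q-1)^2(2q-1) = \tfrac{1}{3}q^{-2m/3 - 4}(q-1)^2(2q-1).
$$
In the second subcase the same strategy applies with $(x_0, x_1, x_2) = (a_0, a_3, a_2)$, $a = \lfloor m/12 \rfloor$, $b = \lfloor m/4 \rfloor - \lfloor m/12 \rfloor$, and with the free coordinate $a_1$ contributing $q^{-(m+2)/4}$; writing $m = 12k + 6$ makes it straightforward to verify that the exponents again collapse to $-2m/3 - 4$.

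The main obstacle is entirely bookkeeping: one must match the $T_m$ valuation conditions to the hypotheses of Lemma~\ref{lem-density-of-cubic-cong-cond} in both subcases, and then check that the powers of $q$ combine correctly. For the boundary value $m = 6$ (where $a = \lfloor m/12 \rfloor = 0$) one should also note that the proof of Lemma~\ref{lem-density-of-cubic-cong-cond} carries through for $a = 0$ so long as $b \geq 1$, via the same appeal to Lemma~\ref{lem-size-of-im-alpha}.
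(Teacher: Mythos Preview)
Your proof is correct and essentially identical to the paper's own argument: the paper also splits on $4\mid m$ versus $4\nmid m$, maps to Lemma~\ref{lem-density-of-cubic-cong-cond} with the same choices of $(x_0,x_1,x_2)$ and $(a,b)$ (noting that your $b=\lfloor m/4\rfloor-\lfloor m/12\rfloor$ equals the paper's $b=m/6$ when $6\mid m$), and multiplies by the measure of the free fourth coordinate. Your explicit remark that the proof of Lemma~\ref{lem-density-of-cubic-cong-cond} still goes through when $a=0$, which is needed at $m=6$, is a point the paper leaves implicit.
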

\begin{proof}    
    Suppose first that $4 \mid m$. Setting $x_i = a_i$ for $i=0,1,2$ and $(a,b) = (\frac{m}{12}, \frac{m}{6})$, Corollary~\ref{cor-cong-cond-m-multiple-of-6} tells us that $T_m\setminus P_m^{\oneaut}$ is the set $S$ from Lemma~\ref{lem-density-of-cubic-cong-cond}, together with the added congruence condition that $v_K(a_3) \geq \frac{m}{4}$, so 
    $$
    \mu(T_m\setminus P_m^{\oneaut}) = \mu(S)\cdot q^{- \frac{m}{4}} = \frac{1}{3}q^{-\frac{2m}{3} - 4}(q-1)^2(2q-1).
    $$ 
    If $4 \nmid m$, then set 
    $$
    (x_0,x_1,x_2) := (a_0, a_3, a_2),\quad (a,b) = \Big(\frac{m-6}{12}, \frac{m}{6}\Big),
    $$
    and proceed similarly. 
\end{proof}
\begin{corollary}
    \label{cor-mu-Pm}
    Let $4 \leq m \leq 6e_K + 2$ be an even integer. Then 
    $$
    \mu(P_m^{\oneaut}) = q^{ - \lceil \frac{2m}{3}\rceil - 3}(q-1)^2 \cdot \Big(1 + \mathbbm{1}_{6 \mid m}\cdot \Big(\frac{1 - 2q}{3q}\Big)\Big).
    $$
\end{corollary}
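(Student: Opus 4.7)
The plan is to split on whether $6\mid m$ (equivalently, since $m$ is assumed even, whether $3\mid m$), and in each case reduce to results already proved.

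First, suppose $3\nmid m$. Then by Lemma~\ref{lem-Pm1-eq-Tm-if-m-not-multiple-of-3} we have $P_m^{\oneaut}=T_m$, so Lemma~\ref{lem-mu-Tm} gives
$$
\mu(P_m^{\oneaut}) = \mu(T_m) = q^{-\lceil\frac{2m}{3}\rceil - 3}(q-1)^2,
$$
which matches the claimed formula since the indicator $\mathbbm{1}_{6\mid m}$ vanishes.

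Now suppose $6\mid m$. By Lemma~\ref{lem-T-m-iff-val-m-by-12} we still have $P_m^{\oneaut}\subseteq T_m$, so
$$
\mu(P_m^{\oneaut}) = \mu(T_m) - \mu(T_m\setminus P_m^{\oneaut}).
$$
Lemma~\ref{lem-mu-Tm} and Corollary~\ref{cor-Tm-minus-Pm} (whose congruence conditions come from Corollary~\ref{cor-cong-cond-m-multiple-of-6}, with $\lceil\frac{2m}{3}\rceil=\frac{2m}{3}$ here) give
$$
\mu(P_m^{\oneaut}) = q^{-\frac{2m}{3}-3}(q-1)^2 \;-\; \tfrac{1}{3}q^{-\frac{2m}{3}-4}(q-1)^2(2q-1)
= q^{-\frac{2m}{3}-3}(q-1)^2\left(1 + \frac{1-2q}{3q}\right),
$$
as required. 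No obstacle is anticipated: the whole statement is an essentially arithmetic consolidation of the two preceding measure computations, the only delicate point being to verify that the $6\mid m$ case of Corollary~\ref{cor-Tm-minus-Pm} applies to both residues of $m$ modulo $4$, which is exactly what the two branches of the proof of that corollary handle.
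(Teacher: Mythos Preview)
Your proof is correct and follows essentially the same approach as the paper, which simply states that the result is immediate from Corollary~\ref{cor-Tm-minus-Pm} and Lemma~\ref{lem-mu-Tm}. Your version is more explicit, correctly invoking Lemma~\ref{lem-Pm1-eq-Tm-if-m-not-multiple-of-3} for the $3\nmid m$ case and Lemma~\ref{lem-T-m-iff-val-m-by-12}(2) for the containment $P_m^{\oneaut}\subseteq T_m$ needed to subtract the measures.
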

\begin{proof}
    This is immediate from Corollary~\ref{cor-Tm-minus-Pm} and Lemma~\ref{lem-mu-Tm}.
\end{proof}
\begin{corollary}
    \label{cor-size-of-Sigma-1-m}
    If $\Sigma_m^{\oneaut}$ is nonempty, then $m$ is an even integer with $4 \leq m \leq 6e_K + 2$, and 
    $$
    \#\Sigma_m^{\oneaut} = q^{\lfloor\frac{m}{3}\rfloor - 1}(q-1)\Big(1 + \mathbbm{1}_{6 \mid m}\cdot \Big(\frac{1 - 2q}{3q}\Big)\Big).
    $$
\end{corollary}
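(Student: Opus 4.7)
The plan is to stitch together three results already established in this section: Lemma~\ref{lem-num-exts-density-polys} converts the enumeration problem into a Haar-measure computation on $\co_K^4$, Lemma~\ref{lem-T-m-iff-val-m-by-12}(2) provides the necessary conditions on $m$, and Corollary~\ref{cor-mu-Pm} supplies the explicit measure of $P_m^{\oneaut}$. The restriction on $m$ is immediate: if $\Sigma_m^{\oneaut}$ is nonempty then so is $P_m^{\oneaut}$, and Lemma~\ref{lem-T-m-iff-val-m-by-12}(2) forces $m$ to be even with $4 \leq m \leq 6e_K + 2$.

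For the count, I would first sum Lemma~\ref{lem-num-exts-density-polys} over $G \in \{S_4, A_4\}$ to obtain
$$
\#\Sigma_m^{\oneaut} = \frac{q^{m+2}}{q-1}\cdot \mu(P_m^{\oneaut}).
$$
Substituting the formula from Corollary~\ref{cor-mu-Pm} gives
$$
\#\Sigma_m^{\oneaut} = q^{m - \lceil \frac{2m}{3}\rceil - 1}(q-1)\Big(1 + \mathbbm{1}_{6\mid m}\cdot \frac{1-2q}{3q}\Big).
$$

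The only remaining task is the routine arithmetic identity $m - \lceil \frac{2m}{3}\rceil = \lfloor \frac{m}{3}\rfloor$, which follows from $\lceil \frac{2m}{3}\rceil = m - \lfloor \frac{m}{3}\rfloor$ (valid for all integers $m$). No step here is a serious obstacle; the proof is essentially a bookkeeping assembly of the preceding lemmas, since all of the real combinatorial content (congruence conditions, cubic character sums, the trace argument) has already been done in Lemmas~\ref{lem-mu-Tm}--\ref{lem-density-of-cubic-cong-cond} and their corollaries.
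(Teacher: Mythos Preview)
Your proposal is correct and follows essentially the same approach as the paper, which simply states that the result is immediate from Lemma~\ref{lem-num-exts-density-polys} and Corollary~\ref{cor-mu-Pm}. You have merely made explicit the arithmetic and the appeal to Lemma~\ref{lem-T-m-iff-val-m-by-12}(2) for the restriction on $m$ that the paper leaves implicit.
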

\begin{proof}
    This is immediate from Lemma~\ref{lem-num-exts-density-polys} and Corollary~\ref{cor-mu-Pm}.
\end{proof}

\subsection{Distinguishing between $A_4$ and $S_4$}

Write ``$\mu_3\subseteq K$'' as shorthand for ``$K$ contains three distinct cube roots of unity''.
\begin{lemma}
    \label{lem-A4-vs-S4-basic-facts}
    The following three statements are true: 
    \begin{enumerate}
        \item (Tower law for discriminant) Let $M/L/K$ be extensions of $2$-adic fields. Then 
        $$
        v_K(d_{M/K}) = [M : L] \cdot v_K(d_{L/K}) + f(L/K)\cdot v_L(d_{M/L}). 
        $$
        \item We have $\mu_3\subseteq K$ if and only if $f_K$ is even. 
        \item If $\mu_3\not\subseteq K$, then $K$ has only one $C_3$-extension up to isomorphism, namely the unramified extension. 
    \end{enumerate}
\end{lemma}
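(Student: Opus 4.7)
The plan is to dispatch the three parts in turn, using the multiplicative tower formula for discriminants, Hensel's lemma on the residue field, and tame ramification theory.

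For (1), I would simply invoke the classical identity $d_{M/K} = N_{L/K}(d_{M/L}) \cdot d_{L/K}^{[M:L]}$ in fractional ideals of $\co_K$. Taking $v_K$ and using $v_K \circ N_{L/K} = f(L/K)\cdot v_L$ immediately yields the stated tower law.

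For (2), the claim reduces to whether $\F_K = \F_{2^{f_K}}$ contains a primitive cube root of unity. Indeed, $X^2+X+1$ has simple roots over $\F_2$, so Hensel's lemma lifts any such root in $\F_K$ uniquely to $\co_K$, while conversely reduction is injective on roots of unity of order coprime to $2$. Since $\F_{2^{f_K}}^\times$ has order $2^{f_K} - 1$, a primitive cube root of unity exists if and only if $3 \mid 2^{f_K} - 1$, i.e.\ if and only if $f_K$ is even.

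For (3), the key observation is that any $C_3$-extension $L/K$ is tamely ramified, since its wild inertia is a subgroup of $C_3$ and also a pro-$2$ group, hence trivial. So $e(L/K) \in \{1,3\}$. Suppose for contradiction that $e(L/K) = 3$. By standard tame theory, $L$ admits a uniformiser $\varpi$ with $\varpi^3 \in K$, and for any nontrivial $\sigma \in \Gal(L/K)$ the ratio $\zeta = \sigma(\varpi)/\varpi$ is a primitive cube root of unity in $L$. Then $[K(\zeta):K] \leq 2$ divides $[L:K] = 3$, so $\zeta \in K$, contradicting $\mu_3 \not\subseteq K$. Hence $e(L/K) = 1$, and since $K$ has a unique unramified extension of each degree, the result follows. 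The totally-tamely-ramified case is the only mildly subtle step; everything else is standard bookkeeping with local-field facts.
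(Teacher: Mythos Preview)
Your proof is correct. Parts (1) and (2) match the paper's approach exactly: the paper cites the tower formula from a reference (which is precisely the multiplicative identity you quote) and invokes Hensel's lemma for the cube roots of unity.

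For part (3) you take a genuinely different route. The paper argues via local class field theory: since $\mu_3\not\subseteq K$ means $3\nmid q-1$, and $U_K^{(1)}$ is a pro-$2$ group, one has $K^\times/K^{\times 3}\cong \Z/3\Z$, so there is a unique index-$3$ open subgroup of $K^\times$ and hence a unique $C_3$-extension, which must be the unramified one. Your argument instead stays within tame ramification theory: any $C_3$-extension is tame, and a totally tamely ramified cubic would force a primitive cube root of unity into $K$ by the standard $\sigma(\varpi)/\varpi$ trick. Your approach is more elementary in that it avoids the reciprocity map entirely; the paper's approach, on the other hand, immediately generalises (to counting $C_n$-extensions for other $n$) and makes the uniqueness transparent without a case split on $e(L/K)$. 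Both are perfectly acceptable for this lemma.
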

\begin{proof}
    Claim (1) is \cite[Proposition III.8]{greenberg1995local}. Claim (2) follows from Hensel's Lemma. Finally, Claim (3) comes from class field theory, along with the fact that $K^\times / K^{\times 3} \cong \Z/3\Z$, which follows from \cite[Proposition~3.7]{neukirch-bonn}. 
\end{proof}
\begin{lemma}
    \label{lem-mu3-no-S4}
    If $\mu_3\subseteq K$, then $K$ has no $S_4$-extensions. 
\end{lemma}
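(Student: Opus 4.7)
The plan is to reduce to ruling out $S_3$-Galois extensions of $K$ and then apply the equivariance of the tame inertia character to derive a contradiction. First I observe that $V_4 \trianglelefteq S_4$ with $S_4/V_4 \cong S_3$, so any $S_4$-Galois extension $\widetilde{L}/K$ yields an $S_3$-Galois extension $\widetilde{L}^{V_4}/K$. It therefore suffices to prove that $K$ admits no $S_3$-Galois extension when $\mu_3 \subseteq K$.

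Suppose for contradiction that $M/K$ is Galois with group $G = S_3$. Let $I \trianglelefteq G$ be the inertia subgroup and $P \trianglelefteq I$ the wild inertia. Since $G/I \cong \Gal(\F_M/\F_K)$ is cyclic and the normal subgroups of $S_3$ are $\{e\}, C_3, S_3$ with quotients $S_3, C_2, \{e\}$, we must have $I \in \{C_3, S_3\}$. The case $I = S_3$ is impossible: then $P$ would be a normal Sylow $2$-subgroup, but the three Sylow $2$-subgroups of $S_3$ are conjugate and non-normal. Hence $I = C_3$, which has order coprime to $2$ and is therefore tame ($P = \{e\}$).

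Now I invoke the equivariance of the tame inertia character $\theta \colon I \hookrightarrow \F_M^\times$, defined by $\theta(\tau) = \tau(\pi_M)/\pi_M \bmod \p_M$ for any uniformiser $\pi_M$ of $M$. A standard computation in ramification theory gives $\theta(\sigma\tau\sigma^{-1}) = \bar\sigma(\theta(\tau))$ for all $\sigma \in G$ and $\tau \in I$, where $\bar\sigma$ is the image of $\sigma$ in $\Gal(\F_M/\F_K)$. Taking $\sigma$ to be any transposition, $\bar\sigma$ is the Frobenius $x \mapsto x^q$, while purely group-theoretically $\sigma\tau\sigma^{-1} = \tau^{-1}$ since transpositions invert $3$-cycles in $S_3$. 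Combining yields $\theta(\tau)^q = \theta(\tau)^{-1}$, so $\theta(\tau)^{q+1} = 1$ for every $\tau \in I$. Because $\theta$ is injective and $|I| = 3$, this forces $3 \mid q+1$, i.e.\ $q \equiv 2 \pmod 3$. On the other hand, by Lemma~\ref{lem-A4-vs-S4-basic-facts}(2) the assumption $\mu_3 \subseteq K$ is equivalent to $f_K$ being even, hence to $q = 2^{f_K} \equiv 1 \pmod 3$, a contradiction.

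The main obstacle, or rather the only nonelementary input, is the equivariance of the tame inertia character; I would either cite Serre's \emph{Local Fields} or give a short direct verification from the defining formula for $\theta$ (using that $\sigma$ sends $\pi_M$ to another uniformiser and that the induced action on $\F_M$ is $\bar\sigma$). Everything else is routine manipulation of the subgroup structure of $S_3$ and its Sylow subgroups.
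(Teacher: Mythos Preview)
Your argument is correct. The reduction to ruling out $S_3$-extensions via the quotient $S_4/V_4\cong S_3$ is clean, the inertia analysis correctly forces $I=C_3$ (the case $I=S_3$ is excluded because wild inertia would be a normal Sylow $2$-subgroup, which $S_3$ lacks), and the equivariance $\theta(\sigma\tau\sigma^{-1})=\bar\sigma(\theta(\tau))$ together with $\sigma\tau\sigma^{-1}=\tau^{-1}$ pins down $q\equiv 2\pmod 3$, contradicting $\mu_3\subseteq K\Leftrightarrow q\equiv 1\pmod 3$. The equivariance of $\theta$ is indeed the one nontrivial input; it can be verified directly by computing $\theta(\tau)$ using the alternate uniformiser $\sigma^{-1}(\pi_M)$ and then applying $\sigma$, or cited from Serre, \emph{Local Fields}, Chapter~IV.

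Your route differs substantially from the paper's, which simply invokes \cite[Theorem~1.2]{wei-ji} as a black box. What you gain is a self-contained and conceptually transparent explanation: the obstruction to $S_4$ (really $S_3$) is that the Frobenius action on tame inertia, dictated by the group structure, is inversion, whereas the arithmetic forces it to be raising to the $q$th power---and these disagree on elements of order $3$ precisely when $3\mid q-1$. The paper's citation is shorter on the page but opaque; your proof would make the section more self-contained and would also make clear why the $A_4$ case behaves differently (there the $3$-cycle centralises the inertia, so no such constraint arises).
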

\begin{proof}
    This is part of \cite[Theorem~1.2]{wei-ji}.
\end{proof}
\begin{proof}[Proof of Theorem~\ref{thm-S4-A4-f-even}]
    By Lemma~\ref{lem-A4-vs-S4-basic-facts} (2), we have $\mu_3 \subseteq K$, so Lemma~\ref{lem-mu3-no-S4} tells us that $\Sigma^{A_4}_m = \Sigma^{\oneaut}_m$, and the result follows by Corollary~\ref{cor-size-of-Sigma-1-m}. 
\end{proof}

\begin{lemma}
    \label{lem-disc-of-V4-in-terms-of-quad-exts}
    Let $M/F$ be a $V_4$-extension of $2$-adic fields, and let $E_1,E_2,E_3$ be its three quadratic intermediate extensions. Then 
    $$
    v_F(d_{M/F}) = \sum_{i=1}^3 v_F(d_{E_i/F}). 
    $$
\end{lemma}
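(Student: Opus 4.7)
The plan is to deduce this identity from the conductor-discriminant formula for abelian extensions. Since $\Gal(M/F)\cong V_4$ is abelian, the formula yields
\[
d_{M/F} \;=\; \prod_{\chi}\mff(\chi),
\]
where the product is over the four characters $\chi\colon V_4\to \C^\times$ and $\mff(\chi)$ denotes the Artin conductor. So the entire argument reduces to matching the four factors on the right with the three quadratic subfield discriminants (plus a trivial contribution).

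First I would enumerate the characters. The trivial character has $\mff(\chi) = \co_F$, contributing $0$ to the valuation. Each of the three nontrivial characters $\chi_1,\chi_2,\chi_3$ has kernel of order $2$, and by the Galois correspondence the fixed field of $\ker\chi_i$ is one of the three quadratic intermediate fields; after relabeling, I may assume it is $E_i$.

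Next I would invoke the inflation-invariance of the Artin conductor: since each $\chi_i$ is the inflation to $\Gal(M/F)$ of the unique nontrivial character $\bar\chi_i$ of $\Gal(E_i/F)$, we get $\mff(\chi_i) = \mff(\bar\chi_i)$. Applying the conductor-discriminant formula to the quadratic extension $E_i/F$ (where the product has only one nontrivial factor) gives $\mff(\bar\chi_i) = d_{E_i/F}$. Combining, $d_{M/F} = \prod_{i=1}^3 d_{E_i/F}$, and taking $v_F$ gives the claim.

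The main obstacle is simply that this approach appeals to the conductor-discriminant formula and the inflation-compatibility of Artin conductors, which are standard but non-trivial pieces of local class field theory. A self-contained alternative would apply the tower law (Lemma~\ref{lem-A4-vs-S4-basic-facts}(1)) to each of the three towers $M/E_i/F$, and then do a case analysis based on the inertia subgroup $T\leq \Gal(M/F)$ (which is trivial, of order $2$, or all of $V_4$), reconciling the three resulting identities for $v_F(d_{M/F})$. That route is elementary but substantially more tedious, so the conductor-discriminant approach seems preferable.
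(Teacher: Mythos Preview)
Your proposal is correct and takes essentially the same approach as the paper: the paper's proof is a one-line citation of \cite[Theorem~17.50]{keune-nf}, which is the conductor--discriminant formula, so you have simply spelled out the details of the same argument.
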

\begin{proof}
    This follows easily from \cite[Theorem~17.50]{keune-nf}. 
\end{proof}

\begin{lemma}
    \label{lem-A4-quartic-implies-disc-mult-3}
    Suppose that $\mu_3\not\subseteq K$ and let $L \in \Sigma^{A_4}$. Then $3 \mid v_K(d_{L/K})$. 
\end{lemma}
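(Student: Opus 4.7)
The plan is to exploit the cyclic symmetry built into $A_4$. Let $\widetilde{L}/K$ be the Galois closure, so $\Gal(\widetilde{L}/K)\cong A_4$, and let $M = \widetilde{L}^{V_4}$ be the fixed field of the unique normal Sylow-$2$ subgroup. Then $\Gal(M/K)\cong C_3$ and $\Gal(\widetilde{L}/M)\cong V_4$. Since $\mu_3\not\subseteq K$, Lemma~\ref{lem-A4-vs-S4-basic-facts}(3) forces $M/K$ to be unramified, and since $L/K$ is totally ramified, $L\cap M = K$, so $\widetilde{L} = LM$ with $\widetilde{L}/L$ unramified of degree $3$ and $\widetilde{L}/M$ totally ramified of degree $4$.

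Two applications of the tower law (Lemma~\ref{lem-A4-vs-S4-basic-facts}(1)) will then yield
$$
3\, v_K(d_{L/K}) \;=\; v_K(d_{\widetilde{L}/K}) \;=\; 3\, v_M(d_{\widetilde{L}/M}),
$$
since $f(L/K)=1$ together with $\widetilde{L}/L$ unramified kills one boundary term, while $v_K(d_{M/K})=0$ together with $f(M/K)=3$ handles the other. It therefore suffices to show $3\mid v_M(d_{\widetilde{L}/M})$.

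The key remaining step is to invoke Lemma~\ref{lem-disc-of-V4-in-terms-of-quad-exts} on the $V_4$-extension $\widetilde{L}/M$, writing $v_M(d_{\widetilde{L}/M}) = \sum_{i=1}^{3} v_M(d_{E_i/M})$ over its three quadratic sub-extensions $E_1,E_2,E_3$. The conjugation action of $\Gal(M/K)\cong C_3$ on $\Gal(\widetilde{L}/M)\cong V_4$ is nontrivial (the center of $A_4$ is trivial), so it has order $3$; since any order-$3$ automorphism of $V_4$ must cyclically permute the three nontrivial elements, it also cyclically permutes the $E_i$. Because each element of $\Gal(\widetilde{L}/K)$ preserves $v_M$ on $M$, the three summands are equal and $v_M(d_{\widetilde{L}/M}) = 3\, v_M(d_{E_1/M})$, completing the proof. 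The only mildly delicate point is this Galois-orbit argument for the $E_i$; everything else is tower-law bookkeeping.
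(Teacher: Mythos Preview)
Your proof is correct and follows essentially the same route as the paper: pass to the Galois closure, identify the $C_3$-subfield as unramified, apply the tower law twice to reduce to showing $3\mid v_M(d_{\widetilde{L}/M})$, and then use Lemma~\ref{lem-disc-of-V4-in-terms-of-quad-exts} together with the fact that the three intermediate quadratics are permuted by the $C_3$-action. The paper phrases this last step as ``the three double transpositions in $A_4$ are conjugate, so the $E_i/K$ are $K$-isomorphic'' and then deduces equality of the $v_M(d_{E_i/M})$ via the tower law, whereas you argue directly that a lift $\sigma$ of a generator of $\Gal(M/K)$ permutes the $E_i$ and preserves $v_M$; these are two wordings of the same observation.
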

\begin{proof}
    Let $M$ be a normal closure of $L$ over $K$, so $\Gal(M/K) \cong A_4$, and let $F = M^{V_4}$. The extension $F/K$ is a $C_3$-extension, so it is unramified by Lemma~\ref{lem-A4-vs-S4-basic-facts}, Part (3). Since $L/K$ is totally ramified, we have $e(M/K) = 4$ and $f(M/K) = 3$, so $V_4$ is the inertia group of $M/K$. Since $F/K$ is unramified, the tower law for discriminant gives 
    $$
    v_K(d_{M/K}) = 3v_F(d_{M/F}). 
    $$
    Let $E_1,E_2, E_3$ be the three intermediate extensions of the $V_4$-extension $M/F$. Since the three double transpositions in $A_4$ are conjugate, the extensions $E_i/K$ are isomorphic, so they have the same discriminant. By the tower law for discriminant, it follows that the valuations
    $$
    v_{F}(d_{E_i/F}),\quad i = 1,2,3
    $$
    are all equal. By Lemma~\ref{lem-disc-of-V4-in-terms-of-quad-exts}, we have 
    $$
    v_F(d_{M/F}) = \sum_{i=1}^3 v_F(d_{E_i/F}) = 3v_F(d_{E_1/F}), 
    $$
    so 
    $$
    v_K(d_{M/K}) = 9v_F(d_{E_1/F}). 
    $$
    Since $M/L$ is unramified, the tower law also gives 
    $$
    v_K(d_{M/K}) = 3v_K(d_{L/K}),
    $$
    and the result follows.
\end{proof}

\begin{lemma}
    \label{lem-A4-exts-tot-ram}
    Suppose that $\mu_3 \not \subseteq K$. Then there is a bijection between $\Sigma^{A_4}$ and the set of isomorphism classes of $A_4$-extensions of $K$. 
\end{lemma}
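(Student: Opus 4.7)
The plan is to construct mutually inverse maps between the two sets. In one direction, send $[L] \in \Sigma^{A_4}$ to $[\widetilde{L}]$; this is well-defined since, by the definition of $\Sigma^{A_4}$, the normal closure $\widetilde{L}/K$ is an $A_4$-extension. In the reverse direction, given an $A_4$-extension $M/K$, send $[M]$ to $[M^H]$ for any $3$-Sylow subgroup $H \leq \Gal(M/K) \cong A_4$. Because the four $3$-Sylow subgroups of $A_4$ form a single conjugacy class, the $K$-isomorphism class of $M^H$ does not depend on the choice of $H$, so this backward map is well-defined.

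The main content is to check that $M^H$ lies in $\Sigma^{A_4}$. Clearly $[M^H : K] = [A_4 : H] = 4$. The Galois closure of $M^H$ inside $M$ is the fixed field of $\bigcap_{g \in A_4} gHg^{-1}$, a normal subgroup of $A_4$ contained in the order-$3$ subgroup $H$; since the only normal subgroups of $A_4$ are $\{1\}, V_4, A_4$, this intersection is trivial, so $\widetilde{M^H} = M$ has Galois group $A_4$. For total ramification, note that the inertia subgroup $I$ of $M/K$ is normal in $\Gal(M/K) \cong A_4$, hence lies in $\{1, V_4, A_4\}$. Unramified extensions have cyclic Galois group, so $I \neq \{1\}$. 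If $I = A_4$, then $f(M/K) = 1$, forcing $f(M^H/K) = 1$ and $e(M^H/K) = 4$. If $I = V_4$, then $f(M/K) = 3$, and since $f(M^H/K)$ divides both $3$ and $[M^H : K] = 4$, it must equal $1$; hence $M^H/K$ is totally ramified in this case too.

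Finally, the two maps are mutually inverse. Going $[M] \mapsto [M^H] \mapsto [\widetilde{M^H}]$ returns $[M]$ by the core computation above. Going $[L] \mapsto [\widetilde{L}] \mapsto [\widetilde{L}^{H'}]$: the subfield $L \subseteq \widetilde{L}$ is the fixed field of its stabilizer in $\Gal(\widetilde{L}/K) \cong A_4$, which has index $4$ and is therefore a $3$-Sylow; since all $3$-Sylows of $A_4$ are conjugate, the class $[\widetilde{L}^{H'}]$ equals $[L]$. The hypothesis $\mu_3 \not\subseteq K$ is inherited from the surrounding setup and, via Lemma~\ref{lem-A4-vs-S4-basic-facts}(3), ensures that the unramified subextension $M^{V_4}/K$ (when $I = V_4$) is the unique $C_3$-extension of $K$; it does not seem strictly essential to the bijection itself.

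The step I expect to be most delicate is the $I = V_4$ case of the total ramification check: here $M^H$ is linearly disjoint over $K$ from the unramified subextension $M^{V_4}$, and one must track inertia degrees carefully through the tower $K \subseteq M^H \subseteq M$ to conclude $f(M^H/K) = 1$ despite $M$ itself having nontrivial residue extension.
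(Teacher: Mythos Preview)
Your proof is correct and follows essentially the same strategy as the paper: establish the bijection via $L \mapsto \widetilde{L}$ and $M \mapsto M^H$ for $H$ a $3$-Sylow, then verify that $M^H$ is a totally ramified $A_4$-quartic by analysing the inertia subgroup $I \trianglelefteq A_4$.

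The one noteworthy difference is in how total ramification is checked. The paper uses the hypothesis $\mu_3 \not\subseteq K$ directly: since $M^{V_4}/K$ is a $C_3$-extension and (by Lemma~\ref{lem-A4-vs-S4-basic-facts}(3)) the only $C_3$-extension of $K$ is unramified, one gets $I \subseteq V_4$, and then normality plus nontriviality force $I = V_4$. You instead allow both possibilities $I = V_4$ and $I = A_4$ and dispose of each separately, the key step being your divisibility argument $f(M^H/K) \mid \gcd(3,4) = 1$ in the $I = V_4$ case. Your observation that the hypothesis $\mu_3 \not\subseteq K$ is not actually needed for the bijection is correct; the paper's route is slightly shorter under that hypothesis, while yours establishes the lemma in greater generality.
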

\begin{proof}
    For an $A_4$-quartic extension $L/K$, let $\widetilde{L}$ be the normal closure of $L$ over $K$. The map $L \mapsto \widetilde{L}$ is a well-defined bijection between the set of isomorphism classes of $A_4$-quartics and the set of isomorphism classes of $A_4$-extensions. To prove the lemma, it suffices to show that every $A_4$-extension of $K$ is the normal closure of a totally ramified $A_4$-quartic. 

    Let $M/K$ be an $A_4$-extension with inertia group $G_0\subseteq A_4$. Then $M^{V_4}/K$ is a $C_3$-extension, so it is unramified by Lemma~\ref{lem-A4-vs-S4-basic-facts}, Part (3), and therefore $G_0 \subseteq V_4$. Since $M/K$ is not cyclic, it is ramified, so $\# G_0 \geq 2$. Since $G_0$ is a normal subgroup of $A_4$, we must have $G_0 = V_4$, so $e(M/K) = 4$. Therefore, for any choice of $A_3\subseteq A_4$, the field $L := M^{A_3}$ is a totally ramified $A_4$-quartic extension of $K$ with normal closure $M$, so we are done. 
\end{proof}
\begin{lemma}
    \label{lem-sigma-A4-in-terms-of-Sigma-1}
    Suppose that $\mu_3\not\subseteq K$. We have 
    $$
    \Sigma^{A_4} = \bigcup_{\substack{m \\ 6 \mid m}} \Sigma_m^{\oneaut}. 
    $$
\end{lemma}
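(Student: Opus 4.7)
The plan is to prove two inclusions. The inclusion $\Sigma^{A_4}\subseteq \bigcup_{6\mid m}\Sigma_m^{\oneaut}$ follows by combining Lemma~\ref{lem-A4-quartic-implies-disc-mult-3}, which gives $3 \mid v_K(d_{L/K})$ for every $L \in \Sigma^{A_4}$, with Lemma~\ref{lem-T-m-iff-val-m-by-12}(2), which forces $v_K(d_{L/K})$ to be even whenever $\Sigma_m^{\oneaut}$ is nonempty.

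For the reverse inclusion I would show that no $L \in \Sigma^{S_4}$ has $v_K(d_{L/K})$ divisible by $3$. Setting $G = \Gal(\widetilde{L}/K) \cong S_4$ and $H = \Gal(\widetilde{L}/L) \cong S_3$, I would analyse the lower-numbering ramification groups $G_i$ of $\widetilde{L}/K$. Since $L/K$ is totally ramified of degree $4$, we have $\langle G_0, H \rangle = G$, and $G_1$ must be a normal Sylow-$2$ subgroup of $G_0$ with cyclic quotient of order coprime to $2$. Running through the normal subgroups of $S_4$, the case $G_0 = V_4$ is ruled out because $\widetilde{L}^{V_4}/K$ would be unramified with non-cyclic Galois group $S_3$, and $G_0 = S_4$ is ruled out because $S_4$ has no normal Sylow-$2$ subgroup; hence $G_0 = A_4$ and $G_1 = V_4$. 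For $i \geq 2$ the group $G_i$ is a normal subgroup of $S_4$ contained in $V_4$, so $G_i \in \{\{1\}, V_4\}$, and there is a unique $b \geq 1$ with $G_i = V_4$ for $1 \leq i \leq b$ and $G_i = \{1\}$ for $i > b$. Using Serre's formula $\sum_{i \geq 0}(|G_i| - 1)$ for the different together with the tower formula of Lemma~\ref{lem-A4-vs-S4-basic-facts}(1) applied to $\widetilde{L}/L/K$, a routine calculation gives $v_K(d_{L/K}) = 3 + b$.

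The main obstacle is to show $3 \nmid b$. I would use the standard fact from ramification theory that the canonical injection
$$
G_b/G_{b+1} \hookrightarrow U_{\widetilde{L}}^{(b)}/U_{\widetilde{L}}^{(b+1)} \cong \F_{\widetilde{L}}
$$
is $G_0$-equivariant, where $G_0$ acts on the target via the $b$-th power of the tame character $\chi\colon G_0/G_1 \to \F_{\widetilde{L}}^\times$. In our setting the source $G_b/G_{b+1} = V_4$ carries a non-trivial action of $G_0/G_1 = C_3$ (which cyclically permutes the three non-identity elements of $V_4$), while $\chi$ takes values in the cube roots of unity of $\F_{\widetilde{L}}$, so $\chi^b$ is trivial precisely when $3 \mid b$. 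For the equivariant injection to exist, the target action must be non-trivial, forcing $3 \nmid b$, and hence $v_K(d_{L/K}) = 3 + b \not\equiv 0 \pmod{3}$. This contradicts $6 \mid v_K(d_{L/K})$, so $\Sigma^{S_4} \cap \bigcup_{6 \mid m}\Sigma_m^{\oneaut}$ is empty, completing the proof.
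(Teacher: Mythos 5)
Your proof is correct, but it takes a genuinely different route from the paper's. For the forward inclusion you and the paper do the same thing (Lemma~\ref{lem-A4-quartic-implies-disc-mult-3} plus the evenness of $m$ for nonempty $\Sigma_m^{\oneaut}$). For the reverse inclusion the paper uses a counting argument: Lemma~\ref{lem-A4-exts-tot-ram} together with Wei--Ji's formula gives $\#\Sigma^{A_4} = \frac{q^{2e_K}-1}{3}$, and summing the explicit formula of Corollary~\ref{cor-size-of-Sigma-1-m} over $6 \mid m$ produces the same total, so the inclusion of finite sets of equal cardinality is an equality. You instead prove directly that $\Sigma_m^{S_4}$ is empty whenever $3 \mid m$, via the lower-numbering ramification filtration of the Galois closure. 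I checked the details: $G_0$ is normal in $S_4$ with $G_0H = G$, so $G_0 \in \{V_4, A_4, S_4\}$, and your eliminations of $V_4$ (unramified non-cyclic quotient) and $S_4$ (no normal Sylow-$2$) are valid; the computation $v_{\widetilde{L}}(\mathfrak{d}_{\widetilde{L}/K}) = 11 + 3b$, $v_L(d_{\widetilde{L}/L}) = 4$, and hence $v_K(d_{L/K}) = 3+b$ via the tower law checks out; and the appeal to Serre's $\theta_0^{\,b}$-equivariance of $G_b/G_{b+1} \hookrightarrow U^{(b)}_{\widetilde{L}}/U^{(b+1)}_{\widetilde{L}}$ (Local Fields, IV, \S 2, Prop.~9) correctly forces $3 \nmid b$ because $G_0/G_1 \cong C_3$ permutes the involutions of $G_b/G_{b+1} = V_4$ non-trivially while $\theta_0$ is injective. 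The trade-off: the paper's argument is shorter given what is already established but leans on Wei--Ji's global count and the density computation; yours is independent of both, requires the ramification-group machinery, and yields the stronger unconditional fact (valid for any $2$-adic $K$, not just $\mu_3 \not\subseteq K$) that every totally ramified $S_4$-quartic has $3 \nmid v_K(d_{L/K})$, which also re-derives part of Theorem~\ref{thm-S4-A4-f-odd} structurally.
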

\begin{proof}
    By Corollary~\ref{cor-size-of-Sigma-1-m} and Lemma~\ref{lem-A4-quartic-implies-disc-mult-3}, we have 
    $$
    \Sigma^{A_4} \subseteq \bigcup_{\substack{m \\ 6 \mid m}} \Sigma_m^{\oneaut}. 
    $$
    Lemma~\ref{lem-A4-exts-tot-ram} and \cite[Theorem~1.2]{wei-ji} tell us that 
    $$
    \#\Sigma^{A_4} = \frac{q^{2e_K} - 1}{3}. 
    $$
    From Corollary~\ref{cor-size-of-Sigma-1-m}, we obtain
    $$
    \sum_{\substack{m \\ 6\mid m}} \#\Sigma_m^{\oneaut} = \frac{q^{2e_K} - 1}{3},
    $$
    and the result follows. 
\end{proof}
\begin{proof}[Proof of Theorem~\ref{thm-S4-A4-f-odd}]
    Lemma~\ref{lem-A4-vs-S4-basic-facts}(2) tells us that $\mu_3 \not\subseteq K$. The result then follows from Corollary~\ref{cor-size-of-Sigma-1-m} and Lemma~\ref{lem-sigma-A4-in-terms-of-Sigma-1}.
\end{proof}

\begin{proof}[Proof of Corollary~\ref{cor-premass-of-S4-A4-f-even}]
    Theorem~\ref{thm-S4-A4-f-even} tells us that $\m(\Sigma^{S_4}) = 0$ and
    $$
    \m(\Sigma^{A_4}) = \sum_{\substack{4 \leq m \leq 6e_K + 2 \\ \text{$m$ even}}} q^{- \lceil \frac{2m}{3}\rceil - 1}(q-1)\Big(1 + \mathbbm{1}_{6 \mid m}\cdot \big(\frac{1 - 2q}{3q}\big)\Big).
    $$
    Setting $m = 2k$, this is the same as 
    $$
    \sum_{k=2}^{3e_K + 1}q^{- \lceil \frac{4k}{3}\rceil - 1}(q-1)\Big(1 + \mathbbm{1}_{3\mid k}\big(\frac{1-2q}{3q}\big)\Big),
    $$
    which is equal to 
    $$
    (q-1)\sum_{l=1}^{e_K}\Big(
        q^{-4l} + \frac{1}{3}q^{-4l-2}(q+1) + q^{-4l-3}
    \Big) = (q-1)\sum_{l=1}^{e_K}\Big(
        q^{-4l-3} + \frac{1}{3}q^{-4l-2} + \frac{1}{3}q^{-4l-1} + q^{-4l}    
    \Big).
    $$
    It is easy to see that 
    $$
    \sum_{l=1}^{e_K} q^{-4l} = q^{-4e_K}\cdot \frac{q^{4e_K}-1}{q^4-1},
    $$
    so the quantity we are computing equals 
    $$
    (q-1)\cdot\frac{q^{4e_K}-1}{q^4-1}\cdot \Big(
       q^{-4e_K-3} + \frac{1}{3}q^{-4e_K-2} + \frac{1}{3}q^{-4e_K-1} + q^{-4e_K}
    \Big),
    $$
    and this is equal to 
    $$
    \frac{1}{3}(q-1)\cdot\frac{q^{4e_K}-1}{q^4-1}\cdot q^{-4e_K-3}\Big(
        3q^3 + q^2 + q + 3
    \Big).
    $$
\end{proof}

\begin{proof}[Proof of Corollary~\ref{cor-premass-of-S4-A4-f-odd}]
    By Theorem~\ref{thm-S4-A4-f-odd}, we have 
    \begin{align*}
        \m(\Sigma^{S_4}) &= \sum_{\substack{4 \leq m \leq 6e_K + 2 \\ 2 \mid m, \hspace{0.25em} 3\nmid m}} q^{\lfloor\frac{m}{3} \rfloor - m - 1}(q-1)
        \\
        &= q^{-1}(q-1)\sum_{\substack{2 \leq k \leq 3e_K + 1 \\ 3\nmid k}}q^{-\lceil \frac{4k}{3}\rceil}
        \\
        &= q^{-1}(q-1)\sum_{l=1}^{e_K}(q^{-2 - 4l} - q^{1 - 4l})
        \\
        &= q^{-1}(q-1)(q + q^{-2})\sum_{l=1}^{e_K}q^{-4l}
        \\
        &= q^{-3}(q-1)(q^3+1)\cdot \frac{1 - q^{-4e_K}}{q^4 - 1}
        \\
        &= \frac{q^3 + 1}{q^3 + q^2 + q + 1}\cdot (q^{-3} - q^{-4e_K - 3}). 
    \end{align*}
    The computation for $\m(\Sigma^{A_4})$ is similar but easier, so we omit it. 
\end{proof}

\section{The Case $G = V_4$}
\label{sec-G=V4}

\begin{lemma}
    \label{lem-half-of-hecke}
    Let $d \in K^\times\setminus K^{\times 2}$ and let $E = K(\sqrt{d})$. If $v_K(d)$ is even, then $v_K(d_{E/K})$ is an even integer with $2 \leq v_K(d_{E/K})\leq 2e_K$. If $v_K(d)$ is odd, then $v_K(d_{E/K}) = 2e_K + 1$.
\end{lemma}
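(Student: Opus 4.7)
The plan is to reduce $d$ to a canonical form in $K^\times/(K^\times)^2$ (since $E$ depends only on the class $[d]$) and then compute $v_K(d_{E/K})$ from an explicit uniformiser of $E$ by differentiating its minimal polynomial.

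\textbf{Odd case.} Rescaling $d$ by $\pi_K^{-2k}$, I may assume $v_K(d) = 1$. Then $X^2 - d$ is Eisenstein, $\sqrt{d}$ is a uniformiser of $E$, the extension $E/K$ is totally ramified, and the different equals $(2\sqrt{d})$. Therefore $v_K(d_{E/K}) = v_E(2\sqrt{d}) = 2e_K + 1$.

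\textbf{Even case, normalisation.} Rescaling, I assume $d \in \co_K^\times$. Since $\F_K$ has characteristic $2$, Frobenius is surjective on $\F_K$; combined with Hensel's lemma, this lets me divide $d$ by a unit square to arrange $d \in U_K^{(1)}$, so $d = 1 + \pi_K^j w$ with $v_K(w) = 0$. The key identity $(1 + \pi_K^{j/2} s)^2 \equiv 1 + \pi_K^j s^2 \pmod{\p_K^{j+1}}$, valid for even $j < 2e_K$, combined with Frobenius-surjectivity to pick $s$ with $s^2 \equiv -w \pmod{\p_K}$, lets me strictly increase $j$ while it is even and less than $2e_K$. After iteration, either $j$ is odd with $1 \leq j \leq 2e_K - 1$, or $j \geq 2e_K$. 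For $j \geq 2e_K + 1$, Hensel applied to $X^2 - d$ at $X = 1$ forces $d \in (K^\times)^2$, contradicting the hypothesis. For $j = 2e_K$, an Artin--Schreier-type analysis of the residual quadratic shows either that $d$ is a square or that $E/K$ is the unramified quadratic extension, giving $v_K(d_{E/K}) = 0$; this boundary case is elided by the stated bound but is implicitly excluded by the paper's focus on ramified extensions.

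\textbf{Even case, $j$ odd.} Set $\beta = \sqrt{d} - 1 \in E$, so $\beta^2 + 2\beta = \pi_K^j w$. Comparing the three valuations in this equation forces $E/K$ to be totally ramified with $v_E(\beta) = j$. Since $\gcd(j, 2) = 1$, there exist integers $a, b$ with $aj + 2b = 1$, and $\pi_E := \beta^a \pi_K^b$ is a uniformiser of $E$. Its minimal polynomial over $K$ comes out Eisenstein, so $\co_E = \co_K[\pi_E]$, and differentiating and evaluating at $\pi_E$ yields $v_K(d_{E/K}) = 2e_K - j + 1$, an even integer in $[2, 2e_K]$ as $j$ ranges over odd values in $[1, 2e_K - 1]$.

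The main obstacle is the iterative normalisation in the even case: this is where the parity of $v_K(d_{E/K})$ is secretly encoded, and it relies on the quirks of squaring in $U_K^{(j)}$ for $j < 2e_K$ versus $j = 2e_K$. Once $j$ is odd, the remaining discriminant computation is a direct, if fiddly, application of the formula for the different of a simple Eisenstein extension.
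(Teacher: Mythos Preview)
Your approach is genuinely different from the paper's, which is a one-line citation of Hecke's theorem (the full statement appears later in the paper as Lemma~\ref{lem-hecke}). You instead rederive the relevant special case from scratch: normalising $d$ in $K^\times/(K^\times)^2$ to either a uniformiser or to $1 + \pi_K^j w$ with $j$ odd, then reading off the different from an explicit uniformiser of $E$. This is more self-contained and explains \emph{why} the parity constraint on $v_K(d_{E/K})$ appears (it is forced by the fact that the iterative squaring trick in $U_K^{(j)}$ always lands on odd $j$ or on $j \geq 2e_K$), whereas the paper treats Hecke's theorem as a black box. You also correctly flag that the unramified quadratic extension, which has $v_K(d) = 0$ and $v_K(d_{E/K}) = 0$, falls outside the stated bound $2 \leq v_K(d_{E/K})$; the lemma as written is literally false in that one case, though it is only applied in the paper to totally ramified extensions.

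One step is thinner than it should be: you assert that differentiating the minimal polynomial of $\pi_E = \beta^a\pi_K^b$ gives $v_K(d_{E/K}) = 2e_K - j + 1$, but you do not carry out the computation. The cleanest way is to take $a = 1$, $b = (1-j)/2$, so $\pi_E = \beta\,\pi_K^{(1-j)/2}$; then $f'(\pi_E) = \pi_E - \sigma\pi_E = (\beta - \sigma\beta)\pi_K^{(1-j)/2} = 2\sqrt{d}\,\pi_K^{(1-j)/2}$, whose $v_E$-valuation is $2e_K + 0 - (j-1) = 2e_K - j + 1$ as claimed. With that detail filled in, your argument is complete.
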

\begin{proof}
    This is part of the $p=2$ case of \cite[Theorem~2.4]{hecke-theorem}.
\end{proof}

\begin{lemma}
    \label{lem-V4-ext-implies-even-m}
    Suppose that $\Sigma_m^{V_4}$ is nonempty. Then $m$ is an even integer and $6 \leq m \leq 6e_K + 2$. 
\end{lemma}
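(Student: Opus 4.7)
The plan is to reduce to the three quadratic subextensions of $L/K$ and apply Lemmas~\ref{lem-disc-of-V4-in-terms-of-quad-exts} and \ref{lem-half-of-hecke}. Fix $L \in \Sigma_m^{V_4}$, and let $E_1, E_2, E_3$ be the three intermediate quadratic subextensions of the $V_4$-extension $L/K$. Since $L/K$ is totally ramified of degree $4$, each $E_i/K$ must be totally ramified of degree $2$ (otherwise $e(L/K)$ would be at most $2$). By Lemma~\ref{lem-disc-of-V4-in-terms-of-quad-exts}, we have
$$
m = v_K(d_{L/K}) = \sum_{i=1}^3 v_K(d_{E_i/K}).
$$

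The next step is to control each $v_K(d_{E_i/K})$. Write $E_i = K(\sqrt{d_i})$ with $d_i \in K^\times$. By Lemma~\ref{lem-half-of-hecke}, each $v_K(d_{E_i/K})$ is either an even integer in $[2, 2e_K]$ (when $v_K(d_i)$ is even) or equals $2e_K + 1$ (when $v_K(d_i)$ is odd).

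To establish parity, I would use Kummer theory: the three classes $[d_1], [d_2], [d_3] \in K^\times / K^{\times 2}$ together with the identity form a subgroup of order $4$, so $d_1 d_2 d_3 \in K^{\times 2}$, which forces
$$
v_K(d_1) + v_K(d_2) + v_K(d_3) \equiv 0 \pmod 2.
$$
Hence the number of indices $i$ for which $v_K(d_i)$ is odd (equivalently, for which $v_K(d_{E_i/K}) = 2e_K + 1$) is either $0$ or $2$. In both cases the sum $m$ is even.

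Finally, the bounds follow from arithmetic on the possible discriminant valuations. The minimum of $v_K(d_{E_i/K})$ is $2$ (even case) and the minimum in the odd case is $2e_K + 1 \geq 3$, so in any case each summand is at least $2$, giving $m \geq 6$. For the upper bound, if there are no odd terms then $m \leq 3 \cdot 2e_K = 6e_K$; if there are two odd terms then $m \leq 2(2e_K + 1) + 2e_K = 6e_K + 2$. Hence $m \leq 6e_K + 2$, completing the proof. The only subtlety is the parity argument via Kummer theory; everything else is a direct combination of the cited lemmas.
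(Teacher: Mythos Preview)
Your proof is correct and follows essentially the same approach as the paper's: both reduce to the three intermediate quadratic subfields via Lemma~\ref{lem-disc-of-V4-in-terms-of-quad-exts}, use the Kummer-theoretic relation $d_1d_2d_3 \in K^{\times 2}$ to force the parity constraint, and then invoke Lemma~\ref{lem-half-of-hecke} to obtain the bounds. Your version is slightly more explicit about why each $E_i/K$ is totally ramified and spells out the case analysis for the upper bound, but the argument is the same.
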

\begin{proof}
    Let $L \in \Sigma_m^{V_4}$, and let $E_1, E_2$ and $E_3$ be the intermediate quadratic subfields of $L$. Let $c_i = v_K(d_{E_i/K})$ for each $i$, so that
    $$
    m = c_1 + c_2 + c_3,
    $$
    by Lemma~\ref{lem-disc-of-V4-in-terms-of-quad-exts}.
    We may write $E_i = K(\sqrt{d_i})$, for $d_i \in K^\times\setminus K^{\times 2}$, such that $d_1d_2d_3 \in K^{\times 2}$.  Since $v_K(d_1d_2d_3)$ is even, it follows from Lemma~\ref{lem-half-of-hecke} that either $0$ or $2$ of the $c_i$ are equal to $2e_K + 1$, and the rest are even integers with $2 \leq c_i \leq 2e_K$. The result follows. 
\end{proof}

\begin{lemma}
    \label{lem-size-Sigma_V4-m}
{\cite[Lemma~4.7]{tunnell}} Let $m$ be a positive even integer with $2 \leq m \leq 6e_K + 2$. Then 
$$
\#\Sigma_m^{V_4} = 2(q-1)q^{\frac{m-4}{2}}\Big(
    q^{-\lfloor \frac{m}{6}\rfloor}(1 + \mathbbm{1}_{3 \mid m}\cdot \frac{q-2}{3}) - \mathbbm{1}_{m\leq 4e_K + 2}\cdot q^{-\lfloor\frac{m-2}{4}\rfloor}
\Big)
$$
\end{lemma}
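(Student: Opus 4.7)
The formula is a direct transcription of \cite[Lemma~4.7]{tunnell} into our notation, so the plan is to invoke Tunnell and verify the translation, rather than redo the calculation. Nevertheless, it helps to understand \emph{why} each term appears, and one could in principle re-derive it using only the tools already in the excerpt.

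Tunnell's approach, which is worth sketching: a $V_4$-quartic $L/K$ is determined by its set $\{E_1, E_2, E_3\}$ of three distinct quadratic subfields, equivalently by an unordered triple $\{d_1, d_2, d_3\} \subseteq K^\times / K^{\times 2}$ with $d_1 d_2 d_3 \in K^{\times 2}$ (where $E_i = K(\sqrt{d_i})$). By Lemma~\ref{lem-disc-of-V4-in-terms-of-quad-exts} one has $m = c_1 + c_2 + c_3$ with $c_i = v_K(d_{E_i/K})$, and Lemma~\ref{lem-half-of-hecke} restricts each $c_i$ to be either an even integer in $[2, 2e_K]$ or the ``wild'' value $2e_K + 1$. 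As in the proof of Lemma~\ref{lem-V4-ext-implies-even-m}, the number of wild $c_i$ is $0$ or $2$. The enumeration then proceeds by: (i) counting the quadratic extensions of $K$ with each prescribed conductor, using the filtration $\{U_K^{(i)}\}$ on $K^\times / K^{\times 2}$; (ii) summing over valid unordered triples with $\sum c_i = m$; and (iii) weighting by the number of compatible square classes satisfying $d_1 d_2 d_3 \in K^{\times 2}$. The main factor $q^{-\lfloor m/6\rfloor}$ corresponds to the generic ``balanced'' triples, the bonus $\tfrac{q-2}{3}$ on multiples of $6$ comes from configurations with all three $c_i$ equal, and the subtracted term $\mathbbm{1}_{m \leq 4e_K + 2}\cdot q^{-\lfloor (m-2)/4 \rfloor}$ corrects for triples in which one of the $c_i$ is forced to take the wild value $2e_K+1$.

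The one step I would verify in detail is the boundary behaviour: the formula must vanish at $m = 2$ and $m = 4$ (it does, since both $q^{-\lfloor m/6 \rfloor}$ and $q^{-\lfloor (m-2)/4 \rfloor}$ evaluate to $1$ and cancel), and it must degenerate correctly as $m$ crosses the thresholds $4e_K + 2$ and $6e_K + 2$ at which the wild case becomes impossible or forced. The main obstacle in any \emph{from-scratch} proof would be the delicate enumeration in step (i), which is sensitive to the jumps of the unit filtration on $K^\times / K^{\times 2}$; but since we are citing Tunnell's already-established count, this work is not repeated here.
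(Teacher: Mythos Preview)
Your approach matches the paper's exactly: the lemma is stated as a citation of \cite[Lemma~4.7]{tunnell}, with no independent proof given, so ``invoke Tunnell and check the translation'' is precisely what the paper does. The only additional content in the paper is the remark immediately following the lemma, noting that Tunnell's inequality $d > \tfrac{c-1}{3}$ (strict) is what forces all three intermediate quadratics to be ramified, hence the count is over totally ramified $V_4$-extensions.

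One small point about your sketch: your interpretation of the subtracted term $\mathbbm{1}_{m\leq 4e_K+2}\cdot q^{-\lfloor (m-2)/4\rfloor}$ is backwards. Two of the $c_i$ can equal the wild value $2e_K+1$ only when $m \geq 4e_K + 4$, so the indicator $m \leq 4e_K + 2$ marks exactly the range where \emph{no} wild values occur; the subtraction there is an inclusion--exclusion correction internal to the all-even case (roughly, removing degenerate configurations where two of the $d_i$ coincide), not a wild-value adjustment. This does not affect the validity of your proposal, since you are deferring the actual enumeration to Tunnell, but it is worth getting right if you ever expand the sketch.
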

\begin{remark}
    Tunnell omits the statement that he only counts totally ramified extensions. This fact can be seen from the second paragraph of his proof, where he writes $d > \frac{c-1}{3}$. The fact that this inequality is strict means that all intermediate quadratic fields have positive discriminant, so the $V_4$-extension is totally ramified. 
\end{remark}
\begin{proof}[Proof of Theorem~\ref{thm-size-of-Sigma-V4-m}]
    The result follows immediately from Lemmas~\ref{lem-V4-ext-implies-even-m} and \ref{lem-size-Sigma_V4-m}.
\end{proof}

\begin{proof}[Proof of Corollary~\ref{cor-premass-C2xC2}]
    By Lemmas~\ref{lem-V4-ext-implies-even-m} and \ref{lem-size-Sigma_V4-m}, we have 
    \begin{align*}
    \m(\Sigma^{V_4}) &= \sum_{\substack{4 \leq m \leq 6e_K + 2 \\ \text{$m$ even}}} \frac{\#\Sigma_m^{V_4}}{4q^m}
    \\
    &= \frac{1}{2}(q-1)\cdot\Big(\sum_{\substack{4 \leq m \leq 6e_K + 2 \\ \text{$m$ even}}} q^{-\frac{m+4}{2} - \lfloor \frac{m}{6}\rfloor}(1 + \mathbbm{1}_{3\mid m}\cdot \frac{q-2}{3}) - \sum_{\substack{4 \leq m \leq 4e_K + 2 \\ \text{$m$ even}}} q^{- \frac{m+4}{2} - \lfloor \frac{m-2}{4}\rfloor}\Big).
    \end{align*}
    We have 
    \begin{align*}
        \sum_{\substack{4 \leq m \leq 6e_K + 2 \\ \text{$m$ even}}} q^{-\frac{m+4}{2} - \lfloor \frac{m}{6}\rfloor}(1 + \mathbbm{1}_{3\mid m}\cdot \frac{q-2}{3}) &= \sum_{k=2}^{3e_K + 1} q^{-k - 2 - \lfloor \frac{k}{3}\rfloor}(1 + \mathbbm{1}_{3\mid k}\cdot \frac{q-2}{3})
        \\
        &= \sum_{l=1}^{e_K}\Big(q^{-4l} + q^{-4l - 2}\cdot \frac{q+1}{3} + q^{-4l-3}\Big).
    \end{align*}
    It is easy to see that 
    $$
    \sum_{l=1}^{e_K}q^{-4l} = q^{-4e_K}\cdot \frac{q^{4e_K} - 1}{q^4 - 1},
    $$
    which means that 
    $$
        \sum_{\substack{4 \leq m \leq 6e_K + 2 \\ \text{$m$ even}}} q^{-\frac{m+4}{2} - \lfloor \frac{m}{6}\rfloor}(1 + \mathbbm{1}_{3\mid m}\cdot \frac{q-2}{3}) = \frac{1}{3}\cdot q^{-4e_K-3}\cdot \frac{q^{4e_K} - 1}{q^4 - 1} \cdot (3q^3 + q^2 + q + 3)
    $$
    Similarly, we obtain 
    $$
    \sum_{\substack{4 \leq m \leq 4e_K + 2 \\ \text{$m$ even}}} q^{- \frac{m+4}{2} - \lfloor \frac{m-2}{4}\rfloor} = q^{-3e_K - 3}\cdot \frac{q^{3e_K} - 1}{q^3 - 1} \cdot (q^2 + 1),
    $$
    and the result follows. 
\end{proof}

\section{The Case $G = C_4$}
\label{sec-G=C4}

\subsection{Sketch of our approach}

Let $L/K$ be a $C_4$-extension and let $E$ be its unique nontrivial intermediate field. 

For a $2$-adic field $F$, write $\Sigma_{\quadr/F}$ for the set of isomorphism classes of quadratic extensions of $F$. For any real number $m$, write $\Sigma_{\quadr/F,m}$ (respectively $\Sigma_{\quadr/F,\leq m}$) for the set of $E \in \Sigma_{\quadr/F}$ with $v_F(d_{E/F}) = m$ (respectively $v_F(d_{E/F}) \leq m$). For quadratic extensions $E/K$ and $G \in \{D_4, V_4,C_4\}$, write $\Sigma_{\quadr/E}^{G/K}$ for the set of $L \in \Sigma_{\quadr/E}$ such that $L/K$ has Galois closure group isomorphic to $G$. Finally, for any real number $m_2$, write $\Sigma_{\quadr/E,m_2}^{G/K}$  (respectively $\Sigma_{\quadr/E,\leq m_2}^{G/K}$) for $\Sigma_{\quadr/E}^{G/K} \cap \Sigma_{\quadr/E,m_2}$ (respectively $\Sigma_{\quadr/E}^{G/K} \cap \Sigma_{\quadr/E,\leq m_2}$).

Call a quadratic extension $E/K$ \emph{$C_4$-extendable} if there is some quadratic extension $L/E$ such that $L/K$ is a $C_4$-extension. For any real number $m_1$, write $\Sigma^{\extendable}_{\quadr/K,m_1}$ (respectively $\Sigma^{\extendable}_{\quadr/K,\leq m_1}$) for the set of $C_4$-extendable extensions $E/K$ such that $v_K(d_{E/K}) = m_1$ (respectively $v_K(d_{E/K}) \leq m_1$). 

Recall that we write $d_{(-1)} = v_K(d_{K(\sqrt{-1})/K})$. In the current subsection, we state the main results, whose proofs are postponed to the later subsections. 
\begin{definition}
    \label{defi-N-ext}
    For even integers $m_1$ with $2 \leq m_1 \leq 2e_K$, define 
$$
N_{\ext}(m_1) := (1 + \mathbbm{1}_{m_1\leq 2e_K - d_{(-1)}})q^{\frac{m_1}{2} - 1}(q - 1 - \mathbbm{1}_{m_1 = 2e_K - d_{(-1)} + 2}).
$$
For $m_1 = 2e_K + 1$, define
    $$
    N_{\ext}(2e_K + 1) = \begin{cases}
        2q^{e_K}\quad\text{if $-1 \in K^{\times 2}$},
        \\
        q^{e_K} \quad\text{if $K(\sqrt{-1})/K$ is quadratic and totally ramified},
        \\
        0\quad\text{if $K(\sqrt{-1})/K$ is quadratic and unramified}.
    \end{cases}
    $$
And set $N_{\ext}(m_1) = 0$ for all other real numbers $m_1$. 
\end{definition}
\begin{lemma}
    \label{lem-N-ext}
    If $E/K$ is a totally ramified $C_4$-extendable extension, then $2 \leq v_K(d_{E/K}) \leq 2e_K + 1$ and $v_K(d_{E/K})$ is either even or equal to $2e_K + 1$. For such $m_1$, we have 
    $$
    \#\Sigma^{\extendable}_{\quadr/K,m_1} = N_{\ext}(m_1). 
    $$
\end{lemma}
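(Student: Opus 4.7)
The range restriction on $v_K(d_{E/K})$ is immediate from Lemma~\ref{lem-half-of-hecke}. For the count, my plan is first to reduce $C_4$-extendability to a Hilbert symbol condition, then count square classes satisfying that condition with a prescribed conductor. The embedding criterion is classical: $E = K(\sqrt d)$ is $C_4$-extendable if and only if $d \in N_{E/K}(E^\times)\cdot K^{\times 2}$, which by the Hilbert-symbol identity $(d,d)_K = (-1,d)_K$ is equivalent to $(-1,d)_K = 1$. I would derive this via Kummer theory on $E/K$: any $C_4$-extension $L = E(\sqrt{\alpha})$ of $K$ containing $E$ must satisfy $N_{E/K}(\alpha) \in d\cdot K^{\times 2}$, and conversely any such $\alpha$ gives rise to one.

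Set $Q_n := \{d \in K^\times/K^{\times 2} : v_K(d_{K(\sqrt d)/K}) \leq n\}$. A standard analysis of $K^\times/K^{\times 2}$ for a $2$-adic field $K$ (using the squaring map on the $U_K^{(i)}$) gives $|Q_{2k}| = 2q^k$ for $0 \leq k \leq e_K$ and $|Q_{2e_K+1}| = 4q^{e_K}$, with no jumps at odd $n \in [1, 2e_K - 1]$. By perfectness of the Hilbert pairing on $K^\times/K^{\times 2}$, the character $\chi_{-1} := (-1, \cdot)_K$ is trivial on $Q_n$ if and only if $-1 \in Q_n^\perp = U_K^{(n)} K^{\times 2}/K^{\times 2}$.

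The central technical input (and main obstacle) is the following duality: for $-1 \notin K^{\times 2}$ with $K(\sqrt{-1})/K$ totally ramified, $-1 \in U_K^{(n)} K^{\times 2}$ if and only if $n \leq 2e_K + 1 - d_{(-1)}$; for the unramified case, $-1 \in U_K^{(n)} K^{\times 2}$ if and only if $n \leq 2e_K$. Restricted to even $n$ with $n \leq 2e_K$, this uniformly becomes $n \leq 2e_K - d_{(-1)}$. I would establish this by the classical parameterization of totally ramified quadratic extensions of $K$ by square classes: for a unit representative $d$, the height $h(d) := \max\{n : d \in U_K^{(n)} K^{\times 2}\}$ and the discriminant valuation satisfy $h(d) + v_K(d_{K(\sqrt d)/K}) = 2e_K + 1$. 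Applying this to $d = -1$ yields the formula.

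With the duality in hand, the rest is arithmetic. For $m_1 = 2e_K + 1$ the $2q^{e_K}$ odd-valuation classes survive the condition $(-1, d) = 1$ completely, halfway, or not at all, according to whether $-1 \in K^{\times 2}$, $K(\sqrt{-1})/K$ is totally ramified, or $K(\sqrt{-1})/K$ is unramified (in the last case, every unit is a norm from $K(\sqrt{-1})$ and $\pi$ is not, so $(-1, u\pi)_K = -1$ for every $u \in \co_K^\times$). For even $m_1 \in \{2,\ldots, 2e_K\}$, I split into three cases according to whether $\chi_{-1}$ is trivial on $Q_{m_1}$ (yielding the doubled count $2q^{m_1/2 - 1}(q-1)$), trivial on $Q_{m_1 - 1}$ but not on $Q_{m_1}$ (precisely when $m_1 = 2e_K - d_{(-1)} + 2$, yielding $|Q_{m_1}|/2 - |Q_{m_1 - 1}| = q^{m_1/2 - 1}(q - 2)$), or on neither (yielding $(|Q_{m_1}| - |Q_{m_1 - 1}|)/2 = q^{m_1/2 - 1}(q - 1)$); these three cases combine into the single formula in the statement.
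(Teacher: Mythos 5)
Your argument is correct and lands on the same mathematical core as the paper --- the reduction of $C_4$-extendability to the condition $(-1,d)_K=1$ (the paper's Corollary~\ref{cor-extendable-iff-so2s} plus symmetry of the Hilbert symbol), Hecke's theorem relating the ``height'' of a square class to the discriminant exponent (the paper's Lemma~\ref{lem-hecke} and Corollary~\ref{cor-disc-square-mod-2t+1}), and a class-field-theoretic count --- but you organize the count quite differently. The paper builds general machinery for an arbitrary subgroup $\mathcal{A}\subseteq K^\times/K^{\times 2}$: it introduces $\co_K^{\mathcal{A}}$, $\mathcal{A}_t$, and $S_{K/K,t}^{\mathcal{A}}$, threads them through several short exact sequences and norm-group index computations (Lemmas~\ref{lem-SKKtA-in-terms-of-SKK0A}--\ref{lem-size-of-SKKtA}), specializes to $\mathcal{A}=\langle[-1]\rangle$ to get the cumulative count $\#\Sigma^{\extendable}_{\quadr/K,\leq m_1}$, and then differences. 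You instead run a direct character-orthogonality count of $\chi_{-1}=(-1,\cdot)_K$ against the filtration $Q_n$, using the duality $Q_n^{\perp}=U_K^{(n)}K^{\times 2}/K^{\times 2}$; your three cases (trivial on $Q_{m_1}$, trivial only on $Q_{m_1-1}$, nontrivial on both) reproduce exactly the three branches of $N_{\ext}$, and your handling of $m_1=2e_K+1$ matches the paper's. Your route is shorter and more self-contained for this one lemma; the paper's buys reusable generality (the author explicitly chooses to work with arbitrary $\mathcal{A}$), and its explicit treatment of the unramified subtlety in $\mathcal{A}_{e_K}$ corresponds to the case split you make when translating $h(-1)=2e_K+1-d_{(-1)}$ into the uniform condition $n\leq 2e_K-d_{(-1)}$ for even $n$. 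The one step you should spell out if writing this up is the identification $Q_n^{\perp}=U_K^{(n)}K^{\times 2}/K^{\times 2}$ itself (conductor--discriminant for quadratic extensions plus a cardinality count to upgrade the containment to equality); it is standard, but it is doing the same work as the paper's Corollary~\ref{cor-disc-square-mod-2t+1} and Lemma~\ref{lem-size-of-SKKtA} combined.
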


\begin{definition}
    \label{defi-N-C4}
    Let $m_1$ be an even integer with $2 \leq m_1 \leq e_K$. For each integer $m_2$, define 
    $$
    N^{C_4}(m_1,m_2) = \begin{cases}
        q^{m_1-1}\quad\text{if $m_2 = 3m_1 - 2$},
        \\
        q^{\lfloor \frac{m_1+m_2}{4}\rfloor} - q^{\lfloor \frac{m_1+m_2-2}{4}\rfloor} \quad\text{if $3m_1 \leq m_2 \leq 4e_K - m_1$ and $m_2$ is even},
        \\
        q^{e_K} \quad\text{if $m_2 = 4e_K - m_1 + 2$},
        \\
        0 \quad\text{otherwise}. 
    \end{cases}
    $$
    Suppose that $m_1 = 2e_K + 1$ or $m_1$ is even with $e_K < m_1 \leq 2e_K$. Then define 
    $$
    N^{C_4}(m_1,m_2) = \begin{cases}
        2q^{e_K}\quad\text{if $m_2 = m_1 + 2e_K$},
        \\
        0\quad\text{otherwise}. 
    \end{cases}
    $$
    Finally, define $N^{C_4}(m_1,m_2) = 0$ for all other real numbers $m_1$ and $m_2$. 
\end{definition}
\begin{lemma}
    \label{lem-N-E-m2}
    Let $E$ be a totally ramified $C_4$-extendable extension and let $m_1 = v_K(d_{E/K})$. For all $m_2$, we have 
    $$
    \#\Sigma_{\quadr/E,m_2}^{C_4/K} = N^{C_4}(m_1,m_2). 
    $$
\end{lemma}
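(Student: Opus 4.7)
The plan is to parametrize quadratic extensions $L/E$ by Kummer generators, impose a cocycle condition ensuring $L/K$ is $C_4$, and translate the prescribed discriminant valuation $m_2 = v_E(d_{L/E})$ into a unit-filtration level on the generator.

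Let $\sigma$ generate $\Gal(E/K)$, and write $L = E(\sqrt\alpha)$ for $[\alpha] \in E^\times / E^{\times 2}$ nontrivial. Then $L/K$ is Galois iff $\sigma(\alpha)/\alpha \in E^{\times 2}$; writing $\sigma(\alpha)/\alpha = \beta^2$, a direct computation with the natural lift of $\sigma$ to $L$ shows that $\Gal(L/K) \cong C_4$ iff $N_{E/K}(\beta) = -1$ (the $V_4$ alternative being $N_{E/K}(\beta) = 1$). This identifies $\Sigma_{\quadr/E}^{C_4/K}$ with a torsor under the subgroup $(K^\times \cdot E^{\times 2})/E^{\times 2}$ of $(E^\times / E^{\times 2})^\sigma$, non-empty exactly because $E$ is $C_4$-extendable by Lemma~\ref{lem-N-ext}.

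Next, I would relate $v_E(d_{L/E})$ to the depth of $[\alpha]$ in the unit filtration modulo squares: by the standard discriminant formula for quadratic Kummer extensions in residue characteristic $2$, $v_E(d_{L/E})$ decreases strictly with the largest $t$ such that $\alpha \in U_E^{(2t)} E^{\times 2}$ (with a separate edge case $v_E(d_{L/E}) = 4e_K + 1$ when $\alpha$ has odd valuation). The counting therefore splits level-by-level into counting $C_4$-admissible classes in each graded piece $U_E^{(2t)} E^{\times 2} / U_E^{(2t+2)} E^{\times 2}$, evaluated via $U_E^{(i)}/U_E^{(i+1)} \cong \F_E$ together with the $\sigma$-action; the surviving cosets are the $S_{E/K, t}$-strata of the paper's notation list, and the count reduces to sizes of certain $(\co_K/\p_K^{2t})^{\mathcal{A}}$.

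A case analysis by the position of $m_1$ relative to $e_K$ then produces the three regimes of $N^{C_4}(m_1, m_2)$. For even $m_1$ with $2 \leq m_1 \leq e_K$, the ramification jump of $E/K$ sits low enough that both pre- and post-jump pieces contribute, giving the staircase $q^{\lfloor(m_1+m_2)/4\rfloor} - q^{\lfloor(m_1+m_2-2)/4\rfloor}$ at generic even $m_2$, the boundary value $q^{m_1-1}$ at $m_2 = 3m_1 - 2$ coming from the jump itself, and $q^{e_K}$ at $m_2 = 4e_K - m_1 + 2$ from the ``closest-to-square'' stratum. For $e_K < m_1 \leq 2e_K$ even, or $m_1 = 2e_K + 1$, the jump leaves no intermediate room and all admissible $L$ are forced to $m_2 = m_1 + 2e_K$, contributing $2q^{e_K}$. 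The main obstacle is the bookkeeping in the first regime: the staircase count is the difference of sizes of $C_4$-admissible cosets at two consecutive levels, which only assembles into the stated formula once one tracks the effect of $N_{E/K}$ on the unit filtration of $E$ against that of $K$. This is exactly the setting for the $S^{\mathcal{A}}_{E/K, t}$ machinery introduced in the notation list, which the proof will invoke to control the counts level by level.
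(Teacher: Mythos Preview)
Your skeleton matches the paper's: parametrize $L = E(\sqrt{\alpha})$, identify $\Sigma_{\quadr/E}^{C_4/K}$ as a torsor under the image of $K^\times/K^{\times 2}$ in $E^\times/E^{\times 2}$ (the paper packages this as the $2$-to-$1$ map $u\mapsto E(\sqrt{u\omega})$ in Lemma~\ref{lem-SE-to-SigmaE-2-to-1}), and convert the condition $v_E(d_{L/E})\leq m_2$ into membership of $S_{E/K,\,2e_K - m_2/2}$ via Hecke. So the route is the same and the case split you describe is correct.

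The gap is in the step you treat as routine: computing $\#S_{E/K,t}$ and the threshold at which the torsor becomes nonempty. You invoke ``$(\co_K/\p_K^{2t})^{\mathcal{A}}$'' and ``the $S^{\mathcal{A}}_{E/K,t}$ machinery,'' but those objects are set up in the paper for $S_{K/K,t}^{\mathcal{A}}$ (subgroups of $K^\times/K^{\times 2}$ cut out by a norm condition \emph{and} a depth condition in $K$), and they feed the proof of Lemma~\ref{lem-N-ext}, not this lemma. The set $S_{E/K,t}$ involves depth in the filtration of $E$, where the jump in $E/K$ at level $m_1$ controls how $U_E^{(2t)}E^{\times 2}\cap K^\times$ sits inside $K^\times/K^{\times 2}$; the $(\co_K/\p_K^{2t})^{\mathcal{A}}$ calculus does not see that jump. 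The paper does not re-derive $\#S_{E/K,t}$ at all: it imports it from Cohen--Diaz y Diaz--Olivier (their Corollary~3.13, recorded here as Lemma~\ref{lem-size-of-Z-E-t-and-S-E/K-t}), and separately imports the nonemptiness threshold $m_2\geq\min\{m_1+2e_K,\,3m_1-2\}$ from their Proposition~3.15 (recorded here as Lemma~\ref{lem-conds-for-m2-extendable}). Your boundary value $q^{m_1-1}$ at $m_2 = 3m_1 - 2$ and the dichotomy at $m_1 = e_K$ both come from those two imported facts, not from anything you can read off $U_E^{(i)}/U_E^{(i+1)}\cong\F_E$ alone. If you want to avoid the citation, you would need to reprove those two statements directly, which is substantially more work than your sketch suggests; otherwise, replace the appeal to $(\co_K/\p_K^{2t})^{\mathcal{A}}$ by an appeal to Lemmas~\ref{lem-conds-for-m2-extendable} and~\ref{lem-size-of-Z-E-t-and-S-E/K-t} and the argument goes through as in Corollary~\ref{cor-size-of-Sigma_E_leq_m2}.
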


\begin{corollary}
    \label{cor-size-of-Sigma-m-C4}
    If $\Sigma_{m}^{C_4}$ is nonempty, then either $m=8e_K + 3$ or $m$ is an even integer with $8 \leq m \leq 8e_K$. For any even integer $m$, the number $\# \Sigma^{C_4}_m$ is the sum of the following four quantities:
    \begin{enumerate}
        \item $\mathbbm{1}_{8\leq m \leq 5e_K - 2}\cdot q^{\frac{m-3}{5}}N_{\ext}(\frac{m+2}{5})$.
        \item $$\sum_{\substack{\max\{2, m-4e_K\}\leq m_1 \leq \min\{\frac{m}{5},e_K\} \\ m_1\equiv m\pmod{4}}} q^{\frac{m-m_1}{4} - 1}(q-1)N_{\ext}(m_1).$$ 
        \item $\mathbbm{1}_{4e_K + 4 \leq m \leq 5e_K + 2}\cdot q^{e_K} N_{\ext}(m - 4e_K - 2).$
        \item $
        \mathbbm{1}_{5e_K + 3 \leq m \leq 8e_K} \cdot 2q^{e_K}N_{\ext}(\frac{m-2e_K}{3}). 
        $
    \end{enumerate}
    Moreover, 
    $$
    \#\Sigma_{8e_K + 3}^{C_4} = \begin{cases}
        4q^{2e_K}\quad\text{if $-1 \in K^{\times 2}$},
        \\
        2q^{2e_K} \quad\text{if $K(\sqrt{-1})/K$ is quadratic and totally ramified},
        \\
        0\quad\text{if $K(\sqrt{-1})/K$ is quadratic and unramified}.
    \end{cases}
    $$
\end{corollary}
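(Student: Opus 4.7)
The plan is to decompose $\Sigma^{C_4}_m$ via the unique quadratic intermediate field. Given $L \in \Sigma^{C_4}_m$, since $C_4$ has a unique index-$2$ subgroup, $L/K$ has a unique quadratic intermediate extension $E$; because $L/K$ is totally ramified, so is $E/K$, and the tower law for discriminants (Lemma~\ref{lem-A4-vs-S4-basic-facts}(1)) gives $m = 2m_1 + m_2$ with $m_1 = v_K(d_{E/K})$ and $m_2 = v_E(d_{L/E})$. Since $L/K$ is $C_4$, $E$ is automatically $C_4$-extendable, so $m_1$ takes one of the values listed in Lemma~\ref{lem-N-ext}.

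The key step is to establish the bijection
\[
\Sigma^{C_4}_m \;\longleftrightarrow\; \bigsqcup_{[E_0]}\,\Sigma^{C_4/K}_{\quadr/E_0,\,m - 2v_K(d_{E_0/K})},
\]
where $[E_0]$ ranges over $K$-isomorphism classes of totally ramified $C_4$-extendable quadratic extensions of $K$. Surjectivity follows from the preceding paragraph. For injectivity, suppose that $L$ and $L'$ are $E_0$-algebra representatives with defining embeddings $\iota, \iota' \colon E_0 \hookrightarrow L, L'$, and that there is a $K$-algebra isomorphism $\psi \colon L \to L'$. Then $\iota'^{-1} \psi \iota \in \Gal(E_0/K)$. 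If it is the identity, $\psi$ is already $E_0$-linear; if it is the nontrivial $\tau$, then since $L/K$ is Galois, $\iota \tau \iota^{-1}$ extends to some $\sigma \in \Gal(L/K)$, and one checks $\psi \circ \sigma$ is $E_0$-linear. So each $L \in \Sigma^{C_4}_m$ contributes once, and Lemmas~\ref{lem-N-ext} and \ref{lem-N-E-m2} give
\[
\#\Sigma^{C_4}_m \;=\; \sum_{m_1} N_{\ext}(m_1) \cdot N^{C_4}(m_1,\,m - 2m_1).
\]

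The rest is casework on which of the four clauses of Definition~\ref{defi-N-C4} produces the nonzero value of $N^{C_4}(m_1, m - 2m_1)$. Substituting $m_2 = m - 2m_1$ and solving for $m_1$ yields: $m_1 = (m+2)/5$, contributing item~(1); a sum over $m_1$ with $\max(2, m - 4e_K) \le m_1 \le \min(m/5, e_K)$ and $m_1 \equiv m \pmod{4}$ (the congruence coming from the requirement $4 \mid (m_1 + m_2)$ needed for the second clause of $N^{C_4}$ to be nonzero), contributing item~(2); $m_1 = m - 4e_K - 2$, contributing item~(3); and $m_1 = (m - 2e_K)/3$, contributing item~(4), where I note that for even $m$ the condition $m_1 \in \Z$ already forces $m_1$ even via $3m_1 = m - 2e_K$. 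The stated support of $\#\Sigma^{C_4}_m$ follows by taking the union of the supports of the four terms.

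Finally, $m = 8e_K + 3$ is odd, so only the fourth clause of $N^{C_4}$ can contribute; it forces $m_1 = 2e_K + 1$ with $m_2 = 4e_K + 1$, giving $\#\Sigma^{C_4}_{8e_K+3} = 2q^{e_K} \cdot N_{\ext}(2e_K + 1)$, which reduces to the three stated values by Definition~\ref{defi-N-ext}. The main obstacle is really just the careful bookkeeping required to match the congruence and parity conditions to the stated indicator functions and ranges; no further mathematical input beyond Lemmas~\ref{lem-N-ext} and~\ref{lem-N-E-m2} is needed.
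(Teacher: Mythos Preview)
Your proposal is correct and follows essentially the same approach as the paper: decompose $\Sigma^{C_4}_m$ via the unique quadratic intermediate field, apply the tower law to write $m = 2m_1 + m_2$, invoke Lemmas~\ref{lem-N-ext} and~\ref{lem-N-E-m2} to reduce to $\sum_{m_1} N_{\ext}(m_1)\,N^{C_4}(m_1,m-2m_1)$, and then do casework on the four clauses of Definition~\ref{defi-N-C4}. The only notable difference is that you spell out the bijection argument explicitly (showing that two $E_0$-extensions of $K$ which are $K$-isomorphic are already $E_0$-isomorphic, by composing with an element of $\Gal(L/K)$), whereas the paper takes this for granted here; in fact your argument is essentially the $G=C_4$ case of Lemma~\ref{lem-size-of-Xi}(2), which the paper proves later in Section~\ref{sec-G=D4}.
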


\subsection{Counting $C_4$-extendable extensions}
The aim of this subsection is to prove Lemma~\ref{lem-N-ext}. The paper \cite{cohen-et-al} gives conditions on $d \in K^\times$ for the extension $K(\sqrt{d})/K$ to be $C_4$-extendable. We use these conditions and adapt the methods of \cite{cohen-et-al} to parametrise and count $C_4$-extendable extensions. 
\begin{lemma}[Hecke's Theorem]
    \label{lem-hecke}
    Let $E$ be a $2$-adic field, let $\alpha \in E^\times \setminus E^{\times 2}$, and let $L = E(\sqrt{\alpha})$. If $v_E(\alpha)$ is odd, then $v_E(d_{L/E}) = 2v_E(2) + 1$. If $v_E(\alpha)$ is even, then $L/E$ is totally ramified if and only if $\alpha / x^2\equiv 1 \pmod{\p_E^{2v_E(2)}}$ has no solution $x \in E$. In that case, we have
    $$
    v_E(d_{L/E}) = 2v_E(2) + 1 - \kappa_{E,\alpha},
    $$
    where 
    $$
    \kappa_{E,\alpha} = \max\{0 \leq l < 2v_E(2) : \alpha / x^2\equiv 1 \pmod{\p_E^l} \text{ has a solution in $E$}\}.
    $$
\end{lemma}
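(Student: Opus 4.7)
The plan is to treat the two cases separately, in each case exhibiting an explicit Eisenstein polynomial for a uniformiser of $L$ and computing the different $\mathfrak{d}_{L/E}$ directly; since $L/E$ will be totally ramified of degree $2$ whenever the discriminant formula is asserted, the identity $v_E(d_{L/E}) = v_L(\mathfrak{d}_{L/E})$ will finish those cases.

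First, suppose $v_E(\alpha)$ is odd. Multiplying $\alpha$ by an even power of $\pi_E$ (which changes neither the class of $\alpha$ in $E^\times/E^{\times 2}$ nor the extension $L$), I may assume $v_E(\alpha) = 1$. Then $\varpi := \sqrt{\alpha}$ is a uniformiser of $L$ satisfying the Eisenstein polynomial $X^2 - \alpha$, so $\mathfrak{d}_{L/E}$ is generated by $2\varpi$, whence $v_L(\mathfrak{d}_{L/E}) = 2v_E(2) + 1$.

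Next, suppose $v_E(\alpha)$ is even; again I may reduce to $v_E(\alpha) = 0$. After replacing $\alpha$ by $\alpha/x^2$ for an optimally chosen $x \in E^\times$, I may assume $\alpha = 1 + \pi_E^\kappa \beta$ with $\beta \in \co_E^\times$, where $\kappa = \kappa_{E,\alpha}$. A key observation, using that $\F_E$ is perfect of characteristic~$2$, is that $\kappa$ must be odd when $\kappa < 2v_E(2)$: otherwise, writing $\kappa = 2s$, one can pick $\gamma \in \co_E$ with $\gamma^2 \equiv \beta \pmod{\pi_E}$, and then $\alpha/(1 + \pi_E^s \gamma)^2 \equiv 1 \pmod{\p_E^{\kappa + 1}}$, contradicting maximality. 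So write $\kappa = 2s - 1$ with $1 \leq s \leq v_E(2)$. Setting $\theta = \sqrt{\alpha} - 1$, the relation $\theta^2 + 2\theta = \pi_E^{2s-1}\beta$ together with a Newton polygon argument forces $v_L(\theta) = 2s - 1$; hence $\pi_L := \theta / \pi_E^{s-1}$ is a uniformiser of $L$, and substituting back yields the Eisenstein polynomial $g(X) = X^2 + 2\pi_E^{1-s} X - \pi_E \beta \in \co_E[X]$. A direct calculation then gives $v_L(g'(\pi_L)) = v_L\bigl(2(\pi_L + \pi_E^{1-s})\bigr) = 2v_E(2) + 2 - 2s$, which rearranges to $2v_E(2) + 1 - \kappa$, confirming the discriminant formula.

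It remains to prove the ``if and only if'' clause. If the congruence $\alpha/x^2 \equiv 1 \pmod{\p_E^{2v_E(2)}}$ admits a solution, write $\alpha = x^2(1 + 4\gamma)$ with $\gamma \in \co_E$. When $\gamma \in \p_E$, Hensel's lemma produces a square root of $1 + 4\gamma$ in $E$, contradicting $\alpha \notin E^{\times 2}$; when $\gamma \in \co_E^\times$, the substitution $\sqrt{1 + 4\gamma} = 1 + 2z$ turns the minimal polynomial into the Artin--Schreier polynomial $X^2 + X - \gamma \in \co_E[X]$, which either splits over $E$ or cuts out the unique unramified quadratic extension of $E$, so in either case $L/E$ is not totally ramified. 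Conversely, if $L/E$ is the unramified quadratic extension of $E$, the same Artin--Schreier description writes $L = E(\sqrt{1 + 4\gamma})$, providing an $x$ solving the congruence. The main obstacle I anticipate is keeping careful track of the parity argument in the second paragraph and the case split between $\gamma$ being a unit or a non-unit in the last step; everything else reduces to direct local computation once the right uniformiser of $L$ has been identified.
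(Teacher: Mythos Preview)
Your argument is correct, and in fact supplies considerably more than the paper does: the paper simply cites this result as the $p=2$ case of \cite[Theorem~2.4]{hecke-theorem} and moves on. Your self-contained proof via explicit Eisenstein polynomials and direct computation of the different is the standard route to this formula, and it is carried out cleanly. Two small points are worth tightening. First, the assertion that $\beta$ is a \emph{unit} after optimising $x$ uses, in the boundary case $\kappa = 2v_E(2)-1$, the forward direction of your ``if and only if'' clause (otherwise $v_E(\alpha/x^2-1)\geq 2v_E(2)$ is not yet excluded), so logically that direction should be established first. Second, in the parity step you implicitly use that the cross term $2\pi_E^s\gamma$ has valuation $v_E(2)+s \geq 2s+1$, which requires $s\leq v_E(2)-1$; this holds because $\kappa$ is even and $<2v_E(2)$ forces $\kappa\leq 2v_E(2)-2$, but it is worth making explicit. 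Neither point affects the validity of the argument.
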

\begin{proof}
    This is the special case $p=2$ of \cite[Theorem~2.4]{hecke-theorem}.
\end{proof}

\begin{corollary}
    \label{cor-disc-square-mod-2t+1}
    Let $E, \alpha$, and $L$ be as in Lemma~\ref{lem-hecke}, and assume that $v_E(\alpha)$ is even. Let $t$ be an integer with $0 \leq t \leq v_E(2)$. Then $v_E(d_{L/E})$ is an even integer and 
    $$
    v_E(d_{L/E}) \leq 2v_E(2) - 2t
    $$ 
    if and only if there is some $x \in E^\times$ with $\alpha / x^2\equiv 1 \pmod{\p_E^{2t}}$. 
\end{corollary}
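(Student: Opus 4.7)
The plan is to reduce the statement to a careful bookkeeping exercise on top of Hecke's theorem (Lemma~\ref{lem-hecke}). The quantity $\kappa_{E,\alpha}$ defined there records the largest $l < 2v_E(2)$ for which $\alpha/x^2 \equiv 1 \pmod{\p_E^l}$ has a solution, so the right-hand side of the stated biconditional is essentially asking whether $\kappa_{E,\alpha} \geq 2t$, while the bound on $v_E(d_{L/E})$ should translate into a lower bound on $\kappa_{E,\alpha}$ via Hecke's formula.

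First I would split on whether $L/E$ is unramified or totally ramified. In the unramified case $v_E(d_{L/E})=0$, so the left side of the biconditional holds automatically for any $t \leq v_E(2)$; and Hecke's theorem tells us that being unramified is exactly equivalent to the existence of $x \in E^\times$ with $\alpha/x^2 \equiv 1 \pmod{\p_E^{2v_E(2)}}$, so in particular such an $x$ works modulo $\p_E^{2t}$ for any $t \leq v_E(2)$. So both sides hold trivially in this case.

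In the totally ramified case I would apply the formula $v_E(d_{L/E}) = 2v_E(2)+1 - \kappa_{E,\alpha}$ from Lemma~\ref{lem-hecke}. Rearranging, the bound $v_E(d_{L/E}) \leq 2v_E(2) - 2t$ becomes $\kappa_{E,\alpha} \geq 2t+1$, whereas the existence condition on $x$ amounts to $\kappa_{E,\alpha} \geq 2t$. The key step is then to observe that $\kappa_{E,\alpha}$ is \emph{odd}: since $v_E(\alpha)$ is assumed even, Lemma~\ref{lem-half-of-hecke} forces $v_E(d_{L/E})$ to be an even integer, and hence $\kappa_{E,\alpha} = 2v_E(2)+1 - v_E(d_{L/E})$ is odd. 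Together with $2t$ being even, this upgrades $\kappa_{E,\alpha} \geq 2t$ to $\kappa_{E,\alpha} \geq 2t+1$ for free, giving the desired equivalence. The statement that $v_E(d_{L/E})$ is even in the first place is also immediate from Lemma~\ref{lem-half-of-hecke} in the totally ramified case, and from $v_E(d_{L/E})=0$ in the unramified case.

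I do not anticipate any genuine obstacle; the whole argument is a two-line deduction from Hecke's theorem once one notices the parity coincidence. The one thing to be careful about is the potential off-by-one error between $2t$ and $2t+1$, which is precisely what the oddness of $\kappa_{E,\alpha}$ is designed to neutralize.
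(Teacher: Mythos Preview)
Your argument is correct and tracks the paper's approach closely: both reduce to Lemma~\ref{lem-hecke} plus the fact that $\kappa_{E,\alpha}$ is odd in the totally ramified case. The only difference is in how that parity is obtained. You deduce it from the evenness of $v_E(d_{L/E})$, citing Lemma~\ref{lem-half-of-hecke}; the paper instead proves directly that any solution of $\alpha/x^2 \equiv 1 \pmod{\p_E^{2t}}$ can be upgraded to one modulo $\p_E^{2t+1}$, by writing $\alpha/x^2 = 1 + \pi_E^{2t}y$ and using that the residue field has characteristic $2$ to replace $y$ by a square $z^2$, so that $\alpha/(x(1+\pi_E^tz))^2 \equiv 1 \pmod{\p_E^{2t+1}}$. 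Your route is a touch quicker since it reuses an earlier result, though note that Lemma~\ref{lem-half-of-hecke} is stated for the fixed base field $K$ rather than a general $2$-adic field $E$; you should remark that the same conclusion holds with $E$ in place of $K$, as it is part of the same cited theorem of Hecke.
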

\begin{proof}
    This follows from Lemma~\ref{lem-hecke}, along with the fact\footnote{If $u \equiv x^2\pmod{\p_E^{2t}}$, then $u/x^2 = 1 + \pi_E^{2t}y$ for some $y \in \co_E$. Taking $z \in \co_E$ with $y \equiv z^2\pmod{\p_E}$, we obtain $u/x^2 \equiv  (1 + \pi_E^tz)^2\pmod{\p_E^{2t+1}}$.} that for $0 \leq t < v_E(2)$ and $u \in \co_E^\times$, if $u$ is square modulo $\p_E^{2t}$ then it is also square modulo $\p_E^{2t+1}$.
\end{proof}
\begin{lemma}
    \label{lem-GG-norm-conditions}
    Let $E = K(\sqrt{d})$ for $d \in K^{\times}\setminus K^{\times 2}$ and let $L = E(\sqrt{\alpha})$ for $\alpha \in E^\times\setminus E^{\times 2}$. The Galois closure group of $L/K$ is 
    $$
    \begin{cases}
        V_4\quad\text{if $N_{E/K}(\alpha)\in K^{\times 2}$},
        \\
        C_4 \quad\text{if $N_{E/K}(\alpha) \in dK^{\times 2}$},
        \\
        D_4\quad\text{otherwise.}
    \end{cases}
    $$
\end{lemma}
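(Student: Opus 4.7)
The plan is to analyse when $L/K$ is Galois by tracking the action of $\sigma$, the non-trivial element of $\Gal(E/K)$, on the quadratic extension $L/E$, and then in the Galois case compute the order of a lift of $\sigma$ to $\Gal(L/K)$ to distinguish $V_4$ from $C_4$.

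First I would observe that $L/K$ is Galois if and only if $E(\sqrt{\alpha}) = E(\sqrt{\sigma(\alpha)})$, which is equivalent to $\sigma(\alpha)/\alpha \in E^{\times 2}$. Using $\sigma(\alpha)/\alpha = N_{E/K}(\alpha)/\alpha^2$, this reduces to the condition $N_{E/K}(\alpha) \in E^{\times 2}$. Since $N_{E/K}(\alpha) \in K^\times$, I would then use the elementary fact $K^\times \cap E^{\times 2} = K^{\times 2} \cup dK^{\times 2}$ (proved by applying $\sigma$ to a square root and noting it must equal $\pm$ itself). This shows $L/K$ is Galois precisely when $N_{E/K}(\alpha) \in K^{\times 2} \cup dK^{\times 2}$; conversely, when $N_{E/K}(\alpha)$ lies in neither coset, the Galois closure $\widetilde{L}/K$ strictly contains $L$ and has degree $8$, and the unique transitive subgroup of $S_4$ of order $8$ is $D_4$, giving the third case.

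Next I would pin down the Galois case. Lift $\sigma$ to some $\tilde\sigma \in \Gal(L/K)$. Then $\tilde\sigma(\sqrt{\alpha})^2 = \sigma(\alpha)$, and since $\sqrt{\alpha}\sqrt{\sigma(\alpha)}$ is a square root of $N_{E/K}(\alpha)$, we get $\tilde\sigma(\sqrt{\alpha}) = \pm\sqrt{N_{E/K}(\alpha)}/\sqrt{\alpha}$ inside $L$. If $N_{E/K}(\alpha) = c^2$ with $c \in K^\times$, a direct computation gives $\tilde\sigma^2(\sqrt{\alpha}) = \sqrt{\alpha}$, so $\tilde\sigma$ has order $2$ and $\Gal(L/K) \cong V_4$. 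If instead $N_{E/K}(\alpha) = dc^2$ with $c \in K^\times$, then $\sqrt{N_{E/K}(\alpha)} = c\sqrt{d}$ and $\sigma(\sqrt{d}) = -\sqrt{d}$ produces a sign flip, so $\tilde\sigma^2(\sqrt{\alpha}) = -\sqrt{\alpha}$, meaning $\tilde\sigma$ has order $4$ and $\Gal(L/K) \cong C_4$.

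There is no real obstacle here: every step is a direct manipulation. The only point requiring genuine care is the sign computation in the final paragraph, where the minus sign from $\sigma(\sqrt{d}) = -\sqrt{d}$ is exactly what separates $V_4$ from $C_4$; it must be tracked through the lift $\tilde\sigma$, which involves a choice of sign $\pm$ in $\tilde\sigma(\sqrt{\alpha})$ that happens not to affect the final order (both choices give the same $\tilde\sigma^2$). Once this is laid out cleanly, the three cases of the lemma read off immediately.
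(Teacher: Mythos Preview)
Your proof is correct and complete. The approach, however, differs from the paper's. You first decide when $L/K$ is Galois via the Kummer-theoretic criterion $\sigma(\alpha)/\alpha \in E^{\times 2}$, translate this to $N_{E/K}(\alpha) \in K^{\times 2}\cup dK^{\times 2}$ using $K^\times \cap E^{\times 2} = K^{\times 2}\cup dK^{\times 2}$, and then compute the order of a lift $\tilde\sigma$ of $\sigma$ directly, the key sign coming from $\tilde\sigma(\sqrt{d}) = -\sqrt{d}$. The paper instead writes the minimal polynomial of $\theta=\sqrt{\alpha}$ over $K$ with roots $\pm\theta,\pm\varphi$ and builds two explicit invariants: it shows $L/K$ is $V_4$ iff $\theta\varphi\in K$ (and $(\theta\varphi)^2 = N_{E/K}(\alpha)$), and $L/K$ is $C_4$ iff $\lambda := \theta/\varphi - \varphi/\theta \in K$ (and $\lambda^2 = 4b^2 d/N_{E/K}(\alpha)$, writing $\alpha = a + b\sqrt{d}$). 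Your route is arguably cleaner conceptually, since it avoids introducing the auxiliary element $\lambda$ and works uniformly with the Galois action; the paper's route has the advantage of producing concrete elements of $K$ witnessing each case, which can be useful for explicit computation. Both dispose of the $D_4$ case by the same observation that the Galois closure then has order $8$ and must be the unique transitive subgroup of $S_4$ of that order.
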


\begin{proof}
    Write $\alpha = a + b\sqrt{d}$ for $a,b \in K$ and let $\theta = \sqrt{\alpha}$. Let $m(X)$ be the minimal polynomial of $\theta$ over $K$. Let $N$ be a splitting field of $m(X)$ over $L$. The polynomial $m(X)$ has roots $\pm \theta, \pm \varphi$ for some element $\varphi \in N$. 

    We claim that $L/K$ is a $V_4$-extension if and only if $\theta\varphi \in K$. Suppose that $L/K$ is a $V_4$-extension. Since $L/K$ is the splitting field of $m(X)$, there are $\sigma,\tau \in \Gal(L/K)$ with $\sigma(\theta) = \varphi$ and $\tau(\theta) = - \theta$. These have order $2$, so $\sigma(\theta\varphi) = \tau(\theta\varphi) = \theta\varphi$, and therefore $\theta\varphi \in K$. Suppose conversely that $\theta\varphi \in K$. Then $K(\theta) = K(\varphi)$, so $L$ is the splitting field of $m(X)$ over $K$, and therefore there are $\sigma,\tau \in \Gal(L/K)$ with $\sigma(\theta) = \varphi$ and $\tau(\theta) = - \theta$. Since $\theta\varphi \in K$, it is fixed by $\sigma$, so 
    $$
    \theta\varphi = \varphi\sigma(\varphi) ,
    $$
    and therefore $\theta = \sigma(\varphi)$, so $\sigma$ has order $2$. Clearly $\tau$ has order $2$, so $\Gal(L/K) \cong V_4$. 
    
    Let $\lambda := \frac{\theta}{\varphi} - \frac{\varphi}{\theta}$. We claim that $L/K$ is a $C_4$-extension if and only if $\lambda \in K$. Suppose that $L/K$ is a $C_4$-extension. Then $\theta, \varphi \in L$, so there is a generator $\sigma \in \Gal(L/K)$ such that $\sigma(\theta) = \varphi$. It follows that $\sigma(\lambda) = \lambda$, so $\lambda \in K$. Suppose conversely that $\lambda \in K$. There is some element $\sigma \in \Gal(N/K)$ such that $\sigma(\theta) = \varphi$. It is easy to see that $\sigma^2(\theta) = \varepsilon \theta$ for some $\varepsilon \in \{\pm 1\}$. Since $\lambda\in K$, we have $\varepsilon = - 1$, so $\sigma$ has order $4$. Clearly $\theta^2 + \varphi^2 = 2a$, so 
    $$
    \lambda = \frac{2\theta^2 - 2a}{\theta \varphi},
    $$
    which means that 
    $$
    \varphi = \frac{2\theta^2 - 2a}{\theta \lambda} \in L,
    $$
    so $L/K$ is Galois and hence $C_4$ with Galois group $\langle \sigma \rangle$. Finally, 
    $$
    \lambda^2 = \frac{4b^2d}{N_{E/K}(\alpha)},
    $$
    and the result follows. 
\end{proof}
\begin{corollary}
    \label{cor-extendable-iff-so2s}
    For $d \in K^\times \setminus K^{\times 2}$, the following are equivalent:
    \begin{enumerate} 
        \item The extension $K(\sqrt{d})/K$ is $C_4$-extendable.
        \item The element $d$ is a sum of two squares in $K$. 
        \item The element $d$ is in the norm group of the extension $K(\sqrt{-1})/K$.
    \end{enumerate}
\end{corollary}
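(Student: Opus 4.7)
My plan is to establish (2)$\Leftrightarrow$(3) essentially by definition, then reduce (1)$\Leftrightarrow$(3) via Lemma~\ref{lem-GG-norm-conditions} to a statement about norms and prove the resulting equivalence by direct computation.

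For (2)$\Leftrightarrow$(3), the norm map on $K(\sqrt{-1})/K$ sends $a+b\sqrt{-1}$ to $a^2+b^2$, so its image is exactly the set of sums of two squares whenever $K(\sqrt{-1})/K$ is a nontrivial quadratic extension. In the degenerate case $-1=i^2\in K^{\times 2}$ both conditions hold vacuously, using the identity $d=\left(\tfrac{d+1}{2}\right)^2+\left(\tfrac{d-1}{2i}\right)^2$ for (2) and the triviality of the norm for (3).

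For (1)$\Leftrightarrow$(3), write $E=K(\sqrt{d})$. Lemma~\ref{lem-GG-norm-conditions} tells me that $E$ is $C_4$-extendable iff there is $\alpha\in E^\times\setminus E^{\times 2}$ with $N_{E/K}(\alpha)\in dK^{\times 2}$. Since $K^{\times 2}\subseteq N_{E/K}(E^\times)$, and since any preimage of $d$ under $N_{E/K}$ is automatically a non-square (otherwise $d\in K^{\times 2}$), this condition is equivalent to $d\in N_{E/K}(E^\times)$. Writing $\alpha=a+b\sqrt{d}$, the equation $N_{E/K}(\alpha)=d$ becomes $a^2=d(1+b^2)$. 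For (3)$\Rightarrow$(1) I would explicitly solve this: given $d=c^2+e^2$ with $e\neq 0$ (forced because $d\notin K^{\times 2}$), take $a=d/e$ and $b=c/e$. For (1)$\Rightarrow$(3) I would reverse the computation: from $a^2-b^2d=dk^2$ derive $a^2=d(k^2+b^2)$, and if $k^2+b^2\neq 0$ exhibit $d = \left(\tfrac{ak}{k^2+b^2}\right)^2 + \left(\tfrac{ab}{k^2+b^2}\right)^2$ as a sum of two squares; the boundary case $k^2+b^2=0$ forces $-1\in K^{\times 2}$ and reduces to the degenerate situation.

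There is no real obstacle beyond remembering to handle $-1\in K^{\times 2}$, in which all three conditions hold for every $d$. A slicker but less elementary alternative would avoid the explicit rational expressions above by invoking the Hilbert symbol identity $(d,d)_K = (d,-1)_K$ (which follows from $(d,-d)_K=1$ and bilinearity of the Hilbert symbol) to deduce directly that $d\in N_{K(\sqrt{d})/K}(K(\sqrt{d})^\times)$ iff $d\in N_{K(\sqrt{-1})/K}(K(\sqrt{-1})^\times)$.
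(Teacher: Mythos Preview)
Your proposal is correct and takes essentially the same approach as the paper: use Lemma~\ref{lem-GG-norm-conditions} to link (1) to the norm condition, and the definition of the norm from $K(\sqrt{-1})$ for (2)$\Leftrightarrow$(3). The paper is terser --- it simply asserts that (1)$\Leftrightarrow$(2) follows from Lemma~\ref{lem-GG-norm-conditions} without spelling out the explicit rational parametrisation you give --- but the underlying argument and the handling of the degenerate case $-1\in K^{\times 2}$ are the same.
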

\begin{proof}
    The equivalence of $(1)$ and $(2)$ follows from Lemma~\ref{lem-GG-norm-conditions}. If $-1 \in K^{\times 2}$, then $(2)$ and $(3)$ are equivalent because every element of $K$ can be written as a sum of two squares, due to the identity 
    $$
    d = \Big(\frac{d + 1}{2}\Big)^2 + \Big(\frac{d - 1}{2\sqrt{-1}}\Big)^2.
    $$
    If $-1 \not \in K^{\times 2}$, then the equivalence of $(2)$ and $(3)$ is trivial.
\end{proof}

By symmetry of the quadratic Hilbert symbol, it follows from Corollary~\ref{cor-extendable-iff-so2s} that we need to count extensions $K(\sqrt{d})$ such that $-1 \in \Nm K(\sqrt{d})$. Our technique for doing this applies much more generally, to counting $K(\sqrt{d})$ such that $\mathcal{A}\subseteq \Nm K(\sqrt{d})$, where $\mathcal{A}$ is any subgroup of $K^\times / K^{\times 2}$. Since it does not require much additional theory, we opt to work at this more natural level of generality. 

Let $F/K$ be an extension of $2$-adic fields. For $1 \leq t \leq v_F(2)$, write
$$
S_{F/K, t} = (U_F^{(2t)}F^{\times 2} \cap K^\times) / K^{\times 2} = \{u \in K^\times / K^{\times 2} : u/x^2 \equiv 1\pmod{\p_F^{2t}} \text{ for some $x \in F^\times$}\},
$$
and define 
$$
S_{F/K,0} = \{u\in K^\times / K^{\times 2} : v_F(u)\text{ is even}\}.
$$
For a subgroup $\mathcal{A}\subseteq K^\times / K^{\times 2}$, let $K(\sqrt{\mathcal{A}})$ be the extension 
$$
K(\{\sqrt{\alpha} : [\alpha] \in \mathcal{A}\})
$$
of $K$, write $\Nm K(\sqrt{\mathcal{A}})$ for its norm group, and define 
$$
\co_K^{\mathcal{A}} = \co_K^\times \cap \Nm K(\sqrt{\mathcal{A}}). 
$$
For $0 \leq t \leq e_K$, define the subgroup
$$
S_{K/K,t}^{\mathcal{A}} =  S_{K/K,t} \cap \Big(\Nm K(\sqrt{\mathcal{A}})/K^{\times 2}\Big).
$$
For each $m_1$, let $\Sigma_{\quadr/K,\leq m_1}^{\mathcal{A}}$ be the set of $E \in \Sigma_{\quadr/K,\leq m_1}$ with $\mathcal{A}\subseteq \Nm E$. 
\begin{lemma}
    \label{lem-bij-SKKtA-Sigma-quad-A}
    Let $0 \leq t \leq e_K$ and let $\mathcal{A}\subseteq K^\times / K^{\times 2}$ be any subgroup. We have a bijection
    $$
    S_{K/K,t}^\mathcal{A} \to \Sigma_{\quadr/K,\leq 2e_K - 2t}^{\mathcal{A}} \cup \{K\}, \quad u \mapsto K(\sqrt{u}).
    $$
\end{lemma}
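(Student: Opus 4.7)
The map in question is the restriction to $S^\mathcal{A}_{K/K,t}$ of the standard Kummer correspondence
$$
K^\times / K^{\times 2} \longrightarrow \{K\} \cup \Sigma_{\quadr/K}, \quad [u] \mapsto K(\sqrt{u}),
$$
which is already bijective, sending $[1]$ to $K$. So injectivity of our restricted map is automatic, and the content of the lemma is to show that its image is exactly $\{K\} \cup \Sigma^{\mathcal{A}}_{\quadr/K, \leq 2e_K - 2t}$. I would reduce this to establishing, for $[u] \in K^\times/K^{\times 2}$ with $u \notin K^{\times 2}$, two independent equivalences: (i) $[u] \in S_{K/K,t}$ if and only if $v_K(d_{K(\sqrt{u})/K}) \leq 2e_K - 2t$; and (ii) $[u] \in \Nm K(\sqrt{\mathcal{A}})/K^{\times 2}$ if and only if $\mathcal{A} \subseteq \Nm K(\sqrt{u})$.

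For (i), the first observation is that membership in $S_{K/K,t}$ forces $v_K(u)$ to be even: for $t \geq 1$ the relation $u \equiv x^2 \pmod{\p_K^{2t}}$ implies $u = x^2(1 + \pi_K^{2t}y)$ with the second factor a unit, so $v_K(u) = 2v_K(x)$, while for $t = 0$ this is the defining property. Conversely, the bound $v_K(d_{K(\sqrt{u})/K}) \leq 2e_K - 2t \leq 2e_K$ forces $v_K(u)$ to be even by Lemma~\ref{lem-hecke}, since odd $v_K(u)$ would give discriminant $2e_K + 1$. With this reduction in place, the $t \geq 1$ case of the equivalence is exactly the content of Corollary~\ref{cor-disc-square-mod-2t+1} applied with $E = K$, and the $t = 0$ case is immediate from Hecke's theorem itself.

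For (ii), by local class field theory the norm group of the compositum is the intersection of the norm groups of its quadratic subfields, so $\Nm K(\sqrt{\mathcal{A}}) = \bigcap_{[\alpha] \in \mathcal{A}} \Nm K(\sqrt{\alpha})$. Thus $[u] \in \Nm K(\sqrt{\mathcal{A}})/K^{\times 2}$ is equivalent to the vanishing of the Hilbert symbol $(u,\alpha)$ for every $[\alpha] \in \mathcal{A}$. Symmetry of the Hilbert symbol then converts this to $\alpha \in \Nm K(\sqrt{u})$ for every such $\alpha$, which is the same as $\mathcal{A} \subseteq \Nm K(\sqrt{u})$.

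I do not anticipate a serious obstacle: the two equivalences together deliver the lemma once one notes that $[1]$ accounts for the extra $\{K\}$ in the codomain. The only mild subtlety is that the definition of $S_{K/K,t}$ bifurcates at $t = 0$, so that case must be verified separately from $t \geq 1$; but the argument via Hecke's theorem is essentially identical in both regimes.
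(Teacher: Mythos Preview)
Your proposal is correct and follows essentially the same approach as the paper's own proof: both arguments restrict the Kummer bijection and then verify separately the discriminant condition (via Corollary~\ref{cor-disc-square-mod-2t+1} and Lemma~\ref{lem-hecke}) and the norm condition (via symmetry of the Hilbert symbol), intersecting to obtain $S_{K/K,t}^\mathcal{A} = S_{K/K,t} \cap S_{K/K,0}^\mathcal{A}$. Your treatment is slightly more explicit about the $t=0$ versus $t\geq 1$ bifurcation and about why $v_K(u)$ must be even, but the structure is identical.
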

\begin{proof}
    By Corollary~\ref{cor-disc-square-mod-2t+1}, the map $u\mapsto K(\sqrt{u})$ gives a well-defined bijection 
    $$
    \co_K^{\times} / \co_K^{\times 2} \to \Sigma_{\quadr/K, \leq 2e_K} \cup \{K\}. 
    $$
    For $u \in \co_K^\times\setminus \co_K^{\times 2}$, we claim that the following two statements are true:
    \begin{enumerate} 
        \item $K(\sqrt{u}) \in \Sigma_{\quadr/K,\leq 2e_K}^\mathcal{A}$ if and only if $u \in S_{K/K,0}^\mathcal{A}$.
        \item $K(\sqrt{u}) \in \Sigma_{\quadr/K, \leq 2e_K - 2t}$ if and only if $u \in S_{K/K,t}$. 
    \end{enumerate}
    The first statement follows from symmetry of the quadratic Hilbert symbol, and the second follows from Corollary~\ref{cor-disc-square-mod-2t+1}.
    The result then follows, since 
    $$
    S_{K/K,t}^\mathcal{A} = S_{K/K,0}^\mathcal{A} \cap S_{K/K,t}.
    $$
\end{proof}
For each $0 \leq t \leq e_K$, define the subgroup
$$
(\co_K/\p_K^{2t})^\mathcal{A}\subseteq (\co_K/\p_K^{2t})^\times
$$
to be the image of the map 
$$
\co_K^\mathcal{A} \to (\co_K/\p_K^{2t})^\times.
$$
\begin{lemma}
    \label{lem-SKKtA-in-terms-of-SKK0A}
    Let $0 \leq t \leq e_K$ and let $\mathcal{A}\subseteq K^\times / K^{\times 2}$ be any subgroup. There is a short exact sequence 
    $$
    1 \to S_{K/K, t}^\mathcal{A} \to S_{K/K,0}^\mathcal{A} \to \frac{(\co_K/\p_K^{2t})^\mathcal{A}}{(\co_K/\p_K^{2t})^{\times 2}} \to 1.  
    $$
\end{lemma}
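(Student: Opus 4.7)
The plan is to construct an explicit homomorphism
$$
\phi \colon S_{K/K,0}^\mathcal{A} \twoheadrightarrow (\co_K/\p_K^{2t})^\mathcal{A}/(\co_K/\p_K^{2t})^{\times 2}
$$
with kernel $S_{K/K,t}^\mathcal{A}$. The case $t=0$ is trivial (the target collapses), so I assume $t \geq 1$. For any $[u] \in S_{K/K,0}^\mathcal{A}$, evenness of $v_K(u)$ makes $\pi_K^{v_K(u)}$ a square in $K^\times$, so the unit $u' := u\pi_K^{-v_K(u)}$ represents the same class as $u$. I would send $[u]$ to the image of $u'$ in the target quotient.

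Two observations are needed to make this rigorous. First, $(\co_K/\p_K^{2t})^{\times 2} \subseteq (\co_K/\p_K^{2t})^\mathcal{A}$, which reduces to the inclusion $K^{\times 2} \subseteq \Nm K(\sqrt{\mathcal{A}})$: for every $[a] \in \mathcal{A}$ and $y \in K^\times$ one has $y^2 = \Nm_{K(\sqrt{a})/K}(y)$, and by local class field theory the norm group of an abelian compositum equals the intersection of the norm groups of its quadratic subextensions. Second, $u'$ actually lies in $\co_K^\mathcal{A}$, not merely in $(\Nm K(\sqrt{\mathcal{A}}) \cdot K^{\times 2}) \cap \co_K^\times$: writing $u = n s^2$ with $n \in \Nm K(\sqrt{\mathcal{A}})$ and $s \in K^\times$, we have $u' = n \cdot (s \pi_K^{-v_K(u)/2})^2$, a product of two genuine norms. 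Well-definedness of $\phi$ at the level of classes is then immediate, since two unit representatives of a single class in $K^\times/K^{\times 2}$ differ by an element of $K^{\times 2} \cap \co_K^\times = \co_K^{\times 2}$.

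Surjectivity holds because any $n \in \co_K^\mathcal{A}$ defines a class $[n] \in S_{K/K,0}^\mathcal{A}$ that maps to the image of $n$. For the kernel, $[u] \in \ker \phi$ iff $u' \equiv y^2 \pmod{\p_K^{2t}}$ for some $y \in \co_K^\times$, which rearranges to $u/(y \pi_K^{v_K(u)/2})^2 \equiv 1 \pmod{\p_K^{2t}}$; this is precisely membership in $S_{K/K,t}$, and combined with the norm condition already baked into $S_{K/K,0}^\mathcal{A}$ it gives $\ker \phi = S_{K/K,t}^\mathcal{A}$. The main subtlety throughout is the bookkeeping that distinguishes genuine norms from norms-modulo-squares, all of which is tamed by the inclusion $K^{\times 2} \subseteq \Nm K(\sqrt{\mathcal{A}})$.
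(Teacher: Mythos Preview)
Your proof is correct and is precisely the unpacking of what the paper means by ``immediate from the definitions''. The paper gives no further argument; your construction of the map via a unit representative, together with the observations that $K^{\times 2}\subseteq \Nm K(\sqrt{\mathcal{A}})$ and that this forces the normalised representative to land in $\co_K^{\mathcal A}$, is exactly the routine verification the paper suppresses.
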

\begin{proof}
    This is immediate from the definitions.
\end{proof}
\begin{lemma}
    \label{lem-index-of-OKA}
    For any subgroup $\mathcal{A}\subseteq K^\times / K^{\times 2}$, we have 
    $$
    [\co_K^\times : \co_K^\mathcal{A}] = \frac{\#\mathcal{A}}{f(K(\sqrt{\mathcal{A}})/K)}.
    $$
\end{lemma}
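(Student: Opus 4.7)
The plan is to apply local class field theory together with Kummer theory to the abelian extension $M := K(\sqrt{\mathcal{A}})$ and track what the norm does to the valuation. Since $K$ has characteristic zero and contains $\mu_2 = \{\pm 1\}$, Kummer theory identifies $\Gal(M/K)$ with $\Hom(\mathcal{A}, \mu_2)$, giving $[M:K] = \#\mathcal{A}$. Local class field theory then yields $[K^\times : \Nm_{M/K}(M^\times)] = \#\mathcal{A}$.

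Next I would split this global index via the intermediate subgroup $\co_K^\times \cdot \Nm M^\times \subseteq K^\times$. Index multiplicativity gives
\[
\#\mathcal{A} \;=\; [K^\times : \Nm M^\times] \;=\; [K^\times : \co_K^\times \cdot \Nm M^\times] \cdot [\co_K^\times \cdot \Nm M^\times : \Nm M^\times].
\]
The second factor equals $[\co_K^\times : \co_K^\times \cap \Nm M^\times] = [\co_K^\times : \co_K^\mathcal{A}]$ by the second isomorphism theorem. So it remains to identify the first factor with $f(M/K)$.

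For this, I would use the formula $v_K \circ \Nm_{M/K} = f(M/K)\cdot v_M$ on $M^\times$, which follows from the fact that for a uniformiser $\pi_M$ of $M$ the element $\Nm_{M/K}(\pi_M)$ has $K$-valuation $f(M/K)$ (and one can check the formula by additivity, reducing to this case). Under the isomorphism $v_K\colon K^\times / \co_K^\times \xrightarrow{\sim} \Z$, the image of $\Nm M^\times$ is therefore $f(M/K)\Z$, and consequently $[K^\times : \co_K^\times\cdot \Nm M^\times] = f(M/K) = f(K(\sqrt{\mathcal{A}})/K)$. Substituting back gives $[\co_K^\times : \co_K^\mathcal{A}] = \#\mathcal{A}/f(K(\sqrt{\mathcal{A}})/K)$, as required.

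There is no real obstacle here: the argument is a bookkeeping exercise once one invokes local class field theory and Kummer theory. The only minor point to take care of is the valuation identity $v_K\circ \Nm_{M/K} = f(M/K)\cdot v_M$, which I would state as a well-known fact (or else verify in one line using a uniformiser of $M$).
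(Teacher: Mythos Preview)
Your argument is correct. Both you and the paper start from the same class field theory input $[K^\times:\Nm M^\times]=\#\mathcal{A}$ and then separate the unit contribution from the valuation contribution; the difference is only in how that separation is carried out. The paper does a case split: either some element of odd $v_K$ lies in $\Nm M^\times$ (equivalently $K(\sqrt{\mathcal{A}})/K$ is totally ramified, since otherwise $\Nm M^\times\subseteq \Nm E^{\mathrm{ur}}=\{x:2\mid v_K(x)\}$), giving index $\#\mathcal{A}$, or not, giving $\#\mathcal{A}/2$. This implicitly uses that $f(K(\sqrt{\mathcal{A}})/K)\in\{1,2\}$, which holds because the Galois group is elementary abelian while the residue extension must be cyclic. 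Your route via the second isomorphism theorem and the identity $v_K\circ\Nm_{M/K}=f(M/K)\cdot v_M$ avoids this dichotomy entirely and in fact proves the formula $[\co_K^\times:\co_K^\times\cap\Nm M^\times]=[M:K]/f(M/K)$ for an arbitrary finite abelian extension $M/K$, not just multiquadratic ones. The paper's version is slightly more concrete; yours is cleaner and more general.
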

\begin{proof}
    Let $[\alpha_1],\ldots, [\alpha_r]\in K^\times / K^{\times 2}$ be a minimal set of generators for $\mathcal{A}$, so that 
    $$
    K(\sqrt{\mathcal{A}}) = K(\sqrt{\alpha_1},\ldots, \sqrt{\alpha_r}). 
    $$
    By class field theory, we have 
    $$
    [K^\times : \Nm K(\sqrt{\mathcal{A}})] = 2^r = \#\mathcal{A}. 
    $$
    It follows that 
    $$
    [\co_K^\times : \co_K^\mathcal{A}] = \begin{cases}
    \#\mathcal{A}\quad\text{if there exists $x \in \Nm(K(\sqrt{\mathcal{A}}))$ with $v_K(x) = 1$},
    \\
    \frac{1}{2}\cdot\#\mathcal{A} \quad\text{otherwise},
    \end{cases}
    $$
    so we need to show that there exists $x \in \Nm(K(\sqrt{\mathcal{A}}))$ with $v_K(x) = 1$ if and only if $K(\sqrt{\mathcal{A}})/K$ is totally ramified. This follows from class field theory, since $K(\sqrt{\mathcal{A}})$ contains the unramified quadratic extension $E^{\mathrm{ur}}/K$ if and only if 
    $$
    \Nm K(\sqrt{\mathcal{A}}) \subseteq \Nm E^{\mathrm{ur}} = \{x \in K^\times : 2 \mid v_K(x)\}.
    $$
\end{proof}

For each $0 \leq t \leq e_K$, let 
$$
\mathcal{A}_t = \mathcal{A} \cap \big(U_K^{(2t)}K^{\times 2}/K^{\times 2}\big). 
$$
\begin{lemma}
    \label{lem-At-tot-ram-iff-A-tot-ram}
    Let $0 \leq t \leq e_K$ and let $\mathcal{A}\subseteq K^\times / K^{\times 2}$ be any subgroup. The extension $K(\sqrt{\mathcal{A}})/K$ is totally ramified if and only if $K(\sqrt{\mathcal{A}_t})/K$ is totally ramified. 
\end{lemma}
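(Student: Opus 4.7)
The plan is to observe that the subfield structure of an elementary abelian 2-extension is governed by Kummer theory and to track the unramified quadratic subextension carefully. Since $\mathcal{A}_t \subseteq \mathcal{A}$, we have an inclusion $K(\sqrt{\mathcal{A}_t}) \subseteq K(\sqrt{\mathcal{A}})$, so the forward direction is immediate from the multiplicativity of inertia degrees in towers. The work is all in the reverse direction.

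The key reformulation is the following: the extension $K(\sqrt{\mathcal{B}})/K$ is abelian of exponent at most $2$, and there is a unique unramified quadratic extension of $K$, call it $E^{\mathrm{ur}}$, corresponding to a class $[u_0] \in K^\times / K^{\times 2}$. Every unramified subextension of $K(\sqrt{\mathcal{B}})/K$ is a subextension of a maximal unramified subextension, and by exponent-2 this maximal unramified piece must itself be either $K$ or $E^{\mathrm{ur}}$. Hence $K(\sqrt{\mathcal{B}})/K$ fails to be totally ramified if and only if $E^{\mathrm{ur}} \subseteq K(\sqrt{\mathcal{B}})$, which by Kummer theory is equivalent to $[u_0] \in \mathcal{B}$. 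Applying this to $\mathcal{A}$ and to $\mathcal{A}_t$, the lemma reduces to the statement $[u_0] \in \mathcal{A} \iff [u_0] \in \mathcal{A}_t$.

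For this equivalence, the nontrivial direction is to show that if $[u_0] \in \mathcal{A}$, then in fact $[u_0] \in \mathcal{A}_t = \mathcal{A} \cap \big(U_K^{(2t)}K^{\times 2}/K^{\times 2}\big)$. It therefore suffices to prove
\[
[u_0] \in U_K^{(2t)}K^{\times 2}/K^{\times 2}.
\]
This is where Corollary~\ref{cor-disc-square-mod-2t+1} enters: applying it to $\alpha = u_0$ and to the unramified extension $L = E^{\mathrm{ur}}$ (so $v_K(d_{L/K}) = 0$), we see that $u_0 \equiv x^2 \pmod{\p_K^{2e_K}}$ for some $x \in K^\times$, i.e.\ $[u_0] \in S_{K/K, e_K}$. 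Since $U_K^{(2e_K)} \subseteq U_K^{(2t)}$ for $t \leq e_K$, we conclude $[u_0] \in U_K^{(2t)}K^{\times 2}/K^{\times 2}$, as desired. The case $t = 0$ is handled by noting $u_0 \in \co_K^\times = U_K^{(0)}$, so the same membership holds trivially.

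I do not expect any serious obstacle: the argument is essentially bookkeeping, with the only delicate point being the identification of the maximal unramified subextension of a multiquadratic extension (which is forced to be at most quadratic), and confirming via Hecke that the unramified square class lies deep in the unit filtration modulo squares.
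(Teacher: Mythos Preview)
Your proof is correct and follows essentially the same approach as the paper: both argue (via contrapositive for one direction and the inclusion $\mathcal{A}_t \subseteq \mathcal{A}$ for the other) that failure of total ramification is equivalent to the unique unramified quadratic square class $[u_0]$ lying in the subgroup, and both invoke Corollary~\ref{cor-disc-square-mod-2t+1} to show $[u_0] \in U_K^{(2t)}K^{\times 2}/K^{\times 2}$. Your write-up is more explicit about why the maximal unramified subextension of a multiquadratic extension is at most quadratic, and you route through $t = e_K$ before descending to general $t$, but these are cosmetic; the paper applies the corollary directly at the given $t$, which also works and makes your separate treatment of $t = 0$ unnecessary.
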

\begin{proof}
    Suppose that $K(\sqrt{\mathcal{A}})/K$ is not totally ramified. Then $[u] \in \mathcal{A}$, where $[u] \in K^\times / K^{\times 2}$ is the unique element such that $K(\sqrt{u})/K$ is unramified. In that case, Corollary~\ref{cor-disc-square-mod-2t+1} tells us that $u \in \mathcal{A}_t$, so $K(\sqrt{\mathcal{A}_t})/K$ is not totally ramified. 

    Suppose conversely that $K(\sqrt{\mathcal{A}_t})/K$ is not totally ramified. Since $\mathcal{A}_t\subseteq \mathcal{A}$, we have $K(\sqrt{\mathcal{A}_t})\subseteq K(\sqrt{\mathcal{A}})$, so $K(\sqrt{\mathcal{A}})$ is not totally ramified. 
\end{proof}
\begin{lemma}
    \label{lem-OKAt-supremum-of-groups}
    Let $0 \leq t \leq e_K$ and let $\mathcal{A}\subseteq K^\times / K^{\times 2}$ be any subgroup. We have 
    $$
    \co_K^{\mathcal{A}_t} = \co_K^{\mathcal{A}}U_K^{(2e_K - 2t)}. 
    $$
\end{lemma}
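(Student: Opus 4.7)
The plan is to upgrade the statement to an equality of norm groups via local class field theory and then intersect with $\co_K^\times$. Specifically, I will prove the stronger equation
\begin{equation*}
\Nm K(\sqrt{\mathcal{A}_t}) \;=\; \Nm K(\sqrt{\mathcal{A}})\cdot U_K^{(2e_K-2t)}
\end{equation*}
as subgroups of $K^\times$; the lemma will then follow by intersecting both sides with $\co_K^\times$, since $U_K^{(2e_K-2t)} \subseteq \co_K^\times$ allows a modular-law factorisation and $\co_K^{\mathcal{B}} = \co_K^\times \cap \Nm K(\sqrt{\mathcal{B}})$ by definition.

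The inclusion $\Nm K(\sqrt{\mathcal{A}})\cdot U_K^{(2e_K-2t)} \subseteq \Nm K(\sqrt{\mathcal{A}_t})$ splits into two pieces. First, $\mathcal{A}_t \subseteq \mathcal{A}$ forces $K(\sqrt{\mathcal{A}_t}) \subseteq K(\sqrt{\mathcal{A}})$, whence the containment of norm groups in the opposite direction. Second, any $[\alpha] \in \mathcal{A}_t$ admits a representative in $U_K^{(2t)}$, so Corollary~\ref{cor-disc-square-mod-2t+1} gives $v_K(d_{K(\sqrt{\alpha})/K}) \leq 2e_K - 2t$; since the conductor of a quadratic extension equals its discriminant, this yields $U_K^{(2e_K-2t)} \subseteq \Nm K(\sqrt{\alpha})$. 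Intersecting over the generators of $\mathcal{A}_t$ and using that the norm group of a compositum is the intersection of the individual norm groups delivers $U_K^{(2e_K-2t)} \subseteq \Nm K(\sqrt{\mathcal{A}_t})$.

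For the reverse inclusion I would use the Galois/Kummer-style correspondence inside the elementary abelian $2$-extension $K(\sqrt{\mathcal{A}})/K$, whose subfields are exactly the $K(\sqrt{\mathcal{B}})$ for subgroups $\mathcal{B} \subseteq \mathcal{A}$. Let $F$ be the subfield corresponding under local class field theory to the intermediate subgroup $\Nm K(\sqrt{\mathcal{A}})\cdot U_K^{(2e_K-2t)}$, so $F = K(\sqrt{\mathcal{B}})$ for some $\mathcal{B} \subseteq \mathcal{A}$, and the containment $U_K^{(2e_K-2t)} \subseteq \Nm F$ means the conductor of $F/K$ divides $\p_K^{2e_K-2t}$. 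Because the conductor of a compositum of abelian extensions is the lcm of the individual conductors (itself a direct consequence of the intersection formula for norm groups) and because conductor equals discriminant in the quadratic case, this condition translates, via Corollary~\ref{cor-disc-square-mod-2t+1} and Lemma~\ref{lem-half-of-hecke}, into the requirement that every generator $[\beta]$ of $\mathcal{B}$ admit a representative in $U_K^{(2t)}$, i.e.\ $\mathcal{B} \subseteq \mathcal{A}_t$. Hence $F \subseteq K(\sqrt{\mathcal{A}_t})$, which is exactly the desired containment $\Nm K(\sqrt{\mathcal{A}_t}) \subseteq \Nm F = \Nm K(\sqrt{\mathcal{A}})\cdot U_K^{(2e_K-2t)}$.

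The step I expect to cost the most care is the equivalence ``$F$ has conductor dividing $\p_K^{2e_K-2t}$ if and only if the defining subgroup $\mathcal{B}$ is contained in $\mathcal{A}_t$''; this hinges on the conductor of a multi-quadratic extension being the lcm of the conductors of its defining quadratic subextensions, and on ruling out odd-valuation representatives via Lemma~\ref{lem-half-of-hecke}. Both are standard but deserve a brief justification inside the proof.
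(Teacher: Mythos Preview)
Your proposal is correct and follows essentially the same route as the paper: both first establish the norm-group identity $\Nm K(\sqrt{\mathcal{A}_t}) = U_K^{(2e_K-2t)}\Nm K(\sqrt{\mathcal{A}})$ via the Galois/Kummer correspondence inside $K(\sqrt{\mathcal{A}})$ (identifying the field with norm group $U_K^{(2e_K-2t)}\Nm K(\sqrt{\mathcal{A}})$ as some $K(\sqrt{\mathcal{B}})$ and then showing $\mathcal{B}\subseteq\mathcal{A}_t$), and then intersect with $\co_K^\times$ using the modular-law step. The only cosmetic difference is that you phrase the key implication through conductors and their lcm, whereas the paper argues directly that $U_K^{(2e_K-2t)}\subseteq\Nm K(\sqrt{\beta})$ forces $v_K(d_{K(\sqrt{\beta})/K})\leq 2e_K-2t$ and hence $[\beta]\in\mathcal{A}_t$ via Corollary~\ref{cor-disc-square-mod-2t+1}; these are equivalent since conductor equals discriminant for quadratic extensions.
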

\begin{proof}
    First we claim that 
    $$
    \Nm K(\sqrt{\mathcal{A}_t}) = U_K^{(2e_K - 2t)}\Nm K(\sqrt{\mathcal{A}}). 
    $$
    For $[\alpha] \in \mathcal{A}_t$, Corollary~\ref{cor-disc-square-mod-2t+1} tells us that $v_K(d_{K(\sqrt{\alpha})/K}) \leq 2e_K - 2t$, so $U_K^{(2e_K - 2t)}\subseteq \Nm K(\sqrt{\alpha})$, and therefore 
    $$
    U_K^{(2e_K - 2t)}\subseteq \Nm K(\sqrt{\mathcal{A}_t}). 
    $$
    Since $\mathcal{A}_t \subseteq \mathcal{A}$, class field theory tells us that 
    $$
    \Nm K(\sqrt{\mathcal{A}}) \subseteq \Nm K(\sqrt{\mathcal{A}_t}),
    $$ 
    and therefore
    $$
    U_K^{(2e_K - 2t)}\Nm K(\sqrt{\mathcal{A}}) \subseteq \Nm K(\sqrt{\mathcal{A}_t}).
    $$
    Suppose that 
    $$
    U_K^{(2e_K - 2t)}\Nm K(\sqrt{\mathcal{A}}) \subseteq G \subseteq \Nm K(\sqrt{\mathcal{A}_t}),
    $$
    for a subgroup $G$ of $K^\times$. By class field theory, there is a unique abelian extension $L/K$ such that $\Nm L = G$. We have  
    $$
    K(\sqrt{\mathcal{A}_t}) \subseteq L \subseteq K(\sqrt{\mathcal{A}}),
    $$
    so 
    $$
    L = K(\sqrt{\mathcal{B}})
    $$
    for some subgroup $\mathcal{B} \leq \mathcal{A}$. Let $[\beta] \in \mathcal{B}$. Since $U_K^{(2e_K - 2t)}\subseteq \Nm L \subseteq \Nm K(\sqrt{\beta})$, we have 
    $v_K(d_{K(\sqrt{\beta})/K}) \leq 2e_K - 2t$, so Corollary~\ref{cor-disc-square-mod-2t+1} tells us that $\beta \in U_K^{(2t)}K^{\times 2}$, and therefore $[\beta] \in \mathcal{A}_t$. It follows that $\mathcal{B}\subseteq \mathcal{A}_t$, and therefore $L \subseteq K(\sqrt{\mathcal{A}_t})$, so $G = \Nm K(\sqrt{\mathcal{A}_t})$. Therefore, as claimed, we have 
    $$
    \Nm K(\sqrt{\mathcal{A}_t}) = U_K^{(2e_K - 2t)}\Nm K(\sqrt{\mathcal{A}}). 
    $$
    It follows that 
    $$
    \co_K^{\mathcal{A}_t} = \Big(
    U_K^{(2e_K - 2t)} \Nm K(\sqrt{\mathcal{A}})     
    \Big) \cap \co_K^\times,
    $$
    so we need to show that 
    $$
    \Big(
    U_K^{(2e_K - 2t)} \Nm K(\sqrt{\mathcal{A}})     
    \Big) \cap \co_K^\times = U_K^{(2e_K - 2t)}\Big(
        \Nm K(\sqrt{\mathcal{A}})  \cap \co_K^\times
    \Big),
    $$
    which is an easy exercise in definitions.
\end{proof}
\begin{lemma}
    \label{lem-ES-for-OK/p2t-A}
    Let $1 \leq t \leq e_K$ and let $\mathcal{A}\subseteq K^\times / K^{\times 2}$ be any subgroup. There is a short exact sequence 
    $$
    1 \to \co_K^{\mathcal{A}_{e_K - t}} \to \co_K^\times \to \frac{(\co_K/\p_K^{2t})^\times}{(\co_K/\p_K^{2t})^\mathcal{A}} \to 1.
    $$
\end{lemma}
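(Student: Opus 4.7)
The plan is to define the map directly and verify surjectivity and kernel by unwinding the definitions, then invoke Lemma~\ref{lem-OKAt-supremum-of-groups} to identify the kernel with $\co_K^{\mathcal{A}_{e_K - t}}$.

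First I would consider the map $\phi: \co_K^\times \to (\co_K/\p_K^{2t})^\times / (\co_K/\p_K^{2t})^\mathcal{A}$ obtained by composing reduction modulo $\p_K^{2t}$ with the natural quotient. Surjectivity is immediate: reduction $\co_K^\times \to (\co_K/\p_K^{2t})^\times$ is surjective by a standard Hensel-type argument, and the quotient map is of course surjective.

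Next I would identify the kernel. By definition, $x \in \ker\phi$ if and only if the class of $x$ in $(\co_K/\p_K^{2t})^\times$ lies in $(\co_K/\p_K^{2t})^\mathcal{A}$, which means there is some $y \in \co_K^\mathcal{A}$ with $x \equiv y \pmod{\p_K^{2t}}$, i.e.\ $x \in y \cdot U_K^{(2t)}$. Hence $\ker\phi = \co_K^\mathcal{A} \cdot U_K^{(2t)}$.

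Finally, I would apply Lemma~\ref{lem-OKAt-supremum-of-groups} with $t$ replaced by $e_K - t$: this gives
$$
\co_K^{\mathcal{A}_{e_K - t}} = \co_K^\mathcal{A} \cdot U_K^{(2e_K - 2(e_K - t))} = \co_K^\mathcal{A}\cdot U_K^{(2t)},
$$
which equals the kernel just computed. Putting this together yields the desired short exact sequence. No step is really an obstacle here; the content is entirely in Lemma~\ref{lem-OKAt-supremum-of-groups}, and the present lemma is essentially a reformulation of that identity via the first isomorphism theorem.
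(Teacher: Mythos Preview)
Your proof is correct and follows exactly the same route as the paper: define the obvious composite map, note its surjectivity, compute $\ker\phi = \co_K^\mathcal{A}\,U_K^{(2t)}$ from the definitions, and then invoke Lemma~\ref{lem-OKAt-supremum-of-groups} (with parameter $e_K - t$) to identify this with $\co_K^{\mathcal{A}_{e_K-t}}$. The only cosmetic point is that surjectivity of $\co_K^\times \to (\co_K/\p_K^{2t})^\times$ needs no Hensel argument: any lift of a unit in a local ring is already a unit.
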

\begin{proof}
    Clearly the map $\varphi:\co_K^\times \to \frac{(\co_K/\p_K^{2t})^\times}{(\co_K/\p_K^{2t})^\mathcal{A}}$ is well-defined and surjective. It follows from the definitions that 
    $$
    \ker \varphi = \co_K^\mathcal{A} U_K^{(2t)},
    $$
    so the result follows from Lemma~\ref{lem-OKAt-supremum-of-groups}.
\end{proof}
\begin{lemma}
    \label{lem-size-of-A_eK}
    For any subgroup $\mathcal{A}\subseteq K^\times / K^{\times 2}$, we have 
    $$
    \# \mathcal{A}_{e_K} = f(K(\sqrt{\mathcal{A}})/K). 
    $$
\end{lemma}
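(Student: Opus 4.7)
The plan is to identify $\mathcal{A}_{e_K}$ as the subgroup of $\mathcal{A}$ which, via Kummer theory, corresponds to the maximal unramified subextension of $K(\sqrt{\mathcal{A}})/K$.

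First, I would unwind the definition $\mathcal{A}_{e_K} = \mathcal{A}\cap \bigl(U_K^{(2e_K)}K^{\times 2}/K^{\times 2}\bigr)$. A class $[\alpha]\in K^\times/K^{\times 2}$ lies in $U_K^{(2e_K)}K^{\times 2}/K^{\times 2}$ if and only if $\alpha/x^2\equiv 1\pmod{\p_K^{2e_K}}$ for some $x\in K^\times$. Applying Corollary~\ref{cor-disc-square-mod-2t+1} with $E=K$ and $t=e_K$, this is equivalent to $v_K(d_{K(\sqrt{\alpha})/K})\leq 0$, i.e.\ $K(\sqrt{\alpha})/K$ is either trivial or unramified. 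Hence $\mathcal{A}_{e_K}$ consists precisely of those $[\alpha]\in \mathcal{A}$ for which $K(\sqrt{\alpha})/K$ is unramified.

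Next, by Kummer theory the subextensions of $K(\sqrt{\mathcal{A}})/K$ are exactly the fields $K(\sqrt{\mathcal{B}})$ for subgroups $\mathcal{B}\leq \mathcal{A}$, with $[K(\sqrt{\mathcal{B}}):K]=\#\mathcal{B}$. Since a compositum of unramified extensions of a local field is unramified and every subextension of an unramified extension is unramified, the subextension $K(\sqrt{\mathcal{B}})/K$ is unramified if and only if every generator $K(\sqrt{\beta})/K$ with $[\beta]\in \mathcal{B}$ is unramified, i.e.\ if and only if $\mathcal{B}\subseteq \mathcal{A}_{e_K}$. Therefore the maximal unramified subextension of $K(\sqrt{\mathcal{A}})/K$ is exactly $K(\sqrt{\mathcal{A}_{e_K}})$.

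Finally, since the maximal unramified subextension of $K(\sqrt{\mathcal{A}})/K$ has degree $f(K(\sqrt{\mathcal{A}})/K)$ over $K$, we conclude that $f(K(\sqrt{\mathcal{A}})/K) = [K(\sqrt{\mathcal{A}_{e_K}}):K] = \#\mathcal{A}_{e_K}$. There is no serious obstacle in this argument; the only point requiring care is the characterisation of $\mathcal{A}_{e_K}$ via Corollary~\ref{cor-disc-square-mod-2t+1}, after which the claim follows immediately from Kummer theory and the definition of the residue degree. (Alternatively, one could derive the same identity by combining Lemma~\ref{lem-OKAt-supremum-of-groups} at $t=e_K$, which gives $\co_K^{\mathcal{A}_{e_K}}=\co_K^\times$, with Lemma~\ref{lem-index-of-OKA} applied to $\mathcal{A}_{e_K}$, but the Kummer-theoretic route is shorter and more conceptual.)
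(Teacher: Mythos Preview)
Your proof is correct. Both your argument and the paper's rest on the same underlying fact---that $\mathcal{A}_{e_K}$ consists exactly of the classes $[\alpha]\in\mathcal{A}$ with $K(\sqrt{\alpha})/K$ unramified---but the executions differ. The paper exploits the concrete observation (via Lemma~\ref{lem-hecke}) that $U_K^{(2e_K)}K^{\times 2}/K^{\times 2}=\{1,[u]\}$, where $K(\sqrt{u})/K$ is the unique unramified quadratic extension, and then simply checks two cases: $\#\mathcal{A}_{e_K}$ is $2$ or $1$ according as $[u]\in\mathcal{A}$ or not, which matches $f(K(\sqrt{\mathcal{A}})/K)\in\{2,1\}$. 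Your route instead invokes Kummer theory to identify $K(\sqrt{\mathcal{A}_{e_K}})$ with the maximal unramified subextension and reads off the degree. The paper's version is shorter and avoids the extra machinery; yours is more conceptual and would transplant without change to an analogous statement for $p$-power Kummer extensions, whereas the paper's case analysis leans on the fact that a $2$-adic field has a \emph{unique} unramified quadratic extension.
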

\begin{proof}
    Lemma~\ref{lem-hecke} tells us that $U_K^{(2e_K)}K^{\times 2}/K^{\times 2} = \{1, [u]\}$, where $K(\sqrt{u})/K$ is the unique unramified quadratic extension. It follows that 
    $$
    \mathcal{A}_{e_K} = \begin{cases}
        \{[1],[u]\}\quad\text{if $K(\sqrt{u})\subseteq K(\sqrt{\mathcal{A}})$},
        \\
        \{[1]\}\quad\text{otherwise},
    \end{cases}
    $$
    and the result follows. 
\end{proof}
\begin{lemma}
    \label{lem-size-of-SKKtA}
    Let $0 \leq t \leq e_K$ and let $\mathcal{A}\subseteq K^\times / K^{\times 2}$ be any subgroup. We have
    $$
    \#S_{K/K,t}^\mathcal{A} = 2q^{e_K - t} \cdot \frac{\#\mathcal{A}_{e_K-t}}{\#\mathcal{A}}. 
    $$
\end{lemma}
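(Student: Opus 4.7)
The plan is to apply the short exact sequence from Lemma~\ref{lem-SKKtA-in-terms-of-SKK0A} and evaluate its two outer terms separately.

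First I would identify $S_{K/K,0}^{\mathcal{A}}$ with $\co_K^{\mathcal{A}}/\co_K^{\times 2}$: every class in $S_{K/K,0}$ admits a unit representative (rescale by an even power of a uniformiser), such a representative is a norm from $K(\sqrt{\mathcal{A}})$ precisely when it lies in $\co_K^{\mathcal{A}}$, and $\co_K^{\mathcal{A}}\cap K^{\times 2}=\co_K^{\times 2}$ because a square with nonzero valuation is not a unit. Lemmas~\ref{lem-index-of-OKA} and \ref{lem-size-of-A_eK} then give $[\co_K^\times:\co_K^{\mathcal{A}}]=\#\mathcal{A}/\#\mathcal{A}_{e_K}$, so combined with the standard identity $\#(\co_K^\times/\co_K^{\times 2})=2q^{e_K}$ for $2$-adic fields, we obtain $\#S_{K/K,0}^{\mathcal{A}}=2q^{e_K}\cdot\#\mathcal{A}_{e_K}/\#\mathcal{A}$.

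Next I would compute $\#\bigl((\co_K/\p_K^{2t})^{\mathcal{A}}/(\co_K/\p_K^{2t})^{\times 2}\bigr)$. Lemma~\ref{lem-ES-for-OK/p2t-A} gives $[(\co_K/\p_K^{2t})^\times:(\co_K/\p_K^{2t})^{\mathcal{A}}]=[\co_K^\times:\co_K^{\mathcal{A}_{e_K-t}}]$. The key observation here is the identity $(\mathcal{A}_{e_K-t})_{e_K}=\mathcal{A}_{e_K}$, which follows from $U_K^{(2e_K)}\subseteq U_K^{(2e_K-2t)}$; it lets me apply Lemmas~\ref{lem-index-of-OKA} and \ref{lem-size-of-A_eK} to $\mathcal{A}_{e_K-t}$ and collapse that index to $\#\mathcal{A}_{e_K-t}/\#\mathcal{A}_{e_K}$. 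Dividing by the standard index $[(\co_K/\p_K^{2t})^\times:(\co_K/\p_K^{2t})^{\times 2}]=q^t$ yields $q^t\cdot\#\mathcal{A}_{e_K}/\#\mathcal{A}_{e_K-t}$ for the quotient.

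Feeding these two quantities into the short exact sequence produces
$$\#S_{K/K,t}^{\mathcal{A}}=\frac{2q^{e_K}\cdot\#\mathcal{A}_{e_K}/\#\mathcal{A}}{q^t\cdot\#\mathcal{A}_{e_K}/\#\mathcal{A}_{e_K-t}}=2q^{e_K-t}\cdot\frac{\#\mathcal{A}_{e_K-t}}{\#\mathcal{A}},$$
as claimed. The proof is more bookkeeping than insight, and the main subtlety is the identity $(\mathcal{A}_{e_K-t})_{e_K}=\mathcal{A}_{e_K}$, which makes Lemma~\ref{lem-size-of-A_eK} line up so that the two $\#\mathcal{A}_{e_K}$ factors cancel in the final ratio. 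If one prefers not to invoke the two standard inputs $\#(\co_K^\times/\co_K^{\times 2})=2q^{e_K}$ and $[(\co_K/\p_K^{2t})^\times:(\co_K/\p_K^{2t})^{\times 2}]=q^t$ as black boxes, they can be recovered from the $\mathcal{A}=\{1\}$ case by combining Lemma~\ref{lem-bij-SKKtA-Sigma-quad-A} with Hecke's classification of quadratic extensions in Lemma~\ref{lem-half-of-hecke}.
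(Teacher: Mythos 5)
Your proof is correct and follows essentially the same route as the paper: both feed the short exact sequence of Lemma~\ref{lem-SKKtA-in-terms-of-SKK0A} with $\#S_{K/K,0}^{\mathcal{A}}=2q^{e_K}/[\co_K^\times:\co_K^{\mathcal{A}}]$ and the index identity from Lemma~\ref{lem-ES-for-OK/p2t-A}, then cancel everything using Lemmas~\ref{lem-index-of-OKA} and~\ref{lem-size-of-A_eK} together with $[(\co_K/\p_K^{2t})^\times:(\co_K/\p_K^{2t})^{\times 2}]=q^t$. Your observation that $(\mathcal{A}_{e_K-t})_{e_K}=\mathcal{A}_{e_K}$ is exactly equivalent (via Lemma~\ref{lem-size-of-A_eK}) to the paper's appeal to Lemma~\ref{lem-At-tot-ram-iff-A-tot-ram}, so the two arguments coincide.
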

\begin{proof}
    It follows from Lemma~\ref{lem-SKKtA-in-terms-of-SKK0A} that 
    $$
    \#S_{K/K,t}^\mathcal{A} = \frac{\# S_{K/K,0}^\mathcal{A} \#(\co_K/\p_K^{2t})^{\times 2}}{\#(\co_K/\p_K^{2t})^\mathcal{A}}. 
    $$
    By \cite[Proposition 3.7]{neukirch-bonn}, we have $[\co_K^\times : \co_K^{\times 2}] = 2q^{e_K}$, so the definition of $S_{K/K,0}^\mathcal{A}$ gives
    $$
    \# S_{K/K,0}^\mathcal{A} = \frac{2q^{e_K}}{[\co_K^\times : \co_K^\mathcal{A}]}.
    $$
    The result for $t=0$ then follows from Lemmas~\ref{lem-index-of-OKA} and \ref{lem-size-of-A_eK}. Now assume that $t \geq 1$. 
    Lemma~\ref{lem-ES-for-OK/p2t-A} tells us that 
    $$
    \frac{1}{\# (\co_K/\p_K^{2t})^\mathcal{A}} = \frac{[\co_K^\times : \co_K^{\mathcal{A}_{e_K-t}}]}{\#(\co_K/\p_K^{2t})^\times}.
    $$
    It follows that 
    $$
    \#S_{K/K,t}^\mathcal{A} = \frac{2q^{e_K}}{[(\co_K/\p_K^{2t})^\times : (\co_K/\p_K^{2t})^{\times 2}]}\cdot \frac{[\co_K^\times : \co_K^{\mathcal{A}_{e_K-t}}]}{[\co_K^\times : \co_K^{\mathcal{A}}]}.
    $$
    The short exact sequence 
    $$
    1 \to U_K^{(t)}/U_K^{(2t)} \overset{[u]\mapsto [u]}{\to} (\co_K/\p_K^{2t})^\times \overset{[u] \mapsto [u^2]}{\to} (\co_K/\p_K^{2t})^{\times 2} \to 1
    $$
    tells us that $[(\co_K/\p_K^{2t})^\times : (\co_K/\p_K^{2t})^{\times 2}] = q^t$. Finally, the result follows from Lemmas~\ref{lem-index-of-OKA} and \ref{lem-At-tot-ram-iff-A-tot-ram}.
\end{proof}
\begin{corollary}
    \label{cor-size-of-Sigma-quad-A-leq-m1}
    Let $0 \leq m_1 \leq 2e_K$ be an even integer and let $\mathcal{A}\subseteq K^\times / K^{\times 2}$ be any subgroup. Then
    $$
    \# \Sigma_{\quadr/K,\leq m_1}^{\mathcal{A}} = 2q^{m_1/2} \cdot \frac{\#\mathcal{A}_{m_1/2}}{\#\mathcal{A}} - 1. 
    $$
\end{corollary}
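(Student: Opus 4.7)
The plan is to combine Lemmas~\ref{lem-bij-SKKtA-Sigma-quad-A} and \ref{lem-size-of-SKKtA} by choosing $t$ so that $2e_K - 2t = m_1$. Since $m_1$ is an even integer with $0 \leq m_1 \leq 2e_K$, the value $t := e_K - m_1/2$ is a valid integer in the range $0 \leq t \leq e_K$, so both lemmas apply.

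First I would apply Lemma~\ref{lem-bij-SKKtA-Sigma-quad-A} with this choice of $t$: the bijection
$$
S_{K/K, e_K - m_1/2}^{\mathcal{A}} \longrightarrow \Sigma_{\quadr/K,\leq m_1}^{\mathcal{A}} \cup \{K\}, \qquad u \mapsto K(\sqrt{u}),
$$
immediately gives
$$
\#\Sigma_{\quadr/K,\leq m_1}^{\mathcal{A}} = \#S_{K/K, e_K - m_1/2}^{\mathcal{A}} - 1,
$$
where the $-1$ accounts for the trivial extension $K$ in the codomain.

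Next I would plug in Lemma~\ref{lem-size-of-SKKtA} with $t = e_K - m_1/2$, yielding
$$
\#S_{K/K, e_K - m_1/2}^{\mathcal{A}} = 2 q^{e_K - (e_K - m_1/2)} \cdot \frac{\#\mathcal{A}_{e_K - (e_K - m_1/2)}}{\#\mathcal{A}} = 2q^{m_1/2}\cdot \frac{\#\mathcal{A}_{m_1/2}}{\#\mathcal{A}}.
$$
Substituting into the previous display gives the claimed formula. There is no real obstacle here: the corollary is a direct rewriting of Lemma~\ref{lem-size-of-SKKtA} via the parametrization of Lemma~\ref{lem-bij-SKKtA-Sigma-quad-A}, and the only thing to watch is the bookkeeping that $t = e_K - m_1/2$ lies in the allowed range and that one subtracts $1$ for the trivial extension.
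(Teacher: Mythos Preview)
Your proof is correct and follows exactly the approach the paper takes: it simply combines Lemmas~\ref{lem-bij-SKKtA-Sigma-quad-A} and \ref{lem-size-of-SKKtA} with the substitution $t = e_K - m_1/2$, which is precisely what the paper means by ``immediate from'' those two lemmas.
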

\begin{proof}
    This is immediate from Lemmas~\ref{lem-bij-SKKtA-Sigma-quad-A} and \ref{lem-size-of-SKKtA}. 
\end{proof}
\begin{corollary}
    \label{cor-num-extendables-with-even-disc}
    Let $m_1$ be an even integer with $2 \leq m_1 \leq 2e_K$. We have  
    $$
    \#\Sigma_{\quadr/K,\leq m_1}^{\extendable} =(1 + \mathbbm{1}_{m_1 \leq 2e_K - d_{(-1)}})\cdot q^{m_1/2} - 1. 
    $$
\end{corollary}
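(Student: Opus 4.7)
The plan is to apply Corollary~\ref{cor-size-of-Sigma-quad-A-leq-m1} with the specific choice $\mathcal{A} = \langle [-1]\rangle \subseteq K^\times / K^{\times 2}$, so that $K(\sqrt{\mathcal{A}}) = K(\sqrt{-1})$. By Corollary~\ref{cor-extendable-iff-so2s}, a quadratic extension $E = K(\sqrt{d})$ is $C_4$-extendable if and only if $d \in \Nm K(\sqrt{-1})$, i.e.\ $[d] \in \Nm K(\sqrt{\mathcal{A}})/K^{\times 2}$. Hence $\Sigma_{\quadr/K,\leq m_1}^{\extendable} = \Sigma_{\quadr/K,\leq m_1}^{\mathcal{A}}$, and Corollary~\ref{cor-size-of-Sigma-quad-A-leq-m1} yields
$$
\#\Sigma_{\quadr/K,\leq m_1}^{\extendable} = 2q^{m_1/2}\cdot \frac{\#\mathcal{A}_{m_1/2}}{\#\mathcal{A}} - 1.
$$

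The remaining task is to evaluate the ratio $\#\mathcal{A}_{m_1/2}/\#\mathcal{A}$. If $-1 \in K^{\times 2}$, then $\mathcal{A}$ is trivial, $d_{(-1)} = 0$, and the ratio is $1$; this matches the formula, since $m_1 \leq 2e_K = 2e_K - d_{(-1)}$ is automatic. If $-1 \notin K^{\times 2}$, then $\#\mathcal{A} = 2$ and $\mathcal{A}_{m_1/2}$ is either $\{[1]\}$ or $\{[1],[-1]\}$. The class $[-1]$ lies in $\mathcal{A}_{m_1/2}$ precisely when $-1 \in U_K^{(m_1)}K^{\times 2}$, which by Corollary~\ref{cor-disc-square-mod-2t+1} (applied to the base field $K$ with $\alpha = -1$, $t = m_1/2$) is equivalent to $d_{(-1)} \leq 2e_K - m_1$. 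Thus $\#\mathcal{A}_{m_1/2}/\#\mathcal{A} = 1$ when $m_1 \leq 2e_K - d_{(-1)}$ and equals $1/2$ otherwise, which gives exactly the indicator expression in the statement.

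No step is really difficult here: the main content is the identification with $\mathcal{A} = \langle[-1]\rangle$, which is immediate from Corollary~\ref{cor-extendable-iff-so2s}, and the bookkeeping for $\mathcal{A}_{m_1/2}$ via Hecke's theorem. The only mild subtlety is handling the degenerate case $-1 \in K^{\times 2}$ uniformly, but this works out cleanly since then $d_{(-1)} = 0$ makes the indicator $\mathbbm{1}_{m_1 \leq 2e_K - d_{(-1)}}$ always equal to $1$.
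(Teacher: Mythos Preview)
Your proof is correct and follows essentially the same approach as the paper: set $\mathcal{A} = \langle [-1]\rangle$, identify $\Sigma_{\quadr/K,\leq m_1}^{\extendable}$ with $\Sigma_{\quadr/K,\leq m_1}^{\mathcal{A}}$ via Corollary~\ref{cor-extendable-iff-so2s}, apply Corollary~\ref{cor-size-of-Sigma-quad-A-leq-m1}, and then compute $\#\mathcal{A}_{m_1/2}/\#\mathcal{A}$ by splitting into the cases $-1 \in K^{\times 2}$ and $-1 \notin K^{\times 2}$, using Corollary~\ref{cor-disc-square-mod-2t+1} for the latter. The only minor omission is that the identification $\Sigma_{\quadr/K,\leq m_1}^{\extendable} = \Sigma_{\quadr/K,\leq m_1}^{\mathcal{A}}$ technically uses the symmetry of the quadratic Hilbert symbol (since $\Sigma_{\quadr/K,\leq m_1}^{\mathcal{A}}$ is defined by $-1 \in \Nm E$, whereas Corollary~\ref{cor-extendable-iff-so2s} gives $d \in \Nm K(\sqrt{-1})$), but the paper itself leaves this implicit in its proof, having noted it in the surrounding discussion.
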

\begin{proof}
    Let $\mathcal{A} = \langle [-1] \rangle \subseteq K^\times / K^{\times 2}$. Corollary~\ref{cor-extendable-iff-so2s} tells us that
    $$
    \Sigma_{\quadr/K,\leq m_1}^{\extendable} = \Sigma_{\quadr/K,\leq m_1}^{\mathcal{A}}, 
    $$
    and it follows by Corollary~\ref{cor-size-of-Sigma-quad-A-leq-m1} that 
    $$
    \Sigma_{\quadr/K,\leq m_1}^{\extendable} = 2q^{m_1/2} \cdot \frac{\#\mathcal{A}_{m_1/2}}{\#\mathcal{A}} - 1. 
    $$
    
    Suppose first that $-1 \in K^{\times 2}$. Then $[-1] = [1]$, so $\# \mathcal{A} = \#\mathcal{A}_{m_1/2} = 1$, and the result follows since $d_{(-1)} = 0$. 

    Suppose instead that $-1 \not \in K^{\times 2}$. Then
    $$
    \#\mathcal{A} = 2,
    $$
    and (by Corollary~\ref{cor-disc-square-mod-2t+1})
    $$
    \#\mathcal{A}_{m_1/2} = 1 + \mathbbm{1}_{d_{(-1)} \leq 2e_K - m_1},
    $$
    and the result follows. 
\end{proof}
\begin{proof}[Proof of Lemma~\ref{lem-N-ext}]
    The first claim follows from the classification of quadratic extensions in \cite[Lemma 4.3]{tunnell}.
    The result for $2 \leq m_1 \leq 2e_K$ follows from Corollary~\ref{cor-num-extendables-with-even-disc}. By Lemma~\ref{lem-hecke}, for any quadratic extension $E/K$, we have $v_K(d_{E/K}) = 2e_K + 1$ if and only if $E = K(\sqrt{\alpha})$ for some $\alpha \in K^\times$ with $v_K(\alpha) = 1$. Assume that this is the case. Then Corollary~\ref{cor-extendable-iff-so2s} tells us that $E/K$ is $C_4$-extendable if and only if $\alpha$ is in the norm group of $K(\sqrt{-1})/K$, and the result follows by basic class field theory. 

\end{proof}
\begin{lemma}
    \label{lem-bound-on-d-(-1)}
    The constant $d_{(-1)}$ is an even integer with
    $$
    d_{(-1)} \leq 2\Big\lceil \frac{e_K}{2}\Big\rceil. 
    $$
\end{lemma}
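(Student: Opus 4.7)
\emph{Proof plan.} The evenness of $d_{(-1)}$ is essentially automatic: if $-1 \in K^{\times 2}$ then $d_{(-1)} = 0$, and otherwise $v_K(-1) = 0$ is even, so Lemma~\ref{lem-half-of-hecke} forces $d_{(-1)}$ to be an even integer in the range $[2, 2e_K]$. So the real content of the lemma is the sharper upper bound $d_{(-1)} \leq 2\lceil e_K/2\rceil$, which improves on $2e_K$ by almost a factor of two when $e_K$ is large.

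For this, my plan is to invoke Hecke's theorem (Lemma~\ref{lem-hecke}) in its quantitative form to write
$$d_{(-1)} = 2e_K + 1 - \kappa_{K, -1},$$
so that the claim reduces to producing a unit $x \in \co_K^\times$ with $v_K(1 + x^2) \geq 2\lfloor e_K/2\rfloor + 1$. The natural ansatz is $x = 1 + c\pi_K^{\lceil e_K/2\rceil}$ for a well-chosen $c \in \co_K^\times$, which gives
$$1 + x^2 = 2 + 2c\pi_K^{\lceil e_K/2\rceil} + c^2\pi_K^{2\lceil e_K/2\rceil}.$$
When $e_K$ is odd, the target valuation is $e_K$, and all three summands already have valuation at least $e_K$ for any choice of $c$ (e.g. $c = 1$), so there is nothing to arrange. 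When $e_K$ is even, the target valuation is $e_K + 1$, and the summands $2$ and $c^2\pi_K^{e_K}$ both have valuation exactly $e_K$; I would then pick $c$ so that these cancel modulo $\p_K^{e_K+1}$, which amounts to solving $c^2 \equiv -2/\pi_K^{e_K} \pmod{\p_K}$. Such a $c$ exists because $\F_K$ has characteristic $2$, so its Frobenius is a bijection. The cross term $2c\pi_K^{e_K/2}$ has valuation $3e_K/2 \geq e_K + 1$ (using $e_K \geq 2$ in this case), so the whole expression acquires valuation at least $e_K + 1$, as required.

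There is no substantive obstacle: once Hecke's theorem is in hand, the argument is just careful bookkeeping of valuations together with one invocation of Frobenius surjectivity on the residue field. The slightly annoying part is the parity split, but it only costs one line in each case.
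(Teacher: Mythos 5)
Your argument is correct, but it takes a longer road than the paper does. The paper's proof is a one-liner: since $v_K(-1-1)=e_K\geq 2\lfloor e_K/2\rfloor$, the element $-1$ is congruent to $1^2$ modulo $\p_K^{2\lfloor e_K/2\rfloor}$, so Corollary~\ref{cor-disc-square-mod-2t+1} (applied with $E=K$, $\alpha=-1$, $t=\lfloor e_K/2\rfloor$) immediately gives both the evenness and the bound $d_{(-1)}\leq 2e_K-2\lfloor e_K/2\rfloor=2\lceil e_K/2\rceil$. You instead work directly from the exact formula $d_{(-1)}=2e_K+1-\kappa_{K,-1}$ of Lemma~\ref{lem-hecke}, which forces you to exhibit a witness one level deeper, namely $v_K(1+x^2)\geq 2\lfloor e_K/2\rfloor+1$; your ansatz $x=1+c\pi_K^{\lceil e_K/2\rceil}$ with $c^2\equiv -2/\pi_K^{e_K}\pmod{\p_K}$ achieves this, and the valuation bookkeeping checks out. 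In effect, the extra step you perform in the $e_K$ even case is exactly the ``square mod $\p^{2t}$ implies square mod $\p^{2t+1}$'' bootstrapping already packaged in the footnote to Corollary~\ref{cor-disc-square-mod-2t+1}, so you have re-derived that corollary's content in the one instance you need rather than citing it. Two minor points to tidy: the formula $d_{(-1)}=2e_K+1-\kappa_{K,-1}$ is only valid when $K(\sqrt{-1})/K$ is (quadratic and) ramified, so alongside the case $-1\in K^{\times 2}$ you should also dispose of the case where $K(\sqrt{-1})/K$ is unramified, where $d_{(-1)}=0$ and there is nothing to prove; relatedly, your appeal to Lemma~\ref{lem-half-of-hecke} for the range $[2,2e_K]$ silently assumes ramification for the same reason. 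Neither affects the validity of the bound.
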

\begin{proof}
    This follows from Corollary~\ref{cor-disc-square-mod-2t+1}, along with the trivial fact that 
    $$
    -1 \equiv 1 \pmod{\p_K^{e_K}}.
    $$
\end{proof}

\subsection{Counting $C_4$-extensions with a given intermediate field}

\begin{lemma}
    \label{lem-conds-for-m2-extendable}
    Let $E = K(\sqrt{d})$ be a totally ramified $C_4$-extendable extension of $K$ with $m_1 = v_K(d_{E/K})$, and let $0 \leq m_2 \leq 4e_K$ be an even integer. The following are equivalent:
    \begin{enumerate}
        \item The set $\Sigma_{\quadr/E,\leq m_2}^{C_4/K}$ is nonempty. 
        \item There is some $\beta \in \co_E^\times$ such that $\beta \equiv 1\pmod{\p_E^{4e_K - m_2}}$ and $N_{E/K}(\beta) \in dK^{\times 2}$. 
        \item We have $m_2 \geq \min\{m_1 + 2e_K, 3m_1 - 2\}$. 
    \end{enumerate}
\end{lemma}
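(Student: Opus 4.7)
The plan is to prove $(1) \iff (2)$ and $(2) \iff (3)$ in sequence. The first equivalence is a direct reformulation. Given $L \in \Sigma_{\quadr/E,\leq m_2}^{C_4/K}$, write $L = E(\sqrt{\alpha})$. Since $v_E(d_{L/E}) \leq m_2 \leq 4e_K$, Hecke's theorem (Lemma~\ref{lem-hecke}) forces $v_E(\alpha)$ to be even, so we may take $\alpha \in \co_E^\times$. Corollary~\ref{cor-disc-square-mod-2t+1} applied with $2t = 4e_K - m_2$ produces $x \in E^\times$ with $\beta := \alpha/x^2 \in U_E^{(4e_K - m_2)}$; this substitution preserves both $L = E(\sqrt{\beta})$ and the class of $N_{E/K}(\alpha)$ modulo $K^{\times 2}$, so by Lemma~\ref{lem-GG-norm-conditions} the $C_4/K$ condition becomes $N_{E/K}(\beta) \in dK^{\times 2}$. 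Conversely, given $\beta$ as in (2), necessarily $\beta \notin E^{\times 2}$ (otherwise $N_{E/K}(\beta) \in K^{\times 2}$ would force $d \in K^{\times 2}$), so $L := E(\sqrt{\beta})$ is a quadratic extension that is $C_4/K$ by Lemma~\ref{lem-GG-norm-conditions} and satisfies $v_E(d_{L/E}) \leq m_2$ by Corollary~\ref{cor-disc-square-mod-2t+1}.

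For $(2) \iff (3)$, set $s := 4e_K - m_2$; the question becomes whether $d \in N_{E/K}(U_E^{(s)}) \cdot K^{\times 2}$. For $\beta = 1 + \pi_E^s \gamma \in U_E^{(s)}$ with $\gamma \in \co_E$, expanding gives
$$N_{E/K}(\beta) = 1 + \Tr_{E/K}(\pi_E^s \gamma) + N_{E/K}(\pi_E^s \gamma).$$
The identity $\Tr_{E/K}(\p_E^s) = \p_K^{\lfloor (s+m_1)/2 \rfloor}$, coming from $v_E(\mathfrak{d}_{E/K}) = m_1$ and the defining property of the different, controls the trace term, while $v_K(N_{E/K}(\pi_E^s \gamma)) \geq s$ with generic equality for $\gamma \in \co_E^\times$. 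On the other hand, Corollary~\ref{cor-disc-square-mod-2t+1} yields $d \in K^{\times 2} \cdot U_K^{(k)}$ if and only if $k \leq 2e_K - m_1 + 1$. The case $m_1 = 2e_K + 1$ is trivial: then $v_K(d) = 1$, so every element of $dK^{\times 2}$ has odd valuation while $N_{E/K}(\beta) \in \co_K^\times$, and both (2) and (3) fail simultaneously.

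Assume now $m_1 \leq 2e_K$ is even, so $s + m_1$ is even and the trace exponent equals $(s+m_1)/2$. The comparison splits into two regimes. In the \emph{trace-dominated} regime $m_1 \leq s$ (equivalently $m_2 \leq 4e_K - m_1$), $N_{E/K}(U_E^{(s)})$ lies in $U_K^{((s+m_1)/2)}$; to meet $dK^{\times 2}$ we need $(s+m_1)/2 \leq 2e_K - m_1 + 1$, equivalently $m_2 \geq 3m_1 - 2$. In the \emph{norm-dominated} regime $m_1 > s$ (equivalently $m_2 > 4e_K - m_1$), $N_{E/K}(U_E^{(s)})$ contains elements of each level $\geq s$ by Frobenius surjectivity on $\F_K^\times$, and the necessary condition $s \leq 2e_K - m_1 + 1$ tightens (using the parity of $m_2$) to $m_2 \geq m_1 + 2e_K$. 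The two thresholds coincide at $m_1 = e_K + 1$, and their minimum is exactly condition (3). The main obstacle will be establishing sufficiency: one must construct a specific $\beta$ realising $N_{E/K}(\beta) \in dK^{\times 2}$ rather than merely showing $N_{E/K}(U_E^{(s)}) \cap dK^{\times 2} \neq \emptyset$ at a group-theoretic level. Here the $C_4$-extendability of $E$ enters via Corollary~\ref{cor-extendable-iff-so2s}, which guarantees that $d$ lies in $\Nm(K(\sqrt{-1})/K) \cdot K^{\times 2}$, ensuring the required $\beta$ exists once the valuation constraints are met.
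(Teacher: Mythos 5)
Your treatment of $(1)\Leftrightarrow(2)$ is correct and is exactly the paper's route: Corollary~\ref{cor-disc-square-mod-2t+1} converts the discriminant bound into the congruence $\beta \equiv 1 \pmod{\p_E^{4e_K-m_2}}$ after adjusting by a square, and Lemma~\ref{lem-GG-norm-conditions} converts the $C_4$ condition into $N_{E/K}(\beta) \in dK^{\times 2}$. Your handling of the degenerate case $m_1 = 2e_K+1$ is also fine. Where you diverge from the paper is $(2)\Leftrightarrow(3)$: the paper simply cites \cite[Proposition~3.15]{cohen-et-al} (with $t = 2e_K - \tfrac{m_2}{2}$), whereas you attempt a direct proof.

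The direct proof has a genuine gap, and you have located it yourself without closing it. The implication $(2)\Rightarrow(3)$ does follow from your valuation analysis: the expansion $N_{E/K}(1+\pi_E^s\gamma) = 1 + \Tr_{E/K}(\pi_E^s\gamma) + N_{E/K}(\pi_E^s\gamma)$ together with $\Tr_{E/K}(\p_E^s) = \p_K^{\lfloor (s+m_1)/2\rfloor}$ shows $N_{E/K}(U_E^{(s)}) \subseteq U_K^{(\min\{(s+m_1)/2,\, s\})}$, and comparing with the level at which $d$ becomes a square (via Corollary~\ref{cor-disc-square-mod-2t+1}) yields $m_2 \geq \min\{m_1+2e_K,\,3m_1-2\}$. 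But the converse $(3)\Rightarrow(2)$ is precisely the content of the cited proposition, and ``ensuring the required $\beta$ exists once the valuation constraints are met'' is an assertion, not an argument. Knowing that $N_{E/K}(U_E^{(s)})$ sits inside $U_K^{(\min\{(s+m_1)/2,\,s\})}$, or that it ``contains elements of each level $\geq s$,'' does not tell you that the subgroup $N_{E/K}(U_E^{(s)})K^{\times 2}$ of $K^\times/K^{\times 2}$ actually contains the \emph{specific} class $[d]$; a priori it could be a proper subgroup missing that class even when the levels are compatible. Closing this requires an explicit computation of $N_{E/K}(U_E^{(s)})K^{\times 2}$ (or at least a lower bound on it), into which the hypothesis that $E$ is $C_4$-extendable --- equivalently $d \in \Nm K(\sqrt{-1})$, equivalently $d \in N_{E/K}(E^\times)$ --- must enter in a quantitative way. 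That computation is the substance of \cite[Proposition~3.15]{cohen-et-al}; either carry it out or cite it, as the paper does.
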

\begin{proof}
    The first two points are equivalent by Corollary~\ref{cor-disc-square-mod-2t+1} and Lemma~\ref{lem-GG-norm-conditions}. The equivalence of (2) and (3) is essentially \cite[Proposition 3.15]{cohen-et-al}. At the start of the proof, the authors state that their ``condition $(*)$'' is equivalent to (2), and the statement of their proposition is equivalent to (3), where $t = 2e_K - \frac{m_2}{2}$. Their result is stated for prime ideals of number fields lying over $2$, but it is trivial to check that the proof works for $2$-adic fields. 
\end{proof}

\begin{lemma}
    \label{lem-SE-to-SigmaE-2-to-1}
    Let $E/K$ be a totally ramified $C_4$-extendable extension, and suppose that $0 \leq m_2 \leq 4e_K$ is an even integer such that $\Sigma_{\quadr/E, \leq m_2}^{C_4/K}$ is nonempty. Let $\omega \in E$ such that $E(\sqrt{\omega}) \in \Sigma_{\quadr/E,\leq m_2}^{C_4/K}$. Then the map 
    $$
    K^\times / K^{\times 2} \to \Sigma_{\quadr/E}^{C_4/K},\quad u \mapsto E(\sqrt{u\omega})
    $$
    is surjective and $2$-to-$1$. Moreover, this map restricts to a surjective $2$-to-$1$ map 
    $$
    S_{E/K,2e_K - \frac{m_2}{2}} \to \Sigma_{\quadr/E,\leq m_2}^{C_4/K}.
    $$
\end{lemma}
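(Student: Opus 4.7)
\emph{Plan.} I plan to first treat the unrestricted map $K^\times/K^{\times 2} \to \Sigma_{\quadr/E}^{C_4/K}$, $[u] \mapsto E(\sqrt{u\omega})$, and then show that on both sides the condition $v_E(d_{L/E}) \leq m_2$ corresponds under the parametrisation. Writing $E = K(\sqrt d)$, Lemma~\ref{lem-GG-norm-conditions} identifies $\Sigma_{\quadr/E}^{C_4/K}$ with the set of $[\alpha] \in E^\times/E^{\times 2}$ satisfying $N_{E/K}([\alpha]) = [d]$ in $K^\times/K^{\times 2}$, and the given map factors as the inclusion $K^\times/K^{\times 2}\to E^\times/E^{\times 2}$ followed by translation by $[\omega]$ and then this identification. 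For well-definedness I verify that $N_{E/K}(u\omega) = u^2 N_{E/K}(\omega) \in dK^{\times 2}$ and that $u\omega \notin E^{\times 2}$ for any $u \in K^\times$: if $u\omega \in E^{\times 2}$ then $\omega \in K^\times E^{\times 2}$, so $E(\sqrt\omega) = E(\sqrt{u^{-1}})$ with $u^{-1} \in K^\times$, and Lemma~\ref{lem-GG-norm-conditions} then forces $E(\sqrt\omega)/K$ to be $V_4$, not $C_4$.

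Since the image of $\Sigma_{\quadr/E}^{C_4/K}$ in $E^\times/E^{\times 2}$ is precisely the coset $[\omega]\cdot \ker N_{E/K}$, the surjectivity and $2$-to-$1$ property of the unrestricted map reduce to showing that $\ker(N_{E/K}\colon E^\times/E^{\times 2}\to K^\times/K^{\times 2})$ equals the image of the inclusion $K^\times/K^{\times 2}\to E^\times/E^{\times 2}$, and that this inclusion has kernel of size $2$. The first statement is Hilbert~90: if $N_{E/K}(\beta) = c^2$ for $\beta\in E^\times, c\in K^\times$, then $N_{E/K}(\beta/c) = 1$ yields $\beta/c = \gamma/\tau(\gamma)$ for some $\gamma \in E^\times$ (where $\tau$ generates $\Gal(E/K)$), and multiplying numerator and denominator by $\tau(\gamma)$ rewrites $\beta = c N_{E/K}(\gamma)/\tau(\gamma)^2 \in K^\times E^{\times 2}$. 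The kernel calculation is elementary: $\sqrt u \in K(\sqrt d)$ for $u \in K^\times$ forces $\sqrt u \in K$ or $\sqrt u/\sqrt d \in K$, giving $u \in K^{\times 2}$ or $u \in dK^{\times 2}$.

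For the restricted statement I apply Corollary~\ref{cor-disc-square-mod-2t+1} with $F = E$ and $t = 2e_K - m_2/2$, noting $v_E(2) = 2e_K$ by total ramification. The hypothesis $E(\sqrt\omega) \in \Sigma_{\quadr/E,\leq m_2}^{C_4/K}$ forces $v_E(\omega)$ even (else Lemma~\ref{lem-hecke} gives discriminant valuation $4e_K+1 > m_2$) and yields $\omega \in E^{\times 2}U_E^{(4e_K - m_2)}$. Then, for any $u \in K^\times$, $v_E(u\omega) = 2v_K(u) + v_E(\omega)$ is still even, and the corollary gives
\[
v_E(d_{E(\sqrt{u\omega})/E}) \leq m_2 \iff u\omega \in E^{\times 2}U_E^{(4e_K - m_2)} \iff u \in E^{\times 2}U_E^{(4e_K - m_2)},
\]
the final equivalence being division by $\omega \in E^{\times 2}U_E^{(4e_K - m_2)}$. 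The right-hand condition is exactly $u \in S_{E/K, 2e_K - m_2/2}$. So the restriction lands in $\Sigma_{\quadr/E,\leq m_2}^{C_4/K}$ and hits every such extension by the unrestricted surjectivity; and the $2$-to-$1$ property descends because $d = (\sqrt d)^2 \in E^{\times 2}$, so the two fibre elements $[u],[ud]$ lie together inside or together outside $S_{E/K, 2e_K - m_2/2}$.

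The main obstacle is the surjectivity in the second paragraph, where one must upgrade $N_{E/K}(\alpha/\omega) \in K^{\times 2}$ to an actual factorisation $\alpha/\omega \in K^\times E^{\times 2}$; this is the only step that uses nontrivial Galois-cohomological content, namely Hilbert~90, and everything else is bookkeeping with the norm and the unit filtration via Corollary~\ref{cor-disc-square-mod-2t+1}. The only small degenerate case worth noting is $m_2 = 4e_K$ (so $t = 0$), for which $S_{E/K,0}$ is defined by parity of $v_E(u)$ and automatically equals $K^\times/K^{\times 2}$ since $v_E(u) = 2v_K(u)$ is always even for $u\in K^\times$, consistent with the restriction then being the entire unrestricted map.
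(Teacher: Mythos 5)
Your proof is correct, and for the first claim it takes a genuinely different (self-contained) route: the paper simply cites \cite[Proposition~1.2]{cohen-et-al} for the surjectivity and $2$-to-$1$ property of the unrestricted map, whereas you reprove it from Lemma~\ref{lem-GG-norm-conditions} by identifying $\Sigma_{\quadr/E}^{C_4/K}$ with the coset $[\omega]\cdot\ker\big(N_{E/K}\colon E^\times/E^{\times 2}\to K^\times/K^{\times 2}\big)$ and then using Hilbert~90 to show that this kernel is exactly the image of $K^\times/K^{\times 2}$, with the $2$-element kernel $\{[1],[d]\}$ accounting for the fibre size. That is precisely the content of the cited result of Cohen--Diaz y Diaz--Olivier, so your argument buys independence from the external reference (and avoids the global-to-local translation the paper has to gesture at elsewhere), at the cost of a longer write-up. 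For the restricted claim you follow the same route as the paper, namely Corollary~\ref{cor-disc-square-mod-2t+1}, but you supply details the paper leaves implicit and which are worth having: the verification that $v_E(\omega)$ is even (via Lemma~\ref{lem-hecke}), the translation of membership in $S_{E/K,2e_K-m_2/2}$ into $u\in U_E^{(4e_K-m_2)}E^{\times 2}$ and its invariance under division by $\omega$, the observation that the two elements $[u],[ud]$ of each fibre lie in or out of $S_{E/K,2e_K-m_2/2}$ together because $d\in E^{\times 2}$, and the degenerate case $t=0$. No gaps.
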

\begin{proof}
    The first claim is \cite[Proposition~1.2]{cohen-et-al}, and the second claim follows from Corollary~\ref{cor-disc-square-mod-2t+1}.
\end{proof}
Fix a totally ramified quadratic extension $E/K$ with $m_1 = v_K(d_{E/K})$, and assume that $m_1$ is even. For $0 \leq t \leq 2e_K - \frac{m_1}{2}$, define $\mathcal{Z}_{E,t}$ by the short exact sequence 
$$
1 \to S_{E/K,t} \to K^\times / K^{\times 2} \to \mathcal{Z}_{E,t} \to 1. 
$$
\begin{lemma}
    \label{lem-size-of-Z-E-t-and-S-E/K-t}
    Let $E$ be a totally ramified quadratic extension of $K$ with even discriminant exponent $m_1 = v_K(d_{E/K})$. Let $m_2$ be an even integer with $m_1 \leq m_2 \leq 4e_K$. Then we have:
    \begin{enumerate} 
    \item $$
    \# \mathcal{Z}_{E,2e_K - \frac{m_2}{2}} = \begin{cases}
        2q^{\lceil e_K - \frac{m_1 + m_2}{4}\rceil} \quad \text{if $m_2 \leq 4e_K - m_1$},
        \\
        1 \quad\text{if $m_2 > 4e_K - m_1$}.
    \end{cases}
    $$
    \item $$
    \# S_{E/K,2e_K - \frac{m_2}{2}} = \begin{cases}
        2q^{\lfloor \frac{m_1+m_2}{4}\rfloor} \quad\text{if $m_2 \leq 4e_K - m_1$},
        \\
        4q^{e_K} \quad\text{if $m_2 > 4e_K - m_1$}. 
    \end{cases}
    $$
\end{enumerate}
\end{lemma}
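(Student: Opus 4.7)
The plan is to establish part~(2) and deduce part~(1) from it. From the defining short exact sequence
\[
1\to S_{E/K,t}\to K^\times/K^{\times 2}\to \mathcal{Z}_{E,t}\to 1,
\]
together with the standard identity $[K^\times:K^{\times 2}]=4q^{e_K}$ (cf.\ \cite[Proposition~3.7]{neukirch-bonn}), we get $\#S_{E/K,t}\cdot \#\mathcal{Z}_{E,t}=4q^{e_K}$, so either formula determines the other.

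The first key step is to translate membership in $S_{E/K,t}$ into a discriminant condition on $E(\sqrt u)/E$. Writing $E=K(\sqrt d)$, a direct computation in the $K$-basis $\{1,\sqrt d\}$ of $E$ shows that $\ker(K^\times/K^{\times 2}\to E^\times/E^{\times 2})=\{[1],[d]\}$: if $u=(a+b\sqrt d)^2\in K^\times$ with $a,b\in K$, then $2ab=0$, forcing $[u]=[1]$ or $[d]$. For $[u]\notin\{[1],[d]\}$, Corollary~\ref{cor-disc-square-mod-2t+1} gives $[u]\in S_{E/K,t}$ if and only if $v_E(d_{E(\sqrt u)/E})\le 4e_K-2t=m_2$. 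The tower law (Lemma~\ref{lem-A4-vs-S4-basic-facts}(1)) applied to $E(\sqrt u)/K(\sqrt u)/K$, together with Lemma~\ref{lem-disc-of-V4-in-terms-of-quad-exts} applied to the $V_4$-extension $E(\sqrt u)/K$, yields
\[
v_E(d_{E(\sqrt u)/E})=d_u+d_{du}-m_1,\qquad d_u:=v_K(d_{K(\sqrt u)/K}).
\]
By Hecke's theorem (Lemma~\ref{lem-hecke}), $d_u\le 2e_K$ when $v_K(u)$ is even and $d_u=2e_K+1$ when $v_K(u)$ is odd, with the analogous statement for $d_{du}$ (and the same parity of $v_K(u)$, since $v_K(d)=0$); hence $v_E(d_{E(\sqrt u)/E})\le 4e_K+2-m_1$, with equality precisely when $v_K(u)$ is odd.

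When $m_2>4e_K-m_1$, this bound is automatic for every $[u]$, so $S_{E/K,t}=K^\times/K^{\times 2}$, giving $\#S_{E/K,t}=4q^{e_K}$ and $\#\mathcal{Z}_{E,t}=1$. When $m_2\le 4e_K-m_1$, the classes with $v_K(u)$ odd are excluded (as then $v_E(d_{E(\sqrt u)/E})=4e_K+2-m_1>m_2$), so $S_{E/K,t}\subseteq\co_K^\times K^{\times 2}/K^{\times 2}$. To count what remains I would apply Lemma~\ref{lem-size-of-SKKtA} with the base field taken to be $E$ (so $e_E=2e_K$ and $q_E=q$) and $\mathcal{A}\subseteq E^\times/E^{\times 2}$ taken to be the image of $K^\times/K^{\times 2}$, which has order $2q^{e_K}$ by the kernel computation above. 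Since both kernel classes $[1]$ and $[d]$ lie in $S_{E/K,t}$ (the class $[d]$ trivially, as $d=(\sqrt d)^2\in E^{\times 2}$), pulling back along the resulting 2-to-1 map gives $\#S_{E/K,t}=2\#S_{E/E,t}^{\mathcal{A}}$; Lemma~\ref{lem-size-of-SKKtA} then reduces the computation to that of $\#\mathcal{A}_{2e_K-t}$, namely the number of classes in $\mathcal{A}$ whose representatives lie in $U_E^{(4e_K-2t)}E^{\times 2}$.

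The main obstacle is this final count. I would attack it by analyzing the filtration $\{U_K^{(i)}K^{\times 2}/K^{\times 2}\}_{i\ge 1}$ of $\co_K^\times/\co_K^{\times 2}$ level-by-level, tracking its image under the squaring map on the quotients $U_E^{(j)}/U_E^{(j+1)}$. The behaviour changes at $j=2e_K=v_E(2)$: for $j<2e_K$ we have $(1+y)^2\equiv 1+y^2$, so squaring is Frobenius (a bijection on $\F_E=\F_K$), while for $j>2e_K$ we have $(1+y)^2\equiv 1+2y$, so squaring at level $j$ lands at level $j+2e_K$. The identity $\F_E=\F_K$ (from total ramification of $E/K$) and the formula $[(\co_E/\p_E^{2s})^\times:(\co_E/\p_E^{2s})^{\times 2}]=q^s$ (which appears in the proof of Lemma~\ref{lem-size-of-SKKtA}) feed the bookkeeping. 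The floor $\lfloor(m_1+m_2)/4\rfloor$ should emerge from aggregating the level-wise contributions, with the shift by $m_1$ reflecting that the ramification of $E/K$ displaces the squaring filtration precisely by $m_1/2$ levels.
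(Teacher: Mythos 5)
Your reduction is set up correctly and the case $m_2 > 4e_K - m_1$ is complete: the identification of $\ker(K^\times/K^{\times 2}\to E^\times/E^{\times 2})$ with $\{[1],[d]\}$, the translation of membership in $S_{E/K,t}$ into the condition $v_E(d_{E(\sqrt u)/E})\le m_2$ via Corollary~\ref{cor-disc-square-mod-2t+1}, and the identity $v_E(d_{E(\sqrt u)/E}) = d_u + d_{du} - m_1$ from the tower law and Lemma~\ref{lem-disc-of-V4-in-terms-of-quad-exts} are all sound, and they correctly force $\#S_{E/K,t} = 4q^{e_K}$ in that case. (The paper itself does none of this: it deduces part~(2) from part~(1) via the exact sequence, and for part~(1) it simply cites \cite[Corollary~3.13]{cohen-et-al}; you are in effect reproving the cited result.)

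The problem is that in the main case $m_2\le 4e_K - m_1$ the computation that actually produces $2q^{\lfloor (m_1+m_2)/4\rfloor}$ is not carried out. Two things are missing. First, your appeal to Lemma~\ref{lem-size-of-SKKtA} over the base field $E$ silently identifies $S_{E/E,t}\cap\mathcal{A}$ with $S_{E/E,t}^{\mathcal{A}} = S_{E/E,t}\cap\big(\Nm E(\sqrt{\mathcal{A}})/E^{\times 2}\big)$; these coincide only because $\mathcal{A}$ (of order $2q^{e_K}$, the square root of $\#(E^\times/E^{\times 2}) = 4q^{2e_K}$) is totally isotropic for the Hilbert pairing on $E$ by the projection formula $(u,u')_E = (u, N_{E/K}(u'))_K = 1$, hence equal to its own orthogonal complement $\Nm E(\sqrt{\mathcal{A}})/E^{\times 2}$ --- an argument you would need to supply. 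Second, and more seriously, even granting this, Lemma~\ref{lem-size-of-SKKtA} only trades $\#\big(\mathcal{A}\cap U_E^{(4e_K-m_2)}E^{\times 2}/E^{\times 2}\big)$ for $\#\mathcal{A}_{m_2/2} = \#\big(\mathcal{A}\cap U_E^{(m_2)}E^{\times 2}/E^{\times 2}\big)$, which is a quantity of exactly the same kind at a dual level; nothing has been computed. Equivalently, by your own discriminant translation, what remains is to show that the number of unit classes $[u]\in\co_K^\times K^{\times 2}/K^{\times 2}$ with $d_u + d_{du}\le m_1+m_2$ equals $2q^{\lfloor(m_1+m_2)/4\rfloor}$; this joint count over the pair of subfields $K(\sqrt u)$, $K(\sqrt{du})$ is the entire content of the lemma, it is where the dependence on $m_1$ enters, and your closing paragraph offers only a heuristic (``the floor should emerge from aggregating the level-wise contributions'') rather than an argument. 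Until that count is done, the proof is incomplete.
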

\begin{proof}
    \hfill
    \begin{enumerate}
    \item For $m_2 = 4e_K$, we have $\mathcal{Z}_{E,2e_K - \frac{m_2}{2}} = 1$, so we can assume that $m_1 \leq m_2 \leq 4e_K - 2$. 
    The claim is then essentially \cite[Corollary~3.13]{cohen-et-al}. Under our notation, $\mathcal{Z}_{E,t}$ corresponds\footnote{In \cite{cohen-et-al}, here are the locations of the relevant definitions: $\mathcal{Z}_{\mathfrak{C}^2}$ is defined on Page 486; $Q_K(\mathfrak{C}^2)$ is defined on Page 479; $\mathfrak{C}$ is defined on Page 478; $T$ is defined on Page 478; the angle brackets $\langle T\rangle$ denote the monoid of ideals generated by $T$ - this can be inferred from the proof of Lemma~1.6.} to Cohen, Diaz y Diaz, and Olivier's $\mathcal{Z}_{\mathfrak{P}^{2t}}$, defined in \cite[Page~486]{cohen-et-al}. As with Lemma~\ref{lem-conds-for-m2-extendable}, the statement in \cite{cohen-et-al} is for prime ideals of number fields, but the modifications to the proof are trivial. 
    \item The second claim follows from the first, together with the definition of $\mathcal{Z}_{E,t}$, and \cite[Proposition~3.7]{neukirch-bonn}. 
    \end{enumerate}
\end{proof}

\begin{corollary}
    \label{cor-size-of-Sigma_E_leq_m2}
    Let $E/K$ be a totally ramified $C_4$-extendable extension with $m_1 = v_K(d_{E/K})$ even. Let $m_2\leq 4e_K$ be an even integer and write $n_0 := \min\{m_1 + 2e_K, 3m_1 - 2\}$. Then we have 
    $$
    \# \Sigma_{\quadr/E, \leq m_2}^{C_4/K} = \begin{cases}
        0\quad\text{if $m_2 < n_0$},
        \\
        q^{\lfloor \frac{m_1 + m_2}{4}\rfloor} \quad\text{if $n_0 \leq m_2 \leq 4e_K - m_1$},
        \\
        2q^{e_K}\quad\text{if $m_2 \geq \max\{4e_K - m_1 + 2, n_0\}$}.
    \end{cases}
    $$
\end{corollary}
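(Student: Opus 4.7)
The plan is to assemble this corollary directly from Lemmas~\ref{lem-conds-for-m2-extendable}, \ref{lem-SE-to-SigmaE-2-to-1}, and \ref{lem-size-of-Z-E-t-and-S-E/K-t}, which between them already do almost all of the work. First I would dispose of the case $m_2 < n_0$: the equivalence (1)$\Leftrightarrow$(3) in Lemma~\ref{lem-conds-for-m2-extendable} says exactly that $\Sigma_{\quadr/E,\leq m_2}^{C_4/K}$ is empty when $m_2 < n_0 = \min\{m_1 + 2e_K, 3m_1 - 2\}$, giving the top line of the claimed formula.

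Next, for $m_2 \geq n_0$, Lemma~\ref{lem-conds-for-m2-extendable} guarantees that we can choose some $\omega \in E$ with $E(\sqrt{\omega}) \in \Sigma_{\quadr/E,\leq m_2}^{C_4/K}$. Fixing this $\omega$, Lemma~\ref{lem-SE-to-SigmaE-2-to-1} furnishes a surjective, $2$-to-$1$ map
$$
S_{E/K,\, 2e_K - m_2/2} \longrightarrow \Sigma_{\quadr/E,\leq m_2}^{C_4/K},\qquad u \mapsto E(\sqrt{u\omega}),
$$
so that
$$
\#\Sigma_{\quadr/E,\leq m_2}^{C_4/K} \;=\; \tfrac{1}{2}\,\# S_{E/K,\, 2e_K - m_2/2}.
$$
Since $n_0 \geq m_1$ (easy: both $m_1 + 2e_K$ and $3m_1-2$ are $\geq m_1$ under our running hypotheses $m_1\geq 2$ and $e_K\geq 1$), the hypothesis $m_1 \leq m_2$ of Lemma~\ref{lem-size-of-Z-E-t-and-S-E/K-t} is satisfied, so I can read off $\#S_{E/K,\,2e_K - m_2/2}$ from part (2) of that lemma.

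Finally I would split on the location of $m_2$ relative to $4e_K - m_1$. If $n_0 \leq m_2 \leq 4e_K - m_1$, Lemma~\ref{lem-size-of-Z-E-t-and-S-E/K-t}(2) gives $2q^{\lfloor(m_1+m_2)/4\rfloor}$, whose halving yields the middle line $q^{\lfloor(m_1+m_2)/4\rfloor}$. If instead $m_2 \geq \max\{4e_K - m_1 + 2, n_0\}$, then since $m_2$ is even we have $m_2 > 4e_K - m_1$, so Lemma~\ref{lem-size-of-Z-E-t-and-S-E/K-t}(2) gives $4q^{e_K}$, halving to $2q^{e_K}$, which is the bottom line. (When $m_1 \geq e_K + 1$ one checks that $n_0 > 4e_K - m_1$, so the middle range is vacuous and only the top and bottom cases occur; this is automatically consistent with the piecewise formula.)

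There is no real obstacle here: the content of the corollary is already encapsulated in the preceding three lemmas, and the only task is to check that the case boundaries on $m_2$ match up correctly and that the hypotheses of Lemma~\ref{lem-size-of-Z-E-t-and-S-E/K-t} are in force. The mildest subtlety is verifying $n_0 \geq m_1$ to activate that lemma, which is immediate from the definitions of $n_0$ and our running assumption that $E/K$ is totally ramified $C_4$-extendable with $m_1 \geq 2$.
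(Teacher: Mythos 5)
Your proposal is correct and follows exactly the paper's own route: Lemma~\ref{lem-conds-for-m2-extendable} handles emptiness for $m_2 < n_0$, Lemma~\ref{lem-SE-to-SigmaE-2-to-1} gives $\#\Sigma_{\quadr/E,\leq m_2}^{C_4/K} = \frac{1}{2}\#S_{E/K,2e_K - m_2/2}$ once nonemptiness is known, and Lemma~\ref{lem-size-of-Z-E-t-and-S-E/K-t}(2) supplies the count in the two remaining ranges. Your explicit check that $n_0 \geq m_1$ (so that the hypothesis $m_1 \leq m_2$ of Lemma~\ref{lem-size-of-Z-E-t-and-S-E/K-t} holds) is a detail the paper leaves implicit, but it is correct and worth having.
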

\begin{proof}
    Lemma~\ref{lem-conds-for-m2-extendable} deals with the case $m_2 < n_0$. Let $n_0 \leq m_2 \leq 4e_K$. By Lemma~\ref{lem-conds-for-m2-extendable}, the set $\Sigma_{\quadr/E,\leq m_2}^{C_4/K}$ is nonempty, so Lemma~\ref{lem-SE-to-SigmaE-2-to-1} tells us that 
    $$
    \# \Sigma_{\quadr/E,\leq m_2}^{C_4/K} = \frac{1}{2} \#S_{E/K,2e_K - \frac{m_2}{2}},
    $$
    and the result follows from Lemma~\ref{lem-size-of-Z-E-t-and-S-E/K-t}. 
\end{proof}
\begin{proof}[Proof of Lemma~\ref{lem-N-E-m2}]
    By \cite[Lemma~4.3]{tunnell}, either $m_1= 2e_K + 1$ or $m_1$ is even with $2 \leq m_1 \leq 2e_K$. The case where $m_1$ is even follows easily from Corollary~\ref{cor-size-of-Sigma_E_leq_m2}. For the case with $m_1$ odd, suppose that $m_1 = 2e_K + 1$. Then by Lemma~\ref{lem-hecke} we have $E = K(\sqrt{d})$ for $d\in K^\times$ with $v_K(d) = 1$. By Lemma~\ref{lem-GG-norm-conditions}, each $C_4$-extension $L/K$ extending $E$ has $L = E(\sqrt{\alpha})$ for some $\alpha \in E^\times$ with $v_K(N_{E/K}(\alpha))$ odd. It follows that $v_E(\alpha)$ is odd, so $v_E(d_{L/E}) = 4e_K + 1$ by Lemma~\ref{lem-hecke}. Therefore, 
    $$
    \Sigma_{\quadr/E}^{C_4/K} = \Sigma_{\quadr/E, 4e_K + 1}^{C_4/K}, 
    $$
    so the result follows from Lemma~\ref{lem-SE-to-SigmaE-2-to-1}.
\end{proof}
\begin{proof}[Proof of Corollary~\ref{cor-size-of-Sigma-m-C4}]
    Suppose that $L/K$ is a $C_4$-extension with intermediate quadratic field $E$. By the tower law for discriminant, we have 
    $$
    v_K(d_{L/K}) = 2v_K(d_{E/K}) + f(E/K)\cdot v_E(d_{L/E}).
    $$
    So if $L \in \Sigma_m^{C_4}$ with $m_1 = v_K(d_{E/K})$ and $m_2 = v_E(d_{L/E})$, then $m = 2m_1 + m_2$, and Lemmas~\ref{lem-N-ext} and \ref{lem-N-E-m2} tell us that either $(m_1,m_2) = (2e_K + 1, 4e_K + 1)$ or $m_1$ and $m_2$ are both even with $2 \leq m_1 \leq 2e_K$ and $4 \leq m_2 \leq 4e_K$. It follows that either $m$ is even with $8 \leq m \leq 8e_K$ or $m = 8e_K + 3$. If $m = 8e_K + 3$, then the result follows from Lemmas~\ref{lem-N-ext} and \ref{lem-N-E-m2}.

    Now consider the case where $8 \leq m \leq 8e_K$ and $m$ is even. For positive integers $m_1$ and $m_2$, write
    $
    \Sigma_{m_1,m_2}^{C_4}
    $
    for the set of totally ramified $C_4$-extensions $L/K$ such that $v_K(d_{E/K}) = m_1$ and $v_E(d_{L/E}) = m_2$. By the discussion above, we have 
    $$
    \#\Sigma_m^{C_4} = \sum_{\substack{2 \leq m_1 \leq 2e_K \\ \text{$m_1$ even}}} \# \Sigma_{m_1,m - 2m_1}^{C_4}.
    $$
    Let $2 \leq m_1 \leq 2e_K$ be even. By Lemmas~\ref{lem-N-ext} and \ref{lem-N-E-m2}, whenever $N_{\ext}(m_1) \neq 0$ we have  
    \begin{align*}
    \frac{\# \Sigma_{m_1,m - 2m_1}^{C_4}}{N_{\ext}(m_1)} &= \begin{cases}
        q^{m_1-1} \quad\text{if $m_1 = \frac{m+2}{5}$ and $m_1\leq e_K$},
        \\
        q^{\lfloor \frac{m - m_1}{4} \rfloor} - q^{\lfloor \frac{m - m_1-2}{4}\rfloor}\quad \text{if $m - 4e_K \leq m_1 \leq \min\{\frac{m}{5}, e_K\}$},
        \\
        q^{e_K}\quad\text{if $m_1 = m - 4e_K - 2$ and $m_1 \leq e_K$},
        \\
        2q^{e_K}\quad\text{if $e_K < m_1 \leq 2e_K$ and $m_1 = \frac{m - 2e_K}{3}$},
        \\
        0 \quad\text{otherwise}.
    \end{cases}
    \\
    &= \begin{cases}
        q^{\frac{m-3}{5}} \quad\text{if $m_1 = \frac{m+2}{5}$ and $8 \leq m\leq 5e_K - 2$},
        \\
        q^{\lfloor \frac{m - m_1}{4} \rfloor} - q^{\lfloor \frac{m - m_1-2}{4}\rfloor}\quad \text{if $m - 4e_K \leq m_1 \leq \min\{\frac{m}{5},e_K\}$},
        \\
        q^{e_K}\quad\text{if $m_1 = m - 4e_K - 2$ and $4e_K + 4 \leq m \leq 5e_K + 2$},
        \\
        2q^{e_K}\quad\text{if $m_1 = \frac{m - 2e_K}{3}$ and $5e_K < m \leq 8e_K$},
        \\
        0 \quad\text{otherwise}.
    \end{cases}
\end{align*}
To finish the proof, we just need to observe that
$$
q^{\lfloor \frac{m - m_1}{4}\rfloor} - q^{\lfloor \frac{m - m_1 - 2}{4}\rfloor}  = \begin{cases}
    q^{\frac{m-m_1}{4} - 1}(q - 1)\quad\text{if $m_1\equiv m\pmod{4}$},
    \\
    0 \quad\text{if $m_1 \not \equiv m \pmod{4}$}.
\end{cases}
$$
\end{proof}

\begin{proof}[Proof of Theorem~\ref{thm-size-Sigma-C4-m}]
    The possible values of $m$ come from Corollary~\ref{cor-size-of-Sigma-m-C4}. The result for $m= 8e_K + 3$ is immediate from Corollary~\ref{cor-size-of-Sigma-m-C4}. Now consider the case where $m$ is even and $8 \leq m \leq 8e_K$. The first, third, and fourth items of Corollary~\ref{cor-size-of-Sigma-m-C4} respectively are equal to 
    \begin{enumerate}
    \item $\mathbbm{1}_{8\leq m \leq 5e_K - 2}\cdot \mathbbm{1}_{m\equiv 3\pmod{5}}\cdot q^{\frac{3m-14}{10}}(1 + \mathbbm{1}_{m \leq 10e_K - 5d_{(-1)} - 2})(q - 1 - \mathbbm{1}_{m = 10e_K - 5d_{(-1)} + 8})$.
        \item $\mathbbm{1}_{4e_K + 4 \leq m \leq 5e_K + 2}\cdot q^{\frac{m}{2}-e_K - 2}(1 + \mathbbm{1}_{m \leq 6e_K - d_{(-1)} + 2})(q - 1 - \mathbbm{1}_{m = 6e_K - d_{(-1)} + 4})$.
        \item $\mathbbm{1}_{5e_K + 3 \leq m \leq 8e_K}\cdot \mathbbm{1}_{m\equiv 2e_K\pmod{3}}\cdot 2q^{\frac{m+4e_K}{6} - 1}(1 + \mathbbm{1}_{m \leq 8e_K - 3d_{(-1)}})(q - 1 - \mathbbm{1}_{m = 8e_K - 3d_{(-1)} + 6})$. 
    \end{enumerate}
    Lemma~\ref{lem-bound-on-d-(-1)} turns these into the first three points of Theorem~\ref{thm-size-Sigma-C4-m}. It remains to compute the value of 
    $$
    \sum_{\substack{\max\{2, m - 4e_K\} \leq m_1 \leq \min\{\frac{m}{5}, e_K\} \\ m_1 \equiv m \pmod{4}}} q^{\frac{m-m_1}{4} - 1}(q-1)N_{\ext}(m_1).
    $$
    For such $m_1$, we have
    $$
    N_{\ext}(m_1) = \begin{cases}
        2q^{\frac{m_1}{2} - 1}(q-1) \quad\text{if $m_1 \leq 2e_K - d_{(-1)}$},
        \\
        q^{\frac{m_1}{2} - 1}(q-2)\quad\text{if $m_1 = 2e_K - d_{(-1)} + 2$},
        \\
        q^{\frac{m_1}{2} - 1}(q-1) \quad\text{if $m_1 \geq 2e_K - d_{(-1)} + 4$}.
    \end{cases}
    $$
    Lemma~\ref{lem-bound-on-d-(-1)} tells us that $2e_K - d_{(-1)} + 2 > e_K$, so the sum is actually 
    $$
    \sum_{\substack{\max\{2, m - 4e_K\} \leq m_1 \leq \min\{\frac{m}{5}, e_K
     \} \\ m_1 \equiv m \pmod{4}}} 2 q^{\frac{m + m_1}{4} - 2}(q-1)^2.
    $$
    For integers $l$ and $u$, the substitution $m_1 = -m + 4k$ makes it easy to see that
    $$
    \sum_{\substack{l \leq m_1 \leq u \\ m_1 \equiv m\pmod{4}}} q^{\frac{m + m_1}{4}} = \mathbbm{1}_{l \leq u}\cdot \frac{q^{b + 1} - q^a}{q-1},
    $$
    where $a = \lceil \frac{m+l}{4}\rceil$ and $b = \lfloor \frac{m+u}{4}\rfloor$. In this case, we have $l = \max\{2, m - 4e_K\}$ and $u = \min\{e_K, \frac{m}{5}\}$, which gives 
    $$
    a = \lceil \max\{
        \frac{m+2}{4}
        ,
        \frac{m}{2} - e_K
    \}\rceil,\quad b = \lfloor \min\{
        \frac{m+e_K}{4}
        ,
        \frac{3m}{10}
    \}\rfloor.
    $$
    Finally, it is easy to see that $l \leq u$ if and only if $10 \leq m \leq 5e_K$. In that case, we have $b = \lfloor \frac{3m}{10}\rfloor$, so 
    $$
    \sum_{\substack{\max\{2, m - 4e_K\} \leq m_1 \leq \min\{e_K, \frac{m}{5}
     \} \\ m_1 \equiv m \pmod{4}}} q^{\frac{m + m_1}{4}} =  \mathbbm{1}_{10 \leq m \leq 5e_K} \cdot \frac{
        q^{\lfloor \frac{3m}{10}\rfloor + 1}
        - 
        q^{\lceil \max\{
            \frac{m+2}{4}
            ,
            \frac{m}{2} - e_K
        \}\rceil}
     }{q-1},
    $$
    and the result follows.
\end{proof}
\begin{proof}[Proof of Corollary~\ref{cor-premass-of-Sigma-C4}]
    Theorem~\ref{thm-size-Sigma-C4-m} and Lemma~\ref{lem-bound-on-d-(-1)} tell us that the mass is the sum of the following quantities:
    \begin{enumerate}
        \item  $$\frac{1}{2}\cdot \sum_{\substack{8 \leq m \leq 5e_K - 2 \\ m \equiv 8\pmod{10}}} q^{-\frac{7m + 14}{10}}(q-1).$$
        \item $$
                \frac{1}{2}\cdot \sum_{\substack{
                    4e_K + 4 \leq m \leq 5e_K + 2
                    \\
                    \text{$m$ even}
                }} q^{-\frac{m}{2} - e_K - 2}(q-1).
            $$
        \item \begin{enumerate}
            \item $$
                \sum_{\substack{5e_K + 3 \leq m \leq 8e_K - 3d_{(-1)} \\ m \equiv 2e_K \pmod{6}}} q^{\frac{4e_K-5m}{6} - 1}(q-1). 
            $$
            \item $$
                \mathbbm{1}_{d_{(-1)} \geq 2}\cdot \frac{1}{2}\cdot q^{-6e_K + \frac{5}{2}d_{(-1)} - 6}(q-2).     
            $$
            \item $$
                \frac{1}{2}\cdot \sum_{\substack{8e_K - 3d_{(-1)} + 12 \leq m \leq 8e_K \\ m\equiv 2e_K\pmod{6}}}  q^{\frac{4e_K-5m}{6} - 1}(q-1).   
            $$
        \end{enumerate}
        \item \begin{enumerate}
            \item $$
            \frac{1}{2} (q-1)q^{-1}\sum_{\substack{10 \leq m \leq 5e_K \\ \text{$m$ even}}} q^{\lfloor-\frac{7m}{10}\rfloor
                }. 
            $$
            \item $$
            - \frac{1}{2} (q-1)q^{-2}\sum_{\substack{10 \leq m \leq 5e_K \\ \text{$m$ even}}} q^{\max\{
                \lceil \frac{-3m+2}{4}\rceil,
                -\frac{m}{2} - e_K
            \}}.
            $$
        \end{enumerate}
        \item $$
        \begin{cases}
            q^{-6e_K - 3} \quad\text{if $-1 \in K^{\times 2}$},
            \\
            \frac{1}{2}q^{-6e_K-3}\quad\text{if $K(\sqrt{-1})/K$ is quadratic and totally ramified},
            \\
            0 \quad\text{otherwise}. 
        \end{cases}
    $$
    \end{enumerate}
    We address these one by one.
    \begin{enumerate}
        \item Making the substitution $m = 10k + 8$, we have 
            \begin{align*}
            \sum_{\substack{8 \leq m \leq 5e_K - 2 \\ m \equiv 8\pmod{10}}} & q^{-\frac{7m + 14}{10}} = \sum_{k=0}^{\lfloor \frac{e_K}{2}\rfloor - 1} q^{-7k - 7}
            \\
            &= \mathbbm{1}_{e_K \geq 2} \cdot \frac{1 - q^{- 7 \lfloor \frac{e_K}{2}\rfloor}}{q^7-1},
            \end{align*}
            so the contribution to the mass is 
            $$
            \frac{1}{2}\cdot \mathbbm{1}_{e_K \geq 2} \cdot \frac{(q-1)(1 - q^{-7\lfloor \frac{e_K}{2}\rfloor})}{q^7 - 1},
            $$
            and we can omit the indicator function since $e_K = 1$ gives $1 - q^{-7\lfloor\frac{e_K}{2}\rfloor}=0$.
        \item Making the substitution $m = 2k$, it is easy to see that 
            $$
            \sum_{\substack{
                    4e_K + 4 \leq m \leq 5e_K + 2
                    \\
                    \text{$m$ even}
                }} q^{-\frac{m}{2}} = \mathbbm{1}_{e_K \geq 2} \cdot \frac{q^{
                    -2e_K - 1
                } - q^{
                    - \lfloor \frac{5e_K + 2}{2}\rfloor
                }}{q-1},
            $$
            so the contribution is 
            $$
            \frac{1}{2} \cdot(q^{
                -3e_K - 3
            } - q^{
                - \lfloor \frac{7e_K + 6}{2}\rfloor
            }) = \frac{1}{2}\cdot q^{-3e_K - 3}(1 - q^{
                - \lfloor \frac{e_K}{2}\rfloor
            }),
            $$ 
            where we omit the indicator function since $e_K = 1$ gives $q^{-2e_K - 1} - q^{-\lfloor \frac{5e_K +2}{2}\rfloor} = 0$.
        \item \begin{enumerate}
            \item The substitution $m = 2e_K + 6k$ gives 
            \begin{align*}
            \sum_{\substack{5e_K + 3 \leq m \leq 8e_K - 3d_{(-1)} \\ m\equiv 2e_K \pmod{6}}} q^{\frac{4e_K - 5m}{6}} &= \sum_{k=\lfloor \frac{e_K}{2}\rfloor + 1}^{e_K - \frac{1}{2}d_{(-1)}} q^{-e_K - 5k}
            \\
            &= \mathbbm{1}_{d_{(-1)} < e_K}\frac{q^{-5\lfloor\frac{e_K}{2}\rfloor - e_K} - q^{\frac{5}{2}d_{(-1)} - 6e_K}}{q^5-1},
            \end{align*}
            so the contribution is 
            $$
            \mathbbm{1}_{d_{(-1)} < e_K} \cdot \frac{(q-1)(q^{-5\lfloor\frac{e_K}{2}\rfloor - e_K - 1} - q^{\frac{5}{2}d_{(-1)} - 6e_K - 1})}{q^5-1}.
            $$
            \item This is already in closed form. 
            \item The substitution $m = 2e_K + 6k$ gives
            \begin{align*}
                \sum_{\substack{8e_K - 3d_{(-1)} + 12\leq m \leq 8e_K \\ m \equiv 2e_K \pmod{6}}} q^{\frac{4e_K - 5m}{6}} &= \sum_{k=e_K - \frac{1}{2}d_{(-1)} + 2}^{e_K}q^{-e_K - 5k}
                \\
                &=\mathbbm{1}_{d_{(-1)} \geq 4}\cdot \frac{q^{\frac{5}{2}d_{(-1)} - 6e_K-5} - q^{-6e_K}}{q^5 - 1}. 
            \end{align*}
            Therefore, the contribution is 
            $$
            \frac{1}{2} \cdot \mathbbm{1}_{d_{(-1)} \geq 4}\cdot \frac{(q-1)(q^{\frac{5}{2}d_{(-1)} - 6e_K - 6} - q^{-6e_K-1})}{q^5-1}.
            $$
        \end{enumerate}
        \item \begin{enumerate}
            \item We need to compute 
            $$
            \sum_{\substack{10 \leq m \leq 5e_K \\ \text{$m$ even}}} q^{\lfloor \frac{-7m}{10}\rfloor} = \sum_{k=5}^{\lfloor \frac{5e_K}{2}\rfloor} q^{- \lceil \frac{7k}{5}\rceil}.
            $$
            Let $b\geq 1$ be an integer, and consider the sum 
            \begin{align*}
            \sum_{k=5}^{5b} q^{-\lceil \frac{7k}{5}\rceil} &= \sum_{a=1}^{b-1}\Big(
                \sum_{k=5a}^{5a+4}q^{-\lceil \frac{7k}{5}\rceil} 
            \Big) + q^{-7b}
            \\
            &= \sum_{a=1}^{b-1}q^{-7a}\Big(
                \sum_{l=0}^4q^{-\lceil \frac{7l}{5}\rceil}
            \Big) + q^{-7b}
            \\
            &= q^{-6}(q^6 + q^4 + q^3 + q + 1) \cdot \frac{(1 - q^{7-7b})}{q^7-1} + q^{-7b}
            \\
            &= \frac{(q^{-6} - q^{1-7b})(q^6 + q^4 + q^3 + q + 1)}{q^7-1} + q^{-7b}.
            \end{align*}
            Suppose that $e_K$ is even. Then we have 
            \begin{align*}
                \sum_{k=5}^{\lfloor \frac{5e_K}{2}\rfloor} q^{- \lceil \frac{7k}{5}\rceil} &= \sum_{k=5}^{5\cdot\frac{e_K}{2}} q^{-\lceil\frac{7k}{5}\rceil} 
                \\
                &= \mathbbm{1}_{e_K\geq 2}\cdot \Big(\frac{(q^{-6} - q^{1-\frac{7e_K}{2}})(q^6 + q^4 + q^3 + q + 1)}{q^7-1} + q^{-\frac{7e_K}{2}}\Big).
            \end{align*}
            Suppose instead that $e_K$ is odd. Then we have 
                \begin{align*}
                    \sum_{k=5}^{\lfloor \frac{5e_K}{2}\rfloor} q^{- \lceil \frac{7k}{5}\rceil} &=  \sum_{k=5}^{\frac{5e_K - 1}{2}} q^{- \lceil \frac{7k}{5}\rceil}
                    \\
                    &= \sum_{k=5}^{5\cdot\frac{e_K-1}{2}}q^{-\lceil \frac{7k}{5}\rceil} + q^{-\lceil\frac{7}{5}\cdot \frac{5e_K - 3}{2}\rceil} + q^{-\lceil\frac{7}{5}\cdot \frac{5e_K-1}{2}\rceil}
                    \\
                    &= \mathbbm{1}_{e_K \geq 2}\cdot \Big(\frac{(q^{-6} - q^{1 - 7\cdot\frac{e_K - 1}{2}})(q^6 + q^4 + q^3 + q + 1)}{q^7-1} + q^{-7\cdot \frac{e_K-1}{2}} 
                    \\
                    &\quad \quad\quad + q^{-7\cdot \frac{e_K-1}{2} - 2} + q^{-7\cdot\frac{e_K-1}{2} - 3}\Big).
                \end{align*}
            In other words, the sum $\sum_{k=5}^{\lfloor \frac{5e_K}{2}\rfloor} q^{- \lceil \frac{7k}{5}\rceil}$ is equal to 
            $$
            \mathbbm{1}_{e_K \geq 2}\cdot\Big( \frac{(q^{-6} - q^{1 - 7\lfloor\frac{e_K}{2}\rfloor})(q^6 + q^4 + q^3 + q + 1)}{q^7 - 1} + q^{-7\lfloor\frac{e_K}{2}\rfloor}(1 + \mathbbm{1}_{2\nmid e_K}(q^{-2} + q^{-3}))\Big).
            $$
            Therefore we have a contribution of 
            $$
            \mathbbm{1}_{e_K\geq 2}\cdot \frac{1}{2}(q-1)\Big(
                \frac{(q^{-7} - q^{ - 7\lfloor\frac{e_K}{2}\rfloor})(q^6 + q^4 + q^3 + q + 1)}{q^7 - 1} + q^{-7\lfloor\frac{e_K}{2}\rfloor-1}(1 + \mathbbm{1}_{2\nmid e_K}(q^{-2} + q^{-3}))
            \Big).
            $$
            \item We need to evaluate 
            \begin{align*}
            \sum_{k=5}^{\lfloor \frac{5e_K}{2}\rfloor} q^{\max\{
                \lceil \frac{-3k+1}{2}\rceil, -k - e_K    
            \}} &= \sum_{k=5}^{2e_K}q^{\lceil \frac{-3k+1}{2}\rceil} + \sum_{k=2e_K + 1}^{\lfloor\frac{5e_K}{2}\rfloor} q^{-k-e_K}.
            \end{align*}
            We have 
            \begin{align*}
            \sum_{k=5}^{2e_K} q^{\lceil  \frac{-3k+1}{2}\rceil} &= \sum_{a=3}^{e_K}\sum_{k=2a-1}^{2a}q^{\lceil \frac{-3k+1}{2}\rceil} 
            \\
            &= \sum_{a=3}^{e_K}(q^{-3a + 2} + q^{-3a+1})
            \\
            &= (q^2+q)\sum_{a=3}^{e_K}q^{-3a}
            \\
            &= \mathbbm{1}_{e_K \geq 3}\cdot \frac{(q^2+q)(q^{-6} - q^{-3e_K})}{q^3-1}. 
            \end{align*}
            So the first half of the sum gives a contribution of 
            $$
            -\mathbbm{1}_{e_K \geq 2}\cdot \frac{1}{2}\cdot\frac{(q-1)(q+1)(q^{-7} - q^{-3e_K - 1})}{q^3-1}.
            $$
            We also have 
            $$
            \sum_{k=2e_K+1}^{\lfloor\frac{5e_K}{2}\rfloor} q^{-k-e_K} = \mathbbm{1}_{e_K\geq 2}\cdot \frac{q^{-3e_K} - q^{-\lfloor\frac{5e_K}{2}\rfloor - e_K}}{q-1},
            $$
            so we also get a contribution of 
            $$
            - \frac{1}{2}(q^{-3e_K - 2} - q^{-\lfloor\frac{7e_K}{2}\rfloor - 2}).
            $$
        \end{enumerate}
    \end{enumerate}
\end{proof}
\section{The Case $G = D_4$}
\label{sec-G=D4}

For $G \in \{V_4, C_4, D_4\}$ and $L \in \Sigma^G$, let 
$$
\Sigma_{\quadr/K}^{\hookrightarrow L} := \{E \in \Sigma_{\quadr/K} : \exists K\text{-morphism } E \hookrightarrow L\}.
$$
Let $L \in \Sigma^G$ and $E \in \Sigma_{\quadr/K}^{\hookrightarrow L}$. There is a unique embedding $E \hookrightarrow L$, so we may naturally view $L$ as an extension of $E$. We define a \emph{$K$-twist of $L/E$} to be an element of the set
$$
\Twist_K(L/E) = \{L' \in \Sigma_{\quadr/E} : \exists K\text{-isomorphism } L' \cong L\}.
$$

\begin{lemma}
    \label{lem-size-of-Xi}
    Let $G \in \{V_4, C_4, D_4\}$. The following two statements are true:
    \begin{enumerate}
        \item For $L \in \Sigma^G$, we have
        $$
        \# \Sigma_{\quadr/K}^{\hookrightarrow L} = \begin{cases}
            1 \quad\text{if $G \in \{C_4, D_4\}$},
            \\
            3\quad\text{if $G = V_4$}.
        \end{cases}
        $$
        \item For $L \in \Sigma^G$ and $E \in \Sigma_{\quadr/K}^{\hookrightarrow L}$, we have 
        $$
        \# \Twist_K(L/E) = \begin{cases}
            1 \quad\text{if $G \in \{C_4, V_4\}$},
            \\
            2 \quad\text{if $G = D_4$}. 
        \end{cases}
        $$
    \end{enumerate}
\end{lemma}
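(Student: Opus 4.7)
The plan is to translate both statements into counts inside $G := \Gal(\widetilde L/K)$, setting $H := \Gal(\widetilde L/L)$. Since $[G:H] = [L:K] = 4$ and the isomorphism class of $G$ is prescribed, we have $H = 1$ when $G \in \{V_4, C_4\}$ (so $L$ itself is Galois over $K$), and $|H| = 2$ when $G = D_4$. In the last case, $H$ must be non-normal in $G$: otherwise $\widetilde L^H = L$ would be Galois over $K$ and equal to its own normal closure, contradicting $|G| = 8$.

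For (1), the Galois correspondence identifies $\Sigma_{\quadr/K}^{\hookrightarrow L}$ with the set of subgroups $H' \leq G$ satisfying $H \subseteq H'$ and $[G:H'] = 2$. In the $V_4$ and $C_4$ cases this is just the set of index-$2$ subgroups of $G$, of cardinality $3$ and $1$ respectively. For $G = D_4$, fix the presentation $D_4 = \langle r,s \mid r^4 = s^2 = 1,\, srs = r^{-1}\rangle$; the three index-$2$ subgroups are the cyclic $\langle r\rangle$ and the two Klein four-subgroups $\langle r^2, s\rangle$ and $\langle r^2, rs\rangle$. A short calculation shows that any non-normal order-$2$ subgroup of $D_4$ (a conjugate of $\langle s\rangle$ or of $\langle rs\rangle$) is contained in exactly one of these Klein subgroups and never in $\langle r\rangle$, yielding a single intermediate field in each case.

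For (2), I first observe that any quadratic extension of $E$ is Galois over $E$, so embedding such an extension into $\overline K$ over $E$ yields a well-defined subfield of $\overline K$ independent of the choice of embedding. This gives a bijection between $\Sigma_{\quadr/E}$ and the set of quadratic subfields of $\overline K$ containing $E$. Restricting to those subfields $K$-isomorphic to $L$ (equivalently, those contained in $\widetilde L$) and applying the Galois correspondence, $\Twist_K(L/E)$ is identified with the set of subgroups $H'' \leq G$ that are $G$-conjugate to $H$ and contained in $N := \Gal(\widetilde L/E)$. In the $V_4$ and $C_4$ cases $H = 1$ is alone in its conjugacy class and trivially lies in $N$, giving one twist. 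In the $D_4$ case, $H$ has a normaliser of order $4$, hence exactly two conjugates in $G$, and both lie in the unique Klein four-subgroup of $D_4$ containing $H$; by part (1) this Klein subgroup is precisely $N$, so $\#\Twist_K(L/E) = 2$. The main step requiring careful writing is the bijection between $\Twist_K(L/E)$ and the conjugacy-class count inside $N$; once this is set up, the remaining verifications are pure subgroup bookkeeping in $D_4$.
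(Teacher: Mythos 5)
Your proof is correct, but it takes a genuinely different route from the paper's. The paper dismisses part (1) as obvious and proves part (2) by explicit field arithmetic: writing $L = E(\sqrt{\alpha})$, it shows that $\Twist_K(L/E) = \{E(\sqrt{\alpha}), E(\sqrt{\overline{\alpha}})\}$, where $\overline{\alpha}$ is the $\Gal(E/K)$-conjugate of $\alpha$, and then observes that these two extensions of $E$ coincide exactly when $G \neq D_4$ (for $V_4$ and $C_4$ one has $\alpha\overline{\alpha} = N_{E/K}(\alpha) \in K^{\times 2}$ or $dK^{\times 2}$, both contained in $E^{\times 2}$). You instead transport both statements into the subgroup lattice of $G = \Gal(\widetilde{L}/K)$: part (1) becomes a count of index-$2$ overgroups of $H = \Gal(\widetilde{L}/L)$, and part (2) a count of $G$-conjugates of $H$ contained in $N = \Gal(\widetilde{L}/E)$. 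Your version is more uniform — all three groups are handled by the same bookkeeping, and it actually supplies the argument for part (1) that the paper omits — at the cost of having to set up carefully the dictionary between isomorphism classes of extensions and subgroups, which you do correctly by noting that degree-$2$ extensions are Galois and hence determined by their image in $\overline{K}$.

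One small slip in wording: the parenthetical ``equivalently, those contained in $\widetilde{L}$'' is not an equivalence. In the $D_4$ case the fixed field of the centre $\langle r^2\rangle$ is a quadratic extension of $E$ inside $\widetilde{L}$ that is Galois over $K$, hence not $K$-isomorphic to $L$; containment in $\widetilde{L}$ is only a necessary condition. The correct condition — that $\Gal(\widetilde{L}/L')$ be $G$-conjugate to $H$ — is the one you actually impose in the next clause, so the argument itself is unaffected.
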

\begin{proof}
    Claim (1) is obvious. For Claim (2), write $E = K(\sqrt{d})$ and $L = E(\sqrt{\alpha})$, where $d \in K$ and $\alpha \in E$. Let $L' \in \Twist_K(L/E)$. Then there is a $K$-isomorphism $\varphi: E(\sqrt{\alpha}) \to L'$. We will view $E$ as a subset of both extensions $L$ and $L'$, even though $L$ and $L'$ are not necessarily inside the same algebraic closure of $E$. 
    
    The element $\varphi(\sqrt{\alpha})\in L'$ has the same minimal polynomial over $K$ as $\sqrt{\alpha} \in L$, so either $L' \cong E(\sqrt{\alpha})$ or $L' \cong E(\sqrt{\overline{\alpha}})$, where $\overline{\alpha}$ is the conjugate of $\alpha$ over $K$. It is easy to see that both these choices for $L'$ are in $\Twist_K(L/E)$, so 
    $$
    \Twist_K(L/E) = \{E(\sqrt{\alpha}), E(\sqrt{\overline{\alpha}})\}.
    $$
    By elementary Galois theory, we have $E(\sqrt{\alpha})\not \cong E(\sqrt{\overline{\alpha}})$ over $E$ if and only if $G = D_4$. 
\end{proof}
For an integer $m$, define an \emph{$m$-tower} to be a pair $(E,L)$, where $E \in \Sigma_{\quadr/K}$ and $L \in \Sigma_{\quadr/E}$, such that $L/K$ is a totally ramified extension with $v_K(d_{L/K}) = m$. Write $\Tow_m$ for the set of $m$-towers. There is a natural surjection 
$$
\Phi_m : \Tow_m \to \Sigma_m^{C_4} \cup \Sigma_m^{V_4} \cup \Sigma_m^{D_4},\quad (E,L)\mapsto L. 
$$
\begin{lemma}
    \label{lem-size-of-fibre-of-Phi_m}
    Let $G \in \{C_4, V_4, D_4\}$, let $m$ be an integer, and let $L_0 \in \Sigma_m^G$. The fibre $\Phi_m^{-1}(L_0)$ has size 
    $$
    \begin{cases}
        1\quad\text{if $G = C_4$},
        \\
        2\quad\text{if $G = D_4$},
        \\
        3 \quad\text{if $G = V_4$}. 
    \end{cases}
    $$
\end{lemma}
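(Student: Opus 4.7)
The plan is to recognise the fibre $\Phi_m^{-1}(L_0)$ as a disjoint union indexed by the quadratic subfields of $L_0$, and then to apply Lemma~\ref{lem-size-of-Xi} twice. More precisely, I will establish the identity
$$
\#\Phi_m^{-1}(L_0) = \sum_{E \in \Sigma_{\quadr/K}^{\hookrightarrow L_0}} \#\Twist_K(L_0/E),
$$
and substitute the values coming from Lemma~\ref{lem-size-of-Xi}. In the $C_4$ case this is $1\cdot 1 = 1$; in the $D_4$ case this is $1\cdot 2 = 2$; in the $V_4$ case this is $3\cdot 1 = 3$.

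To justify the identity, I would argue as follows. Suppose $(E,L) \in \Phi_m^{-1}(L_0)$, so there is a $K$-isomorphism $\varphi \colon L \to L_0$. Then $\varphi(E)$ is a quadratic subfield of $L_0$; in particular $E$ admits a $K$-embedding into $L_0$, so $E \in \Sigma_{\quadr/K}^{\hookrightarrow L_0}$. Moreover, after fixing the unique $K$-embedding $E \hookrightarrow L_0$ given in the discussion preceding Lemma~\ref{lem-size-of-Xi}, the extension $L \in \Sigma_{\quadr/E}$ is by definition an element of $\Twist_K(L_0/E)$, since $L \cong_K L_0$. Conversely, any $E \in \Sigma_{\quadr/K}^{\hookrightarrow L_0}$ together with any $L \in \Twist_K(L_0/E)$ determines an $m$-tower $(E,L)$ in $\Phi_m^{-1}(L_0)$: the condition $v_K(d_{L/K}) = m$ transfers along the $K$-isomorphism $L \cong_K L_0$, and total ramification likewise.

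The only subtlety is verifying that this correspondence is a bijection, i.e.\ that different choices of $(E, L)$ on the right-hand side yield distinct pairs in $\Tow_m$. This is automatic: the pairs are distinguished by their first coordinate (recording $E$ up to $K$-isomorphism) and by their second coordinate (recording $L$ as an element of $\Sigma_{\quadr/E}$, i.e.\ up to $E$-isomorphism), which is exactly the data that $\Twist_K(L_0/E)$ parametrises. Hence the decomposition is a genuine partition.

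I do not expect any real obstacle; the content of the lemma is purely bookkeeping, and all the substantive work has already been done in Lemma~\ref{lem-size-of-Xi}. The trickiest point, if anything, is the implicit identification between ``quadratic subfields of $L_0$'' and ``elements of $\Sigma_{\quadr/K}^{\hookrightarrow L_0}$'', which is harmless because the $K$-embedding $E \hookrightarrow L_0$ is unique when it exists, so no additional choices are involved in the decomposition.
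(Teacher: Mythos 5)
Your proposal is correct and follows exactly the paper's own argument: the paper likewise identifies $\Phi_m^{-1}(L_0)$ with the set of pairs $(E,L)$ where $E \in \Sigma_{\quadr/K}^{\hookrightarrow L_0}$ and $L \in \Twist_K(L_0/E)$, and then reads off the count from Lemma~\ref{lem-size-of-Xi}. You have merely spelled out the bijection that the paper dismisses with ``it is easy to see'', and your verification is accurate.
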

\begin{proof}
    It is easy to see that 
    $$
    \Phi_m^{-1}(L_0) = \{(E,L) : E \in \Sigma_{\quadr/K}^{\hookrightarrow L_0}, L \in \Twist_K(L_0/E)\},
    $$
    and the result follows from Lemma~\ref{lem-size-of-Xi}.
\end{proof}

\begin{corollary}
    \label{cor-Xm-in-terms-of-field-sets}
    For every integer $m$, we have 
    $$
    \# \Sigma_m^{C_4} + 2\cdot \#\Sigma_m^{D_4} + 3\cdot \#\Sigma_m^{V_4} = \# \Tow_m. 
    $$
\end{corollary}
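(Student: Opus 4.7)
The plan is to view the corollary as a direct fibre-counting consequence of Lemma~\ref{lem-size-of-fibre-of-Phi_m}. The map
$$
\Phi_m: \Tow_m \to \Sigma_m
$$
already appears in the setup, so the work amounts to identifying its image and partitioning that image by Galois closure group.

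First I would verify that $\Phi_m$ lands inside $\Sigma_m^{C_4}\cup \Sigma_m^{V_4}\cup \Sigma_m^{D_4}$. Given $(E,L)\in \Tow_m$, the field $L$ is a totally ramified quartic extension of $K$ with $v_K(d_{L/K})=m$, and it contains a quadratic subfield $E$. In particular $L$ has a nontrivial quartic subextension, which forces the Galois closure group of $L/K$ to contain a subgroup of index $2$ above the point-stabiliser; among $\{S_4,A_4,D_4,V_4,C_4\}$ only the last three satisfy this, so $\Phi_m(E,L) \in \Sigma_m^{C_4}\cup \Sigma_m^{V_4}\cup \Sigma_m^{D_4}$.

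Next I would argue that $\Phi_m$ is surjective onto this union: for any $L_0 \in \Sigma_m^G$ with $G \in \{C_4, V_4, D_4\}$, Lemma~\ref{lem-size-of-Xi}(1) guarantees a quadratic subfield $E \subseteq L_0$, and then the pair $(E,L_0)$ is a preimage. Moreover, exactly as indicated in the proof of Lemma~\ref{lem-size-of-fibre-of-Phi_m}, the full fibre $\Phi_m^{-1}(L_0)$ is
$$
\{(E,L) : E \in \Sigma_{\quadr/K}^{\hookrightarrow L_0},\ L \in \Twist_K(L_0/E)\},
$$
whose size is $1, 2, 3$ according as $G$ is $C_4, D_4, V_4$, by Lemma~\ref{lem-size-of-fibre-of-Phi_m}.

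Finally I would sum over fibres. Partitioning $\Tow_m$ according to the Galois closure group of $\Phi_m(E,L)$ gives
$$
\#\Tow_m = \sum_{L \in \Sigma_m^{C_4}} \#\Phi_m^{-1}(L) + \sum_{L \in \Sigma_m^{D_4}} \#\Phi_m^{-1}(L) + \sum_{L \in \Sigma_m^{V_4}} \#\Phi_m^{-1}(L) = \#\Sigma_m^{C_4} + 2\#\Sigma_m^{D_4} + 3\#\Sigma_m^{V_4},
$$
which is the claim. There is essentially no obstacle here: all the content was carried out earlier in establishing Lemmas~\ref{lem-size-of-Xi} and \ref{lem-size-of-fibre-of-Phi_m}. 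The only thing worth being careful about is the sanity check that $\Phi_m$ does not accidentally hit $\Sigma_m^{S_4}$ or $\Sigma_m^{A_4}$, which I addressed above.
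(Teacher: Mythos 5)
Your proof is correct and follows the paper's own route: the paper also deduces the identity immediately from Lemma~\ref{lem-size-of-fibre-of-Phi_m} by summing fibre sizes of the surjection $\Phi_m$, with the fact that the image avoids $\Sigma_m^{S_4}$ and $\Sigma_m^{A_4}$ built into the definition of $\Phi_m$. Your extra sanity check that a quartic field with a quadratic subfield cannot have Galois closure group $S_4$ or $A_4$ is a reasonable elaboration of what the paper leaves implicit.
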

\begin{proof}
    This is immediate from Lemma~\ref{lem-size-of-fibre-of-Phi_m}.
\end{proof}
\begin{lemma}
    \label{lem-size-of-Xm}
    If $\Tow_m$ is nonempty, then one of the following three statements is true: 
    \begin{enumerate}
        \item $m$ is an even integer with $6  \leq m \leq 8e_K + 2$.
        \item $m\equiv 1 \pmod{4}$ and $4e_K + 5 \leq m \leq 8e_K + 1$. 
        \item $m = 8e_K + 3$. 
    \end{enumerate}
    For even $m$ with $6 \leq m \leq 8e_K + 2$, we have 
    $$
    \# \Tow_m = 4(q-1)q^{\frac{m}{2} - 2}(\mathbbm{1}_{m \geq 4e_K + 4}\cdot q^{-e_K} +\mathbbm{1}_{m \leq 8e_K}\cdot ( q^{\min\{0, e_K + 1 - \lceil \frac{m}{4}\rceil\} }- q^{-\min\{\lfloor \frac{m-2}{4}\rfloor, e_K\}})).
    $$
    For $m \equiv 1\pmod{4}$ with $4e_K + 5 \leq m \leq 8e_K + 1$, we have 
    $$
    \# \Tow_m = 4(q-1)q^{e_K + \frac{m-1}{4} - 1}. 
    $$
    We also have 
    $$
    \# \Tow_{8e_K+3} = 4q^{3e_K}. 
    $$
\end{lemma}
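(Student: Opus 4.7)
The plan is to parametrise towers via the tower law for discriminants. Given $(E,L) \in \Tow_m$, set $m_1 = v_K(d_{E/K})$ and $m_2 = v_E(d_{L/E})$. Since $E/K$ is totally ramified, $f(E/K) = 1$, so Lemma~\ref{lem-A4-vs-S4-basic-facts}(1) yields $m = 2m_1 + m_2$; moreover, $L/K$ is totally ramified if and only if $L/E$ is, which is automatic whenever $m_2 \geq 1$. Thus I am reduced to counting, for each admissible pair $(m_1, m_2)$, the product of the number of totally ramified quadratic $E/K$ with $v_K(d_{E/K}) = m_1$ and the number of totally ramified quadratic $L/E$ with $v_E(d_{L/E}) = m_2$.

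The first step is a discriminant-stratified count of totally ramified quadratic extensions of any $2$-adic field $F$. Applying Lemmas~\ref{lem-bij-SKKtA-Sigma-quad-A} and~\ref{lem-size-of-SKKtA} with the trivial subgroup $\mathcal{A} = \{[1]\}$ (so $\#S_{F/F,t} = 2q_F^{e_F - t}$) and invoking Lemma~\ref{lem-hecke} for the odd-valuation stratum, I obtain exactly $2q_F^{m'/2 - 1}(q_F - 1)$ such extensions for each even $m' \in [2, 2e_F]$, exactly $2q_F^{e_F}$ for $m' = 2e_F + 1$, and none otherwise. Taking $F = K$ enumerates the possible $E$, and taking $F = E$ (with $e_E = 2e_K$ and $q_E = q$ by total ramification) enumerates $L$ over each $E$. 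Partitioning by the parity of $(m_1, m_2)$ splits $\Tow_m$ into four cases: (i) $(m_1,m_2) = (2e_K+1, 4e_K+1)$, forcing $m = 8e_K + 3$ with contribution $2q^{e_K}\cdot 2q^{2e_K} = 4q^{3e_K}$; (ii) $m_1 = 2e_K + 1$ and $m_2$ even in $[2, 4e_K]$, forcing even $m \in [4e_K+4, 8e_K+2]$ and contribution $4(q-1)q^{m/2 - e_K - 2}$; (iii) $m_1$ even in $[2, 2e_K]$ and $m_2 = 4e_K + 1$, forcing $m \equiv 1\pmod{4}$ with $m \in [4e_K+5, 8e_K+1]$ and contribution $4(q-1)q^{e_K + (m-1)/4 - 1}$; (iv) both $m_i$ even in their admissible ranges, forcing even $m \in [6, 8e_K]$. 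This simultaneously proves the first claim of the Lemma and gives the formulas for $m = 8e_K + 3$ and for the $m \equiv 1 \pmod{4}$ stratum.

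The substantive step is Case~(iv). For each even $m_1 = 2k$ in the admissible range, the number of pairs is $4(q-1)^2 q^{m/2 - k - 2}$, and the constraints $m_1 \geq 2$, $m_1 \leq 2e_K$, $m_2 \geq 2$, and $m_2 \leq 4e_K$ translate to $\max\{1,\, \lceil m/4 \rceil - e_K\} \leq k \leq \min\{e_K,\, \lfloor (m-2)/4 \rfloor\}$. Summing the resulting geometric series in $q^{-k}$ yields
$$
4(q-1)q^{m/2 - 2}\Big(q^{\min\{0,\, e_K + 1 - \lceil m/4 \rceil\}} - q^{-\min\{\lfloor (m-2)/4 \rfloor,\, e_K\}}\Big),
$$
which vanishes precisely when the index set is empty, namely when $m > 8e_K$. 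Adding this to the Case~(ii) contribution $4(q-1)q^{m/2 - e_K - 2}\mathbbm{1}_{m \geq 4e_K + 4}$ produces the claimed formula for even $m$. The main obstacle is the $\min/\max$ bookkeeping in Case~(iv): one must verify that the exponents and the indicator $\mathbbm{1}_{m \leq 8e_K}$ correctly encode the integer constraints on $k$ across the residues of $m$ modulo $4$ and the relative sizes of $m$ and $4e_K$. Checking the endpoints $m \in \{6,\, 4e_K+2,\, 4e_K+4,\, 8e_K,\, 8e_K+2\}$ makes the match transparent.
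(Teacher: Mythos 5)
Your proof is correct and follows essentially the same route as the paper: decompose each tower via the tower law into $(m_1,m_2)$ with $m=2m_1+m_2$, stratify by the parity of $m_1$ and $m_2$, multiply the counts of quadratic extensions at each level, and sum the resulting geometric series over even $m_1$. The only cosmetic difference is that you re-derive the discriminant-stratified count of quadratic extensions from Lemmas~\ref{lem-bij-SKKtA-Sigma-quad-A}, \ref{lem-size-of-SKKtA} (with trivial $\mathcal{A}$) and \ref{lem-hecke}, whereas the paper simply cites \cite[Lemma~4.3]{tunnell} for the same counts.
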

\begin{proof}
    Let $m$ be an integer such that $\Tow_m$ is nonempty. Let $(E,L) \in \Tow_m$, and let $m_1 = v_K(d_{E/K})$ and $m_2 = v_E(d_{L/E})$, so that $m = 2m_1 + m_2$ by the tower law for discriminant. By \cite[Lemma~4.3]{tunnell}, either $m_1$ is even with $2 \leq m_1 \leq 2e_K$, or $m_1 = 2e_K + 1$. Similarly, either $m_2$ is even with $2 \leq m_2 \leq 4e_K$, or $m_2 = 4e_K + 1$. If $m_2$ is even, then $m$ is even and $6 \leq m \leq 8e_K + 2$. If $m_2 = 4e_K + 1$ and $m_1$ is even, then $m \equiv 1\pmod{4}$ and $4e_K + 5 \leq m \leq 8e_K + 1$. Finally, if $m_1$ and $m_2$ are both odd, then $m = 8e_K + 3$. Now that we have identified the possibilities, we can enumerate $\Tow_m$ in each case.
    
    Suppose first that $m$ is even with $6 \leq m \leq 8e_K + 2$. Then each $(E,L) \in \Tow_m$ has $m_2$ even, so $\#\Tow_m$ is the sum of the following two quantities:
    \begin{enumerate}
    \item $$\sum_{\substack{\max\{2, \frac{m}{2} - 2e_K\} \leq m_1 \leq \min\{\frac{m}{2} - 1, 2e_K\} \\ \text{$m_1$ even}}} \sum_{E \in \Sigma_{\quadr/K,m_1}} \#\Sigma_{\quadr/E, m - 2m_1}.$$
    \item $$\mathbbm{1}_{m \geq 4e_K + 4}\cdot \sum_{E \in \Sigma_{\quadr/K,2e_K + 1}} \#\Sigma_{\quadr/E,m - 4e_K - 2}.
    $$
    \end{enumerate}
    By \cite[Lemma~4.3]{tunnell}, the first of these quantities is equal to 
    \begin{align*}
        \#\Sigma_{\quadr/E, m - 2m_1} 
        &= \sum_{\substack{\max\{2, \frac{m}{2} - 2e_K\} \leq m_1 \leq \min\{\frac{m}{2} - 1, 2e_K\} \\ \text{$m_1$ even}}} 4(q-1)^2 q^{\frac{m-m_1}{2} - 2}
        \\
        &= 4(q-1)^2q^{\frac{m}{2} - 2}\sum_{k=a}^b q^{-k}
        \\
        &= 4(q-1)^2q^{\frac{m}{2} - 2} \cdot \mathbbm{1}_{a \leq b}\cdot \frac{q^{1-a} - q^{-b}}{q-1}
        \\
        &= \mathbbm{1}_{6 \leq m \leq 8e_K} \cdot 4(q-1)q^{\frac{m}{2} - 2}(q^{1-a} - q^{-b}),
        \end{align*}
        where 
        $$
        a := \max\{1, \lceil\frac{m}{4}\rceil - e_K\}, \quad b := \min\{\lfloor \frac{m-2}{4}\rfloor, e_K\}. 
        $$
    For $m = 2,4$ we have $q^{1-a} - q^{-b} = 0$, so we may drop the ``$6 \leq m$'' from the indicator function, giving 
    $$
    \#\Sigma_{\quadr/E, m - 2m_1} = \mathbbm{1}_{m \leq 8e_K} \cdot 4(q-1)q^{\frac{m}{2} - 2}(q^{1-a} - q^{-b}).
    $$
    Similarly, the second quantity is equal to
    $$
    \mathbbm{1}_{m \geq 4e_K + 4}\cdot 4(q-1) q^{\frac{m}{2} - e_K - 2},
    $$
    and we obtain the desired expression for $\# \Tow_m$. Now suppose that $m\equiv 1\pmod{4}$ and $4e_K + 5 \leq m \leq 8e_K + 1$. Then each $(E,L) \in \Tow_m$ has $m_2 = 4e_K + 1$ and $m_1 = \frac{m-1}{2} - 2e_K$, so \cite[Lemma~4.3]{tunnell} gives us
    \begin{align*}
    \# \Tow_m &= \sum_{E \in \Sigma{\quadr/K,\frac{m-1}{2}-2e_K}} \#\Sigma_{\quadr/E,4e_K+1}
    \\
    &= 4(q-1)q^{e_K + \frac{m-1}{4} - 1}.
    \end{align*}
    Finally, if $m = 8e_K + 3$, then each $(E,L) \in \Tow_m$ has $m_1 = 2e_K + 1$ and $m_2 = 4e_K + 1$, so 
    \begin{align*}
    \# \Tow_m &= \sum_{E\in \Sigma_{\quadr/K,2e_K+1}} \#\Sigma_{E,4e_K + 1}
    \\
    &= 4q^{3e_K},
    \end{align*}
    by \cite[Lemma~4.3]{tunnell}.
\end{proof}

\begin{proof}[Proof of Theorem~\ref{thm-size-of-Sigma_m_D4}]
    This is immediate from Corollary~\ref{cor-Xm-in-terms-of-field-sets} and Lemma~\ref{lem-size-of-Xm}. 
\end{proof}
\begin{lemma}
    \label{lem-"mass"-of-towers}
    We have 
    $$
    \frac{1}{4}\sum_m q^{-m}\#\Tow_m = \frac{1}{q^2+q+1}(q^{-3e_K - 3} + q^{-3e_K - 1} + q^{-2}).
    $$
\end{lemma}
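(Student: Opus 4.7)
The plan is to rewrite the sum as a double sum over the towers themselves and exploit a surprising cancellation. Since every $(E,L) \in \Tow_m$ has both $E/K$ and $L/E$ totally ramified quadratic, the tower formula for the discriminant (Lemma~\ref{lem-A4-vs-S4-basic-facts}(1), with $f(E/K) = 1$) gives $v_K(d_{L/K}) = 2 v_K(d_{E/K}) + v_E(d_{L/E})$, so
$$
\sum_m q^{-m}\#\Tow_m \;=\; \sum_E q^{-2 v_K(d_{E/K})}\, S(E), \quad\text{where}\quad S(E) := \sum_L q^{-v_E(d_{L/E})},
$$
with $E$ ranging over totally ramified quadratic extensions of $K$ and $L$ over totally ramified quadratic extensions of $E$.

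The first (and only nontrivial) step is to prove that $S(E) = 2/q$ for every such $E$. Applying \cite[Lemma~4.3]{tunnell} to the field $E$ (noting $q_E = q$ and $e_E = 2e_K$ because $E/K$ is totally ramified), the counts of totally ramified quadratic $L/E$ by $v_E(d_{L/E})$ give two pieces: a geometric sum $2(q-1)q^{-1}\sum_{j=1}^{2e_K} q^{-j}$ corresponding to even values $m_2 = 2j \in [2,4e_K]$, and a single term $2q^{-2e_K - 1}$ from the exceptional value $m_2 = 4e_K + 1$. The geometric series evaluates to $(1-q^{-2e_K})/(q-1)$, and the $q^{-2e_K}$ tails cancel, so $S(E)$ collapses to $2/q$, independently of $e_K$. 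This is the only part of the argument that requires any insight.

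With $S(E)$ now constant in $E$, the whole expression becomes $\tfrac{1}{2q}\,T$, where $T := \sum_E q^{-2v_K(d_{E/K})}$. I would evaluate $T$ by the same tool: another application of \cite[Lemma~4.3]{tunnell} decomposes $T$ into a geometric series $2(q-1)q^{-1}\sum_{j=1}^{e_K} q^{-3j}$ (from even $m_1 = 2j$) plus a boundary term $2q^{-3e_K - 2}$ (from $m_1 = 2e_K + 1$). Summing the series to $(1 - q^{-3e_K})/(q^3 - 1)$, combining over a common denominator, and using $(q-1)/(q^3-1) = 1/(q^2+q+1)$, one obtains
$$
T \;=\; \frac{2}{q^2+q+1}\bigl(q^{-1} + q^{-3e_K} + q^{-3e_K - 2}\bigr).
$$
Multiplying by $1/(2q)$ and distributing yields the claimed right-hand side $\tfrac{1}{q^2+q+1}(q^{-3e_K - 3} + q^{-3e_K - 1} + q^{-2})$. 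The main obstacle is spotting the $E$-independence of $S(E)$; once that is observed, the remainder is routine geometric summation.
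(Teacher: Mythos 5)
Your proof is correct, and it takes a genuinely different route from the paper's. The paper first computes $\#\Tow_m$ explicitly for every $m$ (Lemma~\ref{lem-size-of-Xm}, which it needs anyway for Theorem~\ref{thm-size-of-Sigma_m_D4}) and then sums over $m$, which forces a page of casework split by parity, congruence class, and range of $m$. You instead exchange the order of summation: writing $\sum_m q^{-m}\#\Tow_m = \sum_E q^{-2v_K(d_{E/K})}S(E)$ via the tower law with $f(E/K)=1$, you reduce everything to the observation that $S(E) = \sum_L q^{-v_E(d_{L/E})} = 2/q$ is independent of $E$. I verified this: with $e_E = 2e_K$ and $q_E = q$, Tunnell's counts give $2(q-1)q^{-1}\sum_{j=1}^{2e_K}q^{-j} + 2q^{-2e_K-1} = 2q^{-1}$, the tails cancelling exactly as you say; this is of course just Serre's mass formula for $n=2$ over the base $E$, which you could cite directly instead of re-deriving. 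The outer sum $T$ then evaluates to $\tfrac{2}{q^2+q+1}(q^{-1}+q^{-3e_K}+q^{-3e_K-2})$ as you claim, and $\tfrac{1}{2q}T$ matches the stated right-hand side. Both arguments ultimately rest on the same input (the classification and counts of quadratic extensions of a $2$-adic field), but yours bypasses Lemma~\ref{lem-size-of-Xm} entirely and replaces the paper's extended bookkeeping with two short geometric series; the only thing the paper's route buys is that the intermediate per-$m$ formulas are already on hand for other purposes.
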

\begin{proof}
    Lemma~\ref{lem-size-of-Xm} tells us that $
    \frac{1}{4}\sum_m q^{-m}\#\Tow_m 
    $ is the sum of the following four quantities:
    \begin{enumerate}
        \item $\sum_{\substack{4e_K + 4 \leq m \leq 8e_K + 2 \\ \text{$m$ even}}} (q-1)q^{-\frac{m}{2} - e_K - 2}$.
        \item $\sum_{\substack{6 \leq m \leq 8e_K \\ \text{$m$ even}}} (q-1)q^{\min\{0, e_K + 1 - \lceil \frac{m}{4}\rceil\} -\frac{m}{2} - 2}$.
        \item $- \sum_{\substack{6 \leq m \leq 8e_K \\ \text{$m$ even}}}(q-1)q^{- \frac{m}{2} - 2 - \min\{\lfloor \frac{m-2}{4}\rfloor, e_K\}}$.
        \item $\sum_{\substack{4e_K + 5 \leq m \leq 8e_K + 1 \\ m\equiv 1\pmod{4}}} (q-1)q^{e_K + \frac{-3m-1}{4} - 1}$.
        \item $q^{-5e_K - 3}$.
    \end{enumerate}
    We can simplify this as the sum of the following quantities:
    \begin{enumerate}
        \item $(q-1)q^{-e_K - 2}\cdot \sum_{\substack{4e_K + 4 \leq m \leq 8e_K + 2\\ \text{$m$ even}}}q^{-\frac{m}{2}}$.
        \item \begin{enumerate}
            \item $(q-1)q^{-2}\cdot\sum_{\substack{6 \leq m \leq 4e_K + 4 \\ \text{$m$ even}}} q^{-\frac{m}{2}}$.
            \item $(q-1)q^{e_K - 1}\cdot\sum_{\substack{4e_K + 6 \leq m \leq 8e_K \\ \text{$m$ even}}}q^{-\lceil \frac{3m}{4}\rceil}$.
        \end{enumerate}
        \item \begin{enumerate}
            \item $-(q-1)q^{-e_K-2}\cdot\sum_{\substack{4e_K + 2 \leq m \leq 8e_K \\ \text{$m$ even}}} q^{-\frac{m}{2}}$.
            \item $-(q-1)q^{-2}\cdot\sum_{\substack{6 \leq m \leq 4e_K \\ \text{$m$ even}}} q^{-\lfloor\frac{3m-2}{4}\rfloor}$. 
        \end{enumerate}
        \item $(q-1)q^{e_K - 1}\cdot \sum_{\substack{4e_K + 5 \leq m \leq 8e_K + 1 \\ m\equiv 1\pmod{4}}} q^{-\frac{3m + 1}{4}}$.
        \item $q^{-5e_K - 3}$.
    \end{enumerate}
    A further round of simplifications yields 
    \begin{enumerate}
        \item $(q-1)q^{-e_K - 2}\cdot \sum_{k=2e_K + 2}^{4e_K+1}q^{-k}$.
        \item \begin{enumerate}
            \item $(q-1)q^{-2}\cdot\sum_{k=3}^{2e_K+2}q^{-k}$.
            \item $(q-1)q^{e_K - 1}\cdot\sum_{k=2e_K + 3}^{4e_K}q^{-\lceil\frac{3k}{2}\rceil}$.
        \end{enumerate}
        \item \begin{enumerate}
            \item $-(q-1)q^{-e_K-2}\cdot\sum_{k=2e_K+1}^{4e_K}q^{-k}$.
            \item $-(q-1)q^{-2}\cdot\sum_{k=3}^{2e_K}q^{-\lfloor \frac{3k-1}{2}\rfloor}$.
        \end{enumerate}
        \item $(q-1)q^{e_K-2}\cdot \sum_{k=e_K + 1}^{2e_K}q^{-3k}$.
        \item $q^{-5e_K - 3}$.
    \end{enumerate}
    We put the pieces together to obtain the contributions to the final sum:
    \begin{itemize}
        \item (1) and (3)(a) cancel to give a contribution of
        $$
        (q-1)(q^{-5e_K - 3} - q^{-3e_K - 3}).
        $$
        \item (2)(a) simplifies to a contribution of  
        $$
        q^{-4} - q^{-2e_K - 4}.
        $$
        \item We have 
        $$
        \sum_{k=2e_K + 3}^{4e_K}q^{-\lceil\frac{3k}{2}\rceil} = \frac{q+1}{q^3-1}(q^{-3e_K - 3} - q^{-6e_K}),
        $$
        so (2)(b) gives a contribution of 
        $$
        \frac{q+1}{q^2+q+1}(q^{-2e_K - 4} - q^{-5e_K - 1}).
        $$
        \item We have 
        $$
        \sum_{k=3}^{2e_K} q^{-\lfloor\frac{3k-1}{2}\rfloor} = \frac{q+1}{q^3-1}(q^{-2} - q^{1-3e_K}),
        $$
        so $(3)(b)$ gives a contribution of 
        $$
        -\frac{q+1}{q^2+q+1}(q^{-4} - q^{-1-3e_K}).
        $$
        \item We have 
        $$
        \sum_{k=e_K+1}^{2e_K} q^{-3k} = \frac{q^{-3e_K} - q^{-6e_K}}{q^3-1},
        $$
        so (4) gives a contribution of 
        $$
        \frac{1}{q^2+q+1}(q^{-2e_K - 2} - q^{-5e_K - 2}).
        $$
        \item Finally, (5) obviously gives a contribution of 
        $$
        q^{-5e_K - 3}.
        $$
    \end{itemize}
    So far, we have shown that $\frac{1}{4}\sum_m q^{-m}\#\Tow_m$ is the sum of the following six quantities:
    \begin{enumerate}
        \item[(A)] $(q-1)(q^{-5e_K - 3} - q^{-3e_K - 3})$.
        \item[(B)] $q^{-4} - q^{-2e_K - 4}$.
        \item[(C)] $\frac{q+1}{q^2+q+1}(q^{-2e_K - 4} - q^{-5e_K - 1})$.
        \item[(D)] $-\frac{q+1}{q^2+q+1}(q^{-4} - q^{-3e_K - 1})$.
        \item[(E)] $\frac{1}{q^2+q+1}(q^{-2e_K-2} - q^{-5e_K-2})$.
        \item[(F)] $q^{-5e_K - 3}$.
    \end{enumerate}
    The sum of $(C),(D)$ and $(E)$ is 
    $$
    q^{-2e_K-4} - q^{-5e_K-2} + \frac{q+1}{q^2+q+1}(q^{-3e_K - 1} - q^{-4}),
    $$
    so we have shown that $\sum_m q^{-m}\# \Tow_m$ is the sum of the following four quantities:
    \begin{enumerate}
        \item $(q-1)(q^{-5e_K - 3} - q^{-3e_K - 3})$.
        \item $q^{-4} - q^{-2e_K - 4}$.
        \item $q^{-2e_K-4} - q^{-5e_K-2} + \frac{q+1}{q^2+q+1}(q^{-3e_K - 1} - q^{-4})$.
        \item $q^{-5e_K - 3}$.
    \end{enumerate}
    It is easy to check that this sum simplifies to 
    $$
    \frac{1}{q^2+q+1}(q^{-3e_K - 3} + q^{-3e_K - 1} + q^{-2}),
    $$
    so we are done. 
\end{proof}

\begin{proof}[Proof of Corollary~\ref{cor-premass-of-D4}]
    This follows easily from Corollary~\ref{cor-Xm-in-terms-of-field-sets}, Lemma~\ref{lem-"mass"-of-towers}, and the definition of mass.
\end{proof}

\printbibliography
\end{document}